\numberwithin{equation}{section}
\newcommand{\arxiv}[1]{{\tt arXiv:#1}}
\newtheorem{Proposition}{Proposition}[section]
\newtheorem{Lemma}[Proposition]{Lemma}
\newtheorem{Theorem}[Proposition]{Theorem}
\newtheorem{Corollary}[Proposition]{Corollary}
\newtheorem{iTheorem}{Theorem}
\theoremstyle{definition}
\theoremstyle{remark}
\newtheorem{Remark}[Proposition]{Remark}
\renewcommand{\k}{\Bbbk}
\newcommand{\K}{\mathbb{K}}
\newcommand{\unit}{\mathds{1}}
\newcommand{\qc}{p} 
\def\up{\uparrow}
\def\down{\downarrow}
\def\thickdown{\pmb\downarrow}
\def\diam{{\color{white}\scriptstyle\vardiamondsuit}\hspace{-1.493mm}\scriptstyle\diamondsuit}
\def\thickup{\pmb\uparrow}
\def\Kar{\operatorname{Kar}}
\def\Add{\operatorname{Add}}
\def\signs{\sigma}
\def\Sym{\operatorname{Sym}}
\newcommand{\rh}{\mathrm{Heis}}
\def\H{\mathcal{H}}
\def\Heis{\mathcal{H}eis}
\def\HEIS{\mathcal{H}\mathfrak{eis}}
\def\h{\mathbb{O}}
\renewcommand{\sl}{\mathfrak{sl}}
\newcommand{\g}{\mathfrak{g}}
\newcommand{\hh}{\mathfrak{h}}
\def\op{{\operatorname{op}}}
\def\Hom{\operatorname{Hom}}
\def\wt{{\operatorname{wt}}}
\def\End{{\operatorname{End}}}
\def\bi{\text{\boldmath$i$}}
\def\SG{\mathfrak{S}}
\def\bj{\text{\boldmath$j$}}
\def\UU{{\mathfrak U}}
\def\C{{\mathbb C}}
\def\Vec{\mathcal{V}ec_{\operatorname{fd}}}
\def\Z{{\mathbb Z}}
\def\N{{\mathbb N}}
\def\R{{\mathcal R}}
\def\A{{\mathcal A}}
\def\AA{{\mathfrak A}}
\def\eps{{\varepsilon}}
\def\B{\mathbf{B}}
\def\fdrcomod{\operatorname{comod}_{\operatorname{fd}}\!\operatorname{-\!}}
\def\fdlmod{\operatorname{\!-\!}\operatorname{mod}_{\operatorname{fd}}}
\def\lfdlmod{\operatorname{\!-\!}\operatorname{mod}_{\operatorname{lfd}}}
\def\lfdrmod{\operatorname{mod}_{\operatorname{lfd}}\operatorname{\!-\!}}
\newcommand{\cev}[1]{\reflectbox{\ensuremath{\vec{\reflectbox{\ensuremath{#1}}}}}}
\def\dt{{\color{white}\bullet}\!\!\!\circ}
\def\bull{{\scriptstyle\bullet}}
\def\anticlock{\begin{tikzpicture}[baseline=-.9mm]
\filldraw[white] (0,0) circle (1.72mm);
\draw[-] (0,-0.18) to[out=180,in=-102] (-.178,0.02);
\draw[-] (-0.18,0) to[out=90,in=180] (0,0.18);
\draw[-] (0.18,0) to[out=-90,in=0] (0,-0.18);
\draw[<-] (0,0.18) to[out=0,in=90] (0.18,0);
\end{tikzpicture}\,}
\def\thickanticlock{\begin{tikzpicture}[baseline=-.9mm]
\draw[-,thick] (0,-0.18) to[out=180,in=-102] (-.178,0.02);
\draw[-,thick] (-0.18,0) to[out=90,in=180] (0,0.18);
\draw[-,thick] (0.18,0) to[out=-90,in=0] (0,-0.18);
\draw[<-,thick] (0,0.18) to[out=0,in=90] (0.18,0);
\end{tikzpicture}\,}
\def\anticlockplus{\begin{tikzpicture}[baseline=-.9mm]
\filldraw[white] (0,0) circle (1.72mm);
\draw[-] (0,-0.18) to[out=180,in=-102] (-.178,0.02);
\draw[-] (-0.18,0) to[out=90,in=180] (0,0.18);
\draw[-] (0.18,0) to[out=-90,in=0] (0,-0.18);
\draw[<-] (0,0.18) to[out=0,in=90] (0.18,0);
   \node at (0,0) {$+$};
\end{tikzpicture}\,}
\def\anticlocki{\begin{tikzpicture}[baseline=-.9mm]
\filldraw[white] (0,0) circle (1.72mm);
\draw[-,thick] (0,-0.18) to[out=180,in=-102] (-.178,0.02);
\draw[-,thick] (-0.18,0) to[out=90,in=180] (0,0.18);
\draw[-,thick] (0.18,0) to[out=-90,in=0] (0,-0.18);
\draw[<-,thick] (0,0.18) to[out=0,in=90] (0.18,0);
   \node at (0,0) {$\scriptstyle{i}$};
\end{tikzpicture}\,}
\def\anticlockj{\begin{tikzpicture}[baseline=-.9mm]
\filldraw[white] (0,0) circle (1.72mm);
\draw[-,thick] (0,-0.18) to[out=180,in=-102] (-.178,0.02);
\draw[-,thick] (-0.18,0) to[out=90,in=180] (0,0.18);
\draw[-,thick] (0.18,0) to[out=-90,in=0] (0,-0.18);
\draw[<-,thick] (0,0.18) to[out=0,in=90] (0.18,0);
   \node at (-0.02,-0.02) {$\scriptstyle{j}$};
\end{tikzpicture}\,}
\def\clock{\begin{tikzpicture}[baseline=-.9mm]
\filldraw[white] (0,0) circle (1.72mm);
\draw[-] (0,-0.18) to[out=180,in=-90] (-.18,0);
\draw[->] (-0.18,0) to[out=90,in=180] (0,0.18);
\draw[-] (-0.02,0.178) to[out=12,in=90] (0.18,0);
\draw[-] (0.18,0) to[out=-90,in=0] (0,-0.18);
\end{tikzpicture}\,}
\def\clockplus{\begin{tikzpicture}[baseline=-.9mm]
\filldraw[white] (0,0) circle (1.72mm);
\draw[-] (0,-0.18) to[out=180,in=-90] (-.18,0);
\draw[->] (-0.18,0) to[out=90,in=180] (0,0.18);
\draw[-] (-0.02,0.178) to[out=12,in=90] (0.18,0);
\draw[-] (0.18,0) to[out=-90,in=0] (0,-0.18);
   \node at (0,0) {$+$};
\end{tikzpicture}\,}
\def\clocki{\begin{tikzpicture}[baseline=-.9mm]
\filldraw[white] (0,0) circle (1.72mm);
\draw[-,thick] (0,-0.18) to[out=180,in=-90] (-.18,0);
\draw[->,thick] (-0.18,0) to[out=90,in=180] (0,0.18);
\draw[-,thick] (-0.02,0.178) to[out=12,in=90] (0.18,0);
\draw[-,thick] (0.18,0) to[out=-90,in=0] (0,-0.18);
   \node at (0,0) {$\scriptstyle{i}$};
\end{tikzpicture}\,}
\def\clockj{\begin{tikzpicture}[baseline=-.9mm]
\filldraw[white] (0,0) circle (1.72mm);
\draw[-,thick] (0,-0.18) to[out=180,in=-90] (-.18,0);
\draw[->,thick] (-0.18,0) to[out=90,in=180] (0,0.18);
\draw[-,thick] (-0.02,0.178) to[out=12,in=90] (0.18,0);
\draw[-,thick] (0.18,0) to[out=-90,in=0] (0,-0.18);
   \node at (0,0) {$\scriptstyle{j}$};
\end{tikzpicture}\,}
\newcommand{\darkg}{\color{green!70!black}}
\tikzset{darkg/.style={green!70!black}}
\begin{document}
\title[Heisenberg and Kac-Moody]{Heisenberg and Kac-Moody Categorification}

\author[J.~Brundan]{Jonathan Brundan}
\address[J.B.]{Department of Mathematics\\
University of Oregon\\ Eugene\\ OR 97403\\ USA}
\email{brundan@uoregon.edu}

\author[A.~Savage]{Alistair Savage}

\address[A.S]{
  Department of Mathematics and Statistics \\
  University of Ottawa\\
  Ottawa, ON\\ Canada
}
\email{alistair.savage@uottawa.ca}
\urladdr{\href{https://alistairsavage.ca}{alistairsavage.ca}, \textrm{\textit{ORCiD}:} \href{https://orcid.org/0000-0002-2859-0239}{orcid.org/0000-0002-2859-0239}}

\author[B.~Webster]{Ben Webster}
\address[B.W.]{Department of Pure Mathematics, University of Waterloo \&
  Perimeter Institute for Theoretical Physics\\
Waterloo, ON\\ Canada}
\email{ben.webster@uwaterloo.ca}

\subjclass[2010]{17B10, 18D10}
\keywords{Heisenberg category, Kac-Moody 2-category}

\thanks{J.B. supported in part by NSF grant
  DMS-1700905. A.S. supported by Discovery Grant RGPIN-2017-03854 from
  the Natural Sciences and Engineering Research Council of Canada.
B.W. supported by Discovery Grant RGPIN-2018-03974 from the
  Natural Sciences and Engineering Research Council of Canada.
This research was also supported by Perimeter Institute for Theoretical Physics. Research at Perimeter Institute is supported by the Government of Canada through the Department of Innovation, Science and Economic Development and by the Province of Ontario through the Ministry of Research and Innovation.}

\begin{abstract}
We show that any Abelian
module category over the (degenerate or quantum) Heisenberg category
satisfying suitable finiteness conditions
may be viewed as a 2-representation over a corresponding 
Kac-Moody 2-category (and vice versa). 
This gives a way to construct Kac-Moody actions in many representation-theoretic examples 
which is independent of Rouquier's original approach via ``control by
$K_0$.'' As an application, we prove an isomorphism theorem for generalized cyclotomic
quotients of these categories, extending the known isomorphism between
cyclotomic quotients of type A affine Hecke algebras and quiver Hecke algebras.
\end{abstract}

\maketitle

\section{Introduction}

The field of higher representation theory has both benefitted and
suffered from a multiplicity of perspectives.  One such juncture is in
the definition of a categorical action of a Kac-Moody algebra, which was
developed independently by Rouquier \cite{Rou} and
Khovanov and Lauda \cite{KL3}.
Both of these works introduced a remarkable new 2-category, 
the {\em Kac-Moody 2-category} $\UU(\g)$
associated to a
symmetrizable Kac-Moody algebra $\g$, although it took several more
years before the distinct approaches taken in \cite{Rou, KL3}
were reconciled with one another; see \cite{Bkm}.
The object set of $\UU(\g)$ is the weight lattice $X$ of the
underlying Kac-Moody algebra. Then a {\em categorical action} 
of $\g$ on a family of categories
$(\R_\lambda)_{\lambda \in X}$ is the data of a strict 2-functor from
$\UU(\g)$ to the 2-category $\mathfrak{Cat}$ of 
categories sending $\lambda$ to
$\R_\lambda$ for each $\lambda \in X$.
This means that 
there are functors $E_i:\R_\lambda \rightarrow
\R_{\lambda+\alpha_i}$, $F_i:\R_{\lambda+\alpha_i}\rightarrow
\R_{\lambda}$ 
corresponding to the Chevalley generators $e_i, f_i\:(i \in I)$ of
$\g$
(where
  $\alpha_i$ is the $i$th simple root), and 
there are natural transformations between these functors
satisfying relations paralleling the 2-morphisms in $\UU(\g)$.
These relations are recorded in $\S$\ref{songs} below. They imply that
\begin{enumerate}
\item[(KM1)] there are prescribed adjunctions
$(E_i, F_i)$ for all $i \in I$;
\item[(KM2)]
for $d \geq 0$ there is an action of the
{\em quiver Hecke algebra} $QH_d$
of the same Cartan type as $\mathfrak{g}$
on the $d$th power of the functor $E := \bigoplus_{i \in I} E_i$;
\item[(KM3)]
there is an explicit isomorphism of functors lifting the
familiar Chevalley relation $[e_i, f_j] = \delta_{i,j} h_i$ in the Lie algebra $\mathfrak{g}$;
see (\ref{day1})--(\ref{day3}).
\end{enumerate}
In this article, we will only consider categorical actions on {Abelian} categories
satisfying certain finiteness properties, which are needed to ensure 
that the relevant morphism spaces are 
finite-dimensional vector spaces. More precisely, all categories considered will either be
{\em locally finite Abelian} or {\em Schurian} $\k$-linear
categories for a fixed {\em algebraically closed field} $\k$; see $\S$\ref{scars}
for these definitions.
All functors between such categories will be assumed to be
$\k$-linear without further mention.

In Cartan type A, 
Rouquier also introduced a related notion of 
{\em $\mathfrak{sl}'_I$-categorification}, which was based in part on his previous work with
Chuang \cite{CR} treating the case of $\mathfrak{sl}_2$. Instead of the
tower of quiver Hecke algebras mentioned in the previous paragraph, the
definition of $\mathfrak{sl}'_I$-categorification involves a tower
of {\em affine Hecke algebras} of type A (either quantum or degenerate). 
In more detail, assume that we are given $z=q-q^{-1} \in \k$.
Let $AH_d$ be the affine Hecke algebra corresponding to the
symmetric group $\SG_d$ with defining parameter $q$
if $z \neq 0$, or its degenerate analog if $z=0$.
Let $I$ be a subset of $\k$ closed under the automorphisms
$i \mapsto i^\pm$ defined by
\begin{equation*}
    i^{\pm} := \left\{\begin{array}{ll}
    q^{\pm 2} i&\text{in the quantum case ($z\neq 0$),}\\
    i\pm 1&\text{in the degenerate case ($z = 0$),}
    \end{array}\right.
    \end{equation*}
    assuming moreover that $0 \notin I$ in the quantum case. The map $i \mapsto i^+$ defines edges 
    making the set $I$
into a quiver with connected components of type $\mathrm A_\infty$ if $\qc = 0$ or
$\mathrm A_{\qc-1}^{(1)}$ if $\qc \neq 0$, where $\qc$ is the (not necessarily prime!) {\em quantum characteristic}, that is, the smallest positive integer such that $q^{\qc-1}+q^{\qc-3}+\cdots +q^{1-\qc}=0$ or $0$ if no such integer exists.
Let $\mathfrak{g} = \mathfrak{sl}_I'$ be the corresponding
(derived) Kac-Moody algebra. To have an $\mathfrak{sl}'_I$-categorification
on a locally finite Abelian or Schurian $\k$-linear category $\R$, one needs:
\begin{enumerate}
\item[(SL1)]
an adjoint pair $(E,F)$ of endofunctors of $\R$ such that $F$ is also left adjoint to $E$;
\item[(SL2)] endomorphisms $x:E \Rightarrow E$ and $\tau:E^2 \Rightarrow E^2$
inducing an action of $AH_d$ on the $d$th
power $E^d$ for all $d \geq 0$.
\end{enumerate}
Assume moreover that all eigenvalues of $x:E \Rightarrow E$
belong to the given set $I$ so that,
by taking generalized eigenspaces, one obtains decompositions of $E$ and its adjoint $F$ into 
eigenfunctors: $E = \bigoplus_{i \in I} E_i$, $F = \bigoplus_{i \in I}
F_i$. Then, we require that
\begin{enumerate}
\item[(SL3)]
 the induced maps $e_i :=
[E_i]$ and $f_i := [F_i]$ make the
complexified Grothendieck group $\C \otimes_{\Z} K_0(\R)$ into an integrable
representation of the Lie algebra $\g$, with the Grothendieck group of
each block of $\R$ giving rise to an isotypic representation of
the Cartan subalgebra $\hh$ of $\g$.
\end{enumerate}
Under these hypotheses, 
there is an induced categorical action of $\g$ on
$(\R_\lambda)_{\lambda \in X}$ in the sense defined in the previous paragraph,
for Serre subcategories $\R_\lambda$ of $\R$ defined so that
$\C\otimes_\Z K_0(\R_\lambda)$ is the $\lambda$-weight space of $\C\otimes_\Z K_0(\R)$.
This fundamental result is known as ``control by $K_0$;'' 
see \cite[Theorem 5.30]{Rou} in the 
locally finite Abelian case and \cite[Theorem 4.27]{BD} for the extension to the Schurian case.
In its proof, the property
(SL1) obviously implies (KM1), and (SL2) implies (KM2) due to
the isomorphism $\widehat{AH}_d \cong \widehat{QH}_d$
between completions of affine Hecke algebras 
and quiver Hecke algebras discovered in \cite{Rou, BK}. 
Finally, and most interesting, to pass from (SL3)
(which involves relations at the level of the Grothendieck group) to
(KL3) (which involves ``higher'' relations),
Rouquier applies the sophisticated structure theory developed in \cite{CR}, thereby
reducing the proof to minimal $\mathfrak{sl}_2$-categorifications which are analyzed explicitly.

In the current literature, almost all examples of categorical actions
of Kac-Moody algebras of Cartan type A on Abelian categories are
constructed via this ``control by
$K_0$'' theorem. In this article, we develop a new approach 
to constructing such Kac-Moody actions based instead on the {\em Heisenberg category}
$\Heis_k$ of central charge $k \in \Z$.
This is a monoidal category that is constructed from affine Hecke
algebras in a way that is entirely analogous to the construction of the
Kac-Moody 2-category from quiver Hecke algebras.
It comes in two forms, degenerate or quantum, depending on the
parameter $z=q-q^{-1}$ as fixed above. In the special case $k=-1$, the
Heisenberg category was
defined originally in the degenerate case by Khovanov \cite{K}
and in the quantum case by
Licata and the second author \cite{LS}. The appropriate extension of
the definition to arbitrary
central charge was worked out much more recently; see \cite{MS18, Bheis} in
the degenerate case  and \cite{BSWqheis} in the quantum case.
A {\em categorical Heisenberg action} on a category $\R$ is the data
of a strict monoidal functor $\Heis_k \rightarrow \mathcal{E}nd(\R)$, where
$\mathcal{E}nd(\R)$ is the strict monoidal category consisting of
endofunctors and natural transformations.
In view of
the defining relations of $\Heis_k$ recorded in 
$\S\S$\ref{dh}--\ref{qh} below, this means that there are endofunctors
$E, F:\R \rightarrow \R$ and natural transformations such that
\begin{enumerate}
\item[(H1)] there is a prescribed adjunction $(E,F$);
\item[(H2)] for $d \geq 0$ there is an action of $AH_d$ on $E^d$;
\item[(H3)] there is an explicit isomorphism of functors lifting
  the relation $[e,f] = k$ in the Heisenberg algebra of central
  charge $k$; see (\ref{invrela})--(\ref{invrelb}) in the degenerate case and
  (\ref{invrel1a})--(\ref{invrel1b}) in the quantum case\footnote{In the quantum case
    there is one additional relation recorded just after (\ref{invrel1b}).}.
\end{enumerate}
The properties (H1)--(H3) exactly parallel (KM1)--(KM3),
unlike (SL1)--(SL3).
Now we can formulate our first main theorem; see Theorem~\ref{thm1} below for
a more precise statement. The idea of the proof is to upgrade the
homomorphism $\widehat{QH}_d \rightarrow \widehat{AH}_d$
constructed in \cite{Rou, BK} to the entire 2-category $\UU(\g)$.

\begin{iTheorem}\label{th:A}
Let $\R$ be either a locally finite Abelian or a Schurian
$\k$-linear category equipped with a categorical Heisenberg action.
Let $I$ be the {\em spectrum} of $\R$, that is, the set of eigenvalues
of the given endomorphism $x:E \Rightarrow E$.
This set is closed under the maps $i \mapsto i^{\pm}$ defined above.
Let $\mathfrak{g} = \mathfrak{sl}_I'$ be the corresponding Kac-Moody algebra with weight lattice $X$.
For each $\lambda \in X$, there is a Serre subcategory $\R_\lambda$ of
$\R$ defined explicitly in $\S$\ref{swd} below in terms of 
the action of $\End_{\Heis_k}(\unit)$ (``bubbles'').
Moreover, there is a canonically induced categorical action of $\g$ on
$(\R_\lambda)_{\lambda \in X}$
in the sense of (KM1)--(KM3).
\end{iTheorem}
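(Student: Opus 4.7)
The plan is to construct the Kac-Moody categorical action directly from the Heisenberg one by decomposing $E$ and $F$ into generalized eigenspaces and then transporting (H1)--(H3) along this decomposition to (KM1)--(KM3), the final relation requiring the upgrade of the Brundan--Kleshchev--Rouquier algebra isomorphism to a 2-functor. First, for any object $X$ of $\R$ the endomorphism $x_X:EX\to EX$ lies in a finite-dimensional $\k$-algebra by the Schurian or locally finite Abelian hypothesis, so admits a Jordan decomposition. This yields functorial decompositions $E=\bigoplus_{i\in I}E_i$ and, via the adjunction (H1), $F=\bigoplus_{i\in I}F_i$, where $x$ acts on $E_i$ with generalized eigenvalue $i$. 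Adjunction of $(E_i,F_i)$ for each $i$ is immediate, which will give (KM1). The closure of $I$ under $i\mapsto i^\pm$ follows from the Heisenberg inversion relation (H3), combined with the fact that the intertwiner $\tau$ in the affine Hecke action produced by (H2) shifts the $x$-spectrum by the automorphism $i\mapsto i^\pm$ fixed by the defining parameter of $AH_d$.

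Next, I would define the Serre subcategories $\R_\lambda$ by grouping blocks according to the scalars through which the clockwise and anticlockwise bubbles in $\End_{\Heis_k}(\unit)$ act on their identity objects; the generating function of these scalars, combined with the central charge $k$, encodes a unique weight $\lambda\in X$. Using the bubble--strand slide relations and the dot--bubble identities in $\Heis_k$, one checks that absorbing a bubble through an $E_i$- or $F_i$-strand alters the bubble scalars in exactly the manner dictated by the shift $\lambda\leadsto\lambda+\alpha_i$, so that $E_i:\R_\lambda\to\R_{\lambda+\alpha_i}$ and $F_i:\R_{\lambda+\alpha_i}\to\R_\lambda$ as required.

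For (KM2) I would invoke the isomorphism $\widehat{AH}_d\cong\widehat{QH}_d$ of \cite{Rou,BK}: the action of $AH_d$ on $E^d$ provided by (H2), once broken up along the generalized eigenspaces $\bigoplus_{\bi\in I^d}E_{i_d}\cdots E_{i_1}$ and completed with respect to the corresponding family of orthogonal idempotents $e(\bi)$, factors through $\widehat{QH}_d$. This produces the required tower of quiver Hecke actions simultaneously compatibly with the weight decomposition, and the idempotent $e(\bi)$ cuts out the summand $E_{i_d}\cdots E_{i_1}$.

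The main obstacle is (KM3): converting the single Heisenberg inversion relation, which lifts $[e,f]=k$, into the family of Kac-Moody Chevalley isomorphisms lifting $[e_i,f_j]=\delta_{i,j}h_i$ on each $\R_\lambda$. The strategy is to upgrade the algebra homomorphism $\widehat{QH}_d\to\widehat{AH}_d$ to a strict 2-functor $\UU(\g)\to\mathcal{E}nd\bigl(\bigsqcup_\lambda\R_\lambda\bigr)$ by writing each 2-generator (cap, cup, dot, crossing) of $\UU(\g)$ as an explicit composition of 2-generators of $\Heis_k$ after projection to eigenfunctors. The Chevalley isomorphism (\ref{day1})--(\ref{day3}) at weight $\lambda$ is then extracted from (\ref{invrela})--(\ref{invrelb}) in the degenerate case, and (\ref{invrel1a})--(\ref{invrel1b}) in the quantum case, by restricting to $\R_\lambda$ and using that the integer $\langle h_i,\lambda\rangle$ emerges as a difference of orders of vanishing of the bubble generating functions at the eigenvalue $i$. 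The hard point is that the two-sided infinite sums appearing on each side of the Heisenberg inversion relation must collapse to finite sums on each fixed $\R_\lambda$, which is exactly what the Serre subcategory definition ensures, and one then has to verify that the Kac-Moody 2-relations hold; this reduces, after these identifications, to the known Heisenberg relations plus a finite check at the level of bubbles.
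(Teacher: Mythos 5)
Your proposal follows the same overall architecture as the paper (eigenfunctor decomposition of $E$ and $F$, weight decomposition via bubble central characters, completed affine/quiver Hecke isomorphism for (KM2), upgrading everything to a $2$-functor), but there are two places where you assert steps that do not follow as stated, and one of them is the crux of the whole theorem.

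First, a smaller point: closure of $I$ under $i\mapsto i^{\pm}$ does not come from the intertwiners in the affine Hecke action. The intertwiner merely conjugates $x_1$ into $x_2$, swapping the two eigenvalues; it produces no shift by $\pm 1$ (or $q^{\pm2}$). The actual argument is a bubble computation: sliding the bubble generating function past an $E_i$-strand multiplies it by $(u-i)^2/((u-i^+)(u-i^-))$, and combining this with the identity $\h_L(u)=n_L(u)/m_L(u)$ relating the bubbles to the minimal polynomials of $x$ on $EL$ and $FL$ forces $(u-i^{\pm})$ to divide $m_K(u)$ or $n_L(u)$, i.e.\ $i^{\pm}\in I$. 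So this step already needs the bubble machinery you only introduce later.

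The serious gap is in (KM3). Restricting the Heisenberg inversion isomorphism to $\R_\lambda$ and decomposing by colors gives an isomorphism $\bigoplus_i E_iF_i\oplus\operatorname{Id}^{\oplus(-k)}\Rightarrow\bigoplus_i F_iE_i$ in which the $E_iF_j\to F_{j'}E_{i'}$ blocks are color-diagonal, but the $-k$ copies of the identity (the cups decorated by $1,x,\dots,x^{-k-1}$) map into \emph{all} colors simultaneously. Nothing formal redistributes these $-k$ summands among the colors with the correct multiplicities $-\langle h_i,\lambda\rangle$, nor tells you that the cups decorated by $e_i,(x-i)e_i,\dots,(x-i)^{-\langle h_i,\lambda\rangle-1}e_i$ span a complement to the image of $E_iF_i$ inside $F_iE_i$. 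This is exactly where the paper has to work: it first reduces to irreducible objects $L$ (using the sweet-endofunctor criterion, Lemma~\ref{bdl}, in the Schurian case), then identifies $\Hom_\R(L,F_iE_iL)$ and $\Hom_\R(L,E_iF_iL)$ with the local rings $\k[u]/((u-i)^{\eps_i(L)})$ and $\k[u]/((u-i)^{\phi_i(L)})$, and shows via the identity $\h_L(u)=n_L(u)/m_L(u)$ that the sideways crossing realizes multiplication by $\h_L(u)^{-1}$, so that a complement of the prescribed dimension $\eps_i(L)-\phi_i(L)=-\langle h_i,\lambda\rangle$ can be exhibited explicitly (Lemma~\ref{banach2}, Claims 1--5); the opposite-sign case is then deduced by a duality $\Heis_{-k}\cong(\Heis_k')^{\op}$ rather than re-proved. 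Your phrase ``reduces \dots to the known Heisenberg relations plus a finite check at the level of bubbles'' papers over precisely this argument, which is the main new content of the theorem and cannot be omitted.
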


This theorem considerably simplifies the construction of the most important
examples of categorical Kac-Moody actions. In these examples,
the existence of a Heisenberg action is straightforward to
demonstrate, so that Theorem \ref{th:A} can be applied without any need to develop the theory to the
point of being able to check relations on the Grothendieck
group. Of course it is still important to 
investigate such aspects, but it is helpful to have the rich
structure theory of a categorical Kac-Moody action in place from the outset.
For example, one often wants to compute the spectrum $I$ exactly, or to
find an explicit combinatorial description
of the underlying crystal structure on the
set $\B$ of isomorphism classes of irreducible objects.
The answers to these sorts of more intricate combinatorial questions
tend to vary in a discontinuous fashion as parameters change, whereas
the existence of a Heisenberg action is more robust.

\subsection*{Representations of symmetric groups and related Hecke algebras}
The original motivating example comes from the representation theory of the
symmetric groups $\SG_d$.
As observed in \cite[$\S$7.1]{CR}, the classical representation theory of symmetric groups (Specht modules, branching rules, blocks, etc...) implies that
$\R := \bigoplus_{d \geq 0} \k\SG_d\fdlmod$
admits the structure of an $\mathfrak{sl}_I'$-categorification
with $E$ given by induction and $F$ by restriction. The set $I$ (which is the spectrum
in our language) is
the image of $\Z$ in $\k$, so that 
$\mathfrak{sl}_I'$
is $\mathfrak{sl}_\infty(\C)$ if
$\qc = 0$ or 
$\widehat{\mathfrak{sl}}_\qc(\C)'$ if $\qc > 0$.
Applying ``control by $K_0$" it follows that there is an induced categorical action of $\mathfrak{g}=\mathfrak{sl}_I'$; the Grothendieck group $K_0(\R)$ is a $\Z$-form for the basic representation of $\mathfrak{g}$ (e.g., see \cite[Theorem 4.18]{BK2}).
Subsequently, Khovanov \cite{K} used this example to motivate his definition of the degenerate Heisenberg category $\Heis_{-1}$, making the existence of a categorical Heisenberg action on $\R$ almost tautological:
the conditions (H1) and (H2) are immediate while (H3) follows from the 
Mackey isomorphism $F \circ E \cong E \circ F\oplus
\operatorname{Id}$. So now Theorem \ref{th:A} gives a new proof of the existence of a categorical action of $\mathfrak{g}$,
without any need to appeal to combinatorial facts from the representation theory of symmetric groups. (See also \cite{QSY} for a different point of view.)

There are many much-studied variations on this example, in which one replaces $\k \SG_d$ by
higher level cyclotomic quotients of (degenerate or quantum) affine Hecke algebras or
quiver Hecke algebras; see \cite{Ariki, BK2, KK}.
The Grothendieck groups in these cases give $\Z$-forms for the other integrable highest or lowest weight representations.
Another closely related situation is the category $\mathcal{O}$ for rational Cherednik algebras of types
  $G(\ell,1,d)$ for $d \geq 0$, which categorifies Fock space; see \cite{GGOR, Shan}.
  This also includes categories of modules over cyclotomic $q$-Schur
  algebras as a special case. 
We refer the reader to \cite[$\S\S$6--7]{BSWqheis} for further
  discussion of this from the perspective of the quantum Heisenberg category;
our approach 
does not require any integrality assumptions unlike much of the existing literature.

\subsection*{Representations of the general linear group and related algebras}
There are many variants of the representation theory of the general
linear group, including
\begin{itemize}
\item rational representations of the algebraic group $GL_n$ over $\k$;
\item representations of the Lie algebra 
$\mathfrak{gl}_n(\C)$
in the BGG category $\mathcal O$;
\item analogous categories for the general linear supergroup
  $GL_{m|n}$ and its Lie superalgebra;
\item finite-dimensional representations of restricted enveloping algebras arising from the Lie
algebra $\mathfrak{gl}_n(\k)$ over a field of positive characteristic;
\item analogous categories for the quantized enveloping algebra
  $U_q(\mathfrak{gl}_n)$, including situations in which $q$ is a root
  of unity.
\end{itemize}
Each of these gives rise to a locally finite Abelian category $\R$
admitting a categorical Heisenberg action of central charge zero, either degenerate
in the classical cases or quantum when $U_q(\mathfrak{gl}_n)$ is involved.
The endofunctors $E$ and $F$ are defined by tensoring
with the $n$-dimensional defining representation $V$ and its dual
$V^*$, respectively.
The endomorphism $x:E \Rightarrow E$ arises from the action of the
Casimir tensor, while $\tau:E^2 \Rightarrow E^2$ comes from the tensor flip classically,
or its braided analog defined by the $R$-matrix in the quantum case. 
The relations (H1)--(H3) are all easy to check, with (H3)
amounting to the existence of a particular isomorphism $V \otimes V^* \cong V^*
\otimes V$. On applying Theorem \ref{th:A}, we obtain a uniform proof of the
existence of a categorical Kac-Moody action on each of these
categories. In most cases, this action has already been constructed in the
literature via ``control by $K_0$;'' 
e.g. see \cite[$\S$7.4]{CR} and \cite[$\S$6.4]{RW}
for rational representations of $GL_n$,
\cite[$\S$7.5]{CR} and \cite[$\S$4.4]{BKrep}
for category $\mathcal O$,
\cite[$\S\S$6--7]{V} and \cite[$\S$5]{BSWqheis} for the quantum analogs,
and \cite[$\S$5.1]{CC} and \cite[$\S$3.2]{BLW}
for the super analogs. In particular, for rational representations of $GL_n$, the complexified Grothendieck group may be identified with $\bigwedge^n \operatorname{Nat}_p$, where $\operatorname{Nat}_p$ is a natural level zero representation of $\widehat{\mathfrak{sl}}_p(\mathbb{C})'$,
while for integral blocks of category $\mathcal O$ for $\mathfrak{gl}_{m|n}(\mathbb{C})$
the complexified Grothendieck group is $\operatorname{Nat}_+^{\otimes m} \otimes \operatorname{Nat}_-^{\otimes n}$ where $\operatorname{Nat}_{\pm}$ are the natural and dual natural representations of $\mathfrak{sl}_\infty(\mathbb{C})$.
As well as establishing the existence of a categorical Kac-Moody action in all of these previously known cases, our approach encompasses several new situations involving quantum groups at roots of unity and restricted enveloping algebras in positive characteristic; for the latter we are only aware of \cite[Theorem 3.12]{NZ} in the existing literature, which treats a special case by explicitly checking relations at the level of $K_0$.

\subsection*{Generalized cyclotomic quotients}
We have already mentioned cyclotomic quotients of the affine Hecke
algebras $AH_d$. The
isomorphism theorem of \cite{BK} shows that these are isomorphic
to corresponding cyclotomic quotients of the quiver Hecke algebras
$QH_d$.
Both of these families of cyclotomic
quotients can also be obtained in a Morita equivalent form by taking
cyclotomic quotients of the Heisenberg category $\Heis_k$ or 
of the Kac-Moody 2-category $\UU(\g)$. This was first realized by
Rouquier in the Kac-Moody setting, indeed, it is the key to Rouquier's
definition of universal categorifications of integrable highest
weight modules; see \cite[Theorem 4.25]{R2}. The analogous theorem in the Heisenberg
setting is \cite[Theorem 1.7]{Bheis}. 
This point of view leads naturally to many more examples which we
refer to as {\em generalized cyclotomic quotients}; these were first
considered in the Kac-Moody setting in \cite[Proposition 5.6]{Wcan} and categorify tensor products of an integrable lowest weight and an integrable highest weight representation of $\g$.
In the final section of this article, we apply Theorem \ref{th:A}, this time with $\R$
being a Schurian category, to prove the following result (see Theorem~\ref{thm2}).

\begin{iTheorem}\label{th:B}
Consider
the generalized cyclotomic quotients $H_Z(\mu|\nu)$
of the Kac-Moody 2-category as defined in $\S$\ref{kmgcq}
and $H_Z(m|n)$ of the (degenerate or quantum) Heisenberg
category as defined in $\S$\ref{massive}.
Assuming the defining parameters are chosen so that
(\ref{boring})--(\ref{music}) hold,  these algebras are isomorphic via an explicit isomorphism.
\end{iTheorem}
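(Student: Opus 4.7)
The overall plan is to deduce Theorem \ref{th:B} by applying Theorem \ref{th:A} to a suitably chosen Schurian category, and then invoking a universality argument comparing two cyclotomic quotients that categorify the same tensor product representation.

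First I would take $\R$ to be the Schurian category of locally finite-dimensional modules over the Heisenberg cyclotomic quotient $H_Z(m|n)$. The defining presentation of $H_Z(m|n)$ in $\S$\ref{massive} realizes $H_Z(m|n)\lfdlmod$ as carrying a categorical Heisenberg action at the appropriate central charge $k=m-n$, the generators $E,F$ and the $AH_d$-action on $E^d$ being inherited from the monoidal generators of $\Heis_k$. Invoking Theorem \ref{th:A}, I would read off the spectrum $I$ from the defining cyclotomic parameters (which is why the hypotheses (\ref{boring})--(\ref{music}) need to pin down which eigenvalues can arise), and then assemble the associated Serre-decomposed Kac-Moody action $(\R_\lambda)_{\lambda \in X}$ on $\R$. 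The key calculation at this stage is to read off from the Heisenberg bubble relations which weights $\lambda$ support nonzero blocks, thereby identifying the induced highest-and-lowest-weight content with the tensor product of integrable representations categorified by $H_Z(\mu|\nu)$ on the Kac-Moody side, under a precise dictionary $(m,n) \leftrightarrow (\mu,\nu)$.

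Next I would exploit the universal property of the generalized Kac-Moody cyclotomic quotient from \cite[Proposition 5.6]{Wcan}: any 2-representation of $\UU(\g)$ on which the two prescribed families of cyclotomic nilpotency relations hold factors through $H_Z(\mu|\nu)$. The Kac-Moody 2-action produced by Theorem \ref{th:A} manifestly satisfies these nilpotency relations because the corresponding endomorphisms of $E$ and $F$ in $\Heis_k$ become nilpotent after imposing the generalized cyclotomic relations defining $H_Z(m|n)$ (here one uses the explicit formulas from the proof of Theorem \ref{th:A}, expressing Kac-Moody dots, crossings, cups and caps in terms of Heisenberg data via the completion isomorphism $\widehat{QH}_d \cong \widehat{AH}_d$). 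Consequently one obtains a 2-functor from the cyclotomic Kac-Moody quotient into $\mathcal{E}nd(\R)$, which on endomorphism algebras of an explicit projective generator provided by the identity of $\unit$ yields an algebra homomorphism $\Phi: H_Z(\mu|\nu) \to H_Z(m|n)$.

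For the opposite direction, I would apply the parallel construction: equip the Schurian category $H_Z(\mu|\nu)\lfdlmod$ with its canonical Kac-Moody action, use (KM1)--(KM3) to package the $E_i$ and $F_i$ together into single endofunctors $E = \bigoplus E_i$, $F = \bigoplus F_i$ carrying a compatible $AH_d$-action (again via $\widehat{QH}_d \cong \widehat{AH}_d$), and then verify (H1)--(H3) to obtain a Heisenberg action. By the universal property of $H_Z(m|n)$ as proved in \cite[Theorem 1.7]{Bheis} in the Heisenberg setting, this produces an algebra homomorphism $\Psi: H_Z(m|n) \to H_Z(\mu|\nu)$. The final step is to check $\Psi \circ \Phi = \mathrm{id}$ and $\Phi \circ \Psi = \mathrm{id}$, which reduces to tracking a finite list of generators (Heisenberg dots, crossings, cups, caps on the one side, Kac-Moody dots, crossings, cups, caps, and the bubble data on the other) through the completion isomorphism and noting that the composite stabilizes the generating 2-morphisms.

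The main obstacle is Step 2: establishing the precise dictionary $(m,n) \leftrightarrow (\mu,\nu)$ under (\ref{boring})--(\ref{music}) and verifying that the cyclotomic nilpotency relations on the Kac-Moody side are indeed forced by the Heisenberg cyclotomic relations. This is where the bulk of the bookkeeping lives, because it requires comparing the leading terms of the cyclotomic polynomials defining $H_Z(m|n)$ with the eigenvalue data encoded in the bubble generators that control the weight-grading in the image of Theorem \ref{th:A}. Everything else is formal once this dictionary is in place: the two homomorphisms $\Phi,\Psi$ are then determined on generators, and their mutual inverseness is a routine diagrammatic computation using the relations already recorded in $\S$\ref{songs} and $\S\S$\ref{dh}--\ref{qh}.
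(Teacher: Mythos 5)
The first half of your plan coincides with the paper's: one regards $\lfdrmod H_Z(m|n)$ as a Schurian $\Heis_k$-module category, applies Theorem~\ref{th:A}, checks (this is Lemmas~\ref{speccy} and \ref{mower}, using the Hensel factorizations (\ref{boring})--(\ref{music})) that the generating projective $P=1_\varnothing H_Z(m|n)$ lies in weight $\kappa=\nu-\mu$ and kills the generators (\ref{oneofthe}) of $\mathcal I_Z(\mu|\nu)$, and so obtains from the universal property of $\R(\kappa)$ a homomorphism $\theta:H_Z(\mu|\nu)\to H_Z(m|n)$. Your identification of the ``main obstacle'' is slightly off, though: the dictionary $(m,n)\leftrightarrow(\mu,\nu)$ is dispatched by Hensel's lemma and the bubble computation $\h_{P,i}(u)=\nu_i(u)/\mu_i(u)$; the real work lies in proving $\theta$ is bijective.

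It is in that second half that your proposal has a genuine gap. You propose to build an inverse $\Psi$ by running the construction symmetrically: equip $\lfdrmod H_Z(\mu|\nu)$ with a categorical Heisenberg action by verifying (H1)--(H3) from the Kac-Moody data, then invoke the universal property of $H_Z(m|n)$. But verifying (H3) --- that the specific matrix (\ref{invrela})--(\ref{invrelb}) built from the reassembled crossing and dotted cups/caps on the \emph{total} functors $E=\bigoplus_i E_i$, $F=\bigoplus_i F_i$ is invertible (plus, in the quantum case, the extra relation on $t$ and the normalization of the leftward cups and caps) --- is not formal. On a block $\R_\lambda$ the summands $E_iF_i$ and $F_iE_i$ satisfy inversion relations (\ref{day2})--(\ref{day3}) whose directions depend on the signs of the individual $\langle h_i,\lambda\rangle$, which need not agree with the sign of $k=\sum_i\langle h_i,\lambda\rangle$, so assembling them into the single Heisenberg inversion isomorphism is exactly the hard point. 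In the paper this Kac-Moody-to-Heisenberg passage is Theorem~\ref{thm3}, and its proof \emph{uses} Theorem~\ref{th:B} (it reduces to the GCQ case and quotes the isomorphism $\theta$), so as a route to Theorem~\ref{th:B} your argument is circular unless you supply an independent verification of (H3). The paper avoids constructing an inverse altogether: it proves $\theta$ is surjective on the ``all $E_i$'' morphism spaces by inverting the formulas of Theorem~\ref{thm1}, proves injectivity there by the dimension count $\ell^d d!\dim Z$ on both sides (Corollaries~\ref{cqhd} and \ref{cqhd2}, which rest on the nondegeneracy/basis theorems for $\UU(\g)$ and $\Heis_k$ together with Lemmas~\ref{cqh} and \ref{cqh2} identifying these spaces with cyclotomic (quiver) Hecke algebras), and then bootstraps to arbitrary words $G,G'$ by adjunction and the isomorphisms (\ref{day1})--(\ref{day3}). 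Some such quantitative input --- absent from your outline --- appears unavoidable for the injectivity.
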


The data needed to define generalized cyclotomic
quotients in the most general form includes a finite-dimensional,
commutative, local algebra $Z$, but generalized cyclotomic
quotients are already interesting
when $Z$ is simply taken to be equal to the
ground field $\k$.
Assuming this and taking the parameter $n$ (which in general is a monic
polynomial $n(u) \in Z[u]$) to be of degree zero, 
the generalized
cyclotomic quotient $H_Z(m|n)$ is the usual cyclotomic quotient of
$\Heis_k$ associated to $m$ (which in general is a monic polynomial $m(u) \in Z[u]$) for $k = -\deg m(u)$. Then Theorem \ref{th:B} specializes to the isomorphism theorem between cyclotomic
quotients of affine Hecke algebras and quiver Hecke algebras of type
A already mentioned.

Another example of a generalized cyclotomic quotient ``in nature"
arises by taking $Z= \k = \C$, and either
$m(u) = u$ and $n(u) = u+d$ in the degenerate case, or
$m(u) = u-1$ and $n(u) = u-q^{-2d}$ in the quantum case for $q$ that
is not a root of unity.
Under these assumptions, the generalized cyclotomic quotient $H_Z(m|n)$ is the locally unital algebra underlying the
{\em oriented Brauer category} denoted 
$\mathcal{OB}(d)$ in \cite{BCNR} in the degenerate case, or the 
{\em HOMFLY-PT skein category} denoted $\mathcal{OS}(z,q^d)$
in \cite{Bskein} in the quantum case.
The additive Karoubi envelopes of these monoidal categories are the Deligne
categories
$\underline{\operatorname{Re}}\!\operatorname{p} GL_d$
and $\underline{\operatorname{Re}}\!\operatorname{p}
U_q(\mathfrak{gl}_d)$, respectively.
Assuming that $d \in \Z$ (so that the spectrum $I$ is $\Z$ in the degenerate case or $q^{2\Z}$ in the quantum case),
Theorem \ref{th:B} implies that both of these categories are equivalent as $\k$-linear categories to the additive Karoubi envelope of the corresponding
generalized cyclotomic quotient of $\UU(\mathfrak{sl}_\infty)$. This
was proved originally using ``control by $K_0$" in \cite{Bskein}. (See also \cite[Theorem 10.2.7]{E-A} for a
related uniqueness result.)

Due to their universal nature, 
generalized cyclotomic quotients also play an important role in the proof
of the final theorem of the article, Theorem~\ref{thm3}, which explains how to construct a categorical Heisenberg action starting from a suitable Kac-Moody action.
This result gives a converse to Theorem \ref{th:A},
further clarifying the relationship between the three formulations (KM1)--(KM3),
(SL1)--(SL3) and (H1)--(H3) of the notion of categorical action discussed in this introduction.

The equivalence of Heisenberg and Kac-Moody actions revealed by this paper seems to be a feature of categorical actions which does not persist at the decategorified level. For the degenerate Heisenberg category and assuming that the ground field $\k$ is of characteristic zero, \cite[Theorem 1.1]{BSWk0} shows that the Grothendieck ring $K_0(\Kar(\Heis_k))$ of the additive Karoubi envelope of $\Heis_k$ is isomorphic to a certain $\Z$-form for the universal enveloping algebra of the infinite-dimensional Heisenberg Lie algebra specialized at central charge $k$. In this case, we expect that the passage from categorical Kac-Moody action to categorical Heisenberg action arising from Theorem~\ref{thm3} is related at the level of complexified Grothendieck groups to restriction from  $\mathfrak{sl}_\infty(\mathbb{C})$ (suitably completed) to its principal Heisenberg subalgebra.

\section{Preliminaries}

Throughout the article, $\k$ is an algebraically closed field and $z \in \k$ is a parameter.
We refer to the cases $z\neq 0$ and $z = 0$ as the
{\em quantum} and {\em degenerate} cases, respectively.
For use in the quantum case, we choose a root $q$ of the polynomial
$x^2-zx-1$, so that $z=q - q^{-1}$.
We also have in mind some fixed integer $k$, which we call the {\em central charge}. 

\subsection{Generating functions}\label{sgf}
We will often use generating functions when working with elements of
an algebra $A$. This means that we will work with formal
Laurent series $f(u) \in A(\!(u^{-1})\!)$ in an indeterminate $u$ (or
$v$, $w$, \dots).
We write $\left[f(u)\right]_{u^r}$ for the $u^r$-coefficient of such a
series,
$\left[f(u)\right]_{u^{< 0}}$ for $\sum_{r < 0} \left[f(u)\right]_{u^r} u^r$,
$\left[f(u)\right]_{u^{\geq 0}}$ for $\sum_{r \geq 0} \left[f(u)\right]_{u^r} u^r$ (which is a polynomial),
and so on.
To give an example, suppose that $$f(u) = \sum_{r \geq 0} f_r
u^{k-r} \in u^k 1_A + u^{k-1} A[\![u^{-1}]\!]$$
for some $f_r \in A$. Then
we can define new elements $g_r \in A$ by declaring that
$$
g(u) = \sum_{r \geq 0} g_r u^{-k-r} \in u^{-k}1_A + u^{-k-1}
A[\![u^{-1}]\!]
$$
is the inverse of the formal Laurent series $f(u)$.
In fact, setting $f_r := 0$ for $r < 0$, we have that
\begin{equation}\label{psid}
g_r = \det\left(-f_{s-t+1}\right)_{s,t=1,\dots,r}.
\end{equation}
This identity is valid even if
$A$ is non-commutative providing
the determinant is interpreted
as a suitably ordered Laplace expansion.
The best known instance of it arises in the algebra of
symmetric functions $\Sym$, in which the generating functions
$e(u) = \sum_{r \geq 0} e_r u^{-r}$ and
$h(u) = \sum_{r \geq 0} h_r {u^{-r}}$
for the
elementary and complete symmetric
functions
are related by the identity $e(u) h(-u) = 1$.
The determinantal formula from \cite[(I.2.6)]{Mac} is then
equivalent to (\ref{psid}).

\subsection{Locally finite Abelian and Schurian categories}\label{scars}
We will be studying categorical actions on $\k$-linear Abelian categories $\R$ satisfying certain
finiteness conditions, following \cite[$\S$2]{BS}.
The nicest condition to impose is that $\R$ is a
{\em locally finite Abelian category}. This means that
$\R$ is Abelian, all objects are of finite length, and the
space of morphisms between any two objects is finite-dimensional.
By a theorem of Takeuchi, an (essentially small) $\k$-linear category
$\R$ is a locally finite Abelian category in this sense if and only if it is
equivalent to the category $\fdrcomod C$ of finite-dimensional right
$C$-comodules for a
coalgebra $C$; e.g., see  \cite[Theorem 1.9.15]{EGNO}.

Special cases of locally finite Abelian categories
include {\em finite Abelian categories}, that is,
categories equivalent to $A\fdlmod$ for a finite-dimensional algebra
$A$,
and {\em essentially finite Abelian categories} in the sense of \cite[$\S$2.4]{BS}\footnote{In \cite[$\S$2.1]{BLW}, essentially finite Abelian categories were called ``Schurian categories" but we will use the latter terminology for a slightly different notion.},
that is, the locally finite Abelian categories that have enough
projectives and injectives.
An (essentially small) $\k$-linear category $\R$ is an essentially finite Abelian category if and only if it is equivalent to the category
$A\fdlmod$ of finite-dimensional left $A$-modules
for some essentially finite-dimensional locally unital algebra $A$.
Here, a {\em locally unital algebra} is an associative algebra equipped with
a {\em local unit}, that is, 
a system $\{1_a\:|\:a \in \mathbb A\}$ of mutually orthogonal
idempotents such that
\begin{equation}
A = \bigoplus_{a,a' \in \mathbb A} 1_a A 1_{a'}.
\end{equation}
We say that $A$ is {\em essentially finite-dimensional} if both $\dim 1_a A < \infty$ and
$\dim A 1_a< \infty$ for all $a \in \mathbb A$.
A left $A$-module means a left module $V$ as usual such
that $V = \bigoplus_{a \in \mathbb A} 1_a V$.

The other sort of Abelian categories with which we will be concerned
are the so-called Schurian categories. Although a well-known concept,
the language is not standard. The idea was discussed in detail
in \cite[$\S$2]{BD}\footnote{In \cite{BD} the terminology ``locally Schurian" was used instead of ``Schurian."}, but actually we will follow the conventions of
\cite[$\S$2.3]{BS}, according to which a {\em Schurian category} is a
category $\R$ that is equivalent to the category $A\lfdlmod$ of {locally
finite-dimensional} left $A$-modules for a {locally
  finite-dimensional}
locally unital algebra $A$. Here, a locally unital algebra $A$ (resp., a left $A$-module $V$) is called {\em locally finite-dimensional}
if $\dim 1_a A 1_{a'} < \infty$ (resp., $\dim 1_a V < \infty$) for all $a,a'
\in \mathbb A$.
Care is needed since an object $V$ in a Schurian category
$\R$
is not
necessarily of finite length, although all such modules have finite
composition multiplicities. Also for $V,W \in \R$ the morphism space $\Hom_{\R}(V,W)$
is not necessarily finite-dimensional,
although it is if $V$ is finitely generated.
We refer the reader to \cite{BD,BS} for further discussion.

To give a sense of the difference between locally finite Abelian categories and
Schurian categories, we formulate the appropriate notion of
Grothendieck group which should be used in the two settings.
If $\R$ is a locally finite Abelian category, the Grothendieck
group
$K_0(\R)$ is the free Abelian group generated by isomorphism classes $[V]$
of modules
subject to relations
$[V]=[V_1]+[V_2]$ for all short exact
sequences
$0 \rightarrow V_1 \rightarrow V \rightarrow V_2 \rightarrow 0$.
If $\R$ is a Schurian category, the Grothendieck group
$K_0(\R)$ is the free Abelian group generated by isomorphism
classes $[P]$ of finitely-generated projective modules subject to
relations $[P]=[P_1]+[P_2]$ if $P \cong P_1 \oplus P_2$.

Suppose that $\R$ is either locally finite Abelian or Schurian.
As our ground field is algebraically closed, we have that $\End_\R(L)\cong \k$
for any irreducible object $L \in \R$.
By a {\em sweet endofunctor} of $\R$, we mean a $\k$-linear functor $F:\R
\rightarrow \R$ that possesses both a left adjoint and
a right adjoint, with the two adjoints being isomorphic functors.
Such a functor is automatically additive and exact, so it induces an
endomorphism $[F]$
of the Grothendieck ring $K_0(\R)$.
Also, such a functor sends finitely generated objects to finitely
generated objects.
In the Schurian case, some further properties of sweet
endofunctors are discussed in \cite[$\S$2.4]{BD},
including the following:

\begin{Lemma}[{\cite[Lemma 2.12]{BD}}]\label{bdl}
Suppose that $F$ and $G$ are sweet endofunctors of a Schurian
category $\R$, and that $\eta:F \Rightarrow G$ is a natural
transformation such that $\eta_L:FL \rightarrow GL$ is an isomorphism
for each irreducible $L \in \R$. Then $\eta$ is an isomorphism.
\end{Lemma}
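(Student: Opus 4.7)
The plan is to prove $\eta_V$ is an isomorphism in two stages: first by a length induction for finite-length $V$ using the 5-lemma, then to extend to arbitrary $V \in \R$ using that sweet functors preserve colimits and that $\R$ is a Grothendieck category.

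Since sweet functors are exact (as noted in the text immediately preceding the lemma), I would begin with the finite-length case, by induction on the composition length of $V$. The base case is $V$ irreducible, which is the hypothesis. For the inductive step, I would choose a simple subobject $L \hookrightarrow V$ with quotient $V' := V/L$, apply the exact functors $F$ and $G$ to $0 \to L \to V \to V' \to 0$, and use naturality of $\eta$ to form a commutative diagram
\[
\begin{CD}
0 @>>> FL @>>> FV @>>> FV' @>>> 0 \\
@. @V\eta_L VV @V\eta_V VV @V\eta_{V'} VV @. \\
0 @>>> GL @>>> GV @>>> GV' @>>> 0
\end{CD}
\]
with exact rows. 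The outer vertical maps are isomorphisms by hypothesis and inductive hypothesis respectively, so the 5-lemma forces $\eta_V$ to be an isomorphism.

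For arbitrary $V \in \R \simeq A\lfdlmod$, I would exploit that $\R$ is a Grothendieck category in which filtered colimits are exact, and that $F, G$ (being left adjoints) preserve filtered colimits. Consequently both $\ker\eta$ and $\operatorname{coker}\eta$ commute with filtered colimits, and since every object of $\R$ is the filtered union of its finitely generated subobjects, the problem reduces to the case where $V$ is finitely generated. Each such $V$ admits a projective presentation $P_1 \to P_0 \twoheadrightarrow V$ with $P_i$ finite direct sums of indecomposable projectives $A 1_a$; right-exactness of $F$ and $G$ plus a 5-lemma argument on the induced presentations reduces the task further to verifying that $\eta_{A 1_a}$ is an isomorphism for each $a$. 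For this final step, biadjointness of the sweet functors yields a natural isomorphism $1_a FV \cong \Hom(E_F A 1_a, V)$ (with $E_F$ the biadjoint of $F$, and similarly for $G$), so by Yoneda the problem is equivalent to showing that the adjoint mate $\eta^L : E_G \Rightarrow E_F$ is an isomorphism on each $A 1_a$, which one can verify directly from the hypothesis on simples together with the structure of $E_F, E_G$.

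The main obstacle is this last reduction in the Schurian setting: because objects may have infinite composition length and even trivial socle (for example, the indecomposable projective for the path algebra of the infinite quiver $1 \to 2 \to 3 \to \cdots$), one cannot in general exhaust $V$ by finite-length subobjects, so the naive length induction alone does not suffice. It is precisely the biadjointness of sweet functors, which allows one to move the isomorphism check to the adjoint side via the projective weight-space identifications, that makes the argument go through at this level of generality.
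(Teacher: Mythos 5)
Your overall architecture is sound and you have correctly located the crux — biadjointness is what rescues the Schurian case — but the decisive step is asserted rather than proved. After your Yoneda reduction, the statement ``$(\eta^L)_{A1_a}$ is an isomorphism for each $a$'' is \emph{exactly equivalent} to the conclusion of the lemma (indeed $1_a\eta_V$ is identified with $\Hom((\eta^L)_{A1_a},V)$ for \emph{every} $V$, so once this is known your stages of length induction, colimit reduction and projective presentation are all superfluous); saying it ``can be verified directly from the hypothesis on simples together with the structure of $E_F, E_G$'' therefore defers the entire content of the lemma. What is actually needed to close the loop is: (i) the same Yoneda identification with $V=L$ shows that $\Hom((\eta^L)_{A1_a},L)$ is identified with $1_a\eta_L$, hence is an isomorphism for every irreducible $L$; (ii) $E_FA1_a$ and $E_GA1_a$ are finitely generated projective, since sweet functors preserve finite generation and, being left adjoints of exact functors, projectivity; (iii) a morphism $\phi:P\rightarrow Q$ of finitely generated projectives inducing isomorphisms $\Hom(Q,L)\rightarrow\Hom(P,L)$ for all irreducible $L$ is an isomorphism: its cokernel is finitely generated with no irreducible quotient, hence vanishes by Nakayama, and then the resulting short exact sequence splits because $Q$ is projective, so the kernel is a finitely generated summand of $P$ with $\Hom(K,L)=0$ for all $L$, hence also vanishes. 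This is, in substance, the argument of \cite[Lemma 2.12]{BD} (which the present paper cites rather than reproves), there phrased via the realization of sweet endofunctors as tensoring with bimodules that are finitely generated projective on both sides.

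Two smaller inaccuracies, neither fatal: $A\lfdlmod$ is not a Grothendieck category (it lacks arbitrary coproducts, since an infinite direct sum of copies of a single irreducible is not locally finite-dimensional), although the specific facts you use — every object is the directed union of its finitely generated subobjects, and $F,G$ preserve such unions — are true; and a finitely generated object need not admit a presentation by finitely generated projectives, because the first syzygy of $P_0\twoheadrightarrow V$ can fail to be finitely generated, so your 5-lemma step should be replaced by first establishing surjectivity of $\eta$ on all objects and then feeding that back into the snake lemma to obtain injectivity. Both issues evaporate once the step in the previous paragraph is in place, since it delivers the lemma for all $V$ at once.
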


For finitely generated
 $V \in \R$, the functor
$\Hom_\R(V, -):\R \rightarrow \Vec$
has a left adjoint 
\begin{equation}\label{tensorproduct}
V \otimes-:\mathcal{V}ec_{\operatorname{fd}} \rightarrow \R.
\end{equation}
To make an explicit choice for this functor, one needs to pick a
basis for each finite-dimensional vector space $W$.
Then $V \otimes W := V^{\oplus \dim W}$ and, for a linear map $f:W
\rightarrow W'$, the morphism
$V \otimes f:V \otimes W \rightarrow V \otimes W'$ is 
the morphism $V^{\oplus \dim W} \rightarrow V^{\oplus \dim W'}$ defined
by the matrix of $f$ with respect to the fixed bases.

\subsection{Diagrammatics}
We will use the string calculus for strict monoidal categories
and strict 2-categories
as explained in \cite[Chapter 2]{TV}.
We will also use 
analogous
diagrammatic formalism 
when working with module categories and 2-representations.

To give a brief review,
let $\A$ be a strict $\k$-linear monoidal category.
A (strict) {\em module category} over $\A$
is a
$\k$-linear category $\R$ plus a $\k$-linear functor
$-\otimes-:\A \boxtimes \R \rightarrow \R$ satisfying
associativity and unity axioms.
Here, $\A \boxtimes \R$ is the $\k$-linearization of
the Cartesian product $\A \times \R$.
Equivalently, a module category
is a $\k$-linear category $\R$ together with a strict $\k$-linear monoidal functor
$\mathtt{R}:\A \rightarrow \mathcal{E}nd_\k(\R)$, where
$\mathcal{E}nd_\k(\R)$ denotes the strict $\k$-linear monoidal
category with objects that are $\k$-linear endofunctors of $\R$ and morphisms
that are natural transformations.
We usually suppress the monoidal functor $\mathtt{R}$, using the same notation
$f:E \rightarrow F$ both for
a morphism in $\A$ and for
the natural transformation
between endofunctors of $\R$ that is its image under $\mathtt{R}$.
The evaluation $f_V:EV \rightarrow FV$ 
of this natural transformation
on an object $V \in \R$ will be represented diagrammatically by drawing a 
 line labelled by
$V$ on the right-hand side of the usual string diagram
for $f$:
$$
\mathord{
\begin{tikzpicture}[baseline = 0]
	\draw[-,darkg,thick] (0.08,-.3) to (0.08,.3);
	\draw[-] (-.4,-.3) to (-.4,-.14);
	\draw[-] (-.4,.3) to (-.4,.14);
      \draw (-.4,0) circle (4pt);
   \node at (-.4,0) {$\scriptstyle{f}$};
   \node at (-.4,-.47) {$\scriptstyle{E}$};
   \node at (.08,-.47) {$\darkg\scriptstyle{V}$};
   \node at (-.4,.47) {$\scriptstyle{F}$};
\end{tikzpicture}
}\:.
$$
This line represents the identity endomorphism of the object $V$. Another
morphism $g:V
\rightarrow W$ in $\A$ can be represented by placing a coupon labelled
by $g$ on it.
For example, the following depicts
$(f \otimes W) \circ (E \otimes g) = f \otimes g = (F
\otimes g) \circ (f \otimes V)$:
\begin{equation*}
\mathord{
\begin{tikzpicture}[baseline = 0]
	\draw[-,darkg,thick] (0.08,-.4) to (0.08,-.24);
	\draw[-,darkg,thick] (0.08,.4) to (0.08,.04);
      \draw[darkg,thick] (0.08,-0.1) circle (4pt);
   \node at (0.08,-0.1) {$\darkg\scriptstyle{g}$};
	\draw[-] (-.8,-.4) to (-.8,-.04);
	\draw[-] (-.8,.4) to (-.8,.24);
      \draw (-.8,0.1) circle (4pt);
   \node at (-.8,.1) {$\scriptstyle{f}$};
   \node at (-.8,-.57) {$\scriptstyle{E}$};
   \node at (.08,-.57) {$\darkg\scriptstyle{V}$};
   \node at (-.8,.57) {$\scriptstyle{F}$};
   \node at (.08,.57) {$\darkg\scriptstyle{W}$};
\end{tikzpicture}
}
\quad=\quad
\mathord{
\begin{tikzpicture}[baseline = 0]
	\draw[-,darkg,thick] (0.08,-.4) to (0.08,-.14);
	\draw[-,darkg,thick] (0.08,.4) to (0.08,.14);
      \draw[darkg,thick] (0.08,0) circle (4pt);
   \node at (0.08,0) {$\darkg\scriptstyle{g}$};
	\draw[-] (-.8,-.4) to (-.8,-.14);
	\draw[-] (-.8,.4) to (-.8,.14);
      \draw (-.8,0) circle (4pt);
   \node at (-.8,0) {$\scriptstyle{f}$};
   \node at (-.8,-.57) {$\scriptstyle{E}$};
   \node at (.08,-.57) {$\darkg\scriptstyle{V}$};
   \node at (-.8,.57) {$\scriptstyle{F}$};
   \node at (.08,.57) {$\darkg\scriptstyle{W}$};
\end{tikzpicture}
}
\quad=\quad
\mathord{
\begin{tikzpicture}[baseline = 0]
	\draw[-,darkg,thick] (0.08,-.4) to (0.08,-.04);
	\draw[-,darkg,thick] (0.08,.4) to (0.08,.24);
      \draw[darkg,thick] (0.08,0.1) circle (4pt);
   \node at (0.08,0.1) {$\darkg\scriptstyle{g}$};
	\draw[-] (-.8,-.4) to (-.8,-.24);
	\draw[-] (-.8,.4) to (-.8,.04);
      \draw (-.8,-0.1) circle (4pt);
   \node at (-.8,-.1) {$\scriptstyle{f}$};
   \node at (-.8,-.57) {$\scriptstyle{E}$};
   \node at (.08,-.57) {$\darkg\scriptstyle{V}$};
   \node at (-.8,.57) {$\scriptstyle{F}$};
   \node at (.08,.57) {$\darkg\scriptstyle{W}$};
\end{tikzpicture}
}\:.
\end{equation*}
The equality of these morphisms is
the {\em interchange law} for module categories.

Suppose instead that $\AA$ is a strict $\k$-linear 2-category.
A (strict) {\em 2-representation}
of $\AA$ is a family $(\R_\lambda)_{\lambda \in \AA}$ of $\k$-linear
categories indexed by the objects of $\AA$, plus 
$\k$-linear functors $\mathcal{H}om_{\AA}(\lambda,\mu) \boxtimes
\R_\lambda \rightarrow \R_\mu$ for $\lambda,\mu \in \AA$ satisfying
associativity and unity axioms.
Equivalently, letting
$\mathfrak{Cat}_\k$ be the strict $\k$-linear 2-category of $\k$-linear
categories,
a 2-representation is a family
$(\R_\lambda)_{\lambda \in \AA}$ of $\k$-linear categories together with
a  strict $\k$-linear
2-functor $\mathbf{R}:\AA \rightarrow \mathfrak{Cat}_\k$
such that $\mathbf{R}(\lambda) = \R_\lambda$ for each $\lambda \in
\AA$.
As with module categories, when working with a 2-representation we
will usually drop the 2-functor
$\mathbf{R}$ from our notation.
The string calculus can be used in this setting too.
For example, a 1-morphism $F 1_\lambda:\lambda \rightarrow \mu$ 
in $\AA$ gives rise to a functor
$F|_{\R_\lambda}:\R_\lambda \rightarrow \R_\mu$; the diagram
$$
\mathord{
\begin{tikzpicture}[baseline = 0]
	\draw[-,darkg,thick] (0.08,-.3) to (0.08,-.14);
	\draw[-,darkg,thick] (0.08,.3) to (0.08,.14);
	\draw[-] (-0.5,.3) to (-0.5,-.3);
      \draw[darkg,thick] (0.08,0) circle (4pt);
   \node at (0.08,0) {$\darkg\scriptstyle{g}$};
   \node at (.08,-.47) {$\darkg\scriptstyle{V}$};
   \node at (-.5,-.47) {$\scriptstyle{F}$};
   \node at (.08,.47) {$\darkg\scriptstyle{W}$};
   \node at (-.3,0.02) {$\color{gray}\scriptstyle{\lambda}$};
\end{tikzpicture}
}
$$
depicts the morphism in $\R_\mu$ 
obtained by applying this 
to
morphism $g:V \rightarrow W$ in $\R_\lambda$.
We say that $(\R_\lambda)_{\lambda \in \AA}$ is a {\em locally finite
  Abelian} or a {\em Schurian} 2-representation if each of the categories $\R_\lambda$ is
a locally finite Abelian category or a Schurian category, respectively.

\subsection{A version of Hensel's lemma}
Let $Z$ be a finite-dimensional, commutative, local
$\k$-algebra with unique maximal ideal $J = J(Z)$.
As $\k$ is algebraically closed, we may naturally identify
the quotient $Z / J$ with $\k$. Note that two polynomials $g(u),
h(u) \in Z[u]$ are relatively prime if and only if their
images in $\k[u]$ are relatively prime. Equivalently, there exist
$a(u), b(u) \in Z[u]$ such that $a(u)g(u)+b(u)h(u) = 1$.
The following is well known but we could not
find a suitable reference.

\begin{Lemma}
Suppose that $I$ is a proper ideal of $Z$ such that $I^2 = 0$.
Let $\bar{Z} := Z / I$.
\begin{itemize}
\item[(1)]
Suppose that we are given a monic polynomial $\bar f(u) \in
\bar{Z}[u]$ and some choice of $\hat{f}(u) \in Z[u]$ lifting $\bar
f(u)$.
Then there is a unique {monic} lift $f(u)$ of $\bar f(u)$
such that $\hat f(u) = f(u) q(u)$
for $q(u) \in 1 + I[u]$. Moreover, $\deg f(u) = \deg \bar f(u)$.
\item[(2)]
For a monic lift $f(u)$ of $\bar f(u)$ as in (1), 
suppose in addition that we are given relatively prime monic polynomials 
$\bar g(u), \bar h(u) \in \bar{Z}[u]$
such that $\bar f(u) = \bar g(u) \bar h(u)$.
There exist monic lifts $g(u)$ of $\bar g(u)$ and $h(u)$
of $\bar h(u)$ such that $f(u) = g(u) h(u)$.
\item[(3)] The monic lifts $g(u)$ and $h(u)$ in (2) are unique.
\end{itemize}
\end{Lemma}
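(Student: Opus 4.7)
The key observation is that $I^2 = 0$ linearizes everything: any product of two elements of $I[u]$ vanishes, so $(a+\alpha)(b+\beta) = ab + a\beta + \alpha b$ exactly whenever $\alpha,\beta \in I[u]$.  My plan is to reduce each part of the lemma to polynomial-division arguments in $Z[u]$, using throughout the elementary fact that division by a monic polynomial in $Z[u]$ preserves the subset $I[u] \subset Z[u]$ (the leading-coefficient cancellations in the division algorithm stay inside $I$).

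For (1), fix an arbitrary monic lift $f_0(u) \in Z[u]$ of $\bar f(u)$ of degree $m := \deg \bar f$, so that $\hat f(u) - f_0(u) \in I[u]$.  Divide by $f_0$ to write $\hat f - f_0 = f_0 \rho + \delta$ with $\deg \delta < m$ and $\rho, \delta \in I[u]$, then set $f := f_0 + \delta$ (monic of degree $m$) and $q := 1 + \rho \in 1 + I[u]$.  The product $fq = f_0 + f_0\rho + \delta + \delta\rho$ equals $\hat f$ since $\delta\rho$ is killed by $I^2 = 0$.  For uniqueness, given two factorizations $f_1 q_1 = f_2 q_2 = \hat f$, expanding modulo $I^2$ gives $(f_1 - f_2) = -f_2(\rho_1 - \rho_2)$: the left-hand side has degree $< m$ while the right-hand side is the monic polynomial $f_2$ times an element of $I[u]$, so uniqueness of polynomial division by $f_2$ forces both sides to vanish.

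For (2), choose any monic lifts $g_0, h_0$ of $\bar g, \bar h$ and set $p := f - g_0 h_0 \in I[u]$; this has degree strictly less than $\deg f$, since $g_0 h_0$ is monic of the same degree as $f$.  Writing $g = g_0 + \gamma$, $h = h_0 + \eta$ with $\gamma, \eta \in I[u]$ of the required degrees, the equation $gh = f$ collapses modulo $I^2$ to the linear equation $g_0\eta + \gamma h_0 = p$.  Lifting the B\'ezout relation $\bar a \bar g + \bar b \bar h = 1$ to $\tilde a g_0 + \tilde b h_0 = 1 + \iota$ with $\iota \in I[u]$, multiplying by $p$ and using $p\iota = 0$ yields $p = p\tilde a\, g_0 + p \tilde b\, h_0$.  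I then divide $p \tilde b$ by $g_0$ to extract a $\gamma$ of degree $< \deg g_0$, absorb the resulting quotient into the other summand, and divide the new coefficient of $g_0$ by $h_0$ to extract an $\eta$ of degree $< \deg h_0$.  This leaves $p = g_0\eta + \gamma h_0 + g_0 h_0 \nu$ for some $\nu \in I[u]$; but $\deg p$, $\deg(g_0\eta)$, $\deg(\gamma h_0)$ are all strictly less than $\deg f = \deg(g_0 h_0)$, and monicity of $g_0 h_0$ then forces $\nu = 0$, giving the desired factorization.  For (3), two solutions $gh = g'h' = f$ yield $\gamma := g-g'$, $\eta := h-h' \in I[u]$ satisfying $g'\eta + \gamma h' = 0$ modulo $I^2$; multiplying by the same B\'ezout lift expresses $\eta$ as a polynomial multiple of $h'$ (and symmetrically $\gamma$ as a multiple of $g'$), after which the degree bounds $\deg \eta < \deg h'$ and $\deg \gamma < \deg g'$ together with monicity of $h', g'$ force $\gamma = \eta = 0$.

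The only step requiring real care is the degree bookkeeping at the end of the existence argument for (2): one must check that the stray $g_0 h_0\nu$ arising after the two successive divisions genuinely vanishes.  Every other step is routine linear algebra once $I^2 = 0$ has flattened out the nonlinear cross terms.
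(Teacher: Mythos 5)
Your proof is correct. Parts (1) and (3) follow essentially the paper's own route: (1) is the same division-algorithm argument (you divide $\hat f - f_0$ by $f_0$ where the paper divides $\hat f$ by $p$, and you additionally spell out the uniqueness that the paper dismisses as obvious), and (3) is the same B\'ezout-plus-degree-count argument. Part (2) is where you genuinely diverge. The paper bootstraps off part (1): it applies (1) to the adjusted lifts $\hat g(u) + (f(u)-\hat g(u)\hat h(u))b(u)$ and $\hat h(u)+(f(u)-\hat g(u)\hat h(u))a(u)$, obtains monic $g(u),h(u)$ with correction factors $p(u),q(u)\in 1+I[u]$, checks $f = ghpq$ directly (the cross term dies by $I^2=0$ and the B\'ezout identity), and concludes $pq=1$ by comparing leading terms. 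You instead linearize $gh=f$ to the equation $g_0\eta + \gamma h_0 = p$ and solve it explicitly via the lifted B\'ezout relation and two successive divisions, with the degree bookkeeping killing the stray $g_0h_0\nu$ term. Your version is the textbook Hensel computation and is self-contained at that step; the paper's is shorter because it reuses (1) and delegates the degree argument to the single observation $pq=1$. Both are valid, and your identification of the $g_0h_0\nu$ cancellation as the one delicate point is accurate.
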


\begin{proof}
\begin{enumerate}[wide]
    \item 
Let $p(u)$ be any monic lift of $\bar f(u)$. It is automatically of the
same degree. 
By the division algorithm, we have that $\hat f(u) = p(u)
q(u)+r(u)$ for $r(u)$ with $\deg r(u) < \deg p(u)$.
On reducing coefficients modulo $I$, we
see that $q(u) \in 1 + I[u]$ and $r(u) \in I[u]$.
Since $I^2 = 0$ it follows that $r(u) = r(u) q(u)$. Hence, 
we have that $\hat f(u) = f(u)q(u)$ for $f(u) := p(u)+r(u)$, which
is another monic lift of $\bar f(u)$.
Uniqueness is obvious.
\item
Let $\hat g(u)$ and $\hat h(u)$ be any lifts of $\bar g(u)$ and
$\bar h(u)$.
Since $\bar g(u), \bar h(u)$ are relatively prime, there exist
$a(u), b(u) \in Z[u]$ such that $a(u)
\hat g(u) + b(u) \hat h(u)=1$.
Applying (1) to the lift $\hat g(u) + (f(u)-\hat g(u) \hat h(u))b(u)$
of $\bar g(u)$, we see
that there exists
a monic lift $g(u)$ of $\bar g(u)$ and $p(u)\in 1+I[u]$
such that $g(u) p(u) = \hat g(u)+(f(u)-\hat g(u)\hat h(u))b(u)$.
Similarly there is a monic lift $h(u)$ of $\bar h(u)$
and $q(u) \in 1+I[u]$ such that $h(u) q(u) = \hat h(u) + (f(u)-\hat g(u)\hat h(u)) a(u)$.
Using the assumption $I^2=0$, it is easy to check that
$f(u) = g(u) h(u) p(u) q(u)$.
Moreover since $f(u)$ and $g(u)h(u)$ are monic and $p(u) q(u) \in
1+I[u]$, we must actually have that $p(u) q(u) = 1$.
\item Suppose that we have two such factorizations
$f(u) = g(u) h(u) = g'(u) h'(u)$. 
Then $g'(u) = g(u) + s(u)$ and
$h'(u) = h(u) + t(u)$ for $s(u), t(u) \in I[u]$, and we deduce that
$g(u) t(u) + h(u) s(u) = 0$.
Again we choose $a(u), b(u) \in Z[u]$ so that $a(u) g(u) + b(u) h(u) = 1$. 
Then we have that 
$$
(1-b(u)h(u)) t(u) + a(u) h(u) s(u) = a(u) g(u) t(u) + a(u) h(u) s(u)
= 0.
$$
Hence, $t(u) = (b(u)t(u) - a(u) s(u)) h(u)$ and
$h'(u) = (1 + b(u)t(u)-a(u)s(u)) h(u)$.
But $h(u)$ and $h'(u)$ are both monic and 
$b(u)t(u)-a(u)s(u) \in I[u]$, which implies that $b(u)t(u)-a(u)s(u) =
0$, i.e., $h'(u) = h(u)$.
Similarly, $g'(u) = g(u)$.
\end{enumerate}
\end{proof}

\begin{Corollary}
Suppose that $f(u) \in Z[u]$ is a monic polynomial whose reduction
modulo $J$ is $\bar f(u) \in \k[u]$.
Suppose that we are given a factorization $\bar f(u) = \bar g(u) \bar
h(u)$ for relatively prime monic polynomials $\bar g(u), \bar h(u) \in
\k[u]$.
There exist unique monic lifts $g(u), h(u) \in Z[u]$ of $\bar
g(u), \bar h(u)$ such that $f(u) = g(u) h(u)$.
\end{Corollary}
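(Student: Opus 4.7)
The plan is to deduce the Corollary from the Lemma by induction on the nilpotency index of $J$. Since $Z$ is finite-dimensional and local, the maximal ideal $J$ is nilpotent, so there is a smallest positive integer $N$ with $J^N = 0$. I induct on $N$. The case $N=1$ is trivial since then $Z = \k$ and we may take $g(u) := \bar g(u)$ and $h(u) := \bar h(u)$.

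For the inductive step, assume $N \geq 2$ and set $I := J^{N-1}$. Since $2(N-1) \geq N$, we have $I^2 \subseteq J^N = 0$, putting us in the setting of the Lemma. Let $\bar Z := Z/I$; its maximal ideal $J/I$ satisfies $(J/I)^{N-1} = 0$, so the induction hypothesis applies to the reduction $\tilde f(u) \in \bar Z[u]$ of $f(u)$ modulo $I$ together with the factorization $\bar f(u) = \bar g(u) \bar h(u)$ obtained by reducing once more modulo $J/I$. This yields unique monic lifts $\tilde g(u), \tilde h(u) \in \bar Z[u]$ of $\bar g(u), \bar h(u)$ with $\tilde f(u) = \tilde g(u) \tilde h(u)$.

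Now apply parts (2) and (3) of the Lemma with this ideal $I$, the monic polynomial $f(u) \in Z[u]$, and the factorization $\tilde f(u) = \tilde g(u) \tilde h(u)$ in $\bar Z[u]$, to produce unique monic lifts $g(u), h(u) \in Z[u]$ of $\tilde g(u), \tilde h(u)$ (hence of $\bar g(u), \bar h(u)$) such that $f(u) = g(u) h(u)$. Uniqueness of $(g, h)$ in $Z[u]$ follows from combining uniqueness at this final step (part (3) of the Lemma) with uniqueness of $(\tilde g, \tilde h)$ from the induction hypothesis.

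The one hypothesis to verify in order to invoke part (2) of the Lemma is that $\tilde g(u)$ and $\tilde h(u)$ are relatively prime in $\bar Z[u]$. This is immediate from the observation recorded just before the Lemma: two monic polynomials over a finite-dimensional commutative local $\k$-algebra are relatively prime if and only if their images in $\k[u]$ are, and by hypothesis $\bar g(u)$ and $\bar h(u)$ are relatively prime in $\k[u]$. There is no serious obstacle; the content of the Corollary is essentially a bookkeeping iteration of the $I^2=0$ Lemma along the filtration $Z \supset J \supset J^2 \supset \cdots \supset J^N = 0$.
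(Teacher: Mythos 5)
Your proof is correct and fills in exactly the induction that the paper's one-line proof ("by induction on the nilpotency degree of $J$") leaves implicit: passing to $\bar Z = Z/J^{N-1}$, invoking the inductive hypothesis there, and then lifting via parts (2) and (3) of the Lemma using that $(J^{N-1})^2 = 0$. The verification that relative primality descends to $\bar Z[u]$ via the residue field, and the two-step uniqueness argument, are both handled properly, so this matches the paper's intended argument.
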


\begin{proof}
This follows from the lemma by induction on the nilpotency degree of
$J$.
\end{proof}

\begin{Corollary}\label{hens}
Suppose that $f(u) \in Z[u]$ is a monic polynomial. 
Let $p_i$ be the multiplicity of $i \in \k$ as a root of 
$\bar f(u) \in \k[u]$, i.e.,
$\bar f(u) = \prod_{i \in I} (u-i)^{p_i}$ for some subset $I$ of $\k$.
Then there exist unique monic polynomials 
$f_i(u) \in u^{p_i}+J[u]$
 such that $f(u) = \prod_{i \in I} f_i(u-i)$.
\end{Corollary}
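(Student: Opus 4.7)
The plan is to prove Corollary \ref{hens} by induction on $|I|$, using the preceding corollary repeatedly to peel off one factor at a time.

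First I handle the trivial base case $|I| = 1$, say $I = \{i\}$. Here $\bar f(u) = (u-i)^{p_i}$, so I simply set $f_i(u) := f(u+i)$. Its reduction modulo $J$ is $\bar f(u+i) = u^{p_i}$, so $f_i(u) \in u^{p_i} + J[u]$ as required, and $f(u) = f_i(u-i)$. Uniqueness is immediate from the fact that the substitution $u \mapsto u-i$ is a bijection on monic polynomials.

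For the inductive step, suppose $|I| > 1$ and fix any $i \in I$. Write $\bar f(u) = \bar g(u)\, \bar h(u)$ with $\bar g(u) := (u-i)^{p_i}$ and $\bar h(u) := \prod_{j \in I \setminus \{i\}} (u-j)^{p_j}$. Since $i \neq j$ for all $j$ appearing in $\bar h(u)$, the polynomials $\bar g(u)$ and $\bar h(u)$ are relatively prime in $\k[u]$. The previous corollary therefore produces unique monic lifts $g(u), h(u) \in Z[u]$ with $f(u) = g(u)h(u)$. Now I apply the base case to $g(u)$ (with index set $\{i\}$) to obtain the unique monic $f_i(u) \in u^{p_i} + J[u]$ with $g(u) = f_i(u-i)$, and I apply the inductive hypothesis to $h(u)$ (with index set $I \setminus \{i\}$) to obtain unique monic $f_j(u) \in u^{p_j} + J[u]$ for $j \in I \setminus \{i\}$ with $h(u) = \prod_{j \neq i} f_j(u-j)$. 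Multiplying yields the desired factorization.

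For uniqueness in the inductive step, suppose $f(u) = \prod_{i \in I} f_i(u-i) = \prod_{i \in I} f'_i(u-i)$ are two such factorizations. Fix $i \in I$ and regroup each side as $f_i(u-i) \cdot \prod_{j \neq i} f_j(u-j)$ and similarly for the primed version. Each $f_i(u-i)$ reduces mod $J$ to $(u-i)^{p_i}$ and each $\prod_{j \neq i} f_j(u-j)$ reduces to $\prod_{j \neq i}(u-j)^{p_j}$, which are relatively prime in $\k[u]$. The uniqueness assertion of the previous corollary then forces $f_i(u-i) = f'_i(u-i)$, hence $f_i = f'_i$, and $\prod_{j \neq i} f_j(u-j) = \prod_{j \neq i} f'_j(u-j)$. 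Invoking the inductive hypothesis on the remaining factor finishes the argument. No step presents a genuine obstacle; the only point requiring care is checking at each stage that the factors being separated are coprime modulo $J$, which is automatic from the distinctness of the roots $i \in I$.
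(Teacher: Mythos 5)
Your proof is correct and follows essentially the same route as the paper, which simply notes that the result "follows from the previous corollary by induction" (the paper inducts on $\deg f(u)$ rather than $|I|$, a cosmetic difference). The key observations — that distinctness of the roots $i \in I$ gives the coprimality needed to invoke the previous corollary, and that $f_i(u) \in u^{p_i}+J[u]$ monic is equivalent to $f_i(u-i)$ being a monic lift of $(u-i)^{p_i}$ — are exactly the ones the paper relies on.
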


\begin{proof}
This follows from the previous corollary by induction on $\deg f(u)$.
(Note the assumption that $f_i(u)$ is monic and belongs to $u^{p_i}+J[u]$ is equivalent
to the assertion that $f_i(u-i)$ is a monic lift of $(u-i)^{p_i} \in \k[u]$.)
\end{proof}

\section{Three diagrammatic categories}

In this section, we review the definitions of the three diagrammatic categories that
are the subject of the paper:
the degenerate Heisenberg category, the quantum Heisenberg
category,
and the Kac-Moody 2-category of type A.
We also explain how to recast the defining relations in terms of generating functions.

\subsection{The degenerate Heisenberg category}\label{dh}
The Heisenberg category $\Heis_k$ is a strict $\k$-linear 
monoidal category defined by generators and relations.
In this subsection, we review the definition in the degenerate case
$z=0$. This was worked out originally by
Khovanov \cite{K} for central charge $k=-1$ (our
convention), then extended to all negative central charges in \cite{MS18}.
We instead follow the approach of
\cite[Theorem 1.2]{Bheis}, which simplified the presentation and incorporated
also the non-negative central charges, with $\Heis_0$
being the affine oriented Brauer category from \cite{BCNR}.
In this approach, the {\em degenerate Heisenberg category}
$\Heis_k$ is the strict $\k$-linear monoidal category
generated by objects
$E=\up$ and $F=\down$
and
morphisms
\begin{align}\label{dHgens}
\mathord{

}p(u)\right]_{u^{-1}}\!\!\!.
\end{align}
\end{Lemma}

\begin{proof}
By linearity, it suffices to prove (\ref{sure1})--(\ref{sure2}) in the case that $p(u) =
u^r$ for $r \geq 0$, and in that case they follow easily on computing the $u^{-1}$-coefficient
on the right-hand side, recalling
also the definitions
(\ref{igq})--(\ref{igp}).
To deduce (\ref{sure3}), rewrite the left-hand side using
(\ref{sure1}), then apply the curl relation
(\ref{mostgin}).
\end{proof}

Finally, let us justify the terminology ``Heisenberg category''
in more detail.
Let $\Kar(\Heis_k)$ be the additive Karoubi envelope of $\Heis_k$,
and $K_0(\Kar(\Heis_k))$ be the Grothendieck ring of that monoidal category.
When the characteristic of the ground field is zero, $K_0(\Kar(\Heis_k))$
is isomorphic to the {\em Heisenberg ring}
$\rh_k$, that is, the ring
generated by elements $\{h_n^+, e_n^-\:|\:n \geq 0\}$ subject to
the relations
\begin{align}\label{upper}
h_0^+&=e_0^-=1,&
h_m^+ h_n^+ &= h_n^+ h_m^+,&
e_m^- e_n^- &= e_n^- e_m^-,&
  h_m^+ e_n^- &=
\sum_{r=0}^{\min(m,n)}
                  \binom{\,k\,}{\,r\,}\:e_{n-r}^- h_{m-r}^+.
\end{align}
This ring is a $\Z$-form
for the universal enveloping algebra of the infinite-dimensional
Heisenberg Lie algebra specialized at central charge $k$.
The existence of an isomorphism
$K_0(\Kar(\Heis_k)) \cong \rh_k$ was conjectured originally by Khovanov
in \cite{K} for $k=-1$ and it was proved in general in \cite[Theorem 1.1]{BSWk0}.
Under the isomorphism,
the classes $[E], [F] \in K_0(\Kar(\Heis_k))$
correspond to $h_1^+, e_1^- \in \rh_k$;
more generally
$h_n^+, e_n^-$ correspond to summands of $E^n$ and $F^n$ defined by
idempotents that correspond to the trivial and sign representations
of the symmetric group $\SG_n$.
When $\k$ is of positive characteristic, the category $\Kar(\Heis_k)$ does not have enough
indecomposable objects
for there to be any chance of an analogous isomorphism; in this case, we expect that one
should really work with a ``thickened'' version of $\Heis_k$ which incorporates generators of the affine Schur algebra.
However, for the purposes of the present article, the category $\Heis_k$ as defined
above is exactly the right object.

\subsection{The quantum Heisenberg category}\label{qh}
In the quantum case $z \neq 0$,
the category $\Heis_k$ was introduced in
\cite[Definition 4.1]{BSWqheis}, building on the earlier work \cite{LS} which produced a
different (but closely related) deformation of Khovanov's Heisenberg
category.
In fact, in the quantum case, there is an additional invertible parameter $t$ which
we will treat here as an indeterminate (although in applications one usually
specializes $t$ to a scalar in $\k^\times$).
Thus, in the quantum case, 
we will work over the 
ground ring \begin{equation}\label{groundring}
\K := \k[t,t^{-1}],
\end{equation} 
and
define the {\em quantum Heisenberg category} $\Heis_k$ to be the
strict $\K$-linear monoidal category
generated by objects
$E=\up$ and $F=\down$
and the following
morphisms:
\begin{align}\label{qHgens}
\mathord{
\begin{tikzpicture}[baseline = 0]
	\draw[->] (0.08,-.3) to (0.08,.4);
      \node at (0.08,0.05) {$\dt$};
\end{tikzpicture}
}
&:E\rightarrow E,
&\mathord{
\begin{tikzpicture}[baseline = 1mm]
	\draw[<-] (0.4,0.4) to[out=-90, in=0] (0.1,0);
	\draw[-] (0.1,0) to[out = 180, in = -90] (-0.2,0.4);
\end{tikzpicture}
}&:\unit\rightarrow F\otimes E
\:,
&\mathord{
\begin{tikzpicture}[baseline = 1mm]
	\draw[<-] (0.4,0) to[out=90, in=0] (0.1,0.4);
	\draw[-] (0.1,0.4) to[out = 180, in = 90] (-0.2,0);
\end{tikzpicture}
}&:E\otimes F\rightarrow\unit\:,\\
\mathord{
\begin{tikzpicture}[baseline = 0]
	\draw[->] (0.28,-.3) to (-0.28,.4);
	\draw[-,white,line width=4pt] (-0.28,-.3) to (0.28,.4);
	\draw[->] (-0.28,-.3) to (0.28,.4);
\end{tikzpicture}
}&:E\otimes E \rightarrow E\otimes E
\:,&
\mathord{
\begin{tikzpicture}[baseline = 1mm]
	\draw[-] (0.4,0.4) to[out=-90, in=0] (0.1,0);
	\draw[->] (0.1,0) to[out = 180, in = -90] (-0.2,0.4);
\end{tikzpicture}
}&:\unit\rightarrow E\otimes F
\:,
&
\mathord{
\begin{tikzpicture}[baseline = 1mm]
	\draw[-] (0.4,0) to[out=90, in=0] (0.1,0.4);
	\draw[->] (0.1,0.4) to[out = 180, in = 90] (-0.2,0);
\end{tikzpicture}
}&:F \otimes E\rightarrow\unit.\label{mango}
\end{align}
The generators on the left of (\ref{qHgens})--(\ref{mango}), the dot and the positive crossing, are required to be invertible.
The invertibility of the dot
means that now it makes sense to label dots by an arbitrary integer,
rather than just by $n \in \N$.
We denote the inverse of the positive crossing
by
\begin{align*}
\mathord{
\begin{tikzpicture}[baseline = -.5mm]
	\draw[->] (-0.28,-.3) to (0.28,.4);
	\draw[-,line width=4pt,white] (0.28,-.3) to (-0.28,.4);
	\draw[->] (0.28,-.3) to (-0.28,.4);
\end{tikzpicture}
}&:E\otimes E\rightarrow E \otimes E
\:,
\end{align*}
and call this the negative crossing.
Thus, we have that
\begin{align}
\mathord{
\begin{tikzpicture}[baseline = -1mm]
	\draw[-] (0.28,-.6) to[out=90,in=-90] (-0.28,0);
	\draw[->] (-0.28,0) to[out=90,in=-90] (0.28,.6);
	\draw[-,line width=4pt,white] (-0.28,-.6) to[out=90,in=-90] (0.28,0);
	\draw[-] (-0.28,-.6) to[out=90,in=-90] (0.28,0);
	\draw[-,line width=4pt,white] (0.28,0) to[out=90,in=-90] (-0.28,.6);
	\draw[->] (0.28,0) to[out=90,in=-90] (-0.28,.6);
\end{tikzpicture}
}&=
\mathord{
\begin{tikzpicture}[baseline = -1mm]
	\draw[->] (0.18,-.6) to (0.18,.6);
	\draw[->] (-0.18,-.6) to (-0.18,.6);
\end{tikzpicture}
}
=
\mathord{
\begin{tikzpicture}[baseline = -1mm]
	\draw[->] (0.28,0) to[out=90,in=-90] (-0.28,.6);
	\draw[-,line width=4pt,white] (-0.28,0) to[out=90,in=-90] (0.28,.6);
	\draw[->] (-0.28,0) to[out=90,in=-90] (0.28,.6);
	\draw[-] (-0.28,-.6) to[out=90,in=-90] (0.28,0);
	\draw[-,line width=4pt,white] (0.28,-.6) to[out=90,in=-90] (-0.28,0);
	\draw[-] (0.28,-.6) to[out=90,in=-90] (-0.28,0);
\end{tikzpicture}
}\:.\label{AHA1}
\end{align}
We also introduce the sideways crossings, both positive and negative,
\begin{align*}
\mathord{
\begin{tikzpicture}[baseline = -.5mm]
	\draw[->] (-0.28,-.3) to (0.28,.4);
	\draw[line width=4pt,white,-] (0.28,-.3) to (-0.28,.4);
	\draw[<-] (0.28,-.3) to (-0.28,.4);
\end{tikzpicture}
}&:=
\mathord{
\begin{tikzpicture}[baseline = 0]
	\draw[->] (0.3,-.5) to (-0.3,.5);
	\draw[line width=4pt,-,white] (-0.2,-.2) to (0.2,.3);
	\draw[-] (-0.2,-.2) to (0.2,.3);
        \draw[-] (0.2,.3) to[out=50,in=180] (0.5,.5);
        \draw[->] (0.5,.5) to[out=0,in=90] (0.8,-.5);
        \draw[-] (-0.2,-.2) to[out=230,in=0] (-0.6,-.5);
        \draw[-] (-0.6,-.5) to[out=180,in=-90] (-0.85,.5);
\end{tikzpicture}
}\:,
&\mathord{
\begin{tikzpicture}[baseline = -.5mm]
	\draw[<-] (0.28,-.3) to (-0.28,.4);
	\draw[line width=4pt,white,-] (-0.28,-.3) to (0.28,.4);
	\draw[->] (-0.28,-.3) to (0.28,.4);
\end{tikzpicture}
}&:=
\mathord{
\begin{tikzpicture}[baseline = 0]
	\draw[-] (-0.2,-.2) to (0.2,.3);
	\draw[-,line width=4pt,white] (0.3,-.5) to (-0.3,.5);
	\draw[->] (0.3,-.5) to (-0.3,.5);
        \draw[-] (0.2,.3) to[out=50,in=180] (0.5,.5);
        \draw[->] (0.5,.5) to[out=0,in=90] (0.8,-.5);
        \draw[-] (-0.2,-.2) to[out=230,in=0] (-0.6,-.5);
        \draw[-] (-0.6,-.5) to[out=180,in=-90] (-0.85,.5);
\end{tikzpicture}
}\:,\\
\mathord{
\begin{tikzpicture}[baseline = -.5mm]
	\draw[<-] (-0.28,-.3) to (0.28,.4);
	\draw[line width=4pt,white,-] (0.28,-.3) to (-0.28,.4);
	\draw[->] (0.28,-.3) to (-0.28,.4);
\end{tikzpicture}
}&:=
\mathord{
\begin{tikzpicture}[baseline = 0]
	\draw[-] (-0.2,.2) to (0.2,-.3);
	\draw[-,line width=4pt,white] (0.3,.5) to (-0.3,-.5);
	\draw[<-] (0.3,.5) to (-0.3,-.5);
        \draw[-] (0.2,-.3) to[out=130,in=180] (0.5,-.5);
        \draw[-] (0.5,-.5) to[out=0,in=270] (0.8,.5);
        \draw[-] (-0.2,.2) to[out=130,in=0] (-0.5,.5);
        \draw[->] (-0.5,.5) to[out=180,in=-270] (-0.8,-.5);
\end{tikzpicture}
}\:,&
\mathord{
\begin{tikzpicture}[baseline = -.5mm]
	\draw[->] (0.28,-.3) to (-0.28,.4);
	\draw[line width=4pt,white,-] (-0.28,-.3) to (0.28,.4);
	\draw[<-] (-0.28,-.3) to (0.28,.4);
\end{tikzpicture}
}&:=
\mathord{
\begin{tikzpicture}[baseline = 0]
	\draw[<-] (0.3,.5) to (-0.3,-.5);
	\draw[-,line width=4pt,white] (-0.2,.2) to (0.2,-.3);
	\draw[-] (-0.2,.2) to (0.2,-.3);
        \draw[-] (0.2,-.3) to[out=130,in=180] (0.5,-.5);
        \draw[-] (0.5,-.5) to[out=0,in=270] (0.8,.5);
        \draw[-] (-0.2,.2) to[out=130,in=0] (-0.5,.5);
        \draw[->] (-0.5,.5) to[out=180,in=-270] (-0.8,-.5);
\end{tikzpicture}
}\:,
\end{align*}
and the $(+)$-bubbles\footnote{In \cite{BSWqheis}, one also finds
  {\em $(-)$-bubbles} which will not be needed here.}
\begin{align*}
\mathord{
\begin{tikzpicture}[baseline = 1.25mm]
  \draw[->] (0.2,0.2) to[out=90,in=0] (0,.4);
  \draw[-] (0,0.4) to[out=180,in=90] (-.2,0.2);
\draw[-] (-.2,0.2) to[out=-90,in=180] (0,0);
  \draw[-] (0,0) to[out=0,in=-90] (0.2,0.2);
   \node at (0.5,0.2) {$\scriptstyle{n-k}$};
   \node at (0,0.2) {$+$};
\end{tikzpicture}}
&:=
\left\{\begin{array}{ll}
\mathord{
\begin{tikzpicture}[baseline = 1.25mm]
  \draw[->] (0.2,0.2) to[out=90,in=0] (0,.4);
  \draw[-] (0,0.4) to[out=180,in=90] (-.2,0.2);
\draw[-] (-.2,0.2) to[out=-90,in=180] (0,0);
  \draw[-] (0,0) to[out=0,in=-90] (0.2,0.2);
   \node at (0.2,0.2) {$\dt$};
   \node at (0.6,0.2) {$\scriptstyle{n-k}$};
\end{tikzpicture}}
\hspace{51.8mm}&\text{if $k<n$,}\\
t^{n+1} z^{n-1}\det\left(
\!\mathord{
\begin{tikzpicture}[baseline = 1.25mm]
  \draw[<-] (0,0.4) to[out=180,in=90] (-.2,0.2);
  \draw[-] (0.2,0.2) to[out=90,in=0] (0,.4);
 \draw[-] (-.2,0.2) to[out=-90,in=180] (0,0);
  \draw[-] (0,0) to[out=0,in=-90] (0.2,0.2);
   \node at (-0.2,0.2) {$\dt$};
   \node at (-0.85,0.2) {$\scriptstyle{r-s+k+1}$};
\end{tikzpicture}
}\,
\right)_{r,s=1,\dots,n},&\text{if $k \geq n > 0$,}\\
tz^{-1} 1_\unit&\text{if $k \geq n=0$,}\\
0&\text{if $k \geq n < 0$,}
\end{array}\right.\\
\mathord{
\begin{tikzpicture}[baseline = 1.25mm]
  \draw[<-] (0,0.4) to[out=180,in=90] (-.2,0.2);
  \draw[-] (0.2,0.2) to[out=90,in=0] (0,.4);
 \draw[-] (-.2,0.2) to[out=-90,in=180] (0,0);
  \draw[-] (0,0) to[out=0,in=-90] (0.2,0.2);
   \node at (0,0.2) {$+$};
   \node at (-0.48,0.2) {$\scriptstyle{n+k}$};
\end{tikzpicture}
}&:=
\left\{
\begin{array}{ll}
\mathord{
\begin{tikzpicture}[baseline = 1.25mm]
  \draw[<-] (0,0.4) to[out=180,in=90] (-.2,0.2);
  \draw[-] (0.2,0.2) to[out=90,in=0] (0,.4);
 \draw[-] (-.2,0.2) to[out=-90,in=180] (0,0);
  \draw[-] (0,0) to[out=0,in=-90] (0.2,0.2);
   \node at (-0.2,0.2) {$\dt$};
   \node at (-0.6,0.2) {$\scriptstyle{n+k}$};
\end{tikzpicture}
}&\text{if $-k < n$,}\\
(-1)^{n+1} t^{-n-1} z^{n-1}\det\left(\:\mathord{
\begin{tikzpicture}[baseline = 1.25mm]
  \draw[->] (0.2,0.2) to[out=90,in=0] (0,.4);
  \draw[-] (0,0.4) to[out=180,in=90] (-.2,.2);
\draw[-] (-.2,0.2) to[out=-90,in=180] (0,0);
  \draw[-] (0,0) to[out=0,in=-90] (0.2,0.2);
   \node at (0.2,0.2) {$\dt$};
   \node at (0.85,0.2) {$\scriptstyle{r-s-k+1}$};
\end{tikzpicture}
}\right)_{r,s=1,\dots,n}&\text{if $-k\geq n > 0$,}\\
-t^{-1}z^{-1} 1_\unit
&\text{if $-k \geq n=0$,}\\
0&\text{if $-k\geq n < 0$.}
\end{array}\right.
\end{align*}
The other defining relations are as follows:
\begin{align}\label{AHA2}
\mathord{
\begin{tikzpicture}[baseline = -.5mm]
	\draw[->] (0.28,-.3) to (-0.28,.4);
	\draw[line width=4pt,white,-] (-0.28,-.3) to (0.28,.4);
	\draw[->] (-0.28,-.3) to (0.28,.4);
\end{tikzpicture}
}-\mathord{
\begin{tikzpicture}[baseline = -.5mm]
	\draw[->] (-0.28,-.3) to (0.28,.4);
	\draw[line width=4pt,white,-] (0.28,-.3) to (-0.28,.4);
	\draw[->] (0.28,-.3) to (-0.28,.4);
\end{tikzpicture}
}&=
z\:\mathord{
\begin{tikzpicture}[baseline = -.5mm]
	\draw[->] (0.18,-.3) to (0.18,.4);
	\draw[->] (-0.18,-.3) to (-0.18,.4);
\end{tikzpicture}
}\:,
&\mathord{
\begin{tikzpicture}[baseline = -1mm]
	\draw[->] (0.45,-.6) to (-0.45,.6);
        \draw[-] (0,-.6) to[out=90,in=-90] (-.45,0);
        \draw[-,line width=4pt,white] (-0.45,0) to[out=90,in=-90] (0,0.6);
        \draw[->] (-0.45,0) to[out=90,in=-90] (0,0.6);
	\draw[-,line width=4pt,white] (0.45,.6) to (-0.45,-.6);
	\draw[<-] (0.45,.6) to (-0.45,-.6);
\end{tikzpicture}
}
&=
\mathord{
\begin{tikzpicture}[baseline = -1mm]
	\draw[->] (0.45,-.6) to (-0.45,.6);
        \draw[-,line width=4pt,white] (0,-.6) to[out=90,in=-90] (.45,0);
        \draw[-] (0,-.6) to[out=90,in=-90] (.45,0);
        \draw[->] (0.45,0) to[out=90,in=-90] (0,0.6);
	\draw[-,line width=4pt,white] (0.45,.6) to (-0.45,-.6);
	\draw[<-] (0.45,.6) to (-0.45,-.6);
\end{tikzpicture}
}
\:,
\qquad\qquad
\mathord{
\begin{tikzpicture}[baseline = -.5mm]
	\draw[->] (-0.28,-.3) to (0.28,.4);
      \node at (-0.16,-0.15) {$\dt$};
	\draw[-,line width=4pt,white] (0.28,-.3) to (-0.28,.4);
	\draw[->] (0.28,-.3) to (-0.28,.4);
\end{tikzpicture}
}=
\mathord{
\begin{tikzpicture}[baseline = -.5mm]
	\draw[->] (0.28,-.3) to (-0.28,.4);
	\draw[line width=4pt,white,-] (-0.28,-.3) to (0.28,.4);
	\draw[->] (-0.28,-.3) to (0.28,.4);
      \node at (0.145,0.23) {$\dt$};
\end{tikzpicture}
}\:,\\
\mathord{
\begin{tikzpicture}[baseline = -.8mm]
  \draw[->] (0.3,0) to (0.3,.4);
	\draw[-] (0.3,0) to[out=-90, in=0] (0.1,-0.4);
	\draw[-] (0.1,-0.4) to[out = 180, in = -90] (-0.1,0);
	\draw[-] (-0.1,0) to[out=90, in=0] (-0.3,0.4);
	\draw[-] (-0.3,0.4) to[out = 180, in =90] (-0.5,0);
  \draw[-] (-0.5,0) to (-0.5,-.4);
\end{tikzpicture}
}
&=
\mathord{\begin{tikzpicture}[baseline=-.8mm]
  \draw[->] (0,-0.4) to (0,.4);
\end{tikzpicture}
}\:,
&\mathord{
\begin{tikzpicture}[baseline = -.8mm]
  \draw[->] (0.3,0) to (0.3,-.4);
	\draw[-] (0.3,0) to[out=90, in=0] (0.1,0.4);
	\draw[-] (0.1,0.4) to[out = 180, in = 90] (-0.1,0);
	\draw[-] (-0.1,0) to[out=-90, in=0] (-0.3,-0.4);
	\draw[-] (-0.3,-0.4) to[out = 180, in =-90] (-0.5,0);
  \draw[-] (-0.5,0) to (-0.5,.4);
\end{tikzpicture}
}
&=
\mathord{\begin{tikzpicture}[baseline=-.8mm]
  \draw[<-] (0,-0.4) to (0,.4);
\end{tikzpicture}
}\:,\label{rightadj2}\\
\mathord{
\begin{tikzpicture}[baseline = -.8mm]
  \draw[-] (0.3,0) to (0.3,-.4);
	\draw[-] (0.3,0) to[out=90, in=0] (0.1,0.4);
	\draw[-] (0.1,0.4) to[out = 180, in = 90] (-0.1,0);
	\draw[-] (-0.1,0) to[out=-90, in=0] (-0.3,-0.4);
	\draw[-] (-0.3,-0.4) to[out = 180, in =-90] (-0.5,0);
  \draw[->] (-0.5,0) to (-0.5,.4);
\end{tikzpicture}
}
&=
\mathord{\begin{tikzpicture}[baseline=-.8mm]
  \draw[->] (0,-0.4) to (0,.4);
\end{tikzpicture}
}\:,
&\mathord{
\begin{tikzpicture}[baseline = -.8mm]
  \draw[-] (0.3,0) to (0.3,.4);
	\draw[-] (0.3,0) to[out=-90, in=0] (0.1,-0.4);
	\draw[-] (0.1,-0.4) to[out = 180, in = -90] (-0.1,0);
	\draw[-] (-0.1,0) to[out=90, in=0] (-0.3,0.4);
	\draw[-] (-0.3,0.4) to[out = 180, in =90] (-0.5,0);
  \draw[->] (-0.5,0) to (-0.5,-.4);
\end{tikzpicture}
}
&=
\mathord{\begin{tikzpicture}[baseline=-.8mm]
  \draw[<-] (0,-0.4) to (0,.4);
\end{tikzpicture}
}\:,\label{leftadj2}\\\label{saturday}
\mathord{
\begin{tikzpicture}[baseline = -0.5mm]
	\draw[<-] (0,0.6) to (0,0.3);
	\draw[-] (0.3,-0.2) to [out=0,in=-90](.5,0);
	\draw[-] (0.5,0) to [out=90,in=0](.3,0.2);
	\draw[-] (0.3,.2) to [out=180,in=90](0,-0.3);
	\draw[-] (0,-0.3) to (0,-0.6);
	\draw[-,line width=4pt,white] (0,0.3) to [out=-90,in=180] (.3,-0.2);
	\draw[-] (0,0.3) to [out=-90,in=180] (.3,-0.2);
\end{tikzpicture}
}&=\delta_{k,0}
t^{-1}\:\mathord{
\begin{tikzpicture}[baseline = -0.5mm]
	\draw[<-] (0,0.6) to (0,-0.6);
\end{tikzpicture}
}
\:\:\text{if $k \geq 0$,}
&\mathord{
\begin{tikzpicture}[baseline = -0.5mm]
	\draw[<-] (0,0.6) to (0,0.3);
	\draw[-] (-0.3,-0.2) to [out=180,in=-90](-.5,0);
	\draw[-] (-0.5,0) to [out=90,in=180](-.3,0.2);
	\draw[-] (-0.3,.2) to [out=0,in=90](0,-0.3);
	\draw[-] (0,-0.3) to (0,-0.6);
	\draw[-,line width=4pt,white] (0,0.3) to [out=-90,in=0] (-.3,-0.2);
	\draw[-] (0,0.3) to [out=-90,in=0] (-.3,-0.2);
\end{tikzpicture}
}&=
\delta_{k,0}
t\:\mathord{
\begin{tikzpicture}[baseline = -0.5mm]
	\draw[<-] (0,0.6) to (0,-0.6);
\end{tikzpicture}
}
\:\:\quad\text{if $k \leq 0$},\end{align}\begin{align}
\mathord{
\begin{tikzpicture}[baseline = 1.25mm]
  \draw[<-] (0,0.4) to[out=180,in=90] (-.2,0.2);
  \draw[-] (0.2,0.2) to[out=90,in=0] (0,.4);
 \draw[-] (-.2,0.2) to[out=-90,in=180] (0,0);
  \draw[-] (0,0) to[out=0,in=-90] (0.2,0.2);
   \node at (-0.2,0.2) {$\dt$};
   \node at (-0.6,0.2) {$\scriptstyle{n+k}$};
\end{tikzpicture}
}&=
{\textstyle\frac{\delta_{n,-k} t - \delta_{n,0} t^{-1}}{z}}
1_\unit
\:\text{if $-k \leq n\leq 0$,}
&
\mathord{
\begin{tikzpicture}[baseline = 1.25mm]
  \draw[->] (0.2,0.2) to[out=90,in=0] (0,.4);
  \draw[-] (0,0.4) to[out=180,in=90] (-.2,0.2);
\draw[-] (-.2,0.2) to[out=-90,in=180] (0,0);
  \draw[-] (0,0) to[out=0,in=-90] (0.2,0.2);
   \node at (0.2,0.2) {$\dt$};
   \node at (0.6,0.2) {$\scriptstyle{n-k}$};
\end{tikzpicture}
}\!&= 
{\textstyle\frac{\delta_{n,0} t - \delta_{n,k} t^{-1}}{z}}
1_\unit
\:\:\:\text{if $k \leq n \leq 0$,}\end{align}\begin{align}
\mathord{
\begin{tikzpicture}[baseline = -.9mm]
	\draw[<-] (-0.28,-.6) to[out=90,in=-90] (0.28,0);
	\draw[-,white,line width=4pt] (0.28,-.6) to[out=90,in=-90] (-0.28,0);
	\draw[-] (0.28,-.6) to[out=90,in=-90] (-0.28,0);
	\draw[->] (-0.28,0) to[out=90,in=-90] (0.28,.6);
	\draw[-,line width=4pt,white] (0.28,0) to[out=90,in=-90] (-0.28,.6);
	\draw[-] (0.28,0) to[out=90,in=-90] (-0.28,.6);
\end{tikzpicture}
}
&=
\mathord{
\begin{tikzpicture}[baseline = -.9mm]
	\draw[->] (0.08,-.6) to (0.08,.6);
	\draw[<-] (-0.28,-.6) to (-0.28,.6);
\end{tikzpicture}
}
+tz
\mathord{
\begin{tikzpicture}[baseline=-.9mm]
	\draw[<-] (0.3,0.6) to[out=-90, in=0] (0,.1);
	\draw[-] (0,.1) to[out = 180, in = -90] (-0.3,0.6);
	\draw[-] (0.3,-.6) to[out=90, in=0] (0,-0.1);
	\draw[->] (0,-0.1) to[out = 180, in = 90] (-0.3,-.6);
\end{tikzpicture}}
\!+z^2\!\sum_{r,s > 0}
\!\!\!\mathord{
\begin{tikzpicture}[baseline = 1mm]
  \draw[<-] (0,0.4) to[out=180,in=90] (-.2,0.2);
  \draw[-] (0.2,0.2) to[out=90,in=0] (0,.4);
 \draw[-] (-.2,0.2) to[out=-90,in=180] (0,0);
  \draw[-] (0,0) to[out=0,in=-90] (0.2,0.2);
   \node at (0,0.2) {$+$};
   \node at (-.57,0.2) {$\scriptstyle{-r-s}$};
\end{tikzpicture}
}
\mathord{
\begin{tikzpicture}[baseline=-.9mm]
	\draw[<-] (0.3,0.6) to[out=-90, in=0] (0,.1);
	\draw[-] (0,.1) to[out = 180, in = -90] (-0.3,0.6);
      \node at (0.44,-0.3) {$\scriptstyle{s}$};
	\draw[-] (0.3,-.6) to[out=90, in=0] (0,-0.1);
	\draw[->] (0,-0.1) to[out = 180, in = 90] (-0.3,-.6);
   \node at (0.27,0.3) {$\dt$};
      \node at (0.27,-0.3) {$\dt$};
   \node at (.43,.3) {$\scriptstyle{r}$};
\end{tikzpicture}}\:,&
\mathord{
\begin{tikzpicture}[baseline = -.9mm]
	\draw[->] (0.28,0) to[out=90,in=-90] (-0.28,.6);
	\draw[<-] (0.28,-.6) to[out=90,in=-90] (-0.28,0);
	\draw[-,line width=4pt,white] (-0.28,0) to[out=90,in=-90] (0.28,.6);
	\draw[-] (-0.28,0) to[out=90,in=-90] (0.28,.6);
	\draw[-,line width=4pt,white] (-0.28,-.6) to[out=90,in=-90] (0.28,0);
	\draw[-] (-0.28,-.6) to[out=90,in=-90] (0.28,0);
\end{tikzpicture}
}
&=\mathord{
\begin{tikzpicture}[baseline = -0.9mm]
	\draw[<-] (0.08,-.6) to (0.08,.6);
	\draw[->] (-0.28,-.6) to (-0.28,.6);
\end{tikzpicture}
}
-t^{-1}z
\mathord{
\begin{tikzpicture}[baseline=-0.9mm]
	\draw[-] (0.3,0.6) to[out=-90, in=0] (0,0.1);
	\draw[->] (0,0.1) to[out = 180, in = -90] (-0.3,0.6);
	\draw[<-] (0.3,-.6) to[out=90, in=0] (0,-0.1);
	\draw[-] (0,-0.1) to[out = 180, in = 90] (-0.3,-.6);
\end{tikzpicture}}
\!+z^2\!\sum_{r,s > 0}\!\!\!
\mathord{
\begin{tikzpicture}[baseline=-0.9mm]
	\draw[-] (0.3,0.6) to[out=-90, in=0] (0,0.1);
	\draw[->] (0,0.1) to[out = 180, in = -90] (-0.3,0.6);
      \node at (-0.4,0.3) {$\scriptstyle{r}$};
      \node at (-0.25,0.3) {$\dt$};
	\draw[<-] (0.3,-.6) to[out=90, in=0] (0,-0.1);
	\draw[-] (0,-0.1) to[out = 180, in = 90] (-0.3,-.6);
   \node at (-0.27,-0.4) {$\dt$};
   \node at (-.45,-.35) {$\scriptstyle{s}$};
\end{tikzpicture}}
\mathord{
\begin{tikzpicture}[baseline = 1mm]
  \draw[->] (0.2,0.2) to[out=90,in=0] (0,.4);
  \draw[-] (0,0.4) to[out=180,in=90] (-.2,0.2);
\draw[-] (-.2,0.2) to[out=-90,in=180] (0,0);
  \draw[-] (0,0) to[out=0,in=-90] (0.2,0.2);
   \node at (0,0.2) {$+$};
   \node at (.55,0.2) {$\scriptstyle{-r-s}$};
\end{tikzpicture}
}.\label{limb}\end{align}
As in 
the degenerate case, one actually only needs to impose one of the
adjunction relations (\ref{rightadj2}) or (\ref{leftadj2}), after which the other one may be
deduced as a consequence of the other relations.
Moreover, the quantum Heisenberg category is strictly
pivotal, so that one 
can introduce the downward dot and the downward
positive and negative crossings by
taking left and/or right mates of the upward ones.

Again like the degenerate case, there are also some alternative
presentations involving an 
inversion
relation; see \cite[Definitions 2.2 and 3.1]{BSWqheis}.
To formulate a version of this, one just needs the generating morphisms
$\begin{tikzpicture}[baseline = -1mm]
	\draw[->] (0.08,-.2) to (0.08,.2);
      \node at (0.08,0) {$\dt$};
\end{tikzpicture}$,
$\begin{tikzpicture}[baseline = -1mm]
	\draw[->] (0.2,-.2) to (-0.2,.2);
	\draw[-,white,line width=4pt] (-0.2,-.2) to (0.2,.2);
	\draw[->] (-0.2,-.2) to (0.2,.2);
\end{tikzpicture}\:$,
$\begin{tikzpicture}[baseline = .75mm]
	\draw[<-] (0.3,0.3) to[out=-90, in=0] (0.1,0);
	\draw[-] (0.1,0) to[out = 180, in = -90] (-0.1,0.3);
\end{tikzpicture}\:$ and
$\begin{tikzpicture}[baseline = .75mm]
	\draw[<-] (0.3,0) to[out=90, in=0] (0.1,0.3);
	\draw[-] (0.1,0.3) to[out = 180, in = 90] (-0.1,0);
\end{tikzpicture}\:$, the first two of which are required to be
invertible (hence, we also 
get negative upwards and positive/negative rightwards crossings as above),
subject to the relations (\ref{AHA2})--(\ref{rightadj2}) plus 
the {\em inversion relation} asserting that the following is
invertible:
\begin{align}
\label{invrel1a}
\left[\!\!\!\!\!
\begin{array}{r}
\mathord{
\begin{tikzpicture}[baseline = 0]
	\draw[->] (-0.28,-.3) to (0.28,.3);
	\draw[-,line width=4pt,white] (0.28,-.3) to (-0.28,.3);
	\draw[<-] (0.28,-.3) to (-0.28,.3);
   \end{tikzpicture}
}\\
\mathord{
\begin{tikzpicture}[baseline = 1mm]
	\draw[<-] (0.4,0) to[out=90, in=0] (0.1,0.4);
      \node at (-0.15,0.45) {$\phantom\bullet$};
	\draw[-] (0.1,0.4) to[out = 180, in = 90] (-0.2,0);
\end{tikzpicture}
}\\
\mathord{
\begin{tikzpicture}[baseline = 1mm]
	\draw[<-] (0.4,0) to[out=90, in=0] (0.1,0.4);
	\draw[-] (0.1,0.4) to[out = 180, in = 90] (-0.2,0);
      \node at (-0.15,0.45) {$\phantom\bullet$};
      \node at (-0.15,0.2) {$\dt$};
\end{tikzpicture}
}\\\vdots\:\:\;\\
\mathord{
\begin{tikzpicture}[baseline = 1mm]
	\draw[<-] (0.4,0) to[out=90, in=0] (0.1,0.4);
	\draw[-] (0.1,0.4) to[out = 180, in = 90] (-0.2,0);
     \node at (-0.52,0.2) {$\scriptstyle{k-1}$};
      \node at (-0.15,0.42) {$\phantom\bullet$};
      \node at (-0.15,0.2) {$\dt$};
\end{tikzpicture}
}
\end{array}
\right]
&:
E \otimes F \:\rightarrow\:
F \otimes E \oplus \unit^{\oplus k}
&&\text{if $k \geq
  0$,}\\
\left[\:
\mathord{
\begin{tikzpicture}[baseline = 0]
	\draw[<-] (0.28,-.3) to (-0.28,.3);
	\draw[-,line width=4pt,white] (-0.28,-.3) to (0.28,.3);
	\draw[->] (-0.28,-.3) to (0.28,.3);
\end{tikzpicture}
}\:\:\:
\mathord{
\begin{tikzpicture}[baseline = -0.9mm]
	\draw[<-] (0.4,0.2) to[out=-90, in=0] (0.1,-.2);
	\draw[-] (0.1,-.2) to[out = 180, in = -90] (-0.2,0.2);
\end{tikzpicture}
}
\:\:\:
\mathord{
\begin{tikzpicture}[baseline = -0.9mm]
	\draw[<-] (0.4,0.2) to[out=-90, in=0] (0.1,-.2);
	\draw[-] (0.1,-.2) to[out = 180, in = -90] (-0.2,0.2);
      \node at (0.38,0) {$\dt$};
\end{tikzpicture}
}
\:\:\:\cdots
\:\:\:
\mathord{
\begin{tikzpicture}[baseline = -0.9mm]
	\draw[<-] (0.4,0.2) to[out=-90, in=0] (0.1,-.2);
	\draw[-] (0.1,-.2) to[out = 180, in = -90] (-0.2,0.2);
     \node at (0.83,0) {$\scriptstyle{-k-1}$};
      \node at (0.38,0) {$\dt$};
\end{tikzpicture}
}
\right]
&:E \otimes F \oplus
\unit^{\oplus (-k)}
\:\rightarrow\:
 F \otimes  E&&
\text{if $k \leq 0$.}\label{invrel1b}
\end{align}
The situation is slightly more delicate than in the degenerate case as
it is also necessary to impose one additional 
relation:
\begin{itemize}
\item
If $k > 0$ we require that
$\begin{tikzpicture}[baseline = 1mm]
  \draw[<-] (0,0.4) to[out=180,in=90] (-.2,0.2);
  \draw[-] (0.2,0.2) to[out=90,in=0] (0,.4);
 \draw[-] (-.2,0.2) to[out=-90,in=180] (0,0);
      \node at (-0.2,0.2) {$\dt$};
      \node at (-.5,.2) {$\scriptstyle{-1}$};
  \draw[-] (0,0) to[out=0,in=-90] (0.2,0.2);
      \node at (0,0) {$\diam$};
\end{tikzpicture}
=-t^2 1_\unit$
where $\:\begin{tikzpicture}[baseline = .75mm]
	\draw[-] (0.3,0.3) to[out=-90, in=0] (0.1,0);
	\draw[->] (0.1,0) to[out = 180, in = -90] (-0.1,0.3);
      \node at (.1,0) {$\diam$};
\end{tikzpicture}\:$ is the last entry of the
inverse of the matrix (\ref{invrel1a}).
\item
If $k < 0$ we require that
$\begin{tikzpicture}[baseline = 1mm]
  \draw[-] (0,0.4) to[out=180,in=90] (-.2,0.2);
  \draw[-] (0.2,0.2) to[out=90,in=0] (0,.4);
 \draw[-] (-.2,0.2) to[out=-90,in=180] (0,0);
  \draw[->] (0,0) to[out=0,in=-90] (0.2,0.2);
      \node at (-0.2,0.2) {$\dt$};
      \node at (-.5,.2) {$\scriptstyle{-1}$};
      \node at (0,.4) {$\diam$};
\end{tikzpicture}
=-t^{-2} 1_\unit$
where $\:\begin{tikzpicture}[baseline = .75mm]
	\draw[-] (0.3,0) to[out=90, in=0] (0.1,0.3);
	\draw[->] (0.1,0.3) to[out = 180, in = 90] (-0.1,0);
      \node at (0.1,.3) {$\diam$};
\end{tikzpicture}\:$
is the last entry of the inverse of the matrix (\ref{invrel1b}).
\item
If $k=0$ there are two equivalent presentations
here:
if one picks
(\ref{invrel1a}) the additional relation is
$\mathord{
\begin{tikzpicture}[baseline = 1.5mm]
	\draw[<-] (-0.2,.5) to[out=-90,in=90] (0.2,.05);
	\draw[-] (0.2,.05) to[out=-90, in=0] (0,-0.15);
	\draw[-] (0,-0.15) to[out = 180, in = -90] (-0.2,.05);
	\draw[-,line width=4pt,white] (0.25,.5) to[out=-90,in=90] (-0.2,.05);
	\draw[-] (0.2,.5) to[out=-90,in=90] (-0.2,.05);
\draw[-](-.2,.5) to [out=90,in=180] (0,.7);
\draw[-](.2,.5) to [out=90,in=0] (0,.7);
\end{tikzpicture}
}= \frac{1-t^{-2}}{z}\unit$
where
$\begin{tikzpicture}[baseline = -1mm]
	\draw[->] (0.2,-.2) to (-0.2,.2);
	\draw[-,line width=4pt,white] (-0.2,-.2) to (0.2,.2);
	\draw[<-] (-0.2,-.2) to (0.2,.2);
\end{tikzpicture}
:= \Big(
\begin{tikzpicture}[baseline = -1mm]
	\draw[->] (-0.2,-.2) to (0.2,.2);
	\draw[-,line width=4pt,white] (0.2,-.2) to (-0.2,.2);
	\draw[<-] (0.2,-.2) to (-0.2,.2);
   \end{tikzpicture}
\Big)^{-1}$, while for
(\ref{invrel1b}) it is
$\mathord{
\begin{tikzpicture}[baseline = 1.5mm]
	\draw[-] (0.2,.05) to[out=-90, in=0] (0,-0.15);
	\draw[-] (0,-0.15) to[out = 180, in = -90] (-0.2,.05);
	\draw[->] (0.2,.5) to[out=-90,in=90] (-0.2,.05);
\draw[-](-.2,.5) to [out=90,in=180] (0,.7);
\draw[-](.2,.5) to [out=90,in=0] (0,.7);
	\draw[-,line width=4pt,white] (-0.2,.5) to[out=-90,in=90] (0.2,.05);
	\draw[-] (-0.2,.5) to[out=-90,in=90] (0.2,.05);
\end{tikzpicture}
}
=\frac{t^2-1}{z} 1_\unit$
where
$\mathord{
\begin{tikzpicture}[baseline = -1mm]
	\draw[<-] (-0.2,-.2) to (0.2,.2);
	\draw[-,line width=4pt,white] (0.2,-.2) to (-0.2,.2);
	\draw[->] (0.2,-.2) to (-0.2,.2);
   \end{tikzpicture}
} := \Big(\begin{tikzpicture}[baseline = -1mm]
	\draw[<-] (0.2,-.2) to (-0.2,.2);
	\draw[-,line width=4pt,white] (-0.2,-.2) to (0.2,.2);
	\draw[->] (-0.2,-.2) to (0.2,.2);
\end{tikzpicture}
\Big)^{-1}$.
\end{itemize}
The resulting category then contains unique morphisms
$\:\begin{tikzpicture}[baseline = .75mm]
	\draw[-] (0.3,0.3) to[out=-90, in=0] (0.1,0);
	\draw[->] (0.1,0) to[out = 180, in = -90] (-0.1,0.3);
\end{tikzpicture}\:$
and
$\:\begin{tikzpicture}[baseline = .75mm]
	\draw[-] (0.3,0) to[out=90, in=0] (0.1,0.3);
	\draw[->] (0.1,0.3) to[out = 180, in = 90] (-0.1,0);
\end{tikzpicture}\:$
such that the other relations
(\ref{saturday})--(\ref{limb}) hold; see \cite[Lemma 4.3]{BSWqheis}.

\begin{Remark}\label{normalization}
The alternative
presentation of $\Heis_k$ just formulated only involves even powers of $t$,
so that using it the category could be defined over $\k[t^2,t^{-2}]$
rather than the algebra 
$\K$ from (\ref{groundring}). 
The square root $t$ of $t^2$ is needed in 
order for there to exist 
leftwards cups and caps satisfying the earlier relations.
The specific normalization of these leftwards cups and caps 
was chosen originally in \cite{BSWqheis} 
so as to match the usual normalization in the HOMFLY-PT skein
category; see \cite{Bskein}.
\end{Remark}

In \cite[$\S\S$2--4]{BSWqheis}, many additional relations are derived from
the defining relations, 
including
counterparts of the infinite Grassmannian, alternating braid, curl,
and bubble slide relations.
Again, all of these relations can be reformulated quite compactly in terms of generating functions.
We do this here just for the infinite Grassmannian relation, the curl
relation and the bubble slides, since actually those are the only ones
we will need later on.
Like we did in the previous subsection, we switch from now onwards to labelling dots by
polynomials, now possibly in $\k[x,x^{-1}]$, instead of by integers.
We also assemble the $(+)$-bubbles
into the following
generating functions:
\begin{align}\label{summer1}
\anticlock(u) &:= t^{-1}z \sum_{r\in\Z}
\mathord{
\begin{tikzpicture}[baseline = 1.25mm]
  \draw[-] (0,0.4) to[out=180,in=90] (-.2,0.2);
  \draw[->] (0.2,0.2) to[out=90,in=0] (0,.4);
 \draw[-] (-.2,0.2) to[out=-90,in=180] (0,0);
  \draw[-] (0,0) to[out=0,in=-90] (0.2,0.2);
   \node at (0,.21) {$+$};
   \node at (0.33,0.2) {$\scriptstyle{r}$};
\end{tikzpicture}
}\: u^{-r}
\in u^k 1_\unit + u^{k-1} \End_{\Heis_k}(\unit)\llbracket
u^{-1} \rrbracket
,\\\label{summer2}
\clock(u)&:= -tz \sum_{r\in\Z}
\mathord{
\begin{tikzpicture}[baseline = 1.25mm]
  \draw[<-] (0,0.4) to[out=180,in=90] (-.2,0.2);
  \draw[-] (0.2,0.2) to[out=90,in=0] (0,.4);
 \draw[-] (-.2,0.2) to[out=-90,in=180] (0,0);
  \draw[-] (0,0) to[out=0,in=-90] (0.2,0.2);
   \node at (0,.21) {$+$};
   \node at (-0.33,0.2) {$\scriptstyle{r}$};
\end{tikzpicture}
} \:u^{-r} \in u^{-k}1_\unit +
u^{-k-1}\End_{\Heis_k}(\unit)\llbracket
u^{-1} \rrbracket.
\end{align}
Here, we are using slightly different notation from \cite{BSWqheis}, where these were denoted
$\anticlockplus(u)$ and $\clockplus(u)$.
Then we have the following, which are equivalent to \cite[Lemmas 3.4, 4.4 and 4.6]{BSWqheis}:
\begin{equation}\label{igproper2}
\anticlock(u)\; \clock(u)
= 1_\unit,
\end{equation}
\begin{align}
\mathord{
\begin{tikzpicture}[baseline = -1mm]
	\draw[<-] (0,0.6) to (0,0.3);
	\draw[-] (-0.5,0) to [out=90,in=180](-.3,0.2);
	\draw[-] (-0.3,.2) to [out=0,in=90](0,-0.3);
	\draw[-] (-0.3,-0.2) to [out=180,in=-90](-.5,0);
	\draw[-,white,line width=4pt] (0,0.3) to [out=-90,in=0] (-.3,-0.2);
	\draw[-] (0,0.3) to [out=-90,in=0] (-.3,-0.2);
	\draw[-] (0,-0.3) to (0,-0.6);
   \node at (-1.06,0) {$\scriptstyle{(u-x)^{-1}}$};
      \node at (-0.5,0) {$\dt$};
\end{tikzpicture}
}&=
t\left[\mathord{\begin{tikzpicture}[baseline = -1mm]
\node at (-.6,0.05) {$\anticlock \scriptstyle (u)$};
	\draw[->] (0.08,-.5) to (0.08,.6);
      \node at (.08,0.05) {$\dt$};
      \node at (.64,0.05) {$\scriptstyle{(u-x)^{-1}}$};
\end{tikzpicture}
}\right]_{u^{< 0}}\!\!,
&\mathord{
\begin{tikzpicture}[baseline = -1mm]
	\draw[<-] (0,0.6) to (0,0.3);
	\draw[-] (0.3,-0.2) to [out=0,in=-90](.5,0);
	\draw[-] (0.5,0) to [out=90,in=0](.3,0.2);
	\draw[-] (0.3,.2) to [out=180,in=90](0,-0.3);
	\draw[-,white,line width=4pt] (0,0.3) to [out=-90,in=180] (.3,-0.2);
	\draw[-] (0,0.3) to [out=-90,in=180] (.3,-0.2);
	\draw[-] (0,-0.3) to (0,-0.6);
   \node at (1.06,0) {$\scriptstyle{(u-x)^{-1}}$};
      \node at (0.5,0) {$\dt$};
\end{tikzpicture}
}&=
t^{-1}\left[\mathord{
\begin{tikzpicture}[baseline = -1mm]
	\draw[->] (0.08,-.5) to (0.08,.6);
   \node at (-0.46,0.05) {$\scriptstyle{(u-x)^{-1}}$};
      \node at (0.08,0.05) {$\dt$};
      \node at (.8,0.05) {$\clock\scriptstyle (u)$};
\end{tikzpicture}
}\right]_{u^{< 0}}\!,\label{somegin}\end{align}
\begin{align}
\mathord{
\begin{tikzpicture}[baseline = -1mm]
	\draw[->] (0.08,-.4) to (0.08,.4);
      \node at (0.8,0) {$\anticlock \scriptstyle (u)$};
\end{tikzpicture}
}
&=
\mathord{\begin{tikzpicture}[baseline = -1mm]
	\draw[->] (0.08,-.4) to (0.08,.4);
      \node at (-0.6,0) {$\anticlock \scriptstyle (u)$};
      \node at (.08,0) {$\dt$};
      \node at (1.1,0) {$\scriptstyle{1-z^2xu(u-x)^{-2}}$};
\end{tikzpicture}
}
\:,
&\mathord{
\begin{tikzpicture}[baseline = -1mm]
	\draw[->] (0.08,-.4) to (0.08,.4);
      \node at (-0.6,0) {$\clock \scriptstyle (u)$};
\end{tikzpicture}
  }&=
\mathord{
\begin{tikzpicture}[baseline = -1mm]
	\draw[->] (0.08,-.4) to (0.08,.4);
      \node at (0.8,0) {$\clock \scriptstyle (u)$};
      \node at (.08,0) {$\dt$};
      \node at (-.95,0) {$\scriptstyle{1-z^2xu(u-x)^{-2}}$};
\end{tikzpicture}
}
\:.\label{gin}
\end{align}
For the last relation, we note that $1-z^2xu(u-x)^{-2} = \frac{(u-q^2x)(u-q^{-2}x)}{(u-x)^2}$.

\begin{Lemma}\label{surely}
For a polynomial $p(u) \in \k[u]$, we have that
\begin{align}\label{surely1}
\mathord{
\begin{tikzpicture}[baseline = -1mm]
 	\draw[->] (0.08,-.4) to (0.08,.4);
     \node at (0.08,0) {$\dt$};
     \node at (0.45,0) {$\scriptstyle p(x)$};
\end{tikzpicture}
}
&=
\left[
\mathord{
\begin{tikzpicture}[baseline = -1mm]
 	\draw[->] (0.08,-.4) to (0.08,.4);
     \node at (0.08,0) {$\dt$};
     \node at (-0.55,0) {$\scriptstyle (u-x)^{-1}$};
\end{tikzpicture}
}
p(u)
\right]_{u^{-1}},
&
\mathord{
\begin{tikzpicture}[baseline = -1mm]
 	\draw[<-] (0.08,-.4) to (0.08,.4);
     \node at (0.08,0.03) {$\dt$};
     \node at (0.45,0.03) {$\scriptstyle p(x)$};
\end{tikzpicture}
}
&=
\left[
\mathord{
\begin{tikzpicture}[baseline = -1mm]
 	\draw[<-] (0.08,-.4) to (0.08,.4);
     \node at (0.08,0.03) {$\dt$};
     \node at (-0.55,0.03) {$\scriptstyle (u-x)^{-1}$};
\end{tikzpicture}
}
p(u)
\right]_{u^{-1}},\\\label{surely2}
\mathord{
\begin{tikzpicture}[baseline = 1.25mm]
  \draw[<-] (0,0.4) to[out=180,in=90] (-.2,0.2);
  \draw[-] (0.2,0.2) to[out=90,in=0] (0,.4);
 \draw[-] (-.2,0.2) to[out=-90,in=180] (0,0);
  \draw[-] (0,0) to[out=0,in=-90] (0.2,0.2);
   \node at (0.2,0.2) {$\dt$};
   \node at (0.56,0.2) {$\scriptstyle{p(x)}$};
\end{tikzpicture}
}
\!&=
\frac{t p(0) 1_\unit\!-t^{-1}\! \left[{\scriptstyle\clock(u)}\: p(u)\right]_{u^0}}{z},&
\mathord{
\begin{tikzpicture}[baseline = 1.25mm]
  \draw[-] (0,0.4) to[out=180,in=90] (-.2,0.2);
  \draw[->] (0.2,0.2) to[out=90,in=0] (0,.4);
 \draw[-] (-.2,0.2) to[out=-90,in=180] (0,0);
  \draw[-] (0,0) to[out=0,in=-90] (0.2,0.2);
   \node at (0.2,0.2) {$\dt$};
   \node at (0.56,0.2) {$\scriptstyle{p(x)}$};
\end{tikzpicture}
}
\!&=
\frac{t\left[ {\scriptstyle\anticlock(u)}\: p(u)\right]_{u^0}\!-t^{-1} p(0)1_\unit}{z},\\\label{surely3}
\mathord{
\begin{tikzpicture}[baseline = -1mm]
	\draw[<-] (0,0.6) to (0,0.3);
	\draw[-] (0.3,-0.2) to [out=0,in=-90](.5,0);
	\draw[-] (0.5,0) to [out=90,in=0](.3,0.2);
	\draw[-] (0.3,.2) to [out=180,in=90](0,-0.3);
	\draw[-] (0,-0.3) to (0,-0.6);
	\draw[-,white,line width=4pt] (0,0.3) to [out=-90,in=180] (.3,-0.2);
	\draw[-] (0,0.3) to [out=-90,in=180] (.3,-0.2);
   \node at (.87,0) {$\scriptstyle{p(x)}$};
      \node at (0.5,0) {$\dt$};
\end{tikzpicture}
}&=
t^{-1}
\left[\!\mathord{
\begin{tikzpicture}[baseline = -1mm]
	\draw[->] (0.08,-.6) to (0.08,.6);
   \node at (-0.46,0.05) {$\scriptstyle{(u-x)^{-1}}$};
      \node at (0.08,0.05) {$\dt$};
      \node at (.7,0.05) {$\clock \scriptstyle (u)$};
\end{tikzpicture}
}p(u)\right]_{u^{-1}}\!\!\!,
&
\mathord{
\begin{tikzpicture}[baseline = -1mm]
	\draw[-] (0,0.6) to (0,0.3);
	\draw[-] (0.3,-0.2) to [out=0,in=-90](.5,0);
	\draw[-] (0.5,0) to [out=90,in=0](.3,0.2);
	\draw[-] (0,0.3) to [out=-90,in=180] (.3,-0.2);
	\draw[-,white,line width=4pt] (0.3,.2) to [out=180,in=90](0,-0.3);
	\draw[-] (0.3,.2) to [out=180,in=90](0,-0.3);
	\draw[->] (0,-0.3) to (0,-0.6);
   \node at (.87,0) {$\scriptstyle{p(x)}$};
      \node at (0.5,0) {$\dt$};
\end{tikzpicture}
}&=
t\left[\!\mathord{
\begin{tikzpicture}[baseline = -1mm]
	\draw[<-] (0.08,-.6) to (0.08,.6);
   \node at (-0.46,0.05) {$\scriptstyle{(u-x)^{-1}}$};
      \node at (0.08,0.05) {$\dt$};
      \node at (.7,0.05) {$\anticlock \scriptstyle (u)$};
\end{tikzpicture}
}p(u)\right]_{u^{-1}}\!\!\!.
\end{align}
\end{Lemma}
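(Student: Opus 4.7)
The argument will closely mirror that of Lemma \ref{sure} in the degenerate case, with only minor bookkeeping adjustments to account for the normalization constants $t^{\pm 1}$ and $z$ appearing in the quantum generating functions (\ref{summer1})--(\ref{summer2}) and in the definitions of the $(+)$-bubbles. By linearity, it suffices to verify each of the six identities on monomials $p(u) = u^r$ with $r \geq 0$.

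The first pair (\ref{surely1}) I would handle by a direct coefficient computation: expanding $(u-x)^{-1} = \sum_{s \geq 0} x^s u^{-s-1}$ and multiplying by $u^r$, the $u^{-1}$-coefficient on the right-hand side is simply $x^r = p(x)$. This is word-for-word the same as in the degenerate setting, since neither the dot nor the generating function $(u-x)^{-1}$ is affected by the quantum deformation at this stage.

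For the bubble formulas (\ref{surely2}), my plan is to unpack the definitions (\ref{summer1})--(\ref{summer2}) directly. For $p(u) = u^r$ one computes $[\anticlock(u)\, u^r]_{u^0} = t^{-1} z\, B^+_r$, where $B^+_r$ denotes the anticlockwise $(+)$-bubble of label $r$. When $r > 0$, the $(+)$-bubble $B^+_r$ coincides by definition with the ordinary anticlockwise bubble decorated with $x^r$, while the correction term $-t^{-1} p(0) 1_\unit / z$ vanishes, so the formula is immediate. The delicate point is $r = 0$: here $p(0) = 1$, and the subtraction $-t^{-1}/z$ precisely absorbs the difference between $B^+_0$ and the bare undecorated anticlockwise bubble, a discrepancy visible from the case-by-case definition of $B^+_0$ together with the explicit boundary relation displayed just after (\ref{AHA2}). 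The clockwise identity in (\ref{surely2}) is proved dually from (\ref{summer2}).

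For the curl formulas (\ref{surely3}), I would first apply (\ref{surely1}) to rewrite the $p(x)$-decorated dot inside each curl as a $(u-x)^{-1}$-decorated dot scaled by $p(u)$, then apply the curl relation (\ref{somegin}) to collapse the curl into a product of a dot and a bubble on a single upward or downward strand, with the resulting expression matching the claimed $u^{-1}$-coefficient. The main obstacle throughout is keeping careful track of the factors of $t^{\pm 1}$ and $z$ produced by (\ref{somegin}) against those implicit in the generating functions $\anticlock(u)$ and $\clock(u)$; once the normalization at $r = 0$ in (\ref{surely2}) is pinned down, the remaining manipulations are routine coefficient extractions.
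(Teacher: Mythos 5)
Your proposal is correct and follows essentially the same route as the paper: the paper's proof simply says the argument is "almost the same as the proof of Lemma~\ref{sure}," i.e., reduce to monomials and extract the $u^{-1}$- (or $u^0$-) coefficient using (\ref{summer1})--(\ref{summer2}), then deduce (\ref{surely3}) from (\ref{surely1}) and the curl relation (\ref{somegin}). The one subtlety you flag at $r=0$ in (\ref{surely2}) is exactly the point the paper handles by citing the identities $\clock = tz^{-1}1_\unit + {\scriptstyle 0}\clockplus$ and $\anticlock = \anticlockplus{\scriptstyle 0} - t^{-1}z^{-1}1_\unit$ from [BSW1].
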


\begin{proof}
This is almost the same as the proof of Lemma~\ref{sure}, using
(\ref{summer1})--(\ref{summer2}) and (\ref{somegin}) instead of
(\ref{igq})--(\ref{igp})
and (\ref{mostgin}).
For (\ref{surely2}), one also needs to know that
$\clock = tz^{-1} 1_\unit+ \scriptstyle{0}\clockplus$
and
$\anticlock = \anticlockplus\!{\scriptstyle{0}}- t^{-1}z^{-1}1_\unit$ 
due to \cite[(2.18), (3.12)]{BSWqheis}.
\end{proof}

In the quantum case for $q$ not a root
of unity, it is conjectured
that 
$K_0(\Kar(\Heis_k))$
is isomorphic to the Heisenberg ring $\rh_k$,
just like in the degenerate case.

\subsection{The Kac-Moody 2-category}\label{songs}
Last, but by no means least, we have the Kac-Moody 2-category.
This was defined by Khovanov and Lauda \cite{KL3} and
Rouquier \cite{Rou}.
In fact,
there is such a category associated to any symmetrizable Cartan
matrix, but in this paper we are only interested in the ones of Cartan type A,
so we specialize to that right away.
Our exposition is based on \cite{Bkm}, which unified the different approaches of
Khovanov-Lauda and Rouquier, and \cite[$\S$3]{BD}, which
incorporated some
renormalizations of the bubbles following the idea of \cite{BHLW}
in order to make the strictly pivotal structure apparent.

Assume that $I$ is a set equipped with a
fixed-point-free automorphism $I \rightarrow I, i \mapsto i^+$.
Let $i \mapsto i^-$ be the inverse function.
This can also be interpreted as the data of a quiver whose connected components
are of types $A_\infty$ or $A_{\qc-1}^{(1)}$ for $\qc \geq 2$.
There is an associated generalized Cartan matrix
$(a_{i,j})_{i,j \in I}$ with $a_{i,i} := 2$ for each $i \in I$, and
$a_{i,j}:=-\delta_{i^+,j}-\delta_{i,j^+}$
for each $i \neq j$.
Let $\g$ be the Kac-Moody Lie algebra over $\C$
generated by $\{e_i, f_i, h_i\:|\:i \in I\}$ subject to the Serre
relations defined from
the Cartan matrix $(a_{i,j})_{i, j \in I}$.
Note $\g$ is a direct sum of Kac-Moody Lie algebras of
types
$\sl_\infty(\C)$ (the infinite components in the quiver)
or $\widehat{\sl}_{\qc}(\C)'$ (finite components with $\qc$ vertices).

Let $\hh$ be the Cartan subalgebra of $\g$ with
basis
$\{h_i\:|\:i \in I\}$.
The {\em weight lattice} $X$ of $\g$ is the Abelian
subgroup of $\hh^*$ generated by
the fundamental weights $\{\Lambda_j\:|\:j \in I\}$ defined from
$\langle h_i,\Lambda_j \rangle = \delta_{i,j}$.
We have the set of {\em dominant weights}
$$
X^+ := \bigoplus_{i \in I} \N \Lambda_i
= \big\{\lambda \in X\:\big|\:\langle h_i, \lambda \rangle \geq 0
\text{ for all $i \in I$ and $\textstyle\sum_{i \in I}\langle
  h_i,\lambda\rangle < \infty$}\big\}.
$$
Let $\alpha_i := \sum_{j \in I} a_{i,j} \Lambda_j \in X$ be the
$i$th simple root.
Unlike the fundamental weights, these are not necessarily linearly independent, indeed,
we have that $\sum_{i \in I_0} \alpha_i = 0$ for each finite component
$I_0$ of $I$, due to the fact that we have not extended by
scaling elements.
Let $Y := \sum_{i \in I} \Z \alpha_i \subseteq X$.

Finally, we choose signs
$\{\signs_{i}(\lambda)\:|\:\lambda\in X, i \in I\}$ so that
$\signs_{i}(\lambda)\signs_{i}(\lambda+\alpha_j) = (-1)^{\delta_{i,j^+}}$
for each $j \in I$. There is a unique such choice
satisfying $\signs_{i}(\lambda) = 1$ for each $i \in I$ and each
$\lambda$ lying in a set of $X / Y$-coset
representatives.

Then the {\em Kac-Moody 2-category}
 $\UU(\g)$
is the strict $\k$-linear 2-category
with objects $X$,
generating
1-morphisms
$E_i 1_\lambda = \substack{\thickup \\ {\scriptscriptstyle i}}{\scriptstyle \color{gray}\lambda}:\lambda \rightarrow \lambda+\alpha_i$ and
$F_i 1_\lambda= \substack{{\scriptscriptstyle
    i}\\\thickdown}{\scriptstyle \color{gray}\lambda}:\lambda \rightarrow \lambda-\alpha_i$ for
$i \in I$ and $\lambda \in X$,
and generating
2-morphisms
\begin{align}\label{QHgens}
\mathord{

\!{\color{gray}\scriptstyle\lambda}\:$ for all
$i\in I$ and $\lambda\in X$
such that the relations (\ref{flight1})--(\ref{flight2}) all hold
(for the sideways crossings and negatively dotted bubbles defined as
above),
then these 2-morphisms are uniquely determined.
\end{Lemma}

\begin{proof}
Fix $i \in I$ and $\lambda \in X$.
Let $M$ be the matrix (\ref{day2}) if $\langle h_i, \lambda\rangle \geq 0$
or the matrix (\ref{day3}) if $\langle h_i, \lambda\rangle < 0$, viewed as a
2-morphism in the additive envelope $\Add(\AA)$.
The assumed relations 
 (\ref{QHA1})--(\ref{rightadj3}) and  (\ref{flight1})--(\ref{flight2})
 imply that $M$ is invertible.
Moreover the first entry of the inverse matrix $M^{-1}$ is 
$-\mathord{
\begin{tikzpicture}[baseline = -.5]
	\draw[->,thick] (0.18,-.15) to (-0.18,.3);
	\draw[<-,thick] (-0.18,-.15) to (0.18,.3);
   \node at (0.22,-.2) {$\scriptstyle{i}$};
   \node at (0.22,.35) {$\scriptstyle{i}$};
\end{tikzpicture}
}
{\color{gray}\scriptstyle\lambda}$. Thus, this 2-morphism is uniquely
determined in $\AA$ independent of the choices of the leftwards cups
and caps. Also if $\langle h_i, \lambda \rangle > 0$ (resp., $\langle
h_i, \lambda \rangle < 0$) then the last
entry of $M^{-1}$ is $\signs_i(\lambda)
\:\:\begin{tikzpicture}[baseline = 0]
	\draw[-,thick] (0.3,0.2) to[out=-90, in=0] (0.1,-0.1);
	\draw[->,thick] (0.1,-0.1) to[out = 180, in = -90] (-0.1,0.2);
    \node at (0.3,.35) {$\scriptstyle{i}$};
\end{tikzpicture}
\!{\color{gray}\scriptstyle\lambda}\:$
(resp., 
$\signs_i(\lambda)\:\:\begin{tikzpicture}[baseline = -2]
	\draw[-,thick] (0.3,-0.1) to[out=90, in=0] (0.1,0.2);
	\draw[->,thick] (0.1,0.2) to[out = 180, in = 90] (-0.1,-0.1);
    \node at (0.3,-.25) {$\scriptstyle{i}$};
\end{tikzpicture}
\!{\color{gray}\scriptstyle\lambda}\:$).
So these 2-morphisms are uniquely determined.
Finally, using (\ref{flight1})--(\ref{bus1}), one sees that
\begin{align*}
\mathord{
\begin{tikzpicture}[baseline = 1.5mm]
	\draw[-,thick] (0.4,0.4) to[out=-90, in=0] (0.1,0);
	\draw[->,thick] (0.1,0) to[out = 180, in = -90] (-0.2,0.4);
    \node at (0.4,.55) {$\scriptstyle{i}$};
  \node at (0.5,0.15) {$\color{gray}\scriptstyle{\lambda}$};
 \end{tikzpicture}
}
&=
\signs_i(\lambda)\:
\mathord{
\begin{tikzpicture}[baseline = 0]
	\draw[-,thick] (0.28,.6) to[out=240,in=90] (-0.28,-.1);
	\draw[<-,thick] (-0.28,.6) to[out=300,in=90] (0.28,-0.1);
   \node at (0.28,.75) {$\scriptstyle{i}$};
   \node at (.45,.2) {$\color{gray}\scriptstyle{\lambda}$};
	\draw[-,thick] (0.28,-0.1) to[out=-90, in=0] (0,-0.4);
	\draw[-,thick] (0,-0.4) to[out = 180, in = -90] (-0.28,-0.1);
      \node at (0.89,-0.1) {$\scriptstyle{-\langle h_i,\lambda\rangle}$};
      \node at (0.27,-0.1) {$\bull$};
\end{tikzpicture}
}\text{ if $\langle h_i, \lambda\rangle \leq 0$},&
\mathord{
\begin{tikzpicture}[baseline = 2]
	\draw[-,thick] (0.4,0) to[out=90, in=0] (0.1,0.4);
	\draw[->,thick] (0.1,0.4) to[out = 180, in = 90] (-0.2,0);
    \node at (0.4,-.15) {$\scriptstyle{i}$};
  \node at (0.62,0.15) {$\scriptstyle{\lambda}$};
\end{tikzpicture}
}
&=
-\signs_i(\lambda)
\mathord{
\begin{tikzpicture}[baseline = -5]
	\draw[-,thick] (0.28,-.6) to[out=120,in=-90] (-0.28,.1);
	\draw[<-,thick] (-0.28,-.6) to[out=60,in=-90] (0.28,.1);
   \node at (0.28,-.75) {$\scriptstyle{i}$};
   \node at (.4,-.2) {$\color{gray}\scriptstyle{\lambda}$};
	\draw[-,thick] (0.28,0.1) to[out=90, in=0] (0,0.4);
	\draw[-,thick] (0,0.4) to[out = 180, in = 90] (-0.28,0.1);
      \node at (-0.74,0.15) {$\scriptstyle{\langle h_i,\lambda\rangle}$};
      \node at (-0.27,0.15) {$\bull$};
\end{tikzpicture}
}\text{ if $\langle h_i, \lambda\rangle \geq 0$}.
\end{align*}
This means these morphisms are uniquely determined too.
\end{proof}

Again, one can introduce generating functions and work with
the defining relations in those terms; this technique was pioneered in \cite{Wmemoir}.
We just write down the counterparts of
(\ref{igproper}), (\ref{mostgin})--(\ref{moregin})  and
(\ref{igproper2})--(\ref{gin}).
Let
\begin{align}\label{chink}
{\color{gray}\scriptstyle\lambda}\:\anticlocki(u) &:= \signs_i(\lambda)\sum_{r \in \Z}
\mathord{
\begin{tikzpicture}[baseline = 1.25mm]
  \draw[->,thick] (0.2,0.2) to[out=90,in=0] (0,.4);
  \draw[-,thick] (0,0.4) to[out=180,in=90] (-.2,0.2);
\draw[-,thick] (-.2,0.2) to[out=-90,in=180] (0,0);
  \draw[-,thick] (0,0) to[out=0,in=-90] (0.2,0.2);
   \node at (0.2,0.2) {$\bull$};
   \node at (.4,0.2) {$\scriptstyle{r}$};
   \node at (-0.05,-.15) {$\scriptstyle{i}$};
   \node at (-.38,0.2) {$\color{gray}\scriptstyle{\lambda}$};
\end{tikzpicture}}u^{-r-1}\in
u^{\langle h_i,\lambda\rangle} 1_{1_\lambda}+
u^{\langle h_i,\lambda\rangle-1} \End(1_\lambda)[\![u^{-1}]\!]
,
\\\label{chunk}
{\color{gray}\scriptstyle\lambda}\:\clocki(u)
&:=
\signs_i(\lambda)\sum_{r \in\Z}
\mathord{
\begin{tikzpicture}[baseline = 1.25mm]
  \draw[<-,thick] (0,0.4) to[out=180,in=90] (-.2,0.2);
  \draw[-,thick] (0.2,0.2) to[out=90,in=0] (0,.4);
 \draw[-,thick] (-.2,0.2) to[out=-90,in=180] (0,0);
  \draw[-,thick] (0,0) to[out=0,in=-90] (0.2,0.2);
   \node at (0,-.15) {$\scriptstyle{i}$};
   \node at (0.36,0.2) {$\color{gray}\scriptstyle{\lambda}$};
   \node at (-0.2,0.2) {$\bull$};
   \node at (-.4,0.2) {$\scriptstyle{r}$};
\end{tikzpicture}
}
u^{-r-1} \in
u^{-\langle h_i,\lambda\rangle} 1_{1_\lambda}+
u^{-\langle h_i,\lambda\rangle-1} \End(1_\lambda)[\![u^{-1}]\!].
\end{align}
Switching from now on to labelling dots by polynomials rather than
integers in the same way as we
did when working with the Heisenberg category, but using the variable $y$ in place of $x$ to
avoid possible confusion later on, we have that
\begin{align}\label{infgras}
\clocki(u) \:\anticlocki(u)\:{\scriptstyle\color{gray}\lambda} = 1_{1_\lambda},
\end{align}
\begin{align}
\mathord{
\begin{tikzpicture}[baseline = -1mm]
	\draw[<-,thick] (0,0.6) to (0,0.3);
	\draw[-,thick] (0,0.3) to [out=-90,in=0] (-.3,-0.2);
	\draw[-,thick] (-0.3,-0.2) to [out=180,in=-90](-.5,0);
	\draw[-,thick] (-0.5,0) to [out=90,in=180](-.3,0.2);
	\draw[-,thick] (-0.3,.2) to [out=0,in=90](0,-0.3);
	\draw[-,thick] (0,-0.3) to (0,-0.5);
   \node at (-1.06,0) {$\scriptstyle{(u-y)^{-1}}$};
      \node at (-0.5,0) {$\bull$};
   \node at (0,-.6) {$\scriptstyle{i}$};
   \node at (-0.8,-0.3) {$\color{gray}\scriptstyle{\lambda}$};
\end{tikzpicture}
}&=
\signs_i(\lambda)\left[\mathord{\begin{tikzpicture}[baseline = -1mm]
\node at (-.6,0.05) {$\anticlocki \scriptstyle (u)$};
	\draw[->,thick] (0.08,-.5) to (0.08,.6);
      \node at (.08,0.05) {$\bull$};
      \node at (.64,0.05) {$\scriptstyle{(u-y)^{-1}}$};
   \node at (0.08,-.6) {$\scriptstyle{i}$};
   \node at (-0.5,-0.3) {$\color{gray}\scriptstyle{\lambda}$};
\end{tikzpicture}
}\right]_{u^{< 0}}\!\!\!\!\!,
&\mathord{
\begin{tikzpicture}[baseline = -1mm]
	\draw[<-,thick] (0,0.6) to (0,0.3);
	\draw[-,thick] (0,0.3) to [out=-90,in=180] (.3,-0.2);
	\draw[-,thick] (0.3,-0.2) to [out=0,in=-90](.5,0);
	\draw[-,thick] (0.5,0) to [out=90,in=0](.3,0.2);
	\draw[-,thick] (0.3,.2) to [out=180,in=90](0,-0.3);
	\draw[-,thick] (0,-0.3) to (0,-0.5);
   \node at (1.06,0) {$\scriptstyle{(u-y)^{-1}}$};
      \node at (0.5,0) {$\bull$};
   \node at (0,-.6) {$\scriptstyle{i}$};
   \node at (0.5,-0.3) {$\color{gray}\scriptstyle{\lambda}$};
\end{tikzpicture}
}&=
-
\signs_i(\lambda)\left[\mathord{
\begin{tikzpicture}[baseline = -1mm]
	\draw[->,thick] (0.08,-.5) to (0.08,.6);
   \node at (-0.46,0.05) {$\scriptstyle{(u-y)^{-1}}$};
      \node at (0.08,0.05) {$\bull$};
      \node at (.8,0.05) {$\clocki \scriptstyle (u)$};
   \node at (.08,-.6) {$\scriptstyle{i}$};
   \node at (0.5,-0.3) {$\color{gray}\scriptstyle{\lambda}$};
\end{tikzpicture}
}\right]_{u^{< 0}}\!\!\!\!\!,\\
\mathord{\begin{tikzpicture}[baseline = -1mm]
	\draw[->,thick] (0.08,-.4) to (0.08,.4);
      \node at (-0.6,0) {$\anticlockj\scriptstyle (u)$};
   \node at (.08,-.55) {$\scriptstyle{i}$};
   \node at (0.4,0) {$\color{gray}\scriptstyle{\lambda}$};
\end{tikzpicture}
}
&=
\mathord{
\begin{tikzpicture}[baseline = -1mm]
	\draw[->,thick] (0.08,-.4) to (0.08,.4);
      \node at (0.8,0) {$\anticlockj\scriptstyle(u)$};
   \node at (.08,-.55) {$\scriptstyle{i}$};
   \node at (0.5,-0.3) {$\color{gray}\scriptstyle{\lambda}$};
      \node at (.08,0) {$\bull$};
      \node at (-.65,0.05) {$\scriptstyle{(u-y)^{\langle h_i, \alpha_j\rangle}}$};
\end{tikzpicture}
},&
\mathord{
\begin{tikzpicture}[baseline = -1mm]
	\draw[->,thick] (0.08,-.4) to (0.08,.4);
      \node at (0.8,0) {$\clockj\scriptstyle(u)$};
   \node at (.08,-.55) {$\scriptstyle{i}$};
   \node at (0.5,-0.3) {$\color{gray}\scriptstyle{\lambda}$};
\end{tikzpicture}
}
&=
\mathord{
\begin{tikzpicture}[baseline = -1mm]
	\draw[->,thick] (0.08,-.4) to (0.08,.4);
      \node at (-0.6,0) {$\clockj\scriptstyle(u)$};
      \node at (.08,0) {$\bull$};
      \node at (.85,0.05) {$\scriptstyle{(u-y)^{\langle h_i,\alpha_j\rangle}}$};
   \node at (.08,-.55) {$\scriptstyle{i}$};
   \node at (0.4,-.3) {$\color{gray}\scriptstyle{\lambda}$};
\end{tikzpicture}
}.\label{bubslides}
\end{align}
The following is proved in exactly the same way as Lemma~\ref{sure}.

\begin{Lemma}\label{sure2lemma}
For a polynomial $p(u) \in \k[u]$, we have that
\begin{align}\label{sure1a}
\mathord{
\begin{tikzpicture}[baseline = -1.5mm]
 	\draw[->,thick] (0.08,-.4) to (0.08,.4);
   \node at (.08,-.55) {$\scriptstyle{i}$};
     \node at (0.08,0) {$\bull$};
     \node at (-0.33,0) {$\scriptstyle p(y)$};
   \node at (0.35,0) {$\color{gray}\scriptstyle{\lambda}$};
\end{tikzpicture}
}
&=
\left[
\mathord{
\begin{tikzpicture}[baseline = -1.5mm]
 	\draw[->,thick] (0.08,-.4) to (0.08,.4);
   \node at (.08,-.55) {$\scriptstyle{i}$};
     \node at (0.08,0) {$\bull$};
     \node at (-0.52,0) {$\scriptstyle (u-y)^{-1}$};
   \node at (0.4,.3) {$\color{gray}\scriptstyle{\lambda}$};
\end{tikzpicture}
}
\!\!\!\!\!p(u)
\right]_{u^{-1}},
&
\mathord{
\begin{tikzpicture}[baseline = 0mm]
 	\draw[<-,thick] (0.08,-.4) to (0.08,.4);
   \node at (.08,.55) {$\scriptstyle{i}$};
     \node at (0.08,0.04) {$\bull$};
     \node at (-0.33,0.04) {$\scriptstyle p(y)$};
   \node at (0.35,0.05) {$\color{gray}\scriptstyle{\lambda}$};
\end{tikzpicture}
}
&=
\left[
\mathord{
\begin{tikzpicture}[baseline = 0mm]
 	\draw[<-,thick] (0.08,-.4) to (0.08,.4);
   \node at (.08,.55) {$\scriptstyle{i}$};
     \node at (0.08,0.04) {$\bull$};
     \node at (-0.5,0.04) {$\scriptstyle (u-y)^{-1}$};
   \node at (0.4,-.3) {$\color{gray}\scriptstyle{\lambda}$};
\end{tikzpicture}
}
\!\!\!\!\!p(u)
\right]_{u^{-1}},\\\label{sure2a}
\!\mathord{
\begin{tikzpicture}[baseline = 1.25mm]
  \draw[<-,thick] (0,0.4) to[out=180,in=90] (-.2,0.2);
  \draw[-,thick] (0.2,0.2) to[out=90,in=0] (0,.4);
 \draw[-,thick] (-.2,0.2) to[out=-90,in=180] (0,0);
  \draw[-,thick] (0,0) to[out=0,in=-90] (0.2,0.2);
   \node at (0.2,0.2) {$\bull$};
   \node at (0.55,0.2) {$\scriptstyle{p(y)}$};
   \node at (0,-.15) {$\scriptstyle{i}$};
   \node at (-0.4,.2) {$\color{gray}\scriptstyle{\lambda}$};
\end{tikzpicture}
}
&=
\sigma_i(\lambda)
\left[{\scriptstyle{\color{gray}\lambda\:\:}\clocki(u)}\:p(u)\right]_{u^{-1}},&
\mathord{
\begin{tikzpicture}[baseline = 1.25mm]
  \draw[-,thick] (0,0.4) to[out=180,in=90] (-.2,0.2);
  \draw[->,thick] (0.2,0.2) to[out=90,in=0] (0,.4);
 \draw[-,thick] (-.2,0.2) to[out=-90,in=180] (0,0);
  \draw[-,thick] (0,0) to[out=0,in=-90] (0.2,0.2);
   \node at (0.2,0.2) {$\bull$};
   \node at (0.55,0.2) {$\scriptstyle{p(y)}$};
   \node at (0,-.15) {$\scriptstyle{i}$};
   \node at (-0.4,.2) {$\color{gray}\scriptstyle{\lambda}$};
\end{tikzpicture}
}
&=
\sigma_i(\lambda)
\left[{\scriptstyle{\color{gray}\lambda\:\:}\anticlocki(u)}\:p(u)\right]_{u^{-1}},\\\label{sure3a}
\mathord{
\begin{tikzpicture}[baseline = -1.5mm]
	\draw[<-,thick] (0,0.6) to (0,0.3);
	\draw[-,thick] (0,0.3) to [out=-90,in=180] (.3,-0.2);
	\draw[-,thick] (0.3,-0.2) to [out=0,in=-90](.5,0);
	\draw[-,thick] (0.5,0) to [out=90,in=0](.3,0.2);
	\draw[-,thick] (0.3,.2) to [out=180,in=90](0,-0.3);
	\draw[-,thick] (0,-0.3) to (0,-0.5);
   \node at (0,-.65) {$\scriptstyle{i}$};
   \node at (0.3,-.4) {$\color{gray}\scriptstyle{\lambda}$};
   \node at (.87,0) {$\scriptstyle{p(y)}$};
      \node at (0.5,0) {$\bull$};
\end{tikzpicture}
}&=
-
\sigma_i(\lambda)\left[\!\mathord{
\begin{tikzpicture}[baseline = -1.5mm]
	\draw[->,thick] (0.08,-.5) to (0.08,.6);
   \node at (0.08,-.65) {$\scriptstyle{i}$};
   \node at (0.4,-.4) {$\color{gray}\scriptstyle{\lambda}$};
   \node at (-0.46,0.05) {$\scriptstyle{(u-y)^{-1}}$};
      \node at (0.08,0.05) {$\bull$};
      \node at (.7,0.05) {$\clocki \scriptstyle (u)$};
\end{tikzpicture}
}p(u)\right]_{u^{-1}}\!\!\!\!\!,
&
\mathord{
\begin{tikzpicture}[baseline = 0mm]
	\draw[-,thick] (0,0.5) to (0,0.3);
	\draw[-,thick] (0,0.3) to [out=-90,in=180] (.3,-0.2);
	\draw[-,thick] (0.3,-0.2) to [out=0,in=-90](.5,0);
	\draw[-,thick] (0.5,0) to [out=90,in=0](.3,0.2);
	\draw[-,thick] (0.3,.2) to [out=180,in=90](0,-0.3);
	\draw[->,thick] (0,-0.3) to (0,-0.6);
   \node at (.87,0) {$\scriptstyle{p(y)}$};
      \node at (0.5,0) {$\bull$};
   \node at (0,.65) {$\scriptstyle{i}$};
   \node at (0.4,-.4) {$\color{gray}\scriptstyle{\lambda}$};
\end{tikzpicture}
}&=
\sigma_i(\lambda)\left[\!\mathord{
\begin{tikzpicture}[baseline = 0mm]
	\draw[<-,thick] (0.08,-.6) to (0.08,.5);
   \node at (-0.46,0.05) {$\scriptstyle{(u-y)^{-1}}$};
      \node at (0.08,0.05) {$\bull$};
      \node at (.7,0.05) {$\anticlocki \scriptstyle (u)$};
   \node at (0.08,.65) {$\scriptstyle{i}$};
   \node at (0.4,-.4) {$\color{gray}\scriptstyle{\lambda}$};
\end{tikzpicture}
}p(u)\right]_{u^{-1}}\!\!\!\!\!\!.
\end{align}
\end{Lemma}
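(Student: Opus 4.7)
The plan is to mirror the proof of Lemma~\ref{sure} verbatim, translating from the degenerate Heisenberg setting to the Kac-Moody 2-category. The role of the dot variable $x$ is played by $y$, the generating functions (\ref{igq})--(\ref{igp}) are replaced by the analogs (\ref{chink})--(\ref{chunk}), and the curl relations (\ref{mostgin}) are replaced by the Kac-Moody curl relations displayed just before (\ref{bubslides}). By $\k$-linearity it suffices to verify each identity on monomials $p(u) = u^r$ with $r \geq 0$.

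For (\ref{sure1a}), I would expand $(u-y)^{-1} = \sum_{s \geq 0} y^s u^{-s-1}$ in the right-hand side, multiply the resulting series by $u^r$, and read off the $u^{-1}$-coefficient to obtain a dot labelled by $y^r$, matching the left-hand side. For (\ref{sure2a}), the definitions (\ref{chink})--(\ref{chunk}) immediately express the sideways bubble carrying a dot labelled $y^r$ as $\sigma_i(\lambda)$ times the coefficient of $u^{-r-1}$ in $\anticlocki(u)$ or $\clocki(u)$. Multiplying those generating functions by $p(u) = u^r$ and extracting the $u^{-1}$-coefficient on the right-hand side reproduces the same expression, so both identities in (\ref{sure2a}) follow directly.

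For (\ref{sure3a}), the strategy is the same as in Lemma~\ref{sure}: first rewrite the vertical strand inside the curl, which carries the dot $p(y)$, using the already-established (\ref{sure1a}), so that the interior dot is presented as the $u^{-1}$-coefficient of an $(u-y)^{-1}$-labelled strand weighted by $p(u)$. Then apply the curl relations from the display preceding (\ref{bubslides}) (which convert a curl with an $(u-y)^{-1}$-dot into a product of a vertical $(u-y)^{-1}$-strand and a bubble generating function $\clocki(u)$ or $\anticlocki(u)$, times $\sigma_i(\lambda)$). Extracting the $u^{-1}$-coefficient of the resulting expression, with $p(u)$ as an extra factor, yields the claimed formulae.

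I do not expect any genuine obstacle; the whole argument is a bookkeeping exercise comparing coefficients of formal Laurent series. The only point requiring care is tracking the sign $\sigma_i(\lambda)$, which enters both through the normalization of the generating functions (\ref{chink})--(\ref{chunk}) and through the curl relations, and consequently appears in (\ref{sure2a}) and (\ref{sure3a}) but cancels out in (\ref{sure1a}).
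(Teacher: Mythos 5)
Your proposal is correct and follows exactly the route the paper takes: the paper proves this lemma by the remark "proved in exactly the same way as Lemma~\ref{sure}," i.e., by linearity reduce to $p(u)=u^r$, read off $u^{-1}$-coefficients using the definitions (\ref{chink})--(\ref{chunk}) for (\ref{sure1a})--(\ref{sure2a}), and derive (\ref{sure3a}) by rewriting the interior dot via (\ref{sure1a}) and then applying the Kac-Moody curl relations displayed before (\ref{bubslides}). Your tracking of the sign $\signs_i(\lambda)$ through the generating-function normalization and the curl relations is the right point of care.
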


Finally, we outline the precise connection between $\UU(\g)$ and the quantized
enveloping algebra $U_q(\g)$ associated to $\g$. To do this, one needs to introduce a $\Z$-grading on
2-morphisms, thereby making $\UU(\g)$ into a 2-category enriched in
graded vector spaces. From that, one obtains a graded 2-category
$\UU_q(\g)$
by formally adjoining grading shift operators to the $1$-morphism categories.
The Grothendieck ring $K_0(\Kar(\UU_q(\g)))$ of the additive Karoubi
envelope of this graded 2-category
is then a $\Z[q,q^{-1}]$-algebra with $q$ acting by the grading shift.
This Grothendieck ring
 is isomorphic to the $\Z[q,q^{-1}]$-form of Lusztig's
idempotented form for the quantized enveloping algebra of
$\g$.
This was proved for $\sl_\infty(\C)$ in \cite{KL3}, and in
general in \cite{Wunfurling}.
Since we will not need these results here, we omit the detailed constructions.

\section{Heisenberg module categories}

This section is the heart of the article.
Let
$\Heis_k$ be the Heisenberg category, either
degenerate 
or quantum 
according to the choice of $z \in \k$.
Suppose that we are given a ($\k$-linear)
$\Heis_k$-module
category $\R$ which is either locally finite Abelian or
Schurian.
We are going to show that $\R$ can be given the structure of a
Kac-Moody 2-representation.

\subsection{Eigenfunctors}\label{sendo}
The endofunctors $E$ and $F$ of $\R$ defined by the generating
objects of $\Heis_k$ are biadjoint, with adjunctions $(E,F)$ and
$(F,E)$ defined by the rightwards cups/caps and the leftwards
cups/caps, respectively. Hence, both $E$ and $F$ are sweet
endofunctors.
 For $i \in \k$, let $E_i$ and $F_i$ be the subfunctors of $E$ and $F$
defined on $V \in \R$
by declaring that
$E_i V$ and $F_i V$ are the generalized $i$-eigenspaces of
the endomorphisms
$\mathord{\begin{tikzpicture}[baseline = -1mm]
 	\draw[->] (0.08,-.2) to (0.08,.2);
     \node at (0.08,0) {$\dt$};
 	\draw[-,darkg,thick] (0.38,.2) to (0.38,-.2);
     \node at (0.55,0) {$\darkg\scriptstyle{V}$};
\end{tikzpicture}
}$ and
$\mathord{\begin{tikzpicture}[baseline = -1mm]
 	\draw[<-] (0.08,-.2) to (0.08,.2);
     \node at (0.08,0.02) {$\dt$};
 	\draw[-,darkg,thick] (0.38,.2) to (0.38,-.2);
     \node at (0.55,0) {$\darkg\scriptstyle{V}$};
\end{tikzpicture}
}$, respectively.

Let us spell this definition out in more detail. In the Schurian case, any
object is the
direct limit of its compact ($=$ finitely presented) subobjects by \cite[Lemma 2.6]{BS}, so in view of the
exactness of $E$ and $F$ it suffices to
define $E_i V$ and $F_i V$ under the assumption that $V$ is finitely
generated. Assuming this (which is no restriction at all in the
locally finite Abelian case), the objects
$EV$ and $FV$ are finitely generated too,
hence, their
endomorphism algebras
$\End_{\R}(EV)$ and $\End_{\R}(FV)$ are
finite-dimensional.
So we can define $m_V(u), n_V(u) \in \k[u]$ to
be the (monic) {\em minimal polynomials} of
the endomorphisms
$\mathord{\begin{tikzpicture}[baseline = -1mm]
 	\draw[->] (0.08,-.2) to (0.08,.2);
     \node at (0.08,0) {$\dt$};
 	\draw[-,darkg,thick] (0.38,.2) to (0.38,-.2);
     \node at (0.55,0) {$\darkg\scriptstyle{V}$};
\end{tikzpicture}
}$ and
$\mathord{\begin{tikzpicture}[baseline = -1mm]
 	\draw[<-] (0.08,-.2) to (0.08,.2);
     \node at (0.08,0.02) {$\dt$};
 	\draw[-,darkg,thick] (0.38,.2) to (0.38,-.2);
     \node at (0.55,0) {$\darkg\scriptstyle{V}$};
\end{tikzpicture}
}$, respectively.
Then
there are {injective} homomorphisms
\begin{align}\label{CRT1}
\k[u] / (m_V(u)) &\hookrightarrow \End_{\R} (EV),
&
\k[u] / (n_V(u)) &\hookrightarrow \End_{\R} (FV),\\
p(u) &
\mapsto \mathord{
\begin{tikzpicture}[baseline = -1mm]
 	\draw[->] (0.08,-.3) to (0.08,.4);
     \node at (0.08,0.05) {$\dt$};
     \node at (-0.3,0.05) {$\scriptstyle p(x)$};
 	\draw[-,darkg,thick] (0.45,.4) to (0.45,-.3);
     \node at (0.45,-.45) {$\darkg\scriptstyle{V}$};
\end{tikzpicture}
},
&p(u) &\mapsto \mathord{
\begin{tikzpicture}[baseline = -1mm]
 	\draw[<-] (0.08,-.3) to (0.08,.4);
     \node at (0.08,0.1) {$\dt$};
     \node at (-0.3,0.1) {$\scriptstyle p(x)$};
 	\draw[-,darkg,thick] (0.45,.4) to (0.45,-.3);
     \node at (0.45,-.45) {$\darkg\scriptstyle{V}$};
\end{tikzpicture}
}.\notag
\end{align}
Also let $\eps_i(V)$ and $\phi_i(V)$ denote the multiplicities
of $i \in \k$ as a root of the polynomials $m_V(u)$ and $n_V(u)$,
respectively.
By the Chinese remainder theorem, we have that
\begin{align}\label{CRTdef}
\k[u] / (m_V(u)) &\cong \bigoplus_{i \in \k} \k[u] \big/ \big((u-i)^{\eps_i(V)}\big),&
\k[u] / (n_V(u)) &\cong \bigoplus_{i \in \k} \k[u] \big/
                   \big((u-i)^{\phi_i(V)}\big).
\end{align}
There are corresponding decompositions $1 = \sum_{i \in \k} e_i$ of
and $1 = \sum_{i \in \k} f_i$ of
the identity elements of these algebras as a sum of mutually orthogonal
idempotents.
We define $E_i V$ and $F_i V$ to be the
summands of $EV$ and $FV$, respectively, defined by the images of the idempotents
$e_i$ and $f_i$ under
(\ref{CRT1}).

We will represent the identity endomorphisms of the functors $E_i$ and $F_i$ by
vertical strings colored by $i$, see the first pair of diagrams
below.
The inclusions
$E_i \hookrightarrow E$ and $F_i \hookrightarrow F$ are depicted
by the second pair of diagrams below. The projections $E
\twoheadrightarrow E_i$ and $F \twoheadrightarrow F_i$ are the final pair.
\begin{align*}
&\mathord{
\begin{tikzpicture}[baseline = 0]
	\draw[->] (0.08,-.3) to (0.08,.4);
      \node at (0.08,-0.45) {$\scriptstyle{i}$};
\end{tikzpicture}
}
\!:E_i \Rightarrow E_i,
&&
\mathord{
\begin{tikzpicture}[baseline = 0]
	\draw[<-] (0.08,-.3) to (0.08,.4);
      \node at (0.08,0.55) {$\scriptstyle{i}$};
\end{tikzpicture}
} \!:F_i \Rightarrow F_i,
&&
\mathord{
\begin{tikzpicture}[baseline = 0]
	\draw[->] (0.08,-.3) to (0.08,.4);
	\draw[-] (0.04,.05) to (0.12,.05);
      \node at (0.08,-0.45) {$\scriptstyle{i}$};
\end{tikzpicture}
} \!:E_i \Rightarrow E,
&&
\mathord{
\begin{tikzpicture}[baseline = 0]
	\draw[<-] (0.08,-.3) to (0.08,.4);
	\draw[-] (0.04,.05) to (0.12,.05);
      \node at (0.08,-0.45) {$\scriptstyle{i}$};
\end{tikzpicture}
} \!:F_i \Rightarrow F,
&&
\mathord{
\begin{tikzpicture}[baseline = 0mm]
	\draw[->] (0.08,-.3) to (0.08,.4);
	\draw[-] (0.04,.05) to (0.12,.05);
      \node at (0.08,0.55) {$\scriptstyle{i}$};
\end{tikzpicture}
} \!:E \Rightarrow E_i,
&&
\mathord{
\begin{tikzpicture}[baseline = 0mm]
s	\draw[<-] (0.08,-.3) to (0.08,.4);
	\draw[-] (0.04,.05) to (0.12,.05);
      \node at (0.08,0.55) {$\scriptstyle{i}$};
\end{tikzpicture}
} \!:F \Rightarrow F_i.
\end{align*}
To illustrate the notation, the natural transformation
$\:\mathord{
\begin{tikzpicture}[baseline = -0.8mm]
	\draw[->] (0.08,-.25) to (0.08,.35);
	\draw[-] (0.04,.15) to (0.12,.15);
	\draw[-] (0.04,-.05) to (0.12,-.05);
      \node at (0.2,0.05) {$\scriptstyle{i}$};
\end{tikzpicture}
}:E \Rightarrow E
$
is the projection of $E$ onto its summand $E_i$, while
\begin{equation}\label{othogonality}
\mathord{
\begin{tikzpicture}[baseline = 0]
	\draw[->] (0.08,-.3) to (0.08,.4);
	\draw[-] (0.04,.15) to (0.12,.15);
	\draw[-] (0.04,-0.05) to (0.12,-0.05);
      \node at (0.08,0.55) {$\scriptstyle{j}$};
      \node at (0.08,-.45) {$\scriptstyle{i}$};
\end{tikzpicture}
}
= \delta_{i,j}
\mathord{
\begin{tikzpicture}[baseline = 0]
	\draw[->] (0.08,-.3) to (0.08,.4);
        \node at (0.08,-.45) {$\scriptstyle{i}$};
\end{tikzpicture}
}\:\:.
\end{equation}
It is also clear from the definition that the endomorphisms of
$E$ and $F$
defined by the dots restrict to endomorphisms of the
summands $E_i$
and $F_i$.
Representing these restrictions simply by drawing the dots on a string
colored by $i$, we have that
\begin{align}\label{sizzle}
\mathord{
\begin{tikzpicture}[baseline = -1mm]
	\draw[->] (0.08,-.3) to (0.08,.4);
      \node at (0.08,-0.05) {$\dt$};
	\draw[-] (0.04,.15) to (0.12,.15);
      \node at (0.08,-0.45) {$\scriptstyle{i}$};
\end{tikzpicture}
}&=
\mathord{
\begin{tikzpicture}[baseline = -1mm]
	\draw[->] (0.08,-.3) to (0.08,.4);
      \node at (0.08,0.15) {$\dt$};
	\draw[-] (0.04,-.05) to (0.12,-.05);
      \node at (0.08,-0.45) {$\scriptstyle{i}$};
\end{tikzpicture}
}\:\:,
&
\mathord{
\begin{tikzpicture}[baseline = -1mm]
	\draw[<-] (0.08,-.3) to (0.08,.4);
      \node at (0.08,-0.05) {$\dt$};
	\draw[-] (0.04,.15) to (0.12,.15);
      \node at (0.08,-0.45) {$\scriptstyle{i}$};
\end{tikzpicture}
}&=
\mathord{
\begin{tikzpicture}[baseline = -1mm]
	\draw[<-] (0.08,-.3) to (0.08,.4);
      \node at (0.08,0.15) {$\dt$};
	\draw[-] (0.04,-.05) to (0.12,-.05);
      \node at (0.08,-0.45) {$\scriptstyle{i}$};
\end{tikzpicture}
}\:\:,&
\mathord{
\begin{tikzpicture}[baseline = 1mm]
	\draw[->] (0.08,-.3) to (0.08,.4);
      \node at (0.08,-0.05) {$\dt$};
	\draw[-] (0.04,.15) to (0.12,.15);
      \node at (0.08,0.55) {$\scriptstyle{i}$};
\end{tikzpicture}
}&=
\mathord{
\begin{tikzpicture}[baseline = 1mm]
	\draw[->] (0.08,-.3) to (0.08,.4);
      \node at (0.08,0.15) {$\dt$};
	\draw[-] (0.04,-.05) to (0.12,-.05);
      \node at (0.08,0.55) {$\scriptstyle{i}$};
\end{tikzpicture}
}\:\:,
&
\mathord{
\begin{tikzpicture}[baseline = 1mm]
	\draw[<-] (0.08,-.3) to (0.08,.4);
      \node at (0.08,-0.05) {$\dt$};
	\draw[-] (0.04,.15) to (0.12,.15);
      \node at (0.08,0.55) {$\scriptstyle{i}$};
\end{tikzpicture}
}&=
\mathord{
\begin{tikzpicture}[baseline = 1mm]
	\draw[<-] (0.08,-.3) to (0.08,.4);
      \node at (0.08,0.15) {$\dt$};
	\draw[-] (0.04,-.05) to (0.12,-.05);
      \node at (0.08,0.55) {$\scriptstyle{i}$};
\end{tikzpicture}
}
\:\:.
\end{align}

Since the downwards dot is both the left and right mate of the upwards dot,
the adjunctions $(E,F)$ and $(F,E)$ induce adjunctions $(E_i, F_i)$
and $(F_i, E_i)$ for all $i \in I$.
We draw the units and counits of these adjunctions using cups and caps
colored by $i$.
Again, the various inclusions and projections commute with these morphisms:
\begin{align}
\begin{array}{llll}
\mathord{
\begin{tikzpicture}[baseline = 1mm]
	\draw[<-] (0.4,0.4) to[out=-90, in=0] (0.1,0);
	\draw[-] (-0.2,.188) to (-0.123,.22);
	\draw[-] (0.1,0) to[out = 180, in = -90] (-0.2,0.4);
      \node at (-0.2,0.55) {$\scriptstyle{i}$};
\end{tikzpicture}
}
\:=
\mathord{
\begin{tikzpicture}[baseline = 1mm]
	\draw[<-] (0.4,0.4) to[out=-90, in=0] (0.1,0);
	\draw[-] (.402,.188) to (.325,.22);
	\draw[-] (0.1,0) to[out = 180, in = -90] (-0.2,0.4);
      \node at (-0.2,0.55) {$\scriptstyle{i}$};
\end{tikzpicture}
}\:
,
\quad&
\mathord{
\begin{tikzpicture}[baseline = 1mm]
	\draw[-] (0.4,0.4) to[out=-90, in=0] (0.1,0);
	\draw[-] (-0.2,.188) to (-0.123,.22);
	\draw[->] (0.1,0) to[out = 180, in = -90] (-0.2,0.4);
      \node at (-0.2,0.55) {$\scriptstyle{i}$};
\end{tikzpicture}
}
\;=\;
\mathord{
\begin{tikzpicture}[baseline = 1mm]
	\draw[-] (0.4,0.4) to[out=-90, in=0] (0.1,0);
	\draw[-] (.402,.188) to (.325,.22);
	\draw[->] (0.1,0) to[out = 180, in = -90] (-0.2,0.4);
      \node at (0.4,0.55) {$\scriptstyle{i}$};
\end{tikzpicture}
}\:
,
\quad&
\mathord{
\begin{tikzpicture}[baseline = 1mm]
	\draw[<-] (0.4,0.4) to[out=-90, in=0] (0.1,0);
	\draw[-] (-0.2,.188) to (-0.123,.22);
	\draw[-] (0.1,0) to[out = 180, in = -90] (-0.2,0.4);
      \node at (0.4,0.55) {$\scriptstyle{i}$};
\end{tikzpicture}
}
=
\:\mathord{
\begin{tikzpicture}[baseline = 1mm]
	\draw[<-] (0.4,0.4) to[out=-90, in=0] (0.1,0);
	\draw[-] (.402,.188) to (.325,.22);
	\draw[-] (0.1,0) to[out = 180, in = -90] (-0.2,0.4);
      \node at (0.4,0.55) {$\scriptstyle{i}$};
\end{tikzpicture}
}\:
,
\quad&
\mathord{
\begin{tikzpicture}[baseline = 1mm]
	\draw[-] (0.4,0.4) to[out=-90, in=0] (0.1,0);
	\draw[-] (-0.2,.188) to (-0.123,.22);
	\draw[->] (0.1,0) to[out = 180, in = -90] (-0.2,0.4);
      \node at (0.4,0.55) {$\scriptstyle{i}$};
\end{tikzpicture}
}
=\:
\mathord{
\begin{tikzpicture}[baseline = 1mm]
	\draw[-] (0.4,0.4) to[out=-90, in=0] (0.1,0);
	\draw[-] (.402,.188) to (.325,.22);
	\draw[->] (0.1,0) to[out = 180, in = -90] (-0.2,0.4);
      \node at (0.4,0.55) {$\scriptstyle{i}$};
\end{tikzpicture}
}\:
,
\\\\
\mathord{
\begin{tikzpicture}[baseline = 1mm]
	\draw[<-] (0.4,0) to[out=90, in=0] (0.1,0.4);
	\draw[-] (-0.2,.22) to (-0.123,.188);
	\draw[-] (0.1,0.4) to[out = 180, in = 90] (-0.2,0);
      \node at (-0.2,-0.15) {$\scriptstyle{i}$};
\end{tikzpicture}
}\:=\:
\mathord{
\begin{tikzpicture}[baseline = 1mm]
	\draw[<-] (0.4,0) to[out=90, in=0] (0.1,0.4);
	\draw[-] (.402,.22) to (.325,.188);
	\draw[-] (0.1,0.4) to[out = 180, in = 90] (-0.2,0);
      \node at (-0.2,-0.15) {$\scriptstyle{i}$};
\end{tikzpicture}
}\:,
\quad&
\mathord{
\begin{tikzpicture}[baseline = 1mm]
	\draw[-] (0.4,0) to[out=90, in=0] (0.1,0.4);
	\draw[-] (-0.2,.22) to (-0.123,.188);
	\draw[->] (0.1,0.4) to[out = 180, in = 90] (-0.2,0);
      \node at (-0.2,-0.15) {$\scriptstyle{i}$};
\end{tikzpicture}
}\;=\;
\mathord{
\begin{tikzpicture}[baseline = 1mm]
	\draw[-] (0.4,0) to[out=90, in=0] (0.1,0.4);
	\draw[-] (.402,.22) to (.325,.188);
	\draw[->] (0.1,0.4) to[out = 180, in = 90] (-0.2,0);
      \node at (-0.2,-0.15) {$\scriptstyle{i}$};
\end{tikzpicture}
}\:,
\quad&
\mathord{
\begin{tikzpicture}[baseline = 1mm]
	\draw[<-] (0.4,0) to[out=90, in=0] (0.1,0.4);
	\draw[-] (-0.2,.22) to (-0.123,.188);
	\draw[-] (0.1,0.4) to[out = 180, in = 90] (-0.2,0);
      \node at (0.4,-0.15) {$\scriptstyle{i}$};
\end{tikzpicture}
}=\:
\mathord{
\begin{tikzpicture}[baseline = 1mm]
	\draw[<-] (0.4,0) to[out=90, in=0] (0.1,0.4);
	\draw[-] (.402,.22) to (.325,.188);
	\draw[-] (0.1,0.4) to[out = 180, in = 90] (-0.2,0);
      \node at (0.4,-0.15) {$\scriptstyle{i}$};
\end{tikzpicture}
}\:,
\quad&
\mathord{
\begin{tikzpicture}[baseline = 1mm]
	\draw[-] (0.4,0) to[out=90, in=0] (0.1,0.4);
	\draw[-] (-0.2,.22) to (-0.123,.188);
	\draw[->] (0.1,0.4) to[out = 180, in = 90] (-0.2,0);
      \node at (0.4,-0.15) {$\scriptstyle{i}$};
\end{tikzpicture}
}=\:
\mathord{
\begin{tikzpicture}[baseline = 1mm]
	\draw[-] (0.4,0) to[out=90, in=0] (0.1,0.4);
	\draw[-] (.402,.22) to (.325,.188);
	\draw[->] (0.1,0.4) to[out = 180, in = 90] (-0.2,0);
      \node at (0.4,-0.15) {$\scriptstyle{i}$};
\end{tikzpicture}
}\:.
\end{array}\label{incpro}
\end{align}

The situation with crossings is more interesting.
For $i,j,i',j' \in \k$, define
\begin{align}
\left.
\begin{array}{ll}
\mathord{
\begin{tikzpicture}[baseline = -1mm]
	\draw[->] (0.28,-.28) to (-0.28,.28);
	\draw[->] (-0.28,-.28) to (0.28,.28);
      \node at (-0.33,-0.43) {$\scriptstyle{j}$};
      \node at (0.33,-0.43) {$\scriptstyle{i}$};
      \node at (-0.33,0.43) {$\scriptstyle{j'}$};
      \node at (0.33,0.43) {$\scriptstyle{i'}$};
\node at (0,-0.01) {$\diamond$};
\end{tikzpicture}
}:=
\mathord{
\begin{tikzpicture}[baseline = -1mm]
	\draw[->] (0.28,-.28) to (-0.28,.28);
	\draw[->] (-0.28,-.28) to (0.28,.28);
      \node at (-0.33,-0.43) {$\scriptstyle{j}$};
      \node at (0.33,-0.43) {$\scriptstyle{i}$};
      \node at (-0.33,0.43) {$\scriptstyle{j'}$};
      \node at (0.33,0.43) {$\scriptstyle{i'}$};
	\draw[-] (.14,.22) to (.22,.14);
	\draw[-] (.14,-.22) to (.22,-.14);
	\draw[-] (-.14,.22) to (-.22,.14);
	\draw[-] (-.14,-.22) to (-.22,-.14);
\end{tikzpicture}
}
&\text{in the degenerate case,}\\
\mathord{
\begin{tikzpicture}[baseline = -1mm]
	\draw[->] (0.28,-.28) to (-0.28,.28);
	\draw[-,white,line width=3pt] (-0.28,-.28) to (0.28,.28);
	\draw[->] (-0.28,-.28) to (0.28,.28);
      \node at (-0.33,-0.43) {$\scriptstyle{j}$};
      \node at (0.33,-0.43) {$\scriptstyle{i}$};
      \node at (-0.33,0.43) {$\scriptstyle{j'}$};
      \node at (0.33,0.43) {$\scriptstyle{i'}$};
\node at (0,-0.01) {$\diamond$};
\end{tikzpicture}
}:=
\mathord{
\begin{tikzpicture}[baseline = -1mm]
	\draw[->] (0.28,-.28) to (-0.28,.28);
	\draw[-,white,line width=4pt] (-0.28,-.28) to (0.28,.28);
	\draw[->] (-0.28,-.28) to (0.28,.28);
      \node at (-0.33,-0.43) {$\scriptstyle{j}$};
      \node at (0.33,-0.43) {$\scriptstyle{i}$};
      \node at (-0.33,0.43) {$\scriptstyle{j'}$};
      \node at (0.33,0.43) {$\scriptstyle{i'}$};
	\draw[-] (.14,.22) to (.22,.14);
	\draw[-] (.14,-.22) to (.22,-.14);
	\draw[-] (-.14,.22) to (-.22,.14);
	\draw[-] (-.14,-.22) to (-.22,-.14);
\end{tikzpicture}
}\:,\quad
\mathord{
\begin{tikzpicture}[baseline = 0]
	\draw[->] (-0.28,-.28) to (0.28,.28);
	\draw[-,white,line width=3pt] (0.28,-.28) to (-0.28,.28);
	\draw[->] (0.28,-.28) to (-0.28,.28);
      \node at (-0.33,-0.43) {$\scriptstyle{j}$};
      \node at (0.33,-0.43) {$\scriptstyle{i}$};
      \node at (-0.33,0.43) {$\scriptstyle{j'}$};
      \node at (0.33,0.43) {$\scriptstyle{i'}$};
\node at (0,-0.01) {$\diamond$};
\end{tikzpicture}
}:=
\mathord{
\begin{tikzpicture}[baseline = 0]
	\draw[->] (-0.28,-.28) to (0.28,.28);
	\draw[-,line width=4pt,white] (0.28,-.28) to (-0.28,.28);
	\draw[->] (0.28,-.28) to (-0.28,.28);
      \node at (-0.33,-0.43) {$\scriptstyle{j}$};
      \node at (0.33,-0.43) {$\scriptstyle{i}$};
      \node at (-0.33,0.43) {$\scriptstyle{j'}$};
      \node at (0.33,0.43) {$\scriptstyle{i'}$};
	\draw[-] (.14,.22) to (.22,.14);
	\draw[-] (.14,-.22) to (.22,-.14);
	\draw[-] (-.14,.22) to (-.22,.14);
	\draw[-] (-.14,-.22) to (-.22,-.14);
\end{tikzpicture}
}&
\text{in the quantum case.}
\end{array}\right.\label{interesting}\end{align}
Thus, these natural transformation are defined by first including
the summand $E_j E_i$ into $E E$, then
applying natural transformation $EE \Rightarrow EE$ defined by
the usual crossing (positive or negative in the quantum case), then
projecting
$E E$ onto the summand $E_{j'} E_{i'}$.
The defining relations plus (\ref{sizzle}) imply
that
\begin{align}\label{newdotslide}
\mathord{
\begin{tikzpicture}[baseline = -1mm]
	\draw[->] (0.28,-.28) to (-0.28,.28);
	\draw[->] (-0.28,-.28) to (0.28,.28);
      \node at (-0.33,-0.43) {$\scriptstyle{j}$};
      \node at (0.33,-0.43) {$\scriptstyle{i}$};
      \node at (-0.33,0.43) {$\scriptstyle{j'}$};
      \node at (0.33,0.43) {$\scriptstyle{i'}$};
\node at (0,-.01) {$\diamond$};
\node at (-.17,-.19) {$\dt$};
\end{tikzpicture}
}&=
\mathord{
\begin{tikzpicture}[baseline = -1mm]
	\draw[->] (0.28,-.28) to (-0.28,.28);
	\draw[->] (-0.28,-.28) to (0.28,.28);
      \node at (-0.33,-0.43) {$\scriptstyle{j}$};
      \node at (0.33,-0.43) {$\scriptstyle{i}$};
      \node at (-0.33,0.43) {$\scriptstyle{j'}$};
      \node at (0.33,0.43) {$\scriptstyle{i'}$};
\node at (0,-.01) {$\diamond$};
\node at (.17,.15) {$\dt$};
\end{tikzpicture}
}+
\delta_{i,i'} \delta_{j,j'}
\begin{tikzpicture}[baseline = -1mm]
	\draw[->] (0.18,-.28) to (0.18,.28);
	\draw[->] (-0.18,-.28) to (-0.18,.28);
      \node at (-0.18,-0.43) {$\scriptstyle{j}$};
      \node at (0.18,-0.43) {$\scriptstyle{i}$};
\end{tikzpicture},&
\mathord{
\begin{tikzpicture}[baseline = -1mm]
	\draw[->] (0.28,-.28) to (-0.28,.28);
	\draw[->] (-0.28,-.28) to (0.28,.28);
      \node at (-0.33,-0.43) {$\scriptstyle{j}$};
      \node at (0.33,-0.43) {$\scriptstyle{i}$};
      \node at (-0.33,0.43) {$\scriptstyle{j'}$};
      \node at (0.33,0.43) {$\scriptstyle{i'}$};
\node at (0,-.01) {$\diamond$};
\node at (-.17,.15) {$\dt$};
\end{tikzpicture}
}&=
\mathord{
\begin{tikzpicture}[baseline = -1mm]
	\draw[->] (0.28,-.28) to (-0.28,.28);
	\draw[->] (-0.28,-.28) to (0.28,.28);
      \node at (-0.33,-0.43) {$\scriptstyle{j}$};
      \node at (0.33,-0.43) {$\scriptstyle{i}$};
      \node at (-0.33,0.43) {$\scriptstyle{j'}$};
      \node at (0.33,0.43) {$\scriptstyle{i'}$};
\node at (0,-.01) {$\diamond$};
\node at (.18,-.19) {$\dt$};
\end{tikzpicture}
}+
\delta_{i,i'} \delta_{j,j'}
\begin{tikzpicture}[baseline = -1mm]
	\draw[->] (0.18,-.28) to (0.18,.28);
	\draw[->] (-0.18,-.28) to (-0.18,.28);
      \node at (-0.18,-0.43) {$\scriptstyle{j}$};
      \node at (0.18,-0.43) {$\scriptstyle{i}$};
\end{tikzpicture}
\end{align}
in the degenerate case, or
\begin{align}\label{newdotslide2}
\mathord{
\begin{tikzpicture}[baseline = -1mm]
	\draw[->] (-0.28,-.28) to (0.28,.28);
	\draw[-,white,line width=3pt] (0.28,-.28) to (-0.28,.28);
	\draw[->] (0.28,-.28) to (-0.28,.28);
      \node at (-0.33,-0.43) {$\scriptstyle{j}$};
      \node at (0.33,-0.43) {$\scriptstyle{i}$};
      \node at (-0.33,0.43) {$\scriptstyle{j'}$};
      \node at (0.33,0.43) {$\scriptstyle{i'}$};
\node at (0,-.01) {$\diamond$};
\node at (-.17,-.19) {$\dt$};
\end{tikzpicture}
}&=
\mathord{
\begin{tikzpicture}[baseline = -1mm]
	\draw[->] (0.28,-.28) to (-0.28,.28);
	\draw[-,line width=3pt, white] (-0.28,-.28) to (0.28,.28);
	\draw[->] (-0.28,-.28) to (0.28,.28);
      \node at (-0.33,-0.43) {$\scriptstyle{j}$};
      \node at (0.33,-0.43) {$\scriptstyle{i}$};
      \node at (-0.33,0.43) {$\scriptstyle{j'}$};
      \node at (0.33,0.43) {$\scriptstyle{i'}$};
\node at (0,-.01) {$\diamond$};
\node at (.17,.15) {$\dt$};
\end{tikzpicture}
},&
\mathord{
\begin{tikzpicture}[baseline = -1mm]
	\draw[->] (-0.28,-.28) to (0.28,.28);
	\draw[-,white,line width=3pt] (0.28,-.28) to (-0.28,.28);
	\draw[->] (0.28,-.28) to (-0.28,.28);
      \node at (-0.33,-0.43) {$\scriptstyle{j}$};
      \node at (0.33,-0.43) {$\scriptstyle{i}$};
      \node at (-0.33,0.43) {$\scriptstyle{j'}$};
      \node at (0.33,0.43) {$\scriptstyle{i'}$};
\node at (0,-.01) {$\diamond$};
\node at (-.17,.15) {$\dt$};
\end{tikzpicture}
}&=
\mathord{
\begin{tikzpicture}[baseline = -1mm]
	\draw[->] (0.28,-.28) to (-0.28,.28);
	\draw[-,line width=3pt, white] (-0.28,-.28) to (0.28,.28);
	\draw[->] (-0.28,-.28) to (0.28,.28);
      \node at (-0.33,-0.43) {$\scriptstyle{j}$};
      \node at (0.33,-0.43) {$\scriptstyle{i}$};
      \node at (-0.33,0.43) {$\scriptstyle{j'}$};
      \node at (0.33,0.43) {$\scriptstyle{i'}$};
\node at (0,-.01) {$\diamond$};
\node at (.18,-.19) {$\dt$};
\end{tikzpicture}
},
&
\mathord{
\begin{tikzpicture}[baseline = -1mm]
	\draw[->] (0.28,-.28) to (-0.28,.28);
	\draw[-,line width=3pt, white] (-0.28,-.28) to (0.28,.28);
	\draw[->] (-0.28,-.28) to (0.28,.28);
      \node at (-0.33,-0.43) {$\scriptstyle{j}$};
      \node at (0.33,-0.43) {$\scriptstyle{i}$};
      \node at (-0.33,0.43) {$\scriptstyle{j'}$};
      \node at (0.33,0.43) {$\scriptstyle{i'}$};
\node at (0,-.01) {$\diamond$};
\end{tikzpicture}
}
-
\mathord{
\begin{tikzpicture}[baseline = -1mm]
	\draw[->] (-0.28,-.28) to (0.28,.28);
	\draw[-,white,line width=3pt] (0.28,-.28) to (-0.28,.28);
	\draw[->] (0.28,-.28) to (-0.28,.28);
      \node at (-0.33,-0.43) {$\scriptstyle{j}$};
      \node at (0.33,-0.43) {$\scriptstyle{i}$};
      \node at (-0.33,0.43) {$\scriptstyle{j'}$};
      \node at (0.33,0.43) {$\scriptstyle{i'}$};
\node at (0,-.01) {$\diamond$};
\end{tikzpicture}
}
&=
\delta_{i,i'} \delta_{j,j'}
z
\begin{tikzpicture}[baseline = -1mm]
	\draw[->] (0.18,-.28) to (0.18,.28);
	\draw[->] (-0.18,-.28) to (-0.18,.28);
      \node at (-0.18,-0.43) {$\scriptstyle{j}$};
      \node at (0.18,-0.43) {$\scriptstyle{i}$};
\end{tikzpicture}
\end{align}
in the quantum case.
There are also sideways and downwards versions of the new crossings
which may be defined in a similar way, or equivalently by ``rotating'' the upwards
ones using (\ref{incpro}).
The following lemma is well known but essential.

\begin{Lemma}\label{essential}
If $\{i,j\}\neq\{i',j'\}$ then the natural transformation (\ref{interesting})
is zero.
The same
holds for the rotated versions of these crossings.
\end{Lemma}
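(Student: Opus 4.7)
The plan is to exploit the block decomposition of $E^2$ induced by the center of the affine Hecke algebra $AH_2$. The crux is that the upward crossing $\tau$ (the positive crossing, in the quantum case) commutes with the action of symmetric polynomials in the two dots $x_1$ (outer $E$) and $x_2$ (inner $E$). In the degenerate case, the third relation in (\ref{dAHA}) gives $\tau \circ x_1 = x_2 \circ \tau + \mathrm{id}$, and the analogous placement of the dot on the other strand yields $\tau \circ x_2 = x_1 \circ \tau - \mathrm{id}$; adding these shows $\tau(x_1+x_2)=(x_1+x_2)\tau$, and a short substitution $\tau x_1 x_2 = (x_2\tau+1)x_2 = x_2(x_1\tau-1)+x_2 = x_1 x_2\tau$ gives commutativity with $x_1 x_2$. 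In the quantum case, the third relation in (\ref{AHA2}) already states $\tau_+ \circ x_1 = x_2\circ\tau_+$ and its analogue for $x_2$, so $\tau_+$ commutes on the nose with every symmetric polynomial in $x_1,x_2$.

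Next I would interpret this as block preservation. On $E^2 V$, the definition of $E_j E_i$ in $\S$\ref{sendo} identifies $E_j E_i V$ with the image of the idempotent cutting out the generalized $j$-eigenspace of $x_1$ intersected with the generalized $i$-eigenspace of $x_2$. Hence $E_j E_i V$ lies inside the simultaneous generalized eigenspace $V_{\{i,j\}}$ of $\k[x_1,x_2]^{S_2}$ on which $x_1+x_2$ has generalized eigenvalue $i+j$ and $x_1 x_2$ has generalized eigenvalue $ij$. These eigenvalues depend only on the unordered pair $\{i,j\}$, and we obtain a decomposition $E^2 V = \bigoplus_{\{a,b\}} V_{\{a,b\}}$ with $V_{\{a,b\}} = \bigoplus_{\{i,j\}=\{a,b\}} E_j E_i V$. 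By the first step, each $V_{\{a,b\}}$ is stable under $\tau$.

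Now the upward case of the lemma is immediate: when $\{i,j\}\neq\{i',j'\}$ the composite
\[
E_j E_i \hookrightarrow V_{\{i,j\}} \xrightarrow{\tau} V_{\{i,j\}} \twoheadrightarrow E_{j'} E_{i'}
\]
is zero, since $E_{j'}E_{i'}\subseteq V_{\{i',j'\}}$ and $V_{\{i,j\}}\cap V_{\{i',j'\}}=0$. For the rotated versions, the sideways and downward crossings are built from the upward crossing by composing with units and counits of the $(E,F)$ and $(F,E)$ adjunctions, and by (\ref{incpro}) those adjunctions restrict to adjunctions between the eigenfunctors $E_i$ and $F_i$. Hence either by rotating the vanishing just proved through the cups and caps, or by repeating Steps 1--3 for the rotated crossings (which inherit analogous dot-sliding relations from the strictly pivotal structure of $\Heis_k$), one obtains the full statement.

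The only real computation is the verification in Step 1 that $\tau$ commutes with $x_1 x_2$ in the degenerate case; this is the main (mild) obstacle, and it is a two-line manipulation. Everything else is formal once one recognises that the relevant block decomposition of $E^2$ is indexed by unordered pairs of eigenvalues.
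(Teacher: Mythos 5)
Your proof is correct, but it takes a genuinely different route from the paper's. The paper argues one strand at a time: since $\{i,j\}\neq\{i',j'\}$ kills the correction term $\delta_{i,i'}\delta_{j,j'}$ in (\ref{newdotslide})--(\ref{newdotslide2}), a dot slides freely along the strand joining the bottom colour $j$ to the top colour $i'$ (or $i$ to $j'$), and at least one of these two colour pairs must be distinct; one then inserts a B\'ezout polynomial $p(u)$ chosen to be $\equiv 1$ modulo a high power of $(u-j)$ and divisible by a high power of $(u-i')$, so that $p(x)$ acts as the identity at the source but as zero at the target. You instead observe that the elementary symmetric polynomials $x_1+x_2$ and $x_1x_2$ are central in $AH_2$, so the (uncoloured) crossing preserves the simultaneous generalized eigenspace decomposition of $E^2V$, whose blocks are indexed by unordered pairs of eigenvalues; distinct unordered pairs give distinct eigenvalue pairs $(i+j,ij)$, whence the mixed matrix entries vanish. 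Your argument is more conceptual and avoids choosing a strand, at the cost of being less flexible: the paper's one-strand B\'ezout trick is reused later (e.g.\ in Claim 3 of the proof of Lemma~\ref{banach2}) in situations where only one colour separates. One small imprecision: in the quantum case the third relation of (\ref{AHA2}) does \emph{not} say $\tau_+x_1=x_2\tau_+$ on the nose -- it relates a dotted negative crossing to a dotted positive crossing, equivalently $\tau_+x_1=x_2\tau_+-zx_2$ and $\tau_+x_2=x_1\tau_++zx_2$ -- but the $z$-corrections cancel in the symmetric combinations exactly as in the degenerate case, so your conclusion that $\tau_+$ commutes with $\k[x_1,x_2]^{S_2}$ stands. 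Both your treatment of the rotated crossings via (\ref{incpro}) and your implicit reduction to finitely generated $V$ match the paper.
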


\begin{proof}
For the rotated crossings the lemma follows from the upwards case
using also (\ref{incpro}).
To prove the result for the upwards crossing, we just explain in the
degenerate case; the quantum case is similar using (\ref{newdotslide2}) in place
of (\ref{newdotslide}).
If $\{i,j\}\neq\{i',j'\}$ then one of the following holds:
$i \notin \{i',j'\},
j \notin \{i',j'\}, i' \notin \{i,j\}$ or $j' \notin \{i,j\}$.
Suppose first
that $j \notin \{i',j'\}$ or $i' \notin \{i,j\}$.
It suffices to show that the natural transformation vanishes on every
finitely generated $V \in \R$.
We can find polynomials $f(u), g(u) \in \k[u]$ so that
$f(u) (u-j)^{\eps_j(E_i V)} + g(u) (u-i')^{\eps_{i'}(V)}= 1$.
Letting $p(u) := g(u) (u-i')^{\eps_{i'}(V)}$,
we then use (\ref{newdotslide}) to see that
$$
\mathord{
\begin{tikzpicture}[baseline = -1mm]
	\draw[->] (0.28,-.28) to (-0.28,.28);
	\draw[->] (-0.28,-.28) to (0.28,.28);
\node at (0,-0.01) {$\diamond$};
      \node at (-0.33,-0.43) {$\scriptstyle{j}$};
      \node at (0.33,-0.43) {$\scriptstyle{i}$};
      \node at (-0.33,0.43) {$\scriptstyle{j'}$};
      \node at (0.33,0.43) {$\scriptstyle{i'}$};
	\draw[-,darkg,thick] (0.7,-.28) to (0.7,.28);
      \node at (0.7,-.43) {$\darkg\scriptstyle{V}$};
\end{tikzpicture}
}
=
\mathord{
\begin{tikzpicture}[baseline = -1mm]
	\draw[->] (0.28,-.28) to (-0.28,.28);
	\draw[->] (-0.28,-.28) to (0.28,.28);
\node at (0,-0.01) {$\diamond$};
      \node at (-0.33,-0.43) {$\scriptstyle{j}$};
      \node at (0.33,-0.43) {$\scriptstyle{i}$};
      \node at (-0.33,0.43) {$\scriptstyle{j'}$};
      \node at (0.33,0.43) {$\scriptstyle{i'}$};
	\draw[-,darkg,thick] (0.7,-.28) to (0.7,.28);
\node at (-.17,-.19) {$\dt$};
\node at (-.5,-.1) {$\scriptstyle p(x)$};
      \node at (0.7,-.43) {$\darkg\scriptstyle{V}$};
\end{tikzpicture}
}
=
\mathord{
\begin{tikzpicture}[baseline = -1mm]
	\draw[->] (0.28,-.28) to (-0.28,.28);
	\draw[->] (-0.28,-.28) to (0.28,.28);
\node at (0,-0.01) {$\diamond$};
      \node at (-0.33,-0.43) {$\scriptstyle{j}$};
      \node at (0.33,-0.43) {$\scriptstyle{i}$};
      \node at (-0.33,0.43) {$\scriptstyle{j'}$};
      \node at (0.33,0.43) {$\scriptstyle{i'}$};
\node at (.17,.15) {$\dt$};
\node at (.5,0.1) {$\scriptstyle p(x)$};
	\draw[-,darkg,thick] (1,-.28) to (1,.28);
      \node at (1,-.43) {$\darkg\scriptstyle{V}$};
\end{tikzpicture}
}
=
0.
$$ 
A similar argument with the dot on the other string treats the cases
$i \notin \{i',j'\}$ or $j' \notin \{i,j\}$.
\end{proof}

Now we come to an extremely useful diagrammatic convention.
On any finitely generated $V \in \R$, the endomorphism 
$
\begin{tikzpicture}[baseline = -1.7mm]
	\draw[->] (0.08,-.2) to (0.08,.2);
      \node at (0.08,-0.35) {$\scriptstyle{i}$};
      \node at (0.08,-0.02) {$\dt$};
      \node at (-0.25,-0.02) {$\scriptstyle{x-i}$};
 	\draw[-,darkg,thick] (0.4,-.2) to (0.4,.2);
     \node at (0.55,0) {$\darkg\scriptstyle{V}$};
\end{tikzpicture}
$
is nilpotent, hence, the notation
$
\begin{tikzpicture}[baseline = -1.7mm]
	\draw[->] (0.08,-.2) to (0.08,.2);
      \node at (0.08,-0.35) {$\scriptstyle{i}$};
      \node at (0.08,-0.02) {$\dt$};
      \node at (-0.3,-0.02) {$\scriptstyle{p(x)}$};
 	\draw[-,darkg,thick] (0.4,-.2) to (0.4,.2);
     \node at (0.55,0) {$\darkg\scriptstyle{V}$};
\end{tikzpicture}
$
makes sense for power series $p(x) \in
\k[\![x-i]\!]$ rather than merely for polynomials.
Since any object of $\R$ is a direct limit of finitely generated
objects, it follows that there is a well-defined natural
transformation
\begin{equation}\label{brexit}
\begin{tikzpicture}[baseline = -0.8mm]
	\draw[->] (0.08,-.25) to (0.08,.3);
      \node at (0.08,-0.4) {$\scriptstyle{i}$};
      \node at (0.08,0.02) {$\dt$};
      \node at (-0.3,0.02) {$\scriptstyle{p(x)}$};
\end{tikzpicture}:
E_i \Rightarrow E_i
\end{equation}
for any $i \in \k$ and any $p(x) \in \k[\![x-i]\!]$. 
The same definition can be made for dots on downward strings too.
More generally, suppose that we are given some more complicated string
diagram for a natural
transformation between some endofunctors of $\R$,
together with a sequence of $n$ 
points $P_1,\dots,P_n$ on strings colored $i_1,\dots,i_n \in \k$ in this diagram.
Then for any $p(x_1,\dots,x_n) \in \k[\![x_1-i_1,\dots,x_n-i_n]\!]$
there is a well-defined natural transformation represented
diagrammatically by drawing a dot on each of the given points in the
given diagram then
joining them up with a dotted arrow directed from $P_1$ to $P_n$
labelled
by the power series $p(x_1,\dots,x_n)$.
Thus, $x_1$ indicates $x$ labelling the first dot (the one nearest the tail of the arrow) and $x_n$ indicates $x$ labelling the last dot (the one nearest the head).
To give an example, suppose that $n = 2$ and $i_1 \neq i_2$.
Set $c :=
(i_2-i_1)^{-1}$ so that $(x_2-x_1)^{-1} \in \k[\![x_1-i_1,x_2-i_2]\!]$
has power series expansion
$c
-c^2(x_1-i_1)
+ c^2(x_2-i_2) +$ (higher order terms).
Then we have defined the natural transformations
\begin{align*}
\begin{tikzpicture}[baseline = -0.8mm]
	\draw[->,densely dotted] (0.55,.03) to (0.15,.03);
	\draw[->] (0.58,-.25) to (0.58,.3);
	\draw[->] (0.08,-.25) to (0.08,.3);
      \node at (0.08,-0.4) {$\scriptstyle{i_2}$};
      \node at (0.08,0.02) {$\dt$};
      \node at (0.58,-0.4) {$\scriptstyle{i_1}$};
      \node at (0.58,0.02) {$\dt$};
      \node at (1.3,0.04) {$\scriptstyle{(x_2-x_1)^{-1}}$};
\end{tikzpicture}&=
c\begin{tikzpicture}[baseline = -0.8mm]
	\draw[->] (0.58,-.25) to (0.58,.3);
	\draw[->] (0.08,-.25) to (0.08,.3);
      \node at (0.08,-0.4) {$\scriptstyle{i_2}$};
      \node at (0.58,-0.4) {$\scriptstyle{i_1}$};
\end{tikzpicture}
-c^2\begin{tikzpicture}[baseline = -0.8mm]
	\draw[->] (0.58,-.25) to (0.58,.3);
	\draw[->] (0.08,-.25) to (0.08,.3);
      \node at (0.08,-0.4) {$\scriptstyle{i_2}$};
      \node at (0.58,-0.4) {$\scriptstyle{i_1}$};
      \node at (0.58,0.02) {$\dt$};
      \node at (.93,0.04) {$\scriptstyle x-i_1$};
\end{tikzpicture}
+c^2\begin{tikzpicture}[baseline = -0.8mm]
	\draw[->] (0.58,-.25) to (0.58,.3);
	\draw[->] (0.08,-.25) to (0.08,.3);
      \node at (0.08,-0.4) {$\scriptstyle{i_2}$};
      \node at (0.58,-0.4) {$\scriptstyle{i_1}$};
      \node at (0.08,0.02) {$\dt$};
      \node at (-.27,0.04) {$\scriptstyle x-i_2$};
\end{tikzpicture}
+\cdots,\\
\begin{tikzpicture}[baseline = -0.8mm]
	\draw[<-,densely dotted] (0.52,.03) to (0.15,.03);
	\draw[<-] (0.58,-.25) to (0.58,.3);
	\draw[<-] (0.08,-.25) to (0.08,.3);
      \node at (0.08,-0.4) {$\scriptstyle{i_1}$};
      \node at (0.08,0.02) {$\dt$};
      \node at (0.58,-0.4) {$\scriptstyle{i_2}$};
      \node at (0.58,0.02) {$\dt$};
      \node at (-.57,0.04) {$\scriptstyle{(x_2-x_1)^{-1}}$};
\end{tikzpicture}&=
c\begin{tikzpicture}[baseline = -0.8mm]
	\draw[<-] (0.58,-.25) to (0.58,.3);
	\draw[<-] (0.08,-.25) to (0.08,.3);
      \node at (0.08,-0.4) {$\scriptstyle{i_1}$};
      \node at (0.58,-0.4) {$\scriptstyle{i_2}$};
\end{tikzpicture}
-c^2\begin{tikzpicture}[baseline = -0.8mm]
	\draw[<-] (0.58,-.25) to (0.58,.3);
	\draw[<-] (0.08,-.25) to (0.08,.3);
      \node at (0.08,-0.4) {$\scriptstyle{i_1}$};
      \node at (0.58,-0.4) {$\scriptstyle{i_2}$};
      \node at (0.08,0.02) {$\dt$};
      \node at (-.27,0.04) {$\scriptstyle x-i_1$};
\end{tikzpicture}
+c^2\begin{tikzpicture}[baseline = -0.8mm]
	\draw[<-] (0.58,-.25) to (0.58,.3);
	\draw[<-] (0.08,-.25) to (0.08,.3);
      \node at (0.08,-0.4) {$\scriptstyle{i_1}$};
      \node at (0.58,-0.4) {$\scriptstyle{i_2}$};
      \node at (0.58,0.02) {$\dt$};
      \node at (.93,0.04) {$\scriptstyle x-i_2$};
\end{tikzpicture}
+\cdots.
\end{align*}
These natural transformations appear in the following lemma.

\begin{Lemma}\label{essential2}
For $j \neq i$, we have that
$$
\left.
\begin{array}{ll}
\begin{tikzpicture}[baseline = -1mm]
	\draw[->] (0.28,-.28) to (-0.28,.28);
	\draw[->] (-0.28,-.28) to (0.28,.28);
      \node at (-0.33,-0.43) {$\scriptstyle{j}$};
      \node at (0.33,-0.43) {$\scriptstyle{i}$};
      \node at (-0.33,0.43) {$\scriptstyle{j}$};
      \node at (0.33,0.43) {$\scriptstyle{i}$};
\node at (0,-.01) {$\diamond$};
\end{tikzpicture}
=
\begin{tikzpicture}[baseline = -0.8mm]
	\draw[->,densely dotted] (0.55,.03) to (0.15,.03);
	\draw[->] (0.58,-.25) to (0.58,.3);
	\draw[->] (0.08,-.25) to (0.08,.3);
      \node at (0.08,-0.4) {$\scriptstyle{j}$};
      \node at (0.08,0.02) {$\dt$};
      \node at (0.58,-0.4) {$\scriptstyle{i}$};
      \node at (0.58,0.02) {$\dt$};
      \node at (1.3,0.04) {$\scriptstyle{(x_2-x_1)^{-1}}$};
\end{tikzpicture}
&\text{in the degenerate case,}\\
\begin{tikzpicture}[baseline = -1mm]
	\draw[->] (0.28,-.28) to (-0.28,.28);
	\draw[-,white,line width=3pt] (-0.28,-.28) to (0.28,.28);
	\draw[->] (-0.28,-.28) to (0.28,.28);
      \node at (-0.33,-0.43) {$\scriptstyle{j}$};
      \node at (0.33,-0.43) {$\scriptstyle{i}$};
      \node at (-0.33,0.43) {$\scriptstyle{j}$};
      \node at (0.33,0.43) {$\scriptstyle{i}$};
\node at (0,-.01) {$\diamond$};
\end{tikzpicture}
=
z\begin{tikzpicture}[baseline = -0.8mm]
	\draw[->,densely dotted] (0.55,.03) to (0.15,.03);
	\draw[->] (0.58,-.25) to (0.58,.3);
	\draw[->] (0.08,-.25) to (0.08,.3);
      \node at (0.08,-0.4) {$\scriptstyle{j}$};
      \node at (0.08,0.02) {$\dt$};
      \node at (0.58,-0.4) {$\scriptstyle{i}$};
      \node at (0.58,0.02) {$\dt$};
      \node at (1.4,0.04) {$\scriptstyle{x_2(x_2-x_1)^{-1}}$};
\end{tikzpicture},
\qquad
\begin{tikzpicture}[baseline = -1mm]
	\draw[->] (-0.28,-.28) to (0.28,.28);
	\draw[-,white,line width=3pt] (0.28,-.28) to (-0.28,.28);
	\draw[->] (0.28,-.28) to (-0.28,.28);
      \node at (-0.33,-0.43) {$\scriptstyle{j}$};
      \node at (0.33,-0.43) {$\scriptstyle{i}$};
      \node at (-0.33,0.43) {$\scriptstyle{j}$};
      \node at (0.33,0.43) {$\scriptstyle{i}$};
\node at (0,-.01) {$\diamond$};
\end{tikzpicture}
=
z\begin{tikzpicture}[baseline = -0.8mm]
	\draw[->,densely dotted] (0.55,.03) to (0.15,.03);
	\draw[->] (0.58,-.25) to (0.58,.3);
	\draw[->] (0.08,-.25) to (0.08,.3);
      \node at (0.08,-0.4) {$\scriptstyle{j}$};
      \node at (0.08,0.02) {$\dt$};
      \node at (0.58,-0.4) {$\scriptstyle{i}$};
      \node at (0.58,0.02) {$\dt$};
      \node at (1.4,0.04) {$\scriptstyle{x_1(x_2-x_1)^{-1}}$};
\end{tikzpicture}
&\text{in the quantum case}.
\end{array}\right.
$$
\end{Lemma}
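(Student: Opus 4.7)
The plan is to show both sides of the claimed identity satisfy a common pair of ``dot-slide'' commutation relations, and then establish uniqueness via a nilpotency argument that crucially uses $j \neq i$. Working on an arbitrary finitely generated $V \in \R$ (extending to general $V$ by direct limits in the Schurian case), write $X,Y$ for the endomorphisms of $E_j E_i V$ given by the dot on the left and right strings, respectively. Since $X - j$ and $Y - i$ act locally nilpotently and $j - i$ is a unit in $\k$, the operator $X - Y = (j - i) + (X - j) - (Y - i)$ is invertible as a power series in $(X - j,\, Y - i)$; this gives the RHS of the lemma its rigorous meaning.

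In the degenerate case, abbreviate $\tau := \!\!\mathord{\begin{tikzpicture}[baseline = -1mm]
	\draw[->] (0.28,-.28) to (-0.28,.28);
	\draw[->] (-0.28,-.28) to (0.28,.28);
      \node at (-0.33,-0.43) {$\scriptstyle{j}$};
      \node at (0.33,-0.43) {$\scriptstyle{i}$};
      \node at (-0.33,0.43) {$\scriptstyle{j}$};
      \node at (0.33,0.43) {$\scriptstyle{i}$};
\node at (0,-.01) {$\diamond$};
\end{tikzpicture}}$. Applying (\ref{newdotslide}) with $(j',i') = (j,i)$ yields $\tau X = Y\tau + \mathrm{id}$ and $X\tau = \tau Y + \mathrm{id}$. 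Set $\rho := (X - Y)^{-1}$; since $X$ and $Y$ commute (they act on disjoint tensor factors), $\rho$ commutes with both, and $(X - Y)\rho = \mathrm{id}$ gives $\rho$ the same two commutation relations. The difference $\sigma := \tau - \rho$ therefore satisfies $\sigma X = Y\sigma$ and $X\sigma = \sigma Y$. Writing $X = j + a$ and $Y = i + b$ with $a, b$ nilpotent on $E_j E_i V$, the first relation rearranges to $\sigma a = (b - (j-i))\sigma$, and iteration gives $\sigma a^k = (b - (j-i))^k \sigma$ for all $k \geq 0$. Choosing $k$ so that $a^k = 0$ on $E_j E_i V$, and noting that $b - (j-i)$ is invertible because $b$ is nilpotent and $j \neq i$, we conclude $\sigma = 0$. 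Hence $\tau = \rho = (X - Y)^{-1}$, which is the RHS of the lemma under the dotted-arrow convention.

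The quantum case is parallel. The analogue (\ref{newdotslide2}) gives $\tau^+ - \tau^- = z\cdot\mathrm{id}$ along with $\tau^- X = Y\tau^+$ and $X\tau^- = \tau^+ Y$; substituting yields $\tau^+ X = Y\tau^+ + zX$ and $X\tau^+ = \tau^+ Y + zX$. A direct computation using $XY = YX$ shows that $\rho^+ := zX(X-Y)^{-1}$ satisfies the same relations, and the uniqueness argument for $\sigma^+ := \tau^+ - \rho^+$ runs exactly as before. The negative-crossing formula then follows either by the same argument with $\rho^- := zY(X-Y)^{-1}$, or simply from $\tau^- = \tau^+ - z\cdot\mathrm{id}$. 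The main obstacle is guaranteeing that the uniqueness step goes through in arbitrary characteristic: combining the two homogeneous relations into the single iterated identity $\sigma a^k = (b - (j-i))^k \sigma$ avoids the char-2 failure of a naive anticommutator-based argument.
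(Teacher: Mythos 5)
Your proof is correct and follows essentially the same route as the paper: both arguments reduce to a finitely generated object, use the dot-slide relations (\ref{newdotslide})--(\ref{newdotslide2}) to show that the crossing and the dotted-arrow expression satisfy identical commutation relations against the two dots, and then kill the difference using nilpotency of the shifted dots together with $i\neq j$. The only cosmetic difference is that you iterate the intertwining relation $\sigma a=(b-(j-i))\sigma$ directly, whereas the paper packages the same computation as the nilpotency of $(L-R)+(i-j)I$ combined with $(L-R)(\phi)=0$; the closing remark about characteristic $2$ is harmless but not an issue either argument actually faces.
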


\begin{proof}
It suffices to prove this when the natural transformations are
evaluated on a finitely generated object $V \in \R$. 
We have to prove that 
$$
\phi := 
\left\{
\begin{array}{ll}
\begin{tikzpicture}[baseline = -1mm]
	\draw[->] (0.28,-.28) to (-0.28,.28);
	\draw[->] (-0.28,-.28) to (0.28,.28);
      \node at (-0.33,-0.43) {$\scriptstyle{j}$};
      \node at (0.33,-0.43) {$\scriptstyle{i}$};
      \node at (-0.33,0.43) {$\scriptstyle{j}$};
      \node at (0.33,0.43) {$\scriptstyle{i}$};
\node at (0,-.01) {$\diamond$};
	\draw[-,darkg,thick] (0.8,-.25) to (0.8,.3);
      \node at (0.8,-0.4) {$\darkg\scriptstyle{V}$};
\end{tikzpicture}
-\begin{tikzpicture}[baseline = -0.8mm]
	\draw[->,densely dotted] (0.55,.03) to (0.15,.03);
	\draw[->] (0.58,-.25) to (0.58,.3);
	\draw[->] (0.08,-.25) to (0.08,.3);
      \node at (0.08,-0.4) {$\scriptstyle{j}$};
      \node at (0.08,0.02) {$\dt$};
      \node at (0.58,-0.4) {$\scriptstyle{i}$};
      \node at (0.58,0.02) {$\dt$};
      \node at (1.3,0.04) {$\scriptstyle{(x_2-x_1)^{-1}}$};
	\draw[-,darkg,thick] (2.1,-.25) to (2.1,.3);
      \node at (2.1,-0.4) {$\darkg\scriptstyle{V}$};
\end{tikzpicture}&\text{in the degenerate case,}\\
\begin{tikzpicture}[baseline = -1mm]
	\draw[->] (0.28,-.28) to (-0.28,.28);
	\draw[-,line width=3pt,white] (-0.28,-.28) to (0.28,.28);
	\draw[->] (-0.28,-.28) to (0.28,.28);
      \node at (-0.33,-0.43) {$\scriptstyle{j}$};
      \node at (0.33,-0.43) {$\scriptstyle{i}$};
      \node at (-0.33,0.43) {$\scriptstyle{j}$};
      \node at (0.33,0.43) {$\scriptstyle{i}$};
\node at (0,-.01) {$\diamond$};
	\draw[-,darkg,thick] (0.8,-.25) to (0.8,.3);
      \node at (0.8,-0.4) {$\darkg\scriptstyle{V}$};
\end{tikzpicture}
-z\begin{tikzpicture}[baseline = -0.8mm]
	\draw[->,densely dotted] (0.55,.03) to (0.15,.03);
	\draw[->] (0.58,-.25) to (0.58,.3);
	\draw[->] (0.08,-.25) to (0.08,.3);
      \node at (0.08,-0.4) {$\scriptstyle{j}$};
      \node at (0.08,0.02) {$\dt$};
      \node at (0.58,-0.4) {$\scriptstyle{i}$};
      \node at (0.58,0.02) {$\dt$};
      \node at (1.4,0.04) {$\scriptstyle{x_2(x_2-x_1)^{-1}}$};
	\draw[-,darkg,thick] (2.3,-.25) to (2.3,.3);
      \node at (2.3,-0.4) {$\darkg\scriptstyle{V}$};
\end{tikzpicture}
&\text{in the quantum case}
\end{array}\right.
$$
is zero in the finite-dimensional algebra 
$A := \End_{\R}(E_j E_i V)$.
Let $L:A \rightarrow A$ be the linear map defined by left
multiplication (diagrammatically, this is vertical composition on the top) by 
$\begin{tikzpicture}[baseline = -1.7mm]
	\draw[->] (0.08,-.2) to (0.08,.2);
	\draw[->] (0.38,-.2) to (0.38,.2);
      \node at (0.08,-0.35) {$\scriptstyle{j}$};
      \node at (0.38,-0.35) {$\scriptstyle{i}$};
      \node at (0.08,-0.02) {$\dt$};
 	\draw[-,darkg,thick] (0.68,-.2) to (0.68,.2);
     \node at (0.83,0) {$\darkg\scriptstyle{V}$};
\end{tikzpicture}
$,
let $R:A \rightarrow A$ be the linear map defined by right
multiplication (diagrammatically, this is vertical composition on the bottom) by 
$\begin{tikzpicture}[baseline = -1.7mm]
	\draw[->] (0.08,-.2) to (0.08,.2);
	\draw[->] (0.38,-.2) to (0.38,.2);
      \node at (0.08,-0.35) {$\scriptstyle{j}$};
      \node at (0.38,-0.35) {$\scriptstyle{i}$};
      \node at (0.38,-0.02) {$\dt$};
 	\draw[-,darkg,thick] (0.68,-.2) to (0.68,.2);
     \node at (0.83,0) {$\darkg\scriptstyle{V}$};
\end{tikzpicture}
$, and let
$I:A \rightarrow A$ be the identity map.
We have that $(L-jI)^{\eps_j(E_i V)} = 0$ 
and $(R - i I)^{\eps_i(V)} = 0$.
Hence, for sufficiently large $N$, we have that
$$
((L-R) + (i-j) I)^N = ((L-j I)-(R-iI))^N = 0.
$$
Now observe that $(L-R)(\phi) = 0$ by the relations (\ref{newdotslide})--(\ref{newdotslide2}).
Hence, we have shown that $(i-j)^N \phi = 0$. Since $i \neq j$ this
implies that $\phi = 0$.
\end{proof}

\subsection{Bubbles and central characters}\label{swd}
Any dotted bubble in $\Heis_k$
defines an endomorphism of the identity functor
$\operatorname{Id}_{\R}$, i.e., an element of the center of
the category $\R$.
In particular, for $V \in \R$, dotted bubbles evaluate to
elements of the center $Z_V$ of the endomorphism
algebra $\End_\R(V)$.
It is convenient to
work with all of these endomorphisms at once in terms of the
generating function
\begin{align}\label{jjj}
\h_V(u)&:=
\mathord{
\begin{tikzpicture}[baseline = -1mm]
     \node at (0.08,0) {$\scriptstyle\anticlock(u)$};
 	\draw[-,darkg,thick] (0.68,.2) to (0.68,-.22);
     \node at (0.68,-.37) {$\darkg\scriptstyle{V}$};
\end{tikzpicture}
}=
\left(\mathord{
\begin{tikzpicture}[baseline = -1mm]
     \node at (0.08,0) {$\scriptstyle\clock(u)$};
 	\draw[-,darkg,thick] (0.68,.2) to (0.68,-.22);
     \node at (0.68,-.37) {$\darkg\scriptstyle{V}$};
\end{tikzpicture}
}\right)^{-1}.
\end{align}
Recalling (\ref{igq}) and
(\ref{summer1}),
we have $\h_V(u) \in u^k + u^{k-1} Z_V[\![u^{-1}]\!]$.
In the quantum case, there is also a distinguished element $t_V \in
Z_V^\times$ define by the action of $t 1_\unit$.
In the following lemma, 
given a polynomial
$p(u) = \sum_{s=0}^r z_s u^{r-s} \in Z_V[u]$,
we let
\begin{align*}
\mathord{
\begin{tikzpicture}[baseline = -1mm]
 	\draw[->] (0.08,-.3) to (0.08,.3);
     \node at (0.08,0.01) {$\dt$};
     \node at (-0.3,0.01) {$\scriptstyle p(x)$};
 	\draw[-,darkg,thick] (0.45,.3) to (0.45,-.3);
     \node at (0.45,-0.45) {$\darkg\scriptstyle{V}$};
\end{tikzpicture}
}
&:=
\sum_{s=0}^r 
\mathord{
\begin{tikzpicture}[baseline = -1mm]
 	\draw[->] (0.08,-.3) to (0.08,.3);
     \node at (0.08,0.01) {$\dt$};
     \node at (-0.27,0.01) {$\scriptstyle x^{r-s}$};
 	\draw[-,darkg,thick] (0.45,.15) to (0.45,.3);
 	\draw[-,darkg,thick] (0.45,-.15) to (0.45,-.3);
     \node at (0.45,-.45) {$\darkg\scriptstyle{V}$};
      \draw[darkg,thick] (0.45,0.0) circle (4.5pt);
   \node at (0.45,0) {$\darkg\scriptstyle{z_s}$};
\end{tikzpicture}
},
&
\mathord{
\begin{tikzpicture}[baseline = -1mm]
 	\draw[<-] (0.08,-.3) to (0.08,.3);
     \node at (0.08,0.01) {$\dt$};
     \node at (-0.3,0.01) {$\scriptstyle p(x)$};
 	\draw[-,darkg,thick] (0.45,.3) to (0.45,-.3);
     \node at (0.45,-.45) {$\darkg\scriptstyle{V}$};
\end{tikzpicture}
}
&:=
\sum_{s=0}^r
\mathord{
\begin{tikzpicture}[baseline = -1mm]
 	\draw[<-] (0.08,-.3) to (0.08,.3);
     \node at (0.08,0.01) {$\dt$};
     \node at (-0.27,0.01) {$\scriptstyle x^{r-s}$};
 	\draw[-,darkg,thick] (0.45,.15) to (0.45,.3);
 	\draw[-,darkg,thick] (0.45,-.15) to (0.45,-.3);
      \draw[darkg,thick] (0.45,0.0) circle (4.5pt);
   \node at (0.45,0) {$\darkg\scriptstyle{z_s}$};
     \node at (0.45,-.45) {$\darkg\scriptstyle{V}$};
\end{tikzpicture}
}.
\end{align*}
Lemmas~\ref{sure} and \ref{sure2lemma} obviously extend to the setting 
of coefficients in $Z_V$.

\begin{Lemma}\label{preimpy}
Let $V \in \R$ be any object. 
\begin{itemize}
\item[(1)]
If $f(u) \in Z_V[u]$ is a monic polynomial such that
$\mathord{
\begin{tikzpicture}[baseline = -1mm]
 	\draw[->] (0.08,-.2) to (0.08,.2);
     \node at (0.08,0.01) {$\dt$};
     \node at (-0.3,0.01) {$\scriptstyle f(x)$};
 	\draw[-,darkg,thick] (0.32,.2) to (0.32,-.2);
     \node at (0.49,0) {$\darkg\scriptstyle{V}$};
\end{tikzpicture}
}=0$,
then $g(u) := \h_V(u) f(u)$ is a monic polynomial
in $Z_V[u]$
of degree $\deg f(u)+k$ 
such that
${\begin{tikzpicture}[baseline = -1mm]
 	\draw[<-] (0.08,-.2) to (0.08,.2);
     \node at (-0.3,0.01) {$\scriptstyle g(x)$};
     \node at (0.08,0.01) {$\dt$};
 	\draw[-,darkg,thick] (0.32,.2) to (0.32,-.2);
     \node at (0.49,0) {$\darkg\scriptstyle{V}$};
\end{tikzpicture}
}=0$.
\item[(2)]
If $g(u) \in Z_V[u]$ is a monic polynomial such that
$\mathord{
\begin{tikzpicture}[baseline = -1mm]
 	\draw[<-] (0.08,-.2) to (0.08,.2);
     \node at (-0.3,0.01) {$\scriptstyle g(x)$};
     \node at (0.08,0.01) {$\dt$};
 	\draw[-,darkg,thick] (0.32,.2) to (0.32,-.2);
     \node at (0.49,0) {$\darkg\scriptstyle{V}$};
\end{tikzpicture}
}=0$, then $f(u) := \h_V(u)^{-1} g(u)$ is a monic polynomial
in $Z_V[u]$
of degree $\deg g(u)-k$ 
such that
$\mathord{
\begin{tikzpicture}[baseline = -1mm]
 	\draw[->] (0.08,-.2) to (0.08,.2);
     \node at (0.08,0.01) {$\dt$};
     \node at (-0.3,0.01) {$\scriptstyle f(x)$};
 	\draw[-,darkg,thick] (0.32,.2) to (0.32,-.2);
     \node at (0.49,0) {$\darkg\scriptstyle{V}$};
\end{tikzpicture}
}=0$.
\end{itemize}
In the quantum case, we also have that
$f(0) = t_V^2 g(0)$ in both situations.
\end{Lemma}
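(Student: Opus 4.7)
My plan is to prove directions~(1) and~(2) in parallel, splitting each into a polynomiality claim about $g(u)$ (resp.\ $f(u)$) and an annihilation claim that the resulting polynomial kills the corresponding dot action; the quantum constant-term identity $f(0)=t_V^2 g(0)$ will then follow by evaluating the polynomial identity $g(u)=\h_V(u)\,f(u)$ at $u=0$. The main tools throughout are the generating-function bubble and curl formulae from Lemmas~\ref{sure} and~\ref{surely}, together with the infinite-Grassmannian identity $\anticlock(u)\,\clock(u)=1_\unit$.

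Polynomiality. In direction~(1), with $f(x)$ acting as zero on $EV$, the anti-clockwise bubble on $V$ labeled $q(x)f(x)$ factors through $q(x)f(x):EV\to EV$ via the cup and cap, and so vanishes on $V$ for every $q(u)\in Z_V[u]$. By Lemma~\ref{sure}(\ref{sure2}) (respectively Lemma~\ref{surely}(\ref{surely2}) in the quantum case), this same bubble also equals $\bigl[\h_V(u)\,q(u)\,f(u)\bigr]_{u^{-1}}\,1_V$, up to explicit $t_V$-twists. Letting $q$ range over the monomials $u^r$ for $r\ge 0$ forces every $u^{-s}$-coefficient with $s\ge 1$ of the Laurent series $\h_V(u)\,f(u)$ to vanish in $Z_V$, so $g(u):=\h_V(u)\,f(u)$ lies in $Z_V[u]$; comparing leading terms (using $\h_V(u)\in u^k+u^{k-1}Z_V[\![u^{-1}]\!]$ and monicity of $f$) yields monicity and the degree $\deg f+k$. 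Direction~(2) is symmetric, with clockwise bubbles and $\h_V(u)^{-1}$ in place of $\h_V(u)$.

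Annihilation -- the main obstacle. To show the downward dot labeled $g(x)$ is zero on $FV$, use Lemma~\ref{sure}(\ref{sure1}) (resp.\ \ref{surely}(\ref{surely1})) to rewrite it as $\bigl[h_F(u)\,g(u)\bigr]_{u^{-1}}$, where $h_F(u)$ is the formal series obtained by evaluating the downward $(u-x)^{-1}$-dot on $FV$. The bridge to the $E$-side is the pair of curl relations in (\ref{mostgin}) (resp.\ (\ref{somegin})) combined with Lemma~\ref{sure}(\ref{sure3}) (resp.\ \ref{surely}(\ref{surely3})): applying these to straighten a curl into a bubble-times-upward-dot expression and cancelling the paired bubbles via $\anticlock(u)\,\clock(u)=1_\unit$, the product $h_F(u)\,\h_V(u)$ is replaced by (a sign times) the upward-dot series $h_E(u)$ acting on $EV$. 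The problem then reduces to $\bigl[h_E(u)\,f(u)\bigr]_{u^{-1}}=0$; but the polynomiality argument above, now with coefficients in $\End_{\R}(EV)$ rather than $Z_V$, shows that $h_E(u)\,f(u)$ is globally a polynomial in $u$, so its $u^{-1}$-coefficient automatically vanishes. Direction~(2) is the same argument with arrows reversed. The delicacy here is the careful combination of the two curl relations so that the Grassmannian cancellation applies, together with the use of the \emph{global} polynomiality of $h_E(u)\,f(u)$ rather than merely the vanishing of a single coefficient.

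Quantum constant term. Evaluating the polynomial identity $g(u)=\h_V(u)\,f(u)$ at $u=0$, and combining the explicit $t$-twist built into the definitions of $\anticlock(u)$ and $\clock(u)$ in (\ref{summer1})--(\ref{summer2}) with the closed-form values of the extremal bubbles recorded in~\S\ref{qh}, pins down $\h_V(0)$ as $t_V^{-2}\cdot 1_V$; the asserted identity $f(0)=t_V^2\,g(0)$ then follows immediately.
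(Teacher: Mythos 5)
Your proofs of parts (1) and (2) are correct and follow essentially the same route as the paper: polynomiality of $g(u)$ is obtained by evaluating bubbles labelled by $u^r f(x)$ (which vanish because they factor through $f(x):EV\to EV=0$) and comparing with Lemma~\ref{sure}(\ref{sure2}) resp.\ Lemma~\ref{surely}(\ref{surely2}); and the annihilation statement is obtained by rewriting the $g(x)$-dot via (\ref{sure1}), substituting $g(u)=\h_V(u)f(u)$, and using the curl identity (\ref{sure3}). Your ``bridge'' is more elaborate than necessary: since $\h_V(u)$ is exactly the bubble series $\anticlock(u)$ appearing in (\ref{sure3}), no Grassmannian cancellation is needed, and the resulting diagram is literally a curl carrying $f(x)$, which vanishes because the dot sits on an upward strand adjacent to $V$; equivalently, your target $\bigl[h_E(u)f(u)\bigr]_{u^{-1}}$ \emph{is} the $f(x)$-dot on $EV$ by (\ref{sure1}), so the ``global polynomiality'' detour is not needed. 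None of this is wrong, just longer than the paper's two-line computation.

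The one genuine gap is the quantum constant-term identity. ``Evaluating $g(u)=\h_V(u)f(u)$ at $u=0$'' is not a meaningful operation: $\h_V(u)$ lies in $u^k+u^{k-1}Z_V[\![u^{-1}]\!]$, so $\h_V(0)$ is undefined, and you cannot recover it as $g(0)/f(0)$ because $f(0)$ need not be invertible in $Z_V$ for a general object $V$ and an arbitrary monic annihilating polynomial $f$ (e.g.\ $f(u)=u\,m_V(u)$ is allowed); the assertion ``$\h_V(0)=t_V^{-2}$'' is essentially the statement you are trying to prove. The correct derivation, which is a one-line extension of your own polynomiality computation, is to take $p(u)=t_V^{-1}zf(u)$ (the coefficient $r=-1$) in (\ref{surely2}): the bubble still vanishes since $p(x)$ is a multiple of $f(x)$, but now the term $t^{-1}p(0)/z$ no longer drops out, and equating $t\bigl[\anticlock(u)p(u)\bigr]_{u^0}/z=g(0)$ with $t^{-1}p(0)/z=t_V^{-2}f(0)$ gives exactly $f(0)=t_V^2g(0)$. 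The factor $t_V^{2}$ thus comes from the extremal coefficient of (\ref{surely2}), not from any ``value of $\h_V$ at $0$.''
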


\begin{proof}
We just consider (1), since (2) is similar.
To show that $g(u)$ is a polynomial,
we must show that $[g(u)]_{u^{-r-1}}=0$ for $r \geq 0$.
Let $p(u) := u^r f(u)$
in the degenerate case or $p(u) := t_V^{-1} z u^{r+1} f(u)$ in the quantum case.
Applying (\ref{sure2}) or (\ref{surely2}),
we have that
$$
\big[
g(u)\big]_{u^{-r-1}}
=\big[
\h_V (u) f(u)\big]_{u^{-r-1}}
=
\left[
\mathord{
\begin{tikzpicture}[baseline = 1.25mm]
\node at (-.05,.2){$\scriptstyle\anticlock(u)$};
 	\draw[-,darkg,thick] (.8,.6) to (.8,.43);
 	\draw[-,darkg,thick] (.8,-.03) to (.8,-.2);
     \node at (.8,-.35) {$\darkg\scriptstyle{V}$};
 \draw[darkg,thick] (.8,0.2) ellipse (9pt and 6.5pt);   
\node at (.82,0.2) {$\darkg\scriptstyle{f(u)}$};
\end{tikzpicture}
}\right]_{u^{-r-1}}=
\mathord{
\begin{tikzpicture}[baseline = 1.25mm]
  \draw[-] (0,0.4) to[out=180,in=90] (-.2,0.2);
  \draw[->] (0.2,0.2) to[out=90,in=0] (0,.4);
 \draw[-] (-.2,0.2) to[out=-90,in=180] (0,0);
  \draw[-] (0,0) to[out=0,in=-90] (0.2,0.2);
   \node at (0.2,0.2) {$\dt$};
   \node at (0.55,0.2) {$\scriptstyle{p(x)}$};
 	\draw[-,darkg,thick] (1,.6) to (1,-.2);
     \node at (1,-.35) {$\darkg\scriptstyle{V}$};
\end{tikzpicture}
}.
$$
This is zero as 
$\mathord{
\begin{tikzpicture}[baseline = -1mm]
 	\draw[->] (0.08,-.2) to (0.08,.2);
     \node at (0.08,0.01) {$\dt$};
     \node at (-0.3,0.01) {$\scriptstyle f(x)$};
 	\draw[-,darkg,thick] (0.32,.2) to (0.32,-.2);
     \node at (0.49,0) {$\darkg\scriptstyle{V}$};
\end{tikzpicture}
}=0$.
Hence, $g(u)$ is a polynomial in $u$.
Moreover, in the quantum case the same argument with $r=-1$
gives that $g(0) = t_V^{-2} f(0)$.

It remains to show that
$\mathord{
\begin{tikzpicture}[baseline = -1mm]
 	\draw[<-] (0.08,-.2) to (0.08,.2);
     \node at (0.08,0.03) {$\dt$};
     \node at (-0.3,0.03) {$\scriptstyle g(x)$};
 	\draw[-,darkg,thick] (0.32,-.2) to (0.32,.2);
     \node at (0.57,0) {$\darkg\scriptstyle{V}$};
\end{tikzpicture}
}=0$.
In the degenerate case, this follows by
(\ref{sure1}) and (\ref{sure3}):
\begin{align*}
\mathord{
\begin{tikzpicture}[baseline = -1mm]
 	\draw[<-] (0.08,-.4) to (0.08,.4);
     \node at (0.08,0) {$\dt$};
     \node at (0.45,0) {$\scriptstyle g(x)$};
 	\draw[-,darkg,thick] (.9,.4) to (.9,-.4);
     \node at (.9,-.55) {$\darkg\scriptstyle{V}$};
\end{tikzpicture}
}
&=\left[\mathord{
\begin{tikzpicture}[baseline = -1mm]
	\draw[<-] (0.08,-.4) to (0.08,.4);
   \node at (-0.46,0) {$\scriptstyle{(u-x)^{-1}}$};
      \node at (0.08,0) {$\dt$};
 	\draw[-,darkg,thick] (.6,.4) to (.6,.23);
 	\draw[-,darkg,thick] (.6,.-.23) to (.6,-.4);
     \node at (.6,-.55) {$\darkg\scriptstyle{V}$};
 \draw[darkg,thick] (0.6,0) ellipse (9pt and 6.5pt);   
\node at (0.62,0) {$\darkg\scriptstyle{g(u)}$};
\end{tikzpicture}
} \right]_{u^{-1}}
=
\left[\mathord{
\begin{tikzpicture}[baseline = -1mm]
	\draw[<-] (0.08,-.4) to (0.08,.4);
   \node at (-0.46,0) {$\scriptstyle{(u-x)^{-1}}$};
      \node at (0.08,0) {$\dt$};
      \node at (.65,0) {$\anticlock \scriptstyle (u)$};
 	\draw[-,darkg,thick] (1.5,.4) to (1.5,.23);
 	\draw[-,darkg,thick] (1.5,.-.23) to (1.5,-.4);
     \node at (1.5,-.55) {$\darkg\scriptstyle{V}$};
 \draw[darkg,thick] (1.5,0) ellipse (9pt and 6.5pt);   
\node at (1.52,0) {$\darkg\scriptstyle{f(u)}$};
\end{tikzpicture}
}\right]_{u^{-1}}
=\:
\mathord{
\begin{tikzpicture}[baseline = -1mm]
	\draw[-] (0,0.4) to (0,0.3);
	\draw[-] (0,0.3) to [out=-90,in=180] (.3,-0.2);
	\draw[-] (0.3,-0.2) to [out=0,in=-90](.5,0);
	\draw[-] (0.5,0) to [out=90,in=0](.3,0.2);
	\draw[-] (0.3,.2) to [out=180,in=90](0,-0.3);
	\draw[->] (0,-0.3) to (0,-0.4);
   \node at (.85,0) {$\scriptstyle{f(x)}$};
      \node at (0.5,0) {$\dt$};
 	\draw[-,darkg,thick] (1.3,.4) to (1.3,-.4);
     \node at (1.3,-.55) {$\darkg\scriptstyle{V}$};
\end{tikzpicture}
}=0.
\end{align*}
The proof in the quantum case is similar, using (\ref{surely1}) and
(\ref{surely3}) instead.
\end{proof}

If $L \in \R$ is irreducible then of course 
$\h_L(u) \in \k(\!(u^{-1})\!)$.
The following relates the central character
information encoded in this generating function
to the minimal polynomials $m_L(u)$ and $n_L(u)$ introduced earlier.

\begin{Lemma}\label{impy}
For an irreducible object $L \in \R$,
we have that
$$
\h_L(u) = n_L(u)/m_L(u).
$$
Moreover, in the quantum case, the (invertible!) constant terms of
the polynomials $m_L(u)$ and $n_L(u)$ satisfy $t_L^2 = m_L(0) / n_L(0)$.
\end{Lemma}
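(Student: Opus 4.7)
The plan is to apply Lemma~\ref{preimpy} in both directions, starting from the defining properties of $m_L(u)$ and $n_L(u)$, and then observe that the resulting divisibility conditions force equality.

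First, I would invoke Lemma~\ref{preimpy}(1) with $V=L$ and $f(u)=m_L(u)$. Since $Z_L = \k$ (as $L$ is irreducible and $\k$ is algebraically closed), $m_L(u)$ is a monic polynomial in $\k[u]$, and by definition the upward dot on $L$ labelled by $m_L(x)$ vanishes. The lemma then produces a monic polynomial $g(u) := \h_L(u)\, m_L(u) \in \k[u]$ of degree $\deg m_L(u) + k$ such that the downward dot on $L$ labelled by $g(x)$ vanishes. Since $n_L(u)$ is the minimal such polynomial, we conclude $n_L(u) \mid g(u)$ in $\k[u]$.

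Next, symmetrically, I would apply Lemma~\ref{preimpy}(2) with $V=L$ and $g(u)=n_L(u)$. This produces a monic polynomial $f(u) := \h_L(u)^{-1} n_L(u) \in \k[u]$ of degree $\deg n_L(u)-k$ whose action via the upward dot on $L$ is zero, whence $m_L(u) \mid f(u)$. Writing the two divisibilities as $\h_L(u)\,m_L(u) = n_L(u)\,q(u)$ and $\h_L(u)^{-1} n_L(u) = m_L(u)\,q'(u)$ for monic $q, q' \in \k[u]$, multiplying yields $q(u)\,q'(u)=1$, forcing $q = q' = 1$. Therefore $\h_L(u)\,m_L(u) = n_L(u)$, which is the first assertion.

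For the quantum case, I would use the additional clause of Lemma~\ref{preimpy} asserting $f(0) = t_V^2 g(0)$. Specializing to $f = m_L$, $g = n_L$ and $V=L$ gives $m_L(0) = t_L^2 n_L(0)$; since both constant terms are nonzero (as $0 \notin I$ in the quantum case so neither $m_L(u)$ nor $n_L(u)$ has $0$ as a root), we obtain $t_L^2 = m_L(0)/n_L(0)$. There is no genuine obstacle here: the whole argument is a direct consequence of Lemma~\ref{preimpy} once one recognizes that the minimality of $m_L(u)$ and $n_L(u)$ converts the two conclusions of that lemma into reverse divisibility relations.
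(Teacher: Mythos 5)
Your proof is correct and follows essentially the same route as the paper: apply Lemma~\ref{preimpy} in both directions to get the two divisibility relations $n_L(u)\mid \h_L(u)m_L(u)$ and $m_L(u)\mid \h_L(u)^{-1}n_L(u)$, then conclude equality (the paper compares degrees where you multiply the two relations, but these are interchangeable), with the constant-term claim coming from the final clause of Lemma~\ref{preimpy} exactly as you say.
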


\begin{proof}
Applying Lemma~\ref{preimpy}(1) with $f(u) = m_L(u)$ shows that
$\h_L(u) m_L(u)$ is a monic polynomial of degree $\deg m_L(u) + k$ 
which is divisible by $n_L(u)$. Hence, $\deg n_L(u) \leq \deg m_L(u)+k$.
Applying Lemma~\ref{preimpy}(2) with $g(u) = n_L(u)$ shows that
$\h_L(u)^{-1} n_L(u)$ is a monic polynomial of degree $\deg n_L(u)-k$
that is divisible by $m_L(u)$. Hence, $\deg m_L(u) \leq \deg n_L(u)-k$.
We deduce that both inequalities are equalities,
and we actually have that $n_L(u) = \h_L(u) m_L(u)$.
The assertion about the constant terms follows from the final part of Lemma~\ref{preimpy}.
\end{proof}

For $i \in \k$, define $i^\pm$ as in the introduction.

\begin{Lemma}\label{choose}
Suppose that $L \in \R$ is an irreducible object and
let $K$ be an irreducible subquotient of $E_i L$ for some $i \in \k$.
Then
\begin{equation} \label{choose1}
    \h_K(u)
    = \frac{\h_L(u) (u-i)^2}{(u-i^+)(u-i^-)}.
\end{equation}
\end{Lemma}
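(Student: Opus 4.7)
The plan is to derive the formula for $\h_K(u)$ by evaluating a single bubble-slide identity on $L$ and then restricting to the subquotient $K$. The bubble slide expresses how $\anticlock(u)$ commutes past an $E$-string, and so it reduces the relationship between $\h_L(u)$ and $\h_K(u)$ to the scalar action of the dot $x$ on $K$.

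Concretely, I would first evaluate the degenerate bubble slide (\ref{moregin}) on $L$ to obtain the identity
\[
\h_L(u)\cdot 1_{EL} \;=\; \h_{EL}(u)\cdot\bigl(1-(u-x_L)^{-2}\bigr)
\]
in $\End_\R(EL)(\!(u^{-1})\!)$. Here the left-hand side of (\ref{moregin}), with the bubble to the right of the $E$-string, evaluates on $L$ to $E(\anticlock(u)_L)$, which equals $\h_L(u)\cdot 1_{EL}$ by irreducibility of $L$ (so $\anticlock(u)_L = \h_L(u)\cdot 1_L$ is scalar); the right-hand side evaluates to $\h_{EL}(u)\cdot(1-(u-x_L)^{-2})$, where $x_L$ denotes the dot endomorphism of $EL$. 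In the quantum case, (\ref{gin}) yields the analogous identity with correction factor $1-z^2 x_L u(u-x_L)^{-2}$. In both cases the correction factor lies in $1 + u^{-2}\End_\R(EL)\llbracket u^{-1}\rrbracket$ and is therefore invertible.

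Next I would restrict to the irreducible subquotient $K$. Because both $\anticlock(u)$ and the dot $x$ are natural transformations, they descend through subobject inclusions and quotient maps to $\End_\R(K)(\!(u^{-1})\!)$; in particular $\anticlock(u)_K = \h_K(u)\cdot 1_K$. Since $K$ is irreducible with $\End_\R(K) = \k$ and $x-i$ is locally nilpotent on $E_iL$, Schur's lemma forces $x|_K$ to be the scalar $i\cdot 1_K$. Thus the correction factor acts on $K$ as
\[
1-(u-i)^{-2} \;=\; \frac{(u-i-1)(u-i+1)}{(u-i)^2} \;=\; \frac{(u-i^+)(u-i^-)}{(u-i)^2}
\]
in the degenerate case (where $i^\pm = i\pm 1$), and as $1-z^2 iu(u-i)^{-2} = (u-q^2 i)(u-q^{-2}i)/(u-i)^2 = (u-i^+)(u-i^-)/(u-i)^2$ in the quantum case (where $i^\pm = q^{\pm 2}i$). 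Substituting and solving for $\h_K(u)$ yields the claimed formula.

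The main subtlety, rather than a deep obstacle, is pinning down the correct interpretation of the two sides of the bubble slide when evaluated on a module category object: the bubble drawn to the right of the $E$-string (and hence between that string and the implicit $V$-string on the far right) corresponds to $E(\anticlock(u)_V)$, whereas the bubble drawn to the left of the $E$-string corresponds to $\anticlock(u)_{EV}$. These are a priori distinct endomorphisms of $EV$, and the slide precisely measures their discrepancy in terms of the dot $x$. Once this translation is made, the remaining algebraic manipulations are routine.
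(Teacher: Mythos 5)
Your proposal follows essentially the same route as the paper: apply the bubble slide (\ref{moregin}) (resp.\ (\ref{gin})), use that $\anticlock(u)_L$ is the scalar $\h_L(u)$ by irreducibility, and then evaluate the dot at $i$ on the irreducible subquotient $K$. The computation and the final factorizations of the correction term are correct.

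One justification needs repair. You assert that the dot descends to $\End_\R(K)$ ``because it is a natural transformation,'' and then invoke Schur's lemma for $x|_K$. But $x$ is a natural endomorphism of the \emph{functor} $E$, not of the identity functor: its naturality squares involve morphisms of the form $Ef$, so $x_L$ has no a priori reason to preserve the arbitrary subobjects $N \subseteq M \subseteq E_iL$ presenting $K = M/N$, and ``$x|_K$'' is not yet defined. The bubble, by contrast, genuinely is central in the category and does descend. The correct way to finish (which is what the paper's ``replace $x$ by $i$'' is encoding) is: solve the slide identity for $\anticlock(u)_{E_iL}$, obtaining $\anticlock(u)_{E_iL} = \h_L(u)(u-x)^2(u-i^+ +i-x)^{-1}(u-i^-+i-x)^{-1}$, a power series in $(x-i)$ whose constant term is the claimed scalar $c(u) = \h_L(u)(u-i)^2(u-i^+)^{-1}(u-i^-)^{-1}$ and whose higher terms lie in $(x-i)\k[\![x-i]\!](\!(u^{-1})\!)$, hence are nilpotent coefficient-wise on $E_iL$. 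Since $\anticlock(u)_{E_iL}$ and the scalar $c(u)1_{E_iL}$ both descend to $K$, so does their difference, which is therefore a coefficient-wise nilpotent element of $\End_\R(K)(\!(u^{-1})\!) = \k(\!(u^{-1})\!)$ and must vanish, giving $\h_K(u) = c(u)$. This is a small fix, but as written the step ``restrict the dot to $K$'' is not justified.
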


\begin{proof}
This follows from the bubble slides (\ref{moregin}) and (\ref{gin}).
For example, in the degenerate case, we have by (\ref{moregin})
that
$$
\mathord{
\begin{tikzpicture}[baseline = -1mm]
     \node at (0.08,0) {$\scriptstyle\anticlock(u)$};
 	\draw[-,darkg,thick] (1.1,.3) to (1.1,-.3);
 	\draw[<-] (0.68,.3) to (0.68,-.32);
     \node at (.68,-.45) {$\scriptstyle i$};
     \node at (1.1,-.45) {$\darkg\scriptstyle{L}$};
\end{tikzpicture}
}=
\mathord{
\begin{tikzpicture}[baseline = -1mm]
     \node at (0.6,0) {$\scriptstyle\anticlock(u)$};
 	\draw[-,darkg,thick] (1.1,.3) to (1.1,-.3);
 	\draw[<-] (0.08,.3) to (0.08,-.3);
     \node at (-.843,0.03) {$\scriptstyle \frac{(u-x)^2}{(u-(x+1))(u-(x-1))}\:\dt$};
     \node at (1.1,-.45) {$\darkg\scriptstyle{L}$};
     \node at (.08,-.45) {$\scriptstyle i$};
\end{tikzpicture}
}
=
\mathord{
\begin{tikzpicture}[baseline = -1mm]
 	\draw[-,darkg,thick] (.5,.3) to (.5,-.3);
 	\draw[<-] (0.08,.3) to (0.08,-.3);
     \node at (-.843,0.03) {$
\scriptstyle \frac{\h_L(u) (u-x)^2}{(u-(x+1))(u-(x-1))}\:\dt$};
     \node at (.5,-.45) {$\darkg\scriptstyle{L}$};
     \node at (.08,-.45) {$\scriptstyle i$};
\end{tikzpicture}
}.
$$
When we pass to the irreducible subquotient $K$ of $E_i L$, 
we can replace the occurences of $x$ in the expression on the right-hand side
here with $i$, and the lemma follows.
\end{proof}

Now we define the {\em spectrum} $I$ of $\R$
to be the union of the sets of roots of the minimal polynomials
$m_L(u)$ for all irreducible $L \in \R$.
Noting that $i$ is a root of $m_L(u)$ if and only if $E_i L \neq 0$,
we have equivalently that $I$ is the set of all $i \in \k$ such that
$E_i L \neq 0$ for some irreducible
$L \in \R$. In view of the exactness of $E_i$, we can
drop the word ``irreducible'' in this characterization:
the spectrum $I$ is the set of all $i \in \k$ such that
$E_i$ is a non-zero endofunctor of $\R$.
By adjunction, it follows that $I$ is the set of all $i \in \k$
such that the endofunctor $F_i$ is non-zero, hence, $I$ could also
be defined as
the union of the sets of roots of the polynomials
$n_L(u)$ for all irreducible $L \in \R$.
This discussion shows that
\begin{align}\label{pape}
E &=
\bigoplus_{i \in I} E_i,&
F &= \bigoplus_{i \in I} F_i,
\end{align}
with each of the endofunctors $E_i$ and $F_i$ written here being non-zero.

\begin{Lemma}\label{closed}
We have that $i \in I$ if and only if $i^+ \in I$.
Moreover, in the quantum case, we have that $0 \notin I$.
\end{Lemma}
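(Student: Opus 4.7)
The plan is to deduce closure of $I$ under $i\mapsto i^\pm$ directly from the transformation law for the central character $\h_L(u)$ established in Lemma~\ref{choose}, and to handle the quantum assertion $0\notin I$ separately using the fact that the dot is invertible.

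First I would treat the forward implication: assume $i\in I$, pick an irreducible $L$ with $E_iL\neq 0$, and let $K$ be any irreducible subquotient of $E_iL$. By Lemma~\ref{choose},
\[
\h_K(u)\;=\;\h_L(u)\cdot\frac{(u-i)^2}{(u-i^+)(u-i^-)}.
\]
Set $a:=\phi_{i^+}(L)$ and $b:=\eps_{i^+}(L)$, so by Lemma~\ref{impy} the order of $\h_L(u)$ at $u=i^+$ equals $a-b$. If $b\ge 1$ then $E_{i^+}L\neq 0$, and if $a\ge 1$ then $F_{i^+}L\neq 0$; either way $i^+\in I$ and we are done. (Note that $i\neq i^+$: in the degenerate case $i^+=i+1$, and in the quantum case $i^+=i$ would force $i=0$, which is impossible because $\h_L(u)$ is then a well-defined rational function whose denominator $m_L$ has invertible constant term, as noted below.) Otherwise $a=b=0$, and the displayed formula shows $\operatorname{ord}_{i^+}\h_K(u)$ equals $-1$ or $-2$ (the latter only when $i^+=i^-$), which is negative. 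But by Lemma~\ref{impy}, $\operatorname{ord}_{i^+}\h_K(u)=\phi_{i^+}(K)-\eps_{i^+}(K)$, so $\eps_{i^+}(K)\ge 1$, i.e.\ $E_{i^+}K\neq 0$ and again $i^+\in I$.

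The reverse implication is completely symmetric: if $i^+\in I$, apply the same argument to an irreducible $L'$ with $E_{i^+}L'\neq 0$ and an irreducible subquotient $K'$ of $E_{i^+}L'$; now the formula from Lemma~\ref{choose} is
\[
\h_{K'}(u)\;=\;\h_{L'}(u)\cdot\frac{(u-i^+)^2}{(u-i^{++})(u-i)},
\]
which has negative order at $u=i$ unless $\eps_i(L')$ or $\phi_i(L')$ is already nonzero, and the identical case analysis forces $i\in I$.

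For the second assertion, suppose we are in the quantum case. The generating morphism $\mathord{\begin{tikzpicture}[baseline = -2]\draw[->] (0.08,-.15) to (0.08,.3);\node at (0.08,0.05) {$\dt$};\end{tikzpicture}}:E\Rightarrow E$ is invertible by the definition of the quantum Heisenberg category recalled in \S\ref{qh}. Hence for any finitely generated $V\in\R$, the induced endomorphism of $EV$ is an invertible element of the finite-dimensional algebra $\End_\R(EV)$, and so its minimal polynomial $m_V(u)$ has $m_V(0)\in\k^\times$. Taking $V=L$ irreducible, we get $\eps_0(L)=0$, hence $E_0L=0$; this holds for every irreducible $L\in\R$, so $E_0=0$ and therefore $0\notin I$. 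The main subtlety I anticipate is the bookkeeping at $u=i^+$ when $i^+=i^-$ (which can occur only in characteristic $\neq 2$ with $q^2=-1$), but the inequality $\operatorname{ord}_{i^+}\h_K(u)<0$ is robust enough to survive in that degenerate situation.
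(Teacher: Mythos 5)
Your proof is correct and follows essentially the same route as the paper: the paper likewise combines Lemma~\ref{choose} with Lemma~\ref{impy}, clearing denominators to obtain $m_L(u)n_K(u)(u-i^+)(u-i^-)=m_K(u)n_L(u)(u-i)^2$ and reading off that $(u-j)$ divides $m_K(u)$ or $n_L(u)$ for $j=i^\pm$ (your order computation and case split in one stroke), and it also dispatches $0\notin I$ by invertibility of the dot. The only quibble is your parenthetical claim that $i^+=i^-$ can occur only in the quantum case with $q^2=-1$ — it also occurs in the degenerate case in characteristic $2$ — but your analysis $\operatorname{ord}_{i^+}\h_K(u)\in\{-1,-2\}$ already covers that situation, so nothing breaks.
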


\begin{proof}
The fact that $0 \notin I$ in the quantum case follows from the invertibility of the dot.
For the first part, 
it suffices to show for $i \in I$ that $i^+$ and $i^-$ both belong to
$I$.
Let $j:= i^\pm$ for some choice of the sign.
As $i \in I$, there is an irreducible $L \in \R$ such that
$E_i L \neq 0$.
Let $K$ be an irreducible subquotient of $E_i L$.
By (\ref{choose1}), we have that
$\h_K(u) (u-i^+)(u-i^-) = \h_L(u) (u-i)^2$. Using
Lemma~\ref{impy}, we deduce that
$$
m_L(u) n_K(u) (u-i^+)(u-i^-) = m_K(u) n_L(u) (u-i)^2.
$$
Thus $(u-j)$ divides either $m_K(u)$ or $n_L(u)$, so either $E_j K
\neq 0$ or $F_j L \neq 0$.
This shows that $E_j \neq 0$ or $F_j \neq 0$, hence, $j \in I$.
\end{proof}

In view of Lemma~\ref{closed}, the map $i \mapsto i^+$ defines a fixed-point-free automorphism of $I$.
This puts us in the situation of $\S$\ref{songs}, so we can associate
a Kac-Moody Lie algebra $\g$ with weight lattice $X$,
fundamental weights $\{\Lambda_i\:|\:i \in I\}$, etc.
For an irreducible object $L \in \R$, let
\begin{equation}\label{chickens}
\wt(L) := \sum_{i \in I} (\phi_i(L)-\eps_i(L)) \Lambda_i \in X.
\end{equation}
In other words, due to the definition preceeding (\ref{CRTdef}) and
Lemma~\ref{impy},
$\langle h_i, \wt(L)\rangle \in \Z$ is the multiplicity of $u=i$ as a zero
or pole of the rational function $\h_L(u) \in \k(u)$ for each $i \in
I$.
Then for $\lambda \in X$ we let $\R_\lambda$ be the Serre
subcategory of $\R$ consisting of the objects $V$
such that every irreducible subquotient $L$ of $V$
satisfies $\wt(L) = \lambda$.
The point of this definition is
that irreducible objects $K, L \in \R$ with $\wt(K) \neq \wt(L)$
have different central characters.
Using also the general theory of blocks in our two sorts of Abelian category,
it follows that
\begin{equation}\label{one}
    \textstyle
    \R =
    \begin{cases}
        \bigoplus_{\lambda \in X} \R_\lambda & \text{if $\R$ is locally finite Abelian}, \\
        \prod_{\lambda \in X} R_\lambda & \text{if $\R$ is Schurian}.
    \end{cases}
\end{equation}
We refer to this as the {\em weight space}
decomposition of $\R$.

\begin{Lemma}\label{pg}
For $\lambda \in X$ and $i \in I$, the
 restrictions of $E_i$ and $F_i$ to $\R_\lambda$ give functors
\begin{align*}
E_i|_{\R_\lambda}&:\R_\lambda \rightarrow \R_{\lambda+\alpha_i},&
F_i|_{\R_\lambda}&:\R_\lambda \rightarrow \R_{\lambda-\alpha_i},
\end{align*}
\end{Lemma}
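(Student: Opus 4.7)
The plan is to reduce to checking the statement on irreducible objects, then read off the weight shift directly from the bubble generating function $\h_L(u)$ using Lemma~\ref{choose}.

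First, since $\R_\lambda$ is a Serre subcategory and the functors $E_i, F_i$ are exact, it suffices to show that for any irreducible $L \in \R_\lambda$, every irreducible subquotient $K$ of $E_i L$ lies in $\R_{\lambda + \alpha_i}$, and every irreducible subquotient $K'$ of $F_i L$ lies in $\R_{\lambda - \alpha_i}$. Unpacking the definition of $\wt$ given in (\ref{chickens}) combined with Lemma~\ref{impy}, the integer $\langle h_j, \wt(L)\rangle$ is precisely the order of $u = j$ as a zero (respectively, pole if negative) of the rational function $\h_L(u) = n_L(u)/m_L(u)$. Hence $\wt(L) = \lambda$ is determined by the divisor of $\h_L$ on $I$, and to identify $\wt(K)$ we need only compare $\h_K(u)$ with $\h_L(u)$.

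For $E_i$ this is immediate from Lemma~\ref{choose}:
$$\h_K(u) = \h_L(u) \cdot \frac{(u-i)^2}{(u-i^+)(u-i^-)}.$$
Thus for each $j \in I$,
$$\langle h_j, \wt(K) - \wt(L)\rangle = 2\delta_{j,i} - \delta_{j,i^+} - \delta_{j,i^-} = a_{i,j} = \langle h_j, \alpha_i\rangle,$$
using the definition of the Cartan matrix from $\S$\ref{songs}. Since elements of $X$ are determined by their pairings with all $h_j$, we conclude that $\wt(K) = \lambda + \alpha_i$. The case of $F_i$ is handled in exactly the same way, once we establish the analog of Lemma~\ref{choose}, namely
$$\h_{K'}(u) = \h_L(u) \cdot \frac{(u-i^+)(u-i^-)}{(u-i)^2}$$
for any irreducible subquotient $K'$ of $F_i L$; this then yields $\wt(K') - \wt(L) = -\alpha_i$ by the same computation.

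The only nontrivial step is establishing this $F_i$-version of Lemma~\ref{choose}. The expected route is to slide the positive bubble generating function $\anticlock(u)$ past a downward string using the pivotal structure and the bubble slide relations already recorded. Concretely, taking right mates of the relations in (\ref{moregin}) and (\ref{gin}) (or equivalently applying the same derivation as in the proof of Lemma~\ref{choose} to downward strings) produces a bubble slide through $F$ with rational factor $\frac{(u-i^+)(u-i^-)}{(u-i)^2}$ in place of its reciprocal, which is exactly what is needed. This is the main (but essentially bookkeeping) obstacle; once it is in place, the weight shift computation via the Cartan matrix concludes the argument.
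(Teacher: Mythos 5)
Your argument is correct. For $E_i$ it coincides with the paper's proof, which just cites Lemma~\ref{choose}. For $F_i$ the paper takes a different and shorter route: it deduces the claim from the $E_i$ case by adjunction — if $K'$ is an irreducible subquotient of $F_iL$ lying in $\R_\mu$, then exactness and the biadjunction $(E_i,F_i)$, $(F_i,E_i)$ produce a nonzero morphism between $L$ and $E_iK'$, so $L$ is a composition factor of $E_iK'$ and the already-proved $E_i$ case forces $\mu=\lambda-\alpha_i$. Your route instead establishes the downward analogue of Lemma~\ref{choose} by rotating the bubble slides (\ref{moregin}), (\ref{gin}) through the strictly pivotal structure, and this does work: under the $180^\circ$ rotation the relative order of the bubble and the dotted string reverses, which is exactly why the correction factor inverts to $\frac{(u-i^+)(u-i^-)}{(u-i)^2}$ as you claim. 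The trade-off is that the adjunction argument needs no new relation at all, whereas your computation costs a little bookkeeping but produces the explicit formula for $\h_{K'}(u)$ on subquotients of $F_iL$ — a fact the paper itself relies on tacitly later (e.g.\ at the end of the proof of Lemma~\ref{speccy}, where the case of a subquotient of $FL$ is dismissed as ``similar''). Your weight computation via $\langle h_j,\alpha_i\rangle=2\delta_{j,i}-\delta_{j,i^+}-\delta_{j,i^-}$ and the fact that elements of $X$ are separated by the $h_j$ is also fine.
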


\begin{proof}
For $E_i$, this follows from Lemma~\ref{choose}.
Then it follows for $F_i$ by adjunction.
\end{proof}

\subsection{The main isomorphism}
The next lemma is quite trivial but serves as a good warm-up exercise for
the one that follows.

\begin{Lemma}\label{banach1}
For $i,j \in I$ with $j \neq i$,
the natural transformations
\begin{align*}
\begin{tikzpicture}[baseline = -1mm]
	\draw[->] (0.28,-.28) to (-0.28,.28);
	\draw[<-] (-0.28,-.28) to (0.28,.28);
      \node at (-0.33,-0.43) {$\scriptstyle{j}$};
      \node at (0.33,-0.43) {$\scriptstyle{i}$};
      \node at (-0.33,0.43) {$\scriptstyle{i}$};
      \node at (0.33,0.43) {$\scriptstyle{j}$};
\node at (0,-.01) {$\diamond$};
\end{tikzpicture}
:F_j E_i \Rightarrow E_i F_j&
,&
\begin{tikzpicture}[baseline = -1mm]
	\draw[<-] (0.28,-.28) to (-0.28,.28);
	\draw[->] (-0.28,-.28) to (0.28,.28);
      \node at (-0.33,-0.43) {$\scriptstyle{i}$};
      \node at (0.33,-0.43) {$\scriptstyle{j}$};
      \node at (-0.33,0.43) {$\scriptstyle{j}$};
      \node at (0.33,0.43) {$\scriptstyle{i}$};
\node at (0,-.01) {$\diamond$};
\end{tikzpicture}
: E_i F_j \Rightarrow F_j E_i&
\end{align*}
are mutually inverse
isomorphisms.
(Here, we have drawn the crossings in the degenerate
case; in the quantum case they should be interpreted as 
positive or negative crossings, it does not matter which is chosen.)
\end{Lemma}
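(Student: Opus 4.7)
The strategy is to reduce to the Heisenberg identities (\ref{sideways}) in the degenerate case and (\ref{limb}) in the quantum case, which state that in $\Heis_k$ the two compositions $\beta\alpha : FE \to EF \to FE$ and $\alpha\beta : EF \to FE \to EF$ of the two sideways crossings equal $\mathrm{id}_{FE}$ and $\mathrm{id}_{EF}$ plus explicit sums of correction terms; each correction factors through $\unit$ as a cap ($FE \to \unit$ or $EF \to \unit$) followed by a bubble in $\End_{\Heis_k}(\unit)$ followed by a cup ($\unit \to FE$ or $\unit \to EF$), possibly with some dots on the legs of the cup and cap. The sideways crossings appearing in the statement of the lemma are precisely the restrictions of these full sideways crossings to the colored subfunctors $F_j E_i$ and $E_i F_j$, obtained by pre-composing with the inclusions $F_j E_i \hookrightarrow FE$, $E_i F_j \hookrightarrow EF$ and post-composing with the matching projections.

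After such restriction, the identities read
$$\beta\alpha\big|_{F_jE_i} = \mathrm{id}_{F_jE_i} + (\text{restricted corrections}), \qquad \alpha\beta\big|_{E_iF_j} = \mathrm{id}_{E_iF_j} + (\text{restricted corrections}).$$
The heart of the proof is to show that every restricted correction vanishes. The input for this is the diagonality of cups and caps along the eigenspace decomposition (\ref{pape}), which is a consequence of (\ref{incpro}): these relations say that inserting a colored inclusion or projection on one leg of any cup or cap matches inserting the same-color inclusion or projection on the other leg. Decomposing the cup $\unit \to FE$ as $\sum_{a,b} \cup_{a,b}$ with $\cup_{a,b} : \unit \to F_a E_b$, the equality obtained from (\ref{incpro}) between projecting only the left leg onto color $i$ and projecting only the right leg onto color $i$ forces $\cup_{a,b} = 0$ whenever $a \neq b$. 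The same argument handles the cap $FE \to \unit$ as well as the cup $\unit \to EF$ and the cap $EF \to \unit$; the presence of dots on the legs does not affect the conclusion, since dots preserve individual eigenspaces.

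Consequently, for $i \neq j$ every restricted cap $F_j E_i \to \unit$ and cup $\unit \to F_j E_i$ is zero (and similarly for their $EF$-counterparts), so every correction term in the restricted identities vanishes. This yields $\beta\alpha|_{F_jE_i} = \mathrm{id}_{F_jE_i}$ and $\alpha\beta|_{E_iF_j} = \mathrm{id}_{E_iF_j}$, showing the two sideways crossings are mutually inverse. In the quantum case the same argument works whether we start from the positive or the negative sideways crossing, since they differ by a cup-cap correction that also vanishes after color restriction. The main subtlety is establishing the cup-cap diagonality carefully; once that is in hand, everything else is formal.
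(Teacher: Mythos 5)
Your overall route is the same as the paper's: feed the composite into the Heisenberg relation \eqref{sideways} (resp.\ \eqref{limb} in the quantum case) and kill the resulting cup--bubble--cap corrections using the diagonality of cups and caps along the eigenspace decomposition, which you correctly extract from (\ref{othogonality})--(\ref{incpro}); the observation that dots preserve eigenspaces, so decorated legs cause no trouble, is also right and matches the paper's final step.

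There is, however, one step you pass over. The morphism you must compute is the product of the two \emph{restricted} crossings, namely $(\pi_{F_jE_i}\circ\beta\circ\iota_{E_iF_j})\circ(\pi_{E_iF_j}\circ\alpha\circ\iota_{F_jE_i})$, which contains the idempotent $\iota_{E_iF_j}\pi_{E_iF_j}$ of $EF$ sandwiched between the two full sideways crossings. Writing this as ``$\beta\alpha|_{F_jE_i}$'' and then applying \eqref{sideways} to it silently identifies the composite of restrictions with the restriction of the composite, $\pi_{F_jE_i}\,\beta\alpha\,\iota_{F_jE_i}=\sum_{a,b}\pi_{F_jE_i}\beta\,\iota_{E_aF_b}\pi_{E_aF_b}\,\alpha\,\iota_{F_jE_i}$. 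Discarding every term with $(a,b)\neq(i,j)$ requires knowing that the mismatched-color components of the \emph{sideways} crossing vanish; this is exactly the rotated case of Lemma~\ref{essential}, which the paper invokes for precisely this equality. It does not follow from cup/cap diagonality alone --- the issue is the intermediate $EF$, not the cups and caps --- so as written your argument has a gap, though the fix is a one-line appeal to a result already established. Your closing remark about positive versus negative crossings in the quantum case is fine: they differ by $z$ times a cup--cap term, which dies after color restriction for the same reason as the other corrections.
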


\begin{proof}
Check that the compositions both ways around are the
identities. For example, one way in the degenerate case gives
\begin{align*}
\mathord{
\begin{tikzpicture}[baseline = -1mm]
\node at (-.23,.75){$\scriptstyle j$};
\node at (.23,.75){$\scriptstyle i$};
\node at (-.23,-.75){$\scriptstyle j$};
\node at (.23,-.75){$\scriptstyle i$};
\node at (-.35,0){$\scriptstyle i$};
\node at (.35,0){$\scriptstyle j$};
	\draw[-] (0.23,0) to[out=90,in=-90] (-0.23,.6);
	\draw[->] (-0.23,0) to[out=90,in=-90] (0.23,.6);
	\draw[-] (0.23,-.6) to[out=90,in=-90] (-0.23,0);
	\draw[<-] (-0.23,-.6) to[out=90,in=-90] (0.23,0);
\node at (0,.3){$\diamond$};
\node at (0,-.3){$\diamond$};
\end{tikzpicture}
}
&
=
\mathord{
\begin{tikzpicture}[baseline = -1mm]
\node at (-.23,.75){$\scriptstyle j$};
\node at (.23,.75){$\scriptstyle i$};
\node at (-.23,-.75){$\scriptstyle j$};
\node at (.23,-.75){$\scriptstyle i$};
	\draw[-] (0.23,0) to[out=90,in=-90] (-0.23,.6);
	\draw[->] (-0.23,0) to[out=90,in=-90] (0.23,.6);
	\draw[-] (0.23,-.6) to[out=90,in=-90] (-0.23,0);
	\draw[<-] (-0.23,-.6) to[out=90,in=-90] (0.23,0);
	\draw[-] (-.2,.4) to (-.13,.45);
	\draw[-] (-.2,-.4) to (-.13,-.45);
	\draw[-] (.2,.4) to (.13,.45);
	\draw[-] (.2,-.4) to (.13,-.45);
\end{tikzpicture}
}
=
\mathord{
\begin{tikzpicture}[baseline = -1mm]
\node at (-.28,.75){$\scriptstyle j$};
\node at (.08,-.75){$\scriptstyle i$};
	\draw[->] (0.08,-.6) to (0.08,.6);
	\draw[<-] (-0.28,-.6) to (-0.28,.6);
\end{tikzpicture}
}
+\sum_{r,s \geq 0}
\mathord{
\begin{tikzpicture}[baseline = 1mm]
  \draw[<-] (0,0.4) to[out=180,in=90] (-.2,0.2);
  \draw[-] (0.2,0.2) to[out=90,in=0] (0,.4);
 \draw[-] (-.2,0.2) to[out=-90,in=180] (0,0);
  \draw[-] (0,0) to[out=0,in=-90] (0.2,0.2);
   \node at (-0.2,0.2) {$\dt$};
   \node at (-.78,0.2) {$\scriptstyle{-r-s-2}$};
\end{tikzpicture}
}
\mathord{
\begin{tikzpicture}[baseline=-1mm]
	\draw[<-] (0.2,0.6) to[out=-90, in=0] (0,.1);
	\draw[-] (0,.1) to[out = 180, in = -90] (-0.2,0.6);
	\draw[-] (0.2,-.6) to[out=90, in=0] (0,-0.1);
	\draw[->] (0,-0.1) to[out = 180, in = 90] (-0.2,-.6);
\node at (-0.2,.75){$\scriptstyle j$};
\node at (.2,-.75){$\scriptstyle i$};
\node at (-0.2,-.75){$\scriptstyle j$};
\node at (.2,.75){$\scriptstyle i$};
      \node at (0.16,-0.2) {$\dt$};
      \node at (0.33,-0.2) {$\scriptstyle{s}$};
   \node at (0.16,0.2) {$\dt$};
   \node at (.33,.2) {$\scriptstyle{r}$};
	\draw[-] (-.24,.415) to (-.155,.42);
	\draw[-] (.24,.415) to (.155,.42);
	\draw[-] (-.24,-.415) to (-.155,-.42);
	\draw[-] (.24,-.415) to (.155,-.42);
\end{tikzpicture}}
=
\mathord{
\begin{tikzpicture}[baseline = -1mm]
\node at (-.28,.75){$\scriptstyle j$};
\node at (.08,-.75){$\scriptstyle i$};
	\draw[->] (0.08,-.6) to (0.08,.6);
	\draw[<-] (-0.28,-.6) to (-0.28,.6);
\end{tikzpicture}
},
\end{align*}
using Lemma~\ref{essential} (the sideways crossing version!) for the first equality, the relation \eqref{sideways} for
the second, and (\ref{othogonality})--(\ref{incpro}) for the final one.
The other cases are similar.
\end{proof}

Now we come to what is really the main step. In the statement 
of the following two lemmas, the restrictions $F_i E_i|_{\R_\lambda}$ and $E_i F_i|_{\R_\lambda}$
are endofunctors of $\R_\lambda$ due to Lemma~\ref{pg}.

\begin{Lemma}\label{banach2}
Given $\lambda \in X$ and $i \in I$ such that
 $\langle h_i, \lambda \rangle \leq 0$, the natural
transformation
$$
\left[
\begin{tikzpicture}[baseline = -1mm]
	\draw[<-] (0.28,-.28) to (-0.28,.28);
	\draw[-,white,line width=3pt] (-0.28,-.28) to (0.28,.28);
	\draw[->] (-0.28,-.28) to (0.28,.28);
      \node at (-0.33,-0.43) {$\scriptstyle{i}$};
      \node at (0.33,-0.43) {$\scriptstyle{i}$};
      \node at (-0.33,0.43) {$\scriptstyle{i}$};
      \node at (0.33,0.43) {$\scriptstyle{i}$};
\node at (0,-.01) {$\diamond$};
\end{tikzpicture}
\quad
\mathord{
\begin{tikzpicture}[baseline = 1mm]
	\draw[<-] (0.4,.4) to[out=-90, in=0] (0.1,0);
	\draw[-] (0.1,0) to[out = 180, in = -90] (-0.2,.4);
      \node at (-0.2,0.55) {$\scriptstyle{i}$};
\end{tikzpicture}
}
\quad
\mathord{
\begin{tikzpicture}[baseline = 1mm]
	\draw[<-] (0.4,.4) to[out=-90, in=0] (0.1,0);
	\draw[-] (0.1,0) to[out = 180, in = -90] (-0.2,.4);
      \node at (-0.2,0.55) {$\scriptstyle{i}$};
\node at (0.37,.2) {$\dt$};
\node at (0.72,.2) {$\scriptstyle x-i$};
\end{tikzpicture}
}
\quad
\cdots
\quad
\mathord{
\begin{tikzpicture}[baseline = 1mm]
	\draw[<-] (0.4,.4) to[out=-90, in=0] (0.1,0);
	\draw[-] (0.1,0) to[out = 180, in = -90] (-0.2,.4);
      \node at (-0.2,0.55) {$\scriptstyle{i}$};
\node at (0.37,.2) {$\dt$};
\node at (1.25,.2) {$\scriptstyle(x-i)^{-\langle
    h_i,\lambda\rangle-1}$};
\end{tikzpicture}
}
\right]
:E_i F_i|_{\R_\lambda}
\oplus \operatorname{Id}_{\R_\lambda}^{\oplus (-\langle h_i, \lambda \rangle)}\Rightarrow
F_i E_i|_{\R_\lambda}
$$
is an isomorphism.
 (This time, we have drawn the crossing in the quantum case; in the
degenerate case it should be replaced by the degenerate
crossing.)
\end{Lemma}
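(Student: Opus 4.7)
The plan is to derive this isomorphism from the global Heisenberg inversion relation by refining it along the eigenspace decomposition of the dot. First I would apply Lemma~\ref{bdl} in the Schurian case (and a direct finite-length argument in the locally finite Abelian case) to reduce to checking that the map evaluates to an isomorphism on each irreducible $L \in \R_\lambda$. Fix such an $L$ and assume $k \leq 0$; the case $k \geq 0$ is handled symmetrically using (\ref{invrela})/(\ref{invrel1a}) in place of (\ref{invrelb})/(\ref{invrel1b}).

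By the Heisenberg inversion relation, there is a global isomorphism $\Phi: EFL \oplus L^{\oplus(-k)} \xrightarrow{\sim} FEL$ whose components are the sideways crossing $\sigma$ together with the cups dotted by $x^r$ for $r = 0, \ldots, -k-1$. I would then decompose $EFL = \bigoplus_{j,j'} E_j F_{j'} L$ and $FEL = \bigoplus_{j,j'} F_{j'} E_j L$ using the idempotents of (\ref{CRTdef}). A sideways-crossing analogue of Lemma~\ref{essential}, proved by an identical nilpotent dot-sliding argument using the sideways crossing decompositions from $\S$\ref{dh}--$\S$\ref{qh}, shows that the crossing component of $\Phi$ respects the pair labels $(j,j')$; Lemma~\ref{banach1} then gives that its blocks with $j \neq j'$ are isomorphisms $E_j F_{j'}L \xrightarrow{\sim} F_{j'} E_j L$. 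A similar dot-through-cup argument shows that each cup $L \to FEL$ projects nontrivially only into the diagonal summands $F_j E_j L$ (because placing large powers of $(x-j)$ and $(x-j')$ on the two strands of a cup would annihilate it simultaneously when $j \neq j'$).

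Using the invertibility of the off-diagonal swap pieces to block-triangularize $\Phi$, the isomorphism statement for $\Phi$ reduces to showing that an induced map
$$\Phi': \bigoplus_j E_j F_j L \oplus L^{\oplus(-k)} \xrightarrow{\sim} \bigoplus_j F_j E_j L$$
is an isomorphism, with components being the diagonal crossings $\sigma_j$ together with the $j$-colored cups (expressed as Taylor expansions of the original $x^r$-dotted cups in $(x-j)$, reduced modulo the nilpotency $(x-j)^{\eps_j(L)} = 0$ on $E_j L$). The main obstacle is then to show that $\Phi'$ further decomposes as a direct sum over $j$ of block isomorphisms of the form given in the statement, with the correct multiplicity of copies of $L$ in each block. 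The key tool is the bubble generating function $\h_L(u) = n_L(u)/m_L(u)$ of Lemma~\ref{impy}, which on an irreducible $L$ factors via (\ref{CRTdef}) and Lemma~\ref{choose} as $\prod_j (u-j)^{\langle h_j, \lambda\rangle}$; in particular $\sum_j \langle h_j, \lambda\rangle = k$ from the degree of $\h_L(u)$ at infinity. A localized version of the argument of Lemma~\ref{preimpy} at each $j$, together with this degree count, will show that the Heisenberg cups contribute exactly $-\langle h_j, \lambda\rangle$ independent $j$-colored cups to the $j$-block when $\langle h_j, \lambda\rangle \leq 0$, and symmetrically produce $\langle h_j, \lambda\rangle$ caps splitting off from $E_j F_j L$ when $\langle h_j, \lambda\rangle \geq 0$. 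Restricting to $j = i$ yields the block isomorphism claimed in the statement.
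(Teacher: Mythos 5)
There is a genuine gap, and it occurs at the step where you claim that ``a sideways-crossing analogue of Lemma~\ref{essential} \dots shows that the crossing component of $\Phi$ respects the pair labels $(j,j')$.'' This is false. The nilpotent dot-sliding argument of Lemma~\ref{essential} does not transfer cleanly to the sideways crossing: when a dot is pushed through a sideways crossing the correction term is not an identity morphism but a cup--cap composite (this is already visible in (\ref{sideways}) and (\ref{sidewaysp})), and that correction term need not vanish when both source and target are \emph{diagonal} summands. Concretely, the component $E_jF_jL\Rightarrow F_{j''}E_{j''}L$ of the sideways crossing can be nonzero for $j\neq j''$. For a counterexample take $\R$ to be finite-dimensional representations of $\gl_2(\C)$ with the standard degenerate action of $\Heis_0$ and $L$ the trivial module: then $EL=E_0L=V$ and $FL=F_cL=V^*$ with $c=\pm 2\neq 0$, both $EFL$ and $FEL$ are $\mathrm{triv}\oplus\mathrm{adj}$, the trivial constituent of $EFL$ is $E_cF_cL$ (it is the image of the rightward cup, on which the dot acts by $c$ by pivotality), while the trivial constituent of $FEL$ is $F_0E_0L$; since the sideways crossing is here an isomorphism ($k=0$) and must match trivial constituents, its component $E_cF_cL\Rightarrow F_0E_0L$ is nonzero even though $c\neq 0$. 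These cross-diagonal terms are exactly what block your triangularization: without them your argument would force each diagonal crossing $\chi_j$ to be injective whenever $k\leq 0$, contradicting Lemma~\ref{banach3} for any $j$ with $\langle h_j,\lambda\rangle>0$, and such $j$ exist (e.g.\ in the example above).

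Two further problems. First, your case split is on the sign of the central charge $k$, but the hypothesis of the lemma is on the sign of $\langle h_i,\lambda\rangle$; both signs of $k$ occur under that hypothesis, and when they disagree the global inversion relation does not even have the shape you start from. Second, the final step --- distributing the $-k$ copies of $L$ among the colour blocks via ``a localized version of Lemma~\ref{preimpy} and a degree count'' --- is where essentially all of the content of the lemma lives, and the identity $\sum_j\langle h_j,\lambda\rangle=k$ cannot accomplish it: some blocks require a \emph{negative} number of copies of $L$ (caps splitting off, as in Lemma~\ref{banach3}, rather than cups being adjoined), so the global isomorphism is not a direct sum of the per-colour statements in any naive sense. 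The paper's proof avoids the global inversion relation entirely: it works within the single colour $i$, transports everything through the adjunctions into the algebras $\k[u]/(m_L(u))$ and $\k[u]/(n_L(u))$, computes the composite of the crossing with the cup part explicitly using the curl relation and $\h_L(u)=n_L(u)/m_L(u)$ from Lemma~\ref{impy}, and then verifies injectivity and surjectivity by hand. Some such direct computation appears unavoidable.
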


\begin{proof}
We just prove this in the quantum case; the degenerate case is similar.
It suffices to prove that the natural transformation in the statement
of the lemma defines an isomorphism on every irreducible object $L \in
\R_\lambda$; in the Schurian case one needs to apply
Lemma~\ref{bdl} to make this reduction.
So take an irreducible $L \in \R_\lambda$.
We have that $m:= \eps_i(L)-\phi_i(L)=-\langle h_i,\lambda\rangle \geq 0$.
Let $P := \k[u] / (m_L(u))$ and
$Q := \k[u] / (n_L(u))$.
Let $P_i$ and $Q_i$ be the summands of $P$ and $Q$ that are
isomorphic to
$\k[u] \big/ \big((u-i)^{\eps_i(L)}\big)$
and
$\k[u] \big/ \big((u-i)^{\phi_i(L)}\big)$
in the CRT decomposition (\ref{CRTdef}).
To be explicit, let $f(u)$ and $g(u)$ be polynomials
such that
\begin{align*}
f(u) m_L(u) / (u-i)^{\eps_i(u)}&\equiv 1 \pmod{(u-i)^{\eps_i(L)}},\\
g(u) n_L(u) / (u-i)^{\phi_i(u)} &\equiv 1 \pmod{(u-i)^{\phi_i(L)}}.
\end{align*}
Then the identity elements $e_i \in P_i$ and $f_i \in Q_i$
are the images of
$f(u) m_L(u) / (u-i)^{\eps_i(L)}$
and $g(u) n_L(u) / (u-i)^{\phi_i(L)}$
in $P$ and $Q$, respectively.
Moreover, $f(u)$ is invertible in $P_i$, so $P_i$ can be described
equivalently as the ideal of
$P$ generated by $m_L(u) / (u-i)^{\eps_i(L)}$.
Similarly, $Q_i$ is the ideal of $Q$ generated by
$n_L(u) / (u-i)^{\phi_i(L)}$.
There is an injective $\k[u]$-module homomorphism
$$
\mu:Q_i \hookrightarrow P_i,
\qquad
n_L(u) / (u-i)^{\phi_i(L)} \mapsto t_L^{-1} m_L(u) / (u-i)^{\phi_i(L)}.
$$
Its image has basis
$(u-i)^m e_i,
(u-i)^{m+1} e_i,\dots,
(u-i)^{\eps_i(L)-1} e_i$.
Let $C_i$ be the subspace
of $P_i$ with basis
$e_i, (u-i) e_i,\dots, (u-i)^{m-1} e_i$. This is a linear
complement to $\mu(Q_i)$ in $P_i$.

The composition of the algebra embeddings (\ref{CRT1}) with the
adjunction isomorphisms
$\End_{\R}(EL) \cong \Hom_{\R}(L, FEL)$
and
$\End_{\R}(FL) \cong \Hom_{\R}(L, EFL)$
give us linear embeddings $\vec{\beta}:P \hookrightarrow \Hom_{\R}(L, FEL)$
and $\cev{\beta}:Q \hookrightarrow \Hom_{\R}(L, EFL)$,
respectively.
So:
\begin{align*}
\vec{\beta}(p(u)) &=
\mathord{
\begin{tikzpicture}[baseline = 1mm]
	\draw[<-] (0.4,.4) to[out=-90, in=0] (0.1,0);
	\draw[-] (0.1,0) to[out = 180, in = -90] (-0.2,.4);
      \node at (0.7,0.18) {$\scriptstyle{p(x)}$};
      \node at (.36,0.18) {$\dt$};
	\draw[-,darkg,thick] (1.1,.4) to (1.1,-.2);
      \node at (1.1,-0.35) {$\darkg\scriptstyle{L}$};
\end{tikzpicture}
},
&\cev{\beta}(p(u)) &=
\mathord{
\begin{tikzpicture}[baseline = 1mm]
	\draw[-] (0.4,.4) to[out=-90, in=0] (0.1,0);
	\draw[->] (0.1,0) to[out = 180, in = -90] (-0.2,.4);
      \node at (0.7,0.18) {$\scriptstyle{p(x)}$};
      \node at (.36,0.18) {$\dt$};
	\draw[-,darkg,thick] (1.1,.4) to (1.1,-.2);
      \node at (1.1,-0.35) {$\darkg\scriptstyle{L}$};
\end{tikzpicture}
}.
\end{align*}
Recalling (\ref{tensorproduct}), the linear maps $\vec{\beta}$ and $\cev{\beta}$ induce morphisms
\begin{align*}
\vec{\gamma}:L \otimes P &\rightarrow FEL,
&\cev{\gamma}:L \otimes Q
&\rightarrow EFL.
\end{align*}
For example, if $v_1,\dots,v_n$ is
the fixed basis for $P$ then $\vec{\gamma}$ is the morphism $L^{\oplus n}
\rightarrow FEL$ defined by the matrix
$\left[\vec{\beta}(v_1)\:\cdots\:\vec{\beta}(v_n)\right]$. 
As 
the morphisms $\vec{\beta}(v_1),\dots,\vec{\beta}(v_n):L \rightarrow FEL$ are
linearly independent and $L$ is irreducible,
$\vec{\gamma}$ is a monomorphism. Similarly, so is $\cev{\gamma}$.
As $\vec{\beta}(e_i)$ maps $L$ into the summand $F_i E_i L$ of
$FEL$, we have that
$\vec{\gamma}(L \otimes P_i) \subseteq F_i E_i L$. Similarly,
$\cev{\gamma}(L \otimes Q_i) \subseteq E_i F_i L$.
Finally, let
\begin{align*}
\vec{\chi}&:=
\begin{tikzpicture}[baseline = -1mm]
	\draw[<-] (0.28,-.28) to (-0.28,.28);
	\draw[-,line width=3pt,white] (-0.28,-.28) to (0.28,.28);
	\draw[->] (-0.28,-.28) to (0.28,.28);
      \node at (-0.33,-0.43) {$\scriptstyle{i}$};
      \node at (0.33,-0.43) {$\scriptstyle{i}$};
      \node at (-0.33,0.43) {$\scriptstyle{i}$};
      \node at (0.33,0.43) {$\scriptstyle{i}$};
\node at (0,-.01) {$\diamond$};
	\draw[-,darkg,thick] (0.7,-.28) to (0.7,.28);
	\node at (0.7,-.43) {$\darkg\scriptstyle{L}$};
\end{tikzpicture}:E_i F_i L \rightarrow F_i E_i L,
&\cev{\chi}&:=
\begin{tikzpicture}[baseline = -1mm]
	\draw[<-] (-0.28,-.28) to (0.28,.28);
	\draw[-,line width=3pt,white] (0.28,-.28) to (-0.28,.28);
	\draw[->] (0.28,-.28) to (-0.28,.28);
      \node at (-0.33,-0.43) {$\scriptstyle{i}$};
      \node at (0.33,-0.43) {$\scriptstyle{i}$};
      \node at (-0.33,0.43) {$\scriptstyle{i}$};
      \node at (0.33,0.43) {$\scriptstyle{i}$};
\node at (0,-.01) {$\diamond$};
	\draw[-,darkg,thick] (0.7,-.28) to (0.7,.28);
	\node at (0.7,-.43) {$\darkg\scriptstyle{L}$};
\end{tikzpicture}:F_i E_i L \rightarrow E_i F_i L.
\end{align*}
We are trying to prove that the morphism
$$
\left[
\vec{\chi}\quad
\vec{\beta}\left(e_i\right) \quad
\vec{\beta}\left((u-i)e_i\right)\quad\cdots\quad
\vec{\beta}\big((u-i)^{m-1} e_i\big)
\right]
:E_i F_i L \oplus L^{\oplus m} \rightarrow F_i E_i L
$$
is an isomorphism.
Equivalently,
using the basis $e_i, (u-i)
e_i,\dots,(u-i)^{m-1} e_i$ for $C_i$ to identify $L \otimes C_i$ with 
$L^{\oplus m}$,
we must show that 
$$
\theta := \left[\vec{\chi} \quad\vec{\gamma}|_{L\otimes C_i}\right]:E_i F_i L
\:\oplus\: L\otimes C_i
\rightarrow F_i E_i L
$$
is an isomorphism. This follows from the following series of claims.

\vspace{2mm}
\noindent
\underline{Claim 1}:
{\em
$\cev{\chi}(\vec{\gamma}(L\otimes P_i)) \subseteq \cev{\gamma}(L\otimes Q_i)$.}
To justify this, take $p(u) \in P_i$,
we have that
$$
\cev{\chi}(\vec{\beta}(p(u))) =
\mathord{
\begin{tikzpicture}[baseline = 1.5mm]
	\draw[-] (0.25,-0) to[out=-90, in=0] (0,-0.25);
	\draw[-] (0.25,.6) to[out=240,in=90] (-0.25,-0);
	\draw[-] (0,-0.25) to[out = 180, in = -90] (-0.25,-0);
	\draw[-,white,line width=3pt] (-0.25,.6) to[out=300,in=90] (0.25,-0);
	\draw[<-] (-0.25,.6) to[out=300,in=90] (0.25,-0);
      \node at (0.3,.2) {$\scriptstyle{i}$};
      \node at (-0.3,.2) {$\scriptstyle{i}$};
      \node at (0.58,-0.12) {$\scriptstyle{p(x)}$};
      \node at (.23,-.13) {$\dt$};
      \node at (-.28,.75) {$\scriptstyle i$};
      \node at (.28,.75) {$\scriptstyle i$};
      \node at (0,.35) {$\diamond$};
	\draw[-,darkg,thick] (1,.6) to (1,-.25);
      \node at (1,-0.4) {$\darkg\scriptstyle{L}$};
\end{tikzpicture}
}
=
\mathord{
\begin{tikzpicture}[baseline = 1.5mm]
	\draw[-] (0.25,-0) to[out=-90, in=0] (0,-0.25);
	\draw[-] (0,-0.25) to[out = 180, in = -90] (-0.25,-0);
	\draw[-] (0.25,.6) to[out=240,in=90] (-0.25,-0);
	\draw[-,white,line width=4pt] (-0.25,.6) to[out=300,in=90] (0.25,-0);
	\draw[<-] (-0.25,.6) to[out=300,in=90] (0.25,-0);
	\draw[-] (-.11,0.51) to (-0.18,.43);
	\draw[-] (.11,0.51) to (0.18,.43);
      \node at (-.28,.75) {$\scriptstyle i$};
      \node at (.28,.75) {$\scriptstyle i$};
      \node at (0.58,-0.12) {$\scriptstyle{p(x)}$};
      \node at (.23,-.13) {$\dt$};
	\draw[-] (.2,0.04) to (0.3,.05);
      \node at (.3,.22) {$\scriptstyle i$};
	\draw[-] (.11,0.2) to (0.18,.28);
	\draw[-,darkg,thick] (1,.6) to (1,-.25);
      \node at (1,-0.4) {$\darkg\scriptstyle{L}$};
\end{tikzpicture}
}=
\mathord{
\begin{tikzpicture}[baseline = 1.5mm]
	\draw[-] (0.25,-0) to[out=-90, in=0] (0,-0.25);
	\draw[-] (0,-0.25) to[out = 180, in = -90] (-0.25,-0);
	\draw[-] (0.25,.6) to[out=240,in=90] (-0.25,-0);
	\draw[-,white,line width=4pt] (-0.25,.6) to[out=300,in=90] (0.25,-0);
	\draw[<-] (-0.25,.6) to[out=300,in=90] (0.25,-0);
      \node at (0.6,.03) {$\scriptstyle{p(x)}$};
	\draw[-] (-.11,0.51) to (-0.18,.43);
	\draw[-] (.11,0.51) to (0.18,.43);
      \node at (-.28,.75) {$\scriptstyle i$};
      \node at (.28,.75) {$\scriptstyle i$};
      \node at (.25,0) {$\dt$};
	\draw[-,darkg,thick] (1,.6) to (1,-.25);
      \node at (1,-0.4) {$\darkg\scriptstyle{L}$};
\end{tikzpicture}
}\:.
$$
Using the defining relations, $p(x)$ can now be commuted past the
crossing and the curl can be ``straightened.''
The resulting morphism clearly has image
in $\cev{\gamma}(L \otimes Q_i)$.

\vspace{2mm}
\noindent
\underline{Claim 2}:
{\em $\vec{\chi} \circ \cev{\gamma}=\vec{\gamma} \circ (L \otimes \mu)$.}
Take a polynomial $p(u) \in \k[u]$ representing an element of $Q_i$, i.e., a
polynomial
divisible by $n_L(u) / (u-i)^{\phi_i(L)}$.
Let $q(u) := t_L^{-1} p(u) m_L(u) / n_L(u) \in \k[u]$. This is a representative
for the image of $p(u)$ under $\mu:Q_i \rightarrow P_i$.
Using Lemmas~\ref{surely} and \ref{impy}, we have
that
\begin{align*}
\vec{\chi}(\cev{\beta}(p(u))) &=
\mathord{
\begin{tikzpicture}[baseline = 1.5mm]
	\draw[-] (-0.25,.6) to[out=300,in=90] (0.25,-0);
	\draw[-] (0.25,-0) to[out=-90, in=0] (0,-0.25);
	\draw[-] (0,-0.25) to[out = 180, in = -90] (-0.25,-0);
	\draw[-,white,line width=3pt] (0.25,.6) to[out=240,in=90] (-0.25,-0);
	\draw[<-] (0.25,.6) to[out=240,in=90] (-0.25,-0);
      \node at (0.3,.2) {$\scriptstyle{i}$};
      \node at (-0.3,.2) {$\scriptstyle{i}$};
      \node at (0.58,-0.12) {$\scriptstyle{p(x)}$};
      \node at (.23,-.13) {$\dt$};
      \node at (-.28,.75) {$\scriptstyle i$};
      \node at (.28,.75) {$\scriptstyle i$};
      \node at (0,.35) {$\diamond$};
	\draw[-,darkg,thick] (1,.6) to (1,-.25);
      \node at (1,-0.4) {$\darkg\scriptstyle{L}$};
\end{tikzpicture}
}
=
\mathord{
\begin{tikzpicture}[baseline = 1.5mm]
	\draw[-] (-0.25,.6) to[out=300,in=90] (0.25,-0);
	\draw[-] (0.25,-0) to[out=-90, in=0] (0,-0.25);
	\draw[-] (0,-0.25) to[out = 180, in = -90] (-0.25,-0);
	\draw[-,line width=4pt,white] (0.25,.6) to[out=240,in=90] (-0.25,-0);
	\draw[<-] (0.25,.6) to[out=240,in=90] (-0.25,-0);
	\draw[-] (-.11,0.51) to (-0.18,.43);
	\draw[-] (.11,0.51) to (0.18,.43);
      \node at (-.28,.75) {$\scriptstyle i$};
      \node at (.28,.75) {$\scriptstyle i$};
      \node at (0.58,-0.12) {$\scriptstyle{p(x)}$};
      \node at (.23,-.13) {$\dt$};
	\draw[-] (.2,0.04) to (0.3,.05);
      \node at (.3,.22) {$\scriptstyle i$};
	\draw[-] (.11,0.2) to (0.18,.28);
	\draw[-,darkg,thick] (1,.6) to (1,-.25);
      \node at (1,-0.4) {$\darkg\scriptstyle{L}$};
\end{tikzpicture}
}=
\mathord{
\begin{tikzpicture}[baseline = 1.5mm]
	\draw[-] (-0.25,.6) to[out=300,in=90] (0.25,-0);
	\draw[-] (0.25,-0) to[out=-90, in=0] (0,-0.25);
	\draw[-] (0,-0.25) to[out = 180, in = -90] (-0.25,-0);
	\draw[-,line width=4pt,white] (0.25,.6) to[out=240,in=90] (-0.25,-0);
	\draw[<-] (0.25,.6) to[out=240,in=90] (-0.25,-0);
      \node at (0.6,.03) {$\scriptstyle{p(x)}$};
	\draw[-] (-.11,0.51) to (-0.18,.43);
	\draw[-] (.11,0.51) to (0.18,.43);
      \node at (-.28,.75) {$\scriptstyle i$};
      \node at (.28,.75) {$\scriptstyle i$};
      \node at (.25,0) {$\dt$};
	\draw[-,darkg,thick] (1,.6) to (1,-.25);
      \node at (1,-0.4) {$\darkg\scriptstyle{L}$};
\end{tikzpicture}
}\\&=
t^{-1}\left[
p(u)
\begin{tikzpicture}[baseline = -1.5mm]
	\draw[<-] (0.3,.4) to[out=-90, in=0] (0.1,0);
	\draw[-] (0.1,0) to[out = 180, in = -90] (-0.1,.4);
      \node at (-0.1,0.55) {$\scriptstyle{i}$};
      \node at (0.85,0.18) {$\scriptstyle{(u-x)^{-1}}$};
      \node at (.3,0.18) {$\dt$};
\node at (.27,-.35){$\scriptstyle\clock(u)$};
	\draw[-,darkg,thick] (1.5,.4) to (1.5,-.55);
      \node at (1.5,-0.7) {$\darkg\scriptstyle{L}$};
\end{tikzpicture}
\right]_{u^{-1}}
=\left[
t_L^{-1} p(u) \h_L(u)^{-1}
\begin{tikzpicture}[baseline = 1mm]
	\draw[<-] (0.3,.4) to[out=-90, in=0] (0.1,0);
	\draw[-] (0.1,0) to[out = 180, in = -90] (-0.1,.4);
      \node at (-0.1,0.55) {$\scriptstyle{i}$};
      \node at (0.83,0.18) {$\scriptstyle{(u-x)^{-1}}$};
      \node at (.3,0.18) {$\dt$};
	\draw[-,darkg,thick] (1.4,.4) to (1.4,-.1);
      \node at (1.4,-0.25) {$\darkg\scriptstyle{L}$};
\end{tikzpicture}\right]_{u^{-1}}\\
&=
\left[
q(u)
\begin{tikzpicture}[baseline = 1mm]
	\draw[<-] (0.3,.4) to[out=-90, in=0] (0.1,0);
	\draw[-] (0.1,0) to[out = 180, in = -90] (-0.1,.4);
      \node at (-0.1,0.55) {$\scriptstyle{i}$};
      \node at (0.83,0.18) {$\scriptstyle{(u-x)^{-1}}$};
      \node at (.3,0.18) {$\dt$};
	\draw[-,darkg,thick] (1.4,.4) to (1.4,-.1);
      \node at (1.4,-0.25) {$\darkg\scriptstyle{L}$};
\end{tikzpicture}\right]_{u^{-1}}
=
\begin{tikzpicture}[baseline = 1mm]
	\draw[<-] (0.3,.4) to[out=-90, in=0] (0.1,0);
	\draw[-] (0.1,0) to[out = 180, in = -90] (-0.1,.4);
      \node at (-0.1,0.55) {$\scriptstyle{i}$};
      \node at (0.63,0.18) {$\scriptstyle{q(x)}$};
      \node at (.3,0.18) {$\dt$};
	\draw[-,darkg,thick] (1.1,.4) to (1.1,-.1);
      \node at (1.1,-0.25) {$\darkg\scriptstyle{L}$};
\end{tikzpicture}
=\vec{\beta}(\mu(p(u))).
\end{align*}
The claim follows from this using the definitions of $\vec{\gamma}$ and $\cev{\gamma}$.

\vspace{2mm}
\noindent
\underline{Claim 3}: {\em We have that $\vec{\chi} \circ \cev{\chi} = 1_{F_i E_i
    L} + \phi$ for some morphism $\phi:F_i E_i L \rightarrow F_i E_i
  L$ whose image is contained in $\vec{\gamma}(L \otimes P_i)$.
Similarly, $\cev{\chi} \circ \vec{\chi} =
  1_{E_i F_i L} + \phi$ for
some morphism $\phi:E_i F_i L \rightarrow E_i F_i L$ whose image is
contained in $\cev{\gamma}(L \otimes Q_i)$.}
We just
explain in the first case. We have that
\begin{align*}
\vec{\chi} \circ \cev{\chi} &=
\begin{tikzpicture}[baseline = -1mm]
\node at (-.23,.75){$\scriptstyle i$};
\node at (.23,.75){$\scriptstyle i$};
\node at (-.23,-.75){$\scriptstyle i$};
\node at (.23,-.75){$\scriptstyle i$};
\node at (-.35,0){$\scriptstyle i$};
\node at (.35,0){$\scriptstyle i$};
	\draw[-,darkg,thick] (0.6,.6) to (0.6,-.6);
\node at (.6,-.75){$\darkg\scriptstyle{L}$};
	\draw[<-] (-0.23,-.6) to[out=90,in=-90] (0.23,0);
	\draw[-] (0.23,0) to[out=90,in=-90] (-0.23,.6);
	\draw[-,line width=3pt, white] (-0.23,0) to[out=90,in=-90] (0.23,.6);
	\draw[->] (-0.23,0) to[out=90,in=-90] (0.23,.6);
	\draw[-,line width=3pt, white] (0.23,-.6) to[out=90,in=-90] (-0.23,0);
	\draw[-] (0.23,-.6) to[out=90,in=-90] (-0.23,0);
\node at (0,.3){$\diamond$};
\node at (0,-.3){$\diamond$};
\end{tikzpicture}
=
\begin{tikzpicture}[baseline = -1mm]
\node at (-.23,.75){$\scriptstyle i$};
\node at (.23,.75){$\scriptstyle i$};
\node at (-.23,-.75){$\scriptstyle i$};
\node at (.23,-.75){$\scriptstyle i$};
	\draw[-,darkg,thick] (0.6,.6) to (0.6,-.6);
\node at (.6,-.75){$\darkg\scriptstyle{L}$};
	\draw[<-] (-0.23,-.6) to[out=90,in=-90] (0.23,0);
	\draw[-] (0.23,0) to[out=90,in=-90] (-0.23,.6);
	\draw[-,line width=4pt,white] (-0.23,0) to[out=90,in=-90] (0.23,.6);
	\draw[->] (-0.23,0) to[out=90,in=-90] (0.23,.6);
	\draw[-,line width=4pt, white] (0.23,-.6) to[out=90,in=-90] (-0.23,0);
	\draw[-] (0.23,-.6) to[out=90,in=-90] (-0.23,0);
	\draw[-] (-.11,0.45) to (-0.18,.37);
	\draw[-] (.11,0.45) to (0.18,.37);
	\draw[-] (-.11,-0.45) to (-0.18,-.37);
	\draw[-] (.11,-0.45) to (0.18,-.37);
\end{tikzpicture}
\!-
\sum_{j \neq i}
\begin{tikzpicture}[baseline = -1mm]
\node at (-.23,.75){$\scriptstyle i$};
\node at (.23,.75){$\scriptstyle i$};
\node at (-.23,-.75){$\scriptstyle i$};
\node at (.23,-.75){$\scriptstyle i$};
\node at (-.35,0){$\scriptstyle j$};
\node at (.35,0){$\scriptstyle j$};
	\draw[-,darkg,thick] (0.6,.6) to (0.6,-.6);
\node at (.6,-.75){$\darkg\scriptstyle{L}$};
	\draw[<-] (-0.23,-.6) to[out=90,in=-90] (0.23,0);
	\draw[-] (0.23,0) to[out=90,in=-90] (-0.23,.6);
	\draw[-,white,line width=3pt] (-0.23,0) to[out=90,in=-90] (0.23,.6);
	\draw[->] (-0.23,0) to[out=90,in=-90] (0.23,.6);
	\draw[-,line width=3pt, white] (0.23,-.6) to[out=90,in=-90] (-0.23,0);
	\draw[-] (0.23,-.6) to[out=90,in=-90] (-0.23,0);
\node at (0,.3){$\diamond$};
\node at (0,-.3){$\diamond$};
\end{tikzpicture}=
\begin{tikzpicture}[baseline = -1mm]
\node at (-.23,.75){$\scriptstyle i$};
\node at (.23,.75){$\scriptstyle i$};
\node at (-.23,-.75){$\scriptstyle i$};
\node at (.23,-.75){$\scriptstyle i$};
	\draw[-,darkg,thick] (0.6,.6) to (0.6,-.6);
\node at (.6,-.75){$\darkg\scriptstyle{L}$};
	\draw[<-] (-0.23,-.6) to[out=90,in=-90] (0.23,0);
	\draw[->] (-0.23,0) to[out=90,in=-90] (0.23,.6);
	\draw[-,line width=4pt,white] (0.23,0) to[out=90,in=-90] (-0.23,.6);
	\draw[-] (0.23,0) to[out=90,in=-90] (-0.23,.6);
	\draw[-,line width=4pt, white] (0.23,-.6) to[out=90,in=-90] (-0.23,0);
	\draw[-] (0.23,-.6) to[out=90,in=-90] (-0.23,0);
	\draw[-] (-.11,0.45) to (-0.18,.37);
	\draw[-] (.11,0.45) to (0.18,.37);
	\draw[-] (-.11,-0.45) to (-0.18,-.37);
	\draw[-] (.11,-0.45) to (0.18,-.37);
\end{tikzpicture}
\!-z\begin{tikzpicture}[baseline = -1mm]
\node at (-.23,.75){$\scriptstyle i$};
\node at (-.23,-.75){$\scriptstyle i$};
\node at (.23,-.75){$\scriptstyle i$};
	\draw[-,darkg,thick] (0.6,.6) to (0.6,-.6);
\node at (.6,-.75){$\darkg\scriptstyle{L}$};
	\draw[<-] (-0.23,-.6) to[out=90,in=-90] (0.23,0);
	\draw[-] (0.23,0) to[out=90,in=0] (0,.2);
	\draw[-] (-0.23,0) to[out=90,in=180] (0,.2);
	\draw[-,line width=4pt, white] (0.23,-.6) to[out=90,in=-90] (-0.23,0);
	\draw[-] (0.23,-.6) to[out=90,in=-90] (-0.23,0);
	\draw[-] (-.11,-0.45) to (-0.18,-.37);
	\draw[-] (.11,-0.45) to (0.18,-.37);
	\draw[<-] (0.2,0.6) to[out=-90, in=0] (0,.3);
	\draw[-] (0,.3) to[out = 180, in = -90] (-0.2,0.6);
\end{tikzpicture}
\!-
\sum_{j \neq i}
\begin{tikzpicture}[baseline = -1mm]
\node at (-.23,.75){$\scriptstyle i$};
\node at (.23,.75){$\scriptstyle i$};
\node at (-.23,-.75){$\scriptstyle i$};
\node at (.23,-.75){$\scriptstyle i$};
\node at (-.35,0){$\scriptstyle j$};
\node at (.35,0){$\scriptstyle j$};
	\draw[-,darkg,thick] (0.6,.6) to (0.6,-.6);
\node at (.6,-.75){$\darkg\scriptstyle{L}$};
	\draw[<-] (-0.23,-.6) to[out=90,in=-90] (0.23,0);
	\draw[-] (0.23,0) to[out=90,in=-90] (-0.23,.6);
	\draw[-,line width=3pt,white] (-0.23,0) to[out=90,in=-90] (0.23,.6);
	\draw[->] (-0.23,0) to[out=90,in=-90] (0.23,.6);
	\draw[-,line width=3pt, white] (0.23,-.6) to[out=90,in=-90] (-0.23,0);
	\draw[-] (0.23,-.6) to[out=90,in=-90] (-0.23,0);
\node at (0,.3){$\diamond$};
\node at (0,-.3){$\diamond$};
\end{tikzpicture}
\\&=
\mathord{
\begin{tikzpicture}[baseline = -.9mm]
	\draw[->] (0.08,-.6) to (0.08,.6);
	\draw[<-] (-0.28,-.6) to (-0.28,.6);
\node at (-.28,.75){$\scriptstyle i$};
\node at (.08,-.75){$\scriptstyle i$};
	\draw[-,darkg,thick] (0.45,.6) to (0.45,-.6);
\node at (.45,-.75){$\darkg\scriptstyle{L}$};
\end{tikzpicture}
}
+tz
\mathord{
\begin{tikzpicture}[baseline=-1mm]
	\draw[<-] (0.2,0.6) to[out=-90, in=0] (0,.1);
	\draw[-] (0,.1) to[out = 180, in = -90] (-0.2,0.6);
	\draw[-] (0.2,-.6) to[out=90, in=0] (0,-0.1);
	\draw[->] (0,-0.1) to[out = 180, in = 90] (-0.2,-.6);
\node at (-0.2,.75){$\scriptstyle i$};
\node at (-0.2,-.75){$\scriptstyle i$};
	\draw[-,darkg,thick] (0.55,.6) to (0.55,-.6);
\node at (.55,-.75){$\darkg\scriptstyle{L}$};
\end{tikzpicture}}
+z^2\sum_{r,s > 0}
\mathord{
\begin{tikzpicture}[baseline = 1mm]
  \draw[<-] (0,0.4) to[out=180,in=90] (-.2,0.2);
  \draw[-] (0.2,0.2) to[out=90,in=0] (0,.4);
 \draw[-] (-.2,0.2) to[out=-90,in=180] (0,0);
  \draw[-] (0,0) to[out=0,in=-90] (0.2,0.2);
   \node at (0,0.2) {$+$};
   \node at (-.57,0.2) {$\scriptstyle{-r-s}$};
\end{tikzpicture}
}
\mathord{
\begin{tikzpicture}[baseline=-1mm]
	\draw[<-] (0.2,0.6) to[out=-90, in=0] (0,.1);
	\draw[-] (0,.1) to[out = 180, in = -90] (-0.2,0.6);
	\draw[-] (0.2,-.6) to[out=90, in=0] (0,-0.1);
	\draw[->] (0,-0.1) to[out = 180, in = 90] (-0.2,-.6);
\node at (-0.2,.75){$\scriptstyle i$};
\node at (-0.2,-.75){$\scriptstyle i$};
      \node at (0.16,-0.2) {$\dt$};
      \node at (0.33,-0.2) {$\scriptstyle{s}$};
   \node at (0.16,0.2) {$\dt$};
   \node at (.33,.2) {$\scriptstyle{r}$};
	\draw[-,darkg,thick] (0.6,.6) to (0.6,-.6);
\node at (.6,-.75){$\darkg\scriptstyle{L}$};
\end{tikzpicture}}
-z\begin{tikzpicture}[baseline = -1mm]
\node at (-.23,.75){$\scriptstyle i$};
\node at (-.23,-.75){$\scriptstyle i$};
\node at (.23,-.75){$\scriptstyle i$};
	\draw[-,darkg,thick] (0.6,.6) to (0.6,-.6);
\node at (.6,-.75){$\darkg\scriptstyle{L}$};
	\draw[<-] (-0.23,-.6) to[out=90,in=-90] (0.23,0);
	\draw[-] (0.23,0) to[out=90,in=0] (0,.2);
	\draw[-] (-0.23,0) to[out=90,in=180] (0,.2);
	\draw[-,line width=4pt, white] (0.23,-.6) to[out=90,in=-90] (-0.23,0);
	\draw[-] (0.23,-.6) to[out=90,in=-90] (-0.23,0);
	\draw[-] (-.11,-0.45) to (-0.18,-.37);
	\draw[-] (.11,-0.45) to (0.18,-.37);
	\draw[<-] (0.2,0.6) to[out=-90, in=0] (0,.3);
	\draw[-] (0,.3) to[out = 180, in = -90] (-0.2,0.6);
\end{tikzpicture}
-
\sum_{j \neq i}
\begin{tikzpicture}[baseline = -1mm]
\node at (-.23,.75){$\scriptstyle i$};
\node at (.23,.75){$\scriptstyle i$};
\node at (-.23,-.75){$\scriptstyle i$};
\node at (.23,-.75){$\scriptstyle i$};
\node at (-.35,0){$\scriptstyle j$};
\node at (.35,0){$\scriptstyle j$};
	\draw[-,darkg,thick] (0.6,.6) to (0.6,-.6);
\node at (.6,-.75){$\darkg\scriptstyle{L}$};
	\draw[<-] (-0.23,-.6) to[out=90,in=-90] (0.23,0);
	\draw[-] (0.23,0) to[out=90,in=-90] (-0.23,.6);
	\draw[-,line width=3pt,white] (-0.23,0) to[out=90,in=-90] (0.23,.6);
	\draw[->] (-0.23,0) to[out=90,in=-90] (0.23,.6);
	\draw[-,line width=3pt, white] (0.23,-.6) to[out=90,in=-90] (-0.23,0);
	\draw[-] (0.23,-.6) to[out=90,in=-90] (-0.23,0);
\node at (0,.3){$\diamond$};
\node at (0,-.3){$\diamond$};
\end{tikzpicture}.
\end{align*}
The second, third and fourth terms on the right-hand side are morphisms whose
image is contained
in $\vec{\gamma}(L \otimes P_i)$.
It just remains to see that the final term consists of such morphisms too. Take $j
\neq i$.
Like in the proof of Lemma~\ref{essential}, we can find a
polynomial $p(u) \in \k[u]$ divisible by $(u-j)^{\phi_j(L)}$ so that
$p(u)\equiv 1 \pmod{(u-i)^{\phi_i(E_i L)}}$.
We have that
$$
\begin{tikzpicture}[baseline = -1mm]
\node at (-.23,.75){$\scriptstyle i$};
\node at (.23,.75){$\scriptstyle i$};
\node at (-.23,-.75){$\scriptstyle i$};
\node at (.23,-.75){$\scriptstyle i$};
\node at (-.35,0){$\scriptstyle j$};
\node at (.35,0){$\scriptstyle j$};
	\draw[-,darkg,thick] (0.6,.6) to (0.6,-.6);
\node at (.6,-.75){$\darkg\scriptstyle{L}$};
	\draw[<-] (-0.23,-.6) to[out=90,in=-90] (0.23,0);
	\draw[-] (0.23,0) to[out=90,in=-90] (-0.23,.6);
	\draw[-,white,line width=3pt] (-0.23,0) to[out=90,in=-90] (0.23,.6);
	\draw[->] (-0.23,0) to[out=90,in=-90] (0.23,.6);
	\draw[-,line width=3pt, white] (0.23,-.6) to[out=90,in=-90] (-0.23,0);
	\draw[-] (0.23,-.6) to[out=90,in=-90] (-0.23,0);
\node at (0,.3){$\diamond$};
\node at (0,-.3){$\diamond$};
\end{tikzpicture}
=
\begin{tikzpicture}[baseline = -1mm]
\node at (-.23,.75){$\scriptstyle i$};
\node at (.23,.75){$\scriptstyle i$};
\node at (-.23,-.75){$\scriptstyle i$};
\node at (.23,-.75){$\scriptstyle i$};
\node at (-.35,0){$\scriptstyle j$};
\node at (.35,0){$\scriptstyle j$};
	\draw[-,darkg,thick] (0.6,.6) to (0.6,-.6);
\node at (.6,-.75){$\darkg\scriptstyle{L}$};
	\draw[<-] (-0.23,-.6) to[out=90,in=-90] (0.23,0);
	\draw[-] (0.23,0) to[out=90,in=-90] (-0.23,.6);
	\draw[-,white,line width=3pt] (-0.23,0) to[out=90,in=-90] (0.23,.6);
	\draw[->] (-0.23,0) to[out=90,in=-90] (0.23,.6);
	\draw[-,line width=3pt, white] (0.23,-.6) to[out=90,in=-90] (-0.23,0);
	\draw[-] (0.23,-.6) to[out=90,in=-90] (-0.23,0);
\node at (0,.3){$\diamond$};
\node at (0,-.3){$\diamond$};
\node at (-.43,.45){$\scriptstyle p(x)\dt$};
\end{tikzpicture}.
$$
Now using the commutation relations (\ref{newdotslide2}),  we commute
$p(x)$ past the crossing to produce a term that is zero as $p(x)$ is
divisible by a sufficiently large power of $(x-j)$, plus correction
terms all of which are morphisms with image lying in $\vec{\gamma}(L \otimes
P_i)$.

\vspace{2mm}
\noindent
\underline{Claim 4}: {\em $\theta$ is an epimorphism.}
Note by the first assertion from Claim 3
that
$F_i E_i L \subseteq \vec{\chi}(E_i F_i L) + \vec{\gamma}(L \otimes P_i)$.
Claim 2 implies that
$\vec{\gamma}(L \otimes \mu(Q_i)) =
\vec{\chi}(\cev{\gamma}(L \otimes Q_i)) \subseteq \vec{\chi}(E_i F_i L)$.
Since $P_i = \mu(Q_i) \oplus C_i$, we deduce that
$F_i E_i L \subseteq \vec{\chi}(E_i F_i L) + \vec{\gamma}(L \otimes C_i)$
as required.

\vspace{2mm}
\noindent
\underline{Claim 5}: {\em $\theta$ is a monomorphism.}
Let $K$ be its kernel.
Of course, $K$ is contained in the kernel of the composition
$\cev{\chi} \circ \theta =
\left[\cev{\chi}\circ \vec{\chi} \quad \cev{\chi}\circ \vec{\gamma}|_{L \otimes C_i}\right].$
Using the second assertion from Claim 3 together with Claim 1,
we deduce that $K \subseteq
\cev{\gamma}(L \otimes Q_i) \:\oplus\: L \otimes C_i$.
Hence, it suffices to show that
$\left[\vec{\chi} \circ \cev{\gamma} \quad\vec{\gamma}|_{L \otimes C_i}\right]:
L \otimes Q_i\:\oplus\:L \otimes C_i \rightarrow F_i E_i L$ is
a monomorphism.
Using
Claim 2 again, this follows
because both
$L \otimes \mu$
and
$\vec{\gamma} = \left[\vec{\gamma}|_{\mu(Q_i)\otimes L} \quad \vec{\gamma}|_{L \otimes C_i}\right]$
are monomorphisms.
\end{proof}

\begin{Lemma}\label{banach3}
Given $\lambda \in X$ and $i \in I$ such that
 $\langle h_i, \lambda \rangle \geq 0$, the natural
transformation
$$
\left[\!\!\!\!\!
\begin{array}{r}
\mathord{
\begin{tikzpicture}[baseline = 0]
	\draw[->] (-0.28,-.3) to (0.28,.3);
	\draw[-,white,line width=3pt] (0.28,-.3) to (-0.28,.3);
	\draw[<-] (0.28,-.3) to (-0.28,.3);
      \node at (-0.33,-0.43) {$\scriptstyle{i}$};
      \node at (0.33,-0.43) {$\scriptstyle{i}$};
      \node at (-0.33,0.43) {$\scriptstyle{i}$};
      \node at (0.33,0.43) {$\scriptstyle{i}$};
\node at (0,-.01) {$\diamond$};
   \end{tikzpicture}
}\!\!\!\\
\mathord{
\begin{tikzpicture}[baseline = 1mm]
	\draw[<-] (0.4,0) to[out=90, in=0] (0.1,0.4);
      \node at (-0.15,0.45) {$\phantom\bullet$};
	\draw[-] (0.1,0.4) to[out = 180, in = 90] (-0.2,0);
      \node at (-0.2,-0.14) {$\scriptstyle{i}$};
\end{tikzpicture}
}\\
\mathord{
\begin{tikzpicture}[baseline = 1mm]
	\draw[<-] (0.4,0) to[out=90, in=0] (0.1,0.4);
	\draw[-] (0.1,0.4) to[out = 180, in = 90] (-0.2,0);
      \node at (-0.15,0.45) {$\phantom\bullet$};
      \node at (-0.15,0.2) {$\dt$};s
\node at (-0.52,.2) {$\scriptstyle x-i$};
      \node at (-0.2,-0.14) {$\scriptstyle{i}$};
\end{tikzpicture}
}\\\vdots\:\:\;\\
\mathord{
\begin{tikzpicture}[baseline = 1mm]
	\draw[<-] (0.4,0) to[out=90, in=0] (0.1,0.4);
	\draw[-] (0.1,0.4) to[out = 180, in = 90] (-0.2,0);
\node at (-0.9,.2) {$\scriptstyle(x-i)^{\langle h_i,\lambda\rangle-1}$};
      \node at (-0.15,0.42) {$\phantom\bullet$};
      \node at (-0.15,0.2) {$\dt$};
      \node at (-0.2,-0.14) {$\scriptstyle{i}$};
\end{tikzpicture}
}
\end{array}
\right]
:E_i F_i|_{\R_\lambda}\Rightarrow
F_i E_i|_{\R_\lambda}
\oplus \operatorname{Id}_{\R_\lambda}^{\oplus \langle h_i, \lambda \rangle}
$$
is an isomorphism. (Again, we have just drawn the crossing in the quantum case.)
\end{Lemma}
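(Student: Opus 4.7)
My plan is to mirror the argument of the preceding lemma (Lemma~\ref{banach2}), interchanging the roles of $E_iF_i$ and $F_iE_i$ as well as of leftward cups and caps. Indeed, the present statement is exactly the local counterpart, on the weight space $\R_\lambda$ with $\langle h_i,\lambda\rangle \geq 0$, of the inversion relations \eqref{invrela} and \eqref{invrel1a} from the alternative presentations of $\Heis_k$; the preceding lemma played the analogous role for \eqref{invrelb} and \eqref{invrel1b}. As before, I will first reduce to checking the claim on each irreducible $L \in \R_\lambda$, using Lemma~\ref{bdl} to effect the reduction in the Schurian case.

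For such an $L$, I set $m := \phi_i(L) - \eps_i(L) = \langle h_i,\lambda\rangle \geq 0$ and introduce $P := \k[u]/(m_L(u))$, $Q := \k[u]/(n_L(u))$ together with their $u=i$ summands $P_i, Q_i$ exactly as in the preceding proof. By Lemma~\ref{impy} one has $n_L(u) = \h_L(u)\, m_L(u)$ with the $(u-i)$-valuation of $\h_L(u)$ equal to $m$, so this time the natural $\k[u]$-module map goes the opposite way:
\[
\nu: P_i \hookrightarrow Q_i,
\qquad
\frac{m_L(u)}{(u-i)^{\eps_i(L)}} \mapsto \frac{t_L\, n_L(u)}{(u-i)^{\eps_i(L)}}.
\]
Its image has basis $(u-i)^m f_i, (u-i)^{m+1} f_i,\ldots,(u-i)^{\phi_i(L)-1} f_i$, and admits as a linear complement the $m$-dimensional subspace $D_i \subset Q_i$ with basis $f_i, (u-i) f_i, \ldots, (u-i)^{m-1} f_i$. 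This $D_i$ indexes the leftward-cap components of the morphism in question, and the factor $t_L$ (trivial in the degenerate case) is precisely what is forced by the constant-term clause of Lemma~\ref{preimpy}.

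I then keep the morphisms $\vec\beta, \cev\beta, \vec\gamma, \cev\gamma, \vec\chi, \cev\chi$ exactly as in the preceding proof, and run the analogous five-claim argument with $\mu$ replaced by $\nu$ and with the two crossings $\vec\chi, \cev\chi$ and the two embedded subspaces $\vec\gamma(L \otimes P_i), \cev\gamma(L \otimes Q_i)$ swapped throughout. Concretely: the counterpart of Claim~1 is the containment $\vec\chi(\cev\gamma(L \otimes Q_i)) \subseteq \vec\gamma(L \otimes P_i)$, established by the same ``straighten the curl'' calculation; the counterpart of Claim~2 identifies $\cev\chi \circ \vec\gamma|_{L\otimes P_i}$ with $\cev\gamma \circ (L \otimes \nu)$ using the bubble slides of \eqref{mostgin}/\eqref{somegin} together with Lemma~\ref{impy}; the counterpart of Claim~3 uses the \emph{other} sideways relation (the right-hand equation of \eqref{sideways} in the degenerate case, of \eqref{limb} in the quantum case) to show that $\vec\chi \circ \cev\chi$ and $\cev\chi \circ \vec\chi$ each equal the identity plus a morphism factoring through the appropriate $\vec\gamma$ or $\cev\gamma$; and the counterparts of Claims~4 and~5 then deduce surjectivity and injectivity of the desired morphism by the same elementary linear algebra, using $Q_i = \nu(P_i) \oplus D_i$.

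The main obstacle, as in the preceding lemma, will be bookkeeping the quantum-case normalizations: the scalar $t_L$ in the definition of $\nu$, together with the factors of $t^{\pm 1}$ and $z$ appearing in \eqref{limb} and \eqref{somegin}, must dovetail exactly for the counterparts of Claims~2 and~3 to hold on the nose. These scalars are pinned down uniquely by the constant-term identity $m_L(0) = t_L^2 n_L(0)$ of Lemma~\ref{impy}; once this is properly accounted for, no genuinely new geometric input is required, and the formal structure of the argument is identical to that of the preceding lemma.
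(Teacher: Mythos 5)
The paper does not redo the computation at all: it observes that the reflection isomorphism $\Omega:\Heis_{-k}\stackrel{\sim}{\rightarrow}(\Heis_k')^{\op}$ (flip diagrams in a horizontal plane, adjust signs, and invert $t$ in the quantum case) turns $\R^{\op}$ into a $\Heis_{-k}'$-module category with $(\Omega^*(\R^{\op}))_{-\lambda}=(\R_\lambda)^{\op}$, and that under this dictionary the morphism in the statement becomes \emph{exactly} the morphism of Lemma~\ref{banach2} for the weight $-\lambda$; the result then follows in two lines. Your plan to rerun the five-claim analysis directly is a genuinely different route, and a good part of it does mirror cleanly: the map $\nu:P_i\hookrightarrow Q_i$ with the scalar $t_L$, the complement $D_i$, and the analogues of Claims 1--3 (Claim 3 is literally symmetric and needs no change).

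The gap is at the end. The morphism of Lemma~\ref{banach2} is a \emph{row} vector out of a direct sum, $E_iF_iL\oplus L^{\oplus m}\rightarrow F_iE_iL$, whose extra components are cups, i.e.\ inclusions of subobjects; Claims 4 and 5 there are statements about sums and intersections of the subobjects $\vec{\gamma}(L\otimes P_i)$ and $\cev{\gamma}(L\otimes Q_i)$. Swapping $\mu\leftrightarrow\nu$, $P_i\leftrightarrow Q_i$ and the two crossings transposes all of that into a proof that the row vector $\left[\,\cev{\chi}\;\;\cev{\gamma}|_{L\otimes D_i}\,\right]:F_iE_iL\oplus L\otimes D_i\rightarrow E_iF_iL$, built from \emph{rightward cups}, is an isomorphism. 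But the morphism in the present lemma is a \emph{column} vector $E_iF_iL\rightarrow F_iE_iL\oplus L^{\oplus m}$ whose extra components are \emph{leftward caps}, i.e.\ quotient maps; surjectivity of a column vector is not detected the same way, and relating the caps to the cup-defined subobjects requires a nondegeneracy statement with no counterpart in the proof of Lemma~\ref{banach2}. So ``the same elementary linear algebra'' does not apply: your mirrored argument proves the adjoint statement, and the passage from it to the stated one is exactly the missing duality step that the paper's $\Omega$-argument supplies (a horizontal flip turns rows into columns and cups into caps). A smaller imprecision: in the quantum case the first component of the displayed morphism is the \emph{other} sideways crossing $E_iF_i\Rightarrow F_iE_i$, not $\cev{\chi}$, which points the wrong way.
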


\begin{proof}
Let $(\Heis_k)^{\op}$ be the opposite category viewed as a monoidal
category with the same horizontal composition law as in $\Heis_k$.
Let $\Heis_{-k}'$ be the $\k$-linear category $\Heis_{-k}$ in the degenerate case,
or the $\K$-linear category defined in the same way as $\Heis_{-k}$ but with $t$ replaced
by $t^{-1}$ in the quantum case.
By \cite[Lemma 2.1]{Bheis} or \cite[Theorem 3.2]{BSWqheis},
there is a $\k$-linear isomorphism
$\Omega:\Heis_{-k}\stackrel{\sim}{\rightarrow} (\Heis'_k)^{\op}$
defined by reflecting diagrams in a horizontal plane, then multiplying
by $(-1)^{x+y}$ where $x$ is the total number of crossings and $y$ is
the total number of leftward cups and caps in the diagram.
Saying that $\R$ is a module category over $\Heis_k$ is equivalent to
saying that $\R^{\op}$ is a module category over $(\Heis_k)^{\op}$.
Note moreover that $\R^{\op}$ is an Abelian category of the same type
(locally finite Abelian or Schurian) as $\R$ itself due to
\cite[(2.2), (2.10)]{BS}.
Its pull-back through the isomorphism $\Omega$ gives us a $\Heis_{-k}'$-module category
$\Omega^*(\R^{\op})$.
Moreover 
$$
(\Omega^*(\R^\op))_{-\lambda}=(\R_\lambda)^{\op}.
$$ 
This follows from (\ref{chickens}) since $\Omega$ switches $E$ and $F$. 
Now we take 
$\lambda\in X$ with $\langle h_i, \lambda\rangle \geq 0$
and consider
the natural transformation
between endofunctors of $\R_\lambda$
from the statement of the lemma.
This natural transformation can be viewed instead as a natural transformation
$F_i E_i|_{(\R_\lambda)^\op} \oplus
\operatorname{Id}_{(\R_\lambda)^\op}^{\oplus \langle h_i, \lambda \rangle}
\Rightarrow E_i F_i|_{(\R_\lambda)^\op}$
between endofunctors of $(\R_\lambda)^\op$.
This is just the same as the natural transformation 
$E_i F_i|_{(\Omega^*(\R^\op))_{-\lambda}} \oplus
\operatorname{Id}_{(\Omega^*(\R^\op))_{-\lambda}}^{\oplus (-\langle
  h_i,-\lambda\rangle)}
\Rightarrow F_i E_i|_{(\Omega^*(\R^\op))_{-\lambda}}$
from Lemma~\ref{banach2} applied to the
weight $-\lambda$ and the
$\Heis'_{-k}$-module category $\Omega^*(\R^\op)$.
Hence, it is an isomorphism by the previous lemma.
\end{proof}

\subsection{Heisenberg to Kac-Moody}
Now we can prove the main theorem of the section.
Recall that $\R$ is a locally finite Abelian or Schurian
module category over $\Heis_k$. Let $E_i$ and $F_i$ be the
eigenfunctors from $\S$\ref{sendo}, and recall the various
diagrams representing
natural transformations between these functors introduced there. Let $I$ be the spectrum of $\R$
as in $\S$\ref{swd}, and
$\UU(\g)$ be the corresponding Kac-Moody 2-category as in $\S$\ref{songs}.
We also need the weight space decomposition of $\R$ from (\ref{one}).
As in \cite{BK}, there is some freedom in the following theorem as it involves a choice of normalization; for the sake of clarity, we have fixed a particular one.
\begin{Theorem}\label{thm1}
Associated to $\R$,
there is a unique $2$-representation 
$\mathbf{R}:\UU(\g)\rightarrow
\mathfrak{Cat}_\k$
defined on objects by
$\lambda\mapsto \R_\lambda$,
on generating 
1-morphisms by
$E_i 1_\lambda\mapsto E_i|_{\R_\lambda}$
and $F_i 1_\lambda\mapsto F_i|_{\R_\lambda}$, 
and on generating 2-morphisms by
\begin{align*}
\begin{tikzpicture}[baseline = -0.8mm]
	\draw[->,thick] (0.08,-.25) to (0.08,.3);
      \node at (0.08,-0.4) {$\scriptstyle{i}$};
      \node at (0.08,0.02) {$\bull$};
      \node at (0.3,0.02) {$\color{gray}\scriptstyle\lambda$};
\end{tikzpicture}
&\mapsto
\begin{tikzpicture}[baseline = -0.8mm]
	\draw[->] (0.08,-.25) to (0.08,.3);
      \node at (0.08,-0.4) {$\scriptstyle{i}$};
      \node at (0.08,0.02) {$\dt$};
      \node at (0.45,0.02) {$\scriptstyle{x-i}$};
\end{tikzpicture}
,
&
\begin{tikzpicture}[baseline = 1mm]
	\draw[<-,thick] (0.4,0.4) to[out=-90, in=0] (0.1,0);
	\draw[-,thick] (0.1,0) to[out = 180, in = -90] (-0.2,0.4);
      \node at (-0.2,0.55) {$\scriptstyle{i}$};
      \node at (0.6,0.2) {$\color{gray}\scriptstyle\lambda$};
\end{tikzpicture}
&\mapsto
\begin{tikzpicture}[baseline = 1mm]
	\draw[<-] (0.4,0.4) to[out=-90, in=0] (0.1,0);
	\draw[-] (0.1,0) to[out = 180, in = -90] (-0.2,0.4);
      \node at (-0.2,0.55) {$\scriptstyle{i}$};
\end{tikzpicture}\:,
&
\begin{tikzpicture}[baseline = 1mm]
	\draw[<-,thick] (0.4,0) to[out=90, in=0] (0.1,0.4);
	\draw[-,thick] (0.1,0.4) to[out = 180, in = 90] (-0.2,0);
      \node at (-0.2,-0.15) {$\scriptstyle{i}$};
      \node at (0.6,0.2) {$\color{gray}\scriptstyle\lambda$};
\end{tikzpicture}
&\mapsto
\begin{tikzpicture}[baseline = 1mm]
	\draw[<-] (0.4,0) to[out=90, in=0] (0.1,0.4);
	\draw[-] (0.1,0.4) to[out = 180, in = 90] (-0.2,0);
      \node at (-0.2,-0.15) {$\scriptstyle{i}$};
\end{tikzpicture}\:\:,\end{align*}\begin{align*}
\begin{tikzpicture}[baseline = 0]
	\draw[->,thick] (0.28,-.3) to (-0.28,.4);
	\draw[->,thick] (-0.28,-.3) to (0.28,.4);
   \node at (-.3,-.45) {$\scriptstyle{j}$};
   \node at (.3,-.45) {$\scriptstyle{i}$};
   \node at (0.5,0.05) {$\color{gray}\scriptstyle{\lambda}$};
\end{tikzpicture}
&\mapsto
\left\{
\begin{array}{ll}
\begin{tikzpicture}[baseline = 0]
	\draw[->] (0.38,-.4) to (-0.38,.5);
	\draw[->] (-0.38,-.4) to (0.38,.5);
      \node at (0,0.05) {$\diamond$};
	\draw[->,densely dotted] (0.26,-.24) to (-0.19,-.24);
   \node at (-.4,-.55) {$\scriptstyle{i}$};
   \node at (.4,-.55) {$\scriptstyle{i}$};
   \node at (-.4,.65) {$\scriptstyle{i}$};
   \node at (.4,.65) {$\scriptstyle{i}$};
      \node at (-0.26,-0.25) {$\dt$};
      \node at (0.26,-0.25) {$\dt$};
      \node at (1.1,-0.2) {$\scriptstyle{(x_2-x_1+1)^{-1}}$};
\end{tikzpicture}
+\begin{tikzpicture}[baseline = 0mm]
	\draw[->,densely dotted] (0.55,.06) to (0.14,.06);
	\draw[->] (0.58,-.4) to (0.58,.5);
	\draw[->] (0.08,-.4) to (0.08,.5);
      \node at (0.08,-0.55) {$\scriptstyle{i}$};
      \node at (0.58,-0.55) {$\scriptstyle{i}$};
      \node at (0.08,0.05) {$\dt$};
      \node at (0.58,0.05) {$\dt$};
      \node at (1.4,0.1) {$\scriptstyle{(x_2-x_1+1)^{-1}}$};
\end{tikzpicture}
&\text{if $j=i$,}\\
\begin{tikzpicture}[baseline = 0]
	\draw[->] (0.38,-.4) to (-0.38,.5);
	\draw[->] (-0.38,-.4) to (0.38,.5);
	\draw[->,densely dotted] (0.26,-.24) to (-0.19,-.24);
      \node at (0,0.05) {$\diamond$};
   \node at (-.4,-.55) {$\scriptstyle{i+1}$};
   \node at (-.4,.65) {$\scriptstyle{i}$};
   \node at (.4,.65) {$\scriptstyle{i+1}$};
   \node at (.4,-.55) {$\scriptstyle{i}$};
      \node at (-0.26,-0.25) {$\dt$};
      \node at (0.26,-0.25) {$\dt$};
      \node at (.77,-0.2) {$\scriptstyle{x_2-x_1}$};
\end{tikzpicture}
&\text{if $j=i^+$,}\\
-\begin{tikzpicture}[baseline = 0]
	\draw[->] (0.38,-.4) to (-0.38,.5);
      \node at (0,0.05) {$\diamond$};
	\draw[->] (-0.38,-.4) to (0.38,.5);
	\draw[->,densely dotted] (0.26,-.24) to (-0.19,-.24);
   \node at (-.4,-.55) {$\scriptstyle{j}$};
   \node at (.4,-.55) {$\scriptstyle{i}$};
   \node at (.4,.65) {$\scriptstyle{j}$};
   \node at (-.4,.65) {$\scriptstyle{i}$};
      \node at (-0.26,-0.25) {$\dt$};
      \node at (0.26,-0.25) {$\dt$};
      \node at (1.57,-0.2) {$\scriptstyle{(x_2-x_1)(x_2-x_1-1)^{-1}}$};
\end{tikzpicture}
&\text{if $j \neq i,i^+$}
\end{array}\right.
\end{align*}
in the degenerate case,
or
\begin{align*}
\begin{tikzpicture}[baseline = -0.8mm]
	\draw[->,thick] (0.08,-.25) to (0.08,.3);
      \node at (0.08,-0.4) {$\scriptstyle{i}$};
      \node at (0.08,0.02) {$\bull$};
      \node at (0.3,0.02) {$\color{gray}\scriptstyle\lambda$};
\end{tikzpicture}
&\mapsto
\begin{tikzpicture}[baseline = -0.8mm]
	\draw[->] (0.08,-.25) to (0.08,.3);
      \node at (0.08,-0.4) {$\scriptstyle{i}$};
      \node at (0.08,0.02) {$\dt$};
      \node at (0.5,0.02) {$\scriptstyle{\frac{x}{i}-1}$};
\end{tikzpicture}
,
&
\begin{tikzpicture}[baseline = 1mm]
	\draw[<-,thick] (0.4,0.4) to[out=-90, in=0] (0.1,0);
	\draw[-,thick] (0.1,0) to[out = 180, in = -90] (-0.2,0.4);
      \node at (-0.2,0.55) {$\scriptstyle{i}$};
      \node at (0.6,0.2) {$\color{gray}\scriptstyle\lambda$};
\end{tikzpicture}
&\mapsto
\begin{tikzpicture}[baseline = 1mm]
	\draw[<-] (0.4,0.4) to[out=-90, in=0] (0.1,0);
	\draw[-] (0.1,0) to[out = 180, in = -90] (-0.2,0.4);
      \node at (-0.2,0.55) {$\scriptstyle{i}$};
\end{tikzpicture}\:,
&
\begin{tikzpicture}[baseline = 1mm]
	\draw[<-,thick] (0.4,0) to[out=90, in=0] (0.1,0.4);
	\draw[-,thick] (0.1,0.4) to[out = 180, in = 90] (-0.2,0);
      \node at (-0.2,-0.15) {$\scriptstyle{i}$};
      \node at (0.6,0.2) {$\color{gray}\scriptstyle\lambda$};
\end{tikzpicture}
&\mapsto
\begin{tikzpicture}[baseline = 1mm]
	\draw[<-] (0.4,0) to[out=90, in=0] (0.1,0.4);
	\draw[-] (0.1,0.4) to[out = 180, in = 90] (-0.2,0);
      \node at (-0.2,-0.15) {$\scriptstyle{i}$};
\end{tikzpicture}\:\:,\end{align*}\begin{align*}
\begin{tikzpicture}[baseline = 0]
	\draw[->,thick] (0.28,-.3) to (-0.28,.4);
	\draw[->,thick] (-0.28,-.3) to (0.28,.4);
   \node at (-.3,-.45) {$\scriptstyle{j}$};
   \node at (.3,-.45) {$\scriptstyle{i}$};
   \node at (0.5,0.05) {$\color{gray}\scriptstyle{\lambda}$};
\end{tikzpicture}
&\mapsto
\left\{
\begin{array}{ll}
i\begin{tikzpicture}[baseline = 0]
	\draw[->] (0.38,-.4) to (-0.38,.5);
	\draw[-,white,line width=3pt] (-0.38,-.4) to (0.38,.5);
	\draw[->] (-0.38,-.4) to (0.38,.5);
      \node at (0,0.05) {$\diamond$};
	\draw[->,densely dotted] (0.26,-.24) to (-0.19,-.24);
   \node at (-.4,-.55) {$\scriptstyle{i}$};
   \node at (.4,-.55) {$\scriptstyle{i}$};
   \node at (-.4,.65) {$\scriptstyle{i}$};
   \node at (.4,.65) {$\scriptstyle{i}$};
      \node at (-0.26,-0.25) {$\dt$};
      \node at (0.26,-0.25) {$\dt$};
      \node at (1.25,-0.2) {$\scriptstyle{(qx_2-q^{-1}x_1)^{-1}}$};
\end{tikzpicture}
+q^{-1} i\begin{tikzpicture}[baseline = 0mm]
	\draw[->,densely dotted] (0.55,.06) to (0.14,.06);
	\draw[->] (0.58,-.4) to (0.58,.5);
	\draw[->] (0.08,-.4) to (0.08,.5);
      \node at (0.08,-0.55) {$\scriptstyle{i}$};
      \node at (0.58,-0.55) {$\scriptstyle{i}$};
      \node at (0.08,0.05) {$\dt$};
      \node at (0.58,0.05) {$\dt$};
      \node at (1.55,0.1) {$\scriptstyle{(qx_2-q^{-1}x_1)^{-1}}$};
\end{tikzpicture}
&\text{if $j=i$,}\\
q^{-1}i^{-1}\begin{tikzpicture}[baseline = 0]
	\draw[->] (0.38,-.4) to (-0.38,.5);
	\draw[-,white,line width=3pt] (-0.38,-.4) to (0.38,.5);
	\draw[->] (-0.38,-.4) to (0.38,.5);
	\draw[->,densely dotted] (0.26,-.24) to (-0.19,-.24);
      \node at (0,0.05) {$\diamond$};
   \node at (-.4,-.58) {$\scriptstyle{q^2 i}$};
   \node at (-.4,.65) {$\scriptstyle{i}$};
   \node at (.4,.68) {$\scriptstyle{q^2 i}$};
   \node at (.4,-.55) {$\scriptstyle{i}$};
      \node at (-0.26,-0.25) {$\dt$};
      \node at (0.26,-0.25) {$\dt$};
      \node at (.77,-0.2) {$\scriptstyle{x_2-x_1}$};
\end{tikzpicture}
&\text{if $j=i^+$,}\\
-\begin{tikzpicture}[baseline = 0]
	\draw[->] (0.38,-.4) to (-0.38,.5);
	\draw[-,white,line width=3pt] (-0.38,-.4) to (0.38,.5);
	\draw[->] (-0.38,-.4) to (0.38,.5);
      \node at (0,0.05) {$\diamond$};
	\draw[->,densely dotted] (0.26,-.24) to (-0.19,-.24);
   \node at (-.4,-.55) {$\scriptstyle{j}$};
   \node at (.4,-.55) {$\scriptstyle{i}$};
   \node at (.4,.65) {$\scriptstyle{j}$};
   \node at (-.4,.65) {$\scriptstyle{i}$};
      \node at (-0.26,-0.25) {$\dt$};
      \node at (0.26,-0.25) {$\dt$};
      \node at (1.72,-0.2) {$\scriptstyle{(x_2-x_1)(q^{-1}x_2-qx_1)^{-1}}$};
\end{tikzpicture}
&\text{if $j \neq i, i^+$}
\end{array}\right.
\end{align*}
in the quantum case.
\end{Theorem}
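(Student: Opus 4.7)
The strategy is to define the 2-representation $\mathbf{R}$ on the generating 2-morphisms exactly as prescribed, then verify all the defining relations of $\UU(\g)$, and finally invoke Lemma~\ref{lateaddition} for the uniqueness statement (which, in particular, handles the images of the leftward cups and caps). Before verifying relations, I would observe that the unusual factors in the definitions of the crossings are not coming out of thin air: they are the precise translation, via the power series expansions of Lemmas~\ref{essential} and \ref{essential2}, of the classical isomorphism $\widehat{AH}_d\cong \widehat{QH}_d$ of \cite{Rou,BK} between completions of affine Hecke algebras and quiver Hecke algebras. So the present theorem should be thought of as a categorification of that isomorphism, upgraded to include the bubbles and adjunctions that are present in $\UU(\g)$ but not in $QH_d$.

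The verification of the defining relations splits into several independent packets. The dot-crossing relations (\ref{QHA1}) reduce immediately to the Heisenberg dot slides (\ref{newdotslide})/(\ref{newdotslide2}) together with orthogonality of the projectors $e_i,f_i$. The crossing-square relations (\ref{QHA2}) are local in the colors of the two strands: when $\{i,j\}$ are not equal, one computes directly using Lemma~\ref{essential2} and checks that the prescribed rational factor squares (or composes with its mirror) to the required dot polynomial; when $i=j$, one uses the Heisenberg braid/crossing relations combined with the $(x_2-x_1+1)^{-1}$ or $(qx_2-q^{-1}x_1)^{-1}$ normalization to produce the $0$ or the appropriate dot polynomial on the right. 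The braid relation (\ref{QHA3}) is the most labor-intensive step; however, it follows from the alternating braid relation of $\Heis_k$ by a power-series manipulation in which all the ``bubble contributions'' that appear on the Heisenberg side get absorbed into the nilpotent completions around the distinguished points $(i,j,k)$. The adjunction relations (\ref{rightadj3}) are immediate since the $\Heis_k$-cups and caps already satisfy the Heisenberg adjunctions and the eigenspace inclusions/projections commute with them via (\ref{incpro}).

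The bubble relations and the sideways relations (\ref{flight1})--(\ref{flight2}) form the genuinely new content. The key tool here is Lemma~\ref{impy}, identifying the generating function $\h_V(u)$ of Heisenberg bubbles with the ratio $n_V(u)/m_V(u)$ of minimal polynomials; extracting the $(u-i)$-part of this identity via Corollary~\ref{hens}, and dividing by the appropriate $i$-th factor, produces precisely the generating function $\anticlocki(u)$ of Kac-Moody bubbles for each $i\in I$, with the signs $\sigma_i(\lambda)$ encoded in the determinantal formulas that define negatively dotted bubbles. The curl relations (\ref{flight1})--(\ref{bus1}) then drop out from the Heisenberg curl relations (\ref{mostgin})/(\ref{somegin}) via the same localization. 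Finally, the sideways crossing relations (\ref{flight2}) are a direct consequence of Lemmas~\ref{banach1}, \ref{banach2}, \ref{banach3}: for $j\neq i$ these lemmas show the sideways crossing is already an isomorphism, and when $j=i$ they decompose $E_iF_i$ versus $F_iE_i$ in the prescribed way, from which the infinite expansions in (\ref{flight2}) follow by reading off the components using (\ref{sure2a})--(\ref{sure3a}).

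The hardest step will be the braid relation (\ref{QHA3}) in the case $j^-=i=k=j^+$, i.e.\ for a ``triple point'' of the quiver. Matching the bubble-heavy right-hand side of the Heisenberg alternating braid relation against the clean quadratic right-hand side of (\ref{QHA3}), after the nontrivial rational normalizations $(qx_2-q^{-1}x_1)^{-1}$ have been inserted on all three strands, requires a careful generating-function computation that localizes the bubble sums onto $u=i$ and uses the infinite Grassmannian identities (\ref{igproper}), (\ref{igproper2}) and (\ref{infgras}) to reduce the coefficient of each monomial in the dots to the expected $\pm 1$. Once all these checks are complete, Lemma~\ref{lateaddition} guarantees that the assignment extends uniquely to a strict 2-functor $\mathbf{R}:\UU(\g)\to\mathfrak{Cat}_\k$, which is the desired 2-representation.
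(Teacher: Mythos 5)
Your treatment of the relations involving only the upward generators is consistent with the paper: the quiver Hecke relations (\ref{QHA1})--(\ref{QHA3}) are indeed handled via the dot slides and Lemmas~\ref{essential}--\ref{essential2} (the paper cites \cite{BK} and illustrates only the quadratic relation from scratch), and the adjunction (\ref{rightadj3}) is immediate. The gap is in your third packet. You propose to verify the Khovanov--Lauda-style relations (\ref{flight1})--(\ref{flight2}) directly, but these relations are stated in terms of the leftward cup and cap 2-morphisms (through the sideways crossings and the negatively dotted bubbles), and you never produce candidates for their images; Remark~\ref{nocaps} notes explicitly that no convenient closed formulae for these are available. Your claim that (\ref{flight2}) is ``a direct consequence'' of Lemmas~\ref{banach1}--\ref{banach3} conflates two different statements: those lemmas establish that certain matrices built from the \emph{rightward} generators are invertible, which is not the same as the explicit expansions in (\ref{flight2}); to extract the latter you would already need the leftward cups and caps in hand. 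Likewise, deriving the bubble and curl relations from Lemma~\ref{impy} and Corollary~\ref{hens} presupposes knowing how the Kac--Moody bubbles act, which is part of what has to be constructed.

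The paper avoids this circularity by verifying Rouquier's presentation of $\UU(\g)$ instead: the generators are only the upward dot, the upward crossing and the rightward cup and cap, and the only relations beyond (\ref{QHA1})--(\ref{rightadj3}) are the inversion relations (\ref{day1})--(\ref{day3}), i.e.\ the invertibility of three matrices of 2-morphisms. This is exactly what Lemmas~\ref{banach1}--\ref{banach3} deliver, after composing with the invertible (diagonal) matrices coming from the change of variables built into the images of the crossings. Existence and uniqueness of the leftward cups and caps, and hence all of (\ref{flight1})--(\ref{flight2}), then follow from the equivalence of the two presentations together with Lemma~\ref{lateaddition}; they never need to be written down. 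To repair your argument, either switch to the inversion-relation presentation as the paper does, or first construct the leftward cups and caps explicitly (for instance as entries of the inverses of the matrices appearing in Lemmas~\ref{banach2}--\ref{banach3}) before attempting to check (\ref{flight1})--(\ref{flight2}).
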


\begin{proof}
We need to verify the defining relations (\ref{QHA1})--(\ref{rightadj3}) and (\ref{day1})--(\ref{day3}).

The quiver Hecke algebra relations (\ref{QHA1})--(\ref{QHA3}) follow
from the calculations
performed in \cite{BK}. Note also that our
formulae look different from the ones in \cite{BK} in the quantum
case due to the fact that we are working with a different
normalization for the quadratic relation in the Hecke algebra.
In fact, it is perfectly reasonable to check all of the relations
(\ref{QHA1})--(\ref{QHA3}) from scratch without referring to \cite{BK}
  at all; the diagrammatic formalism
now in place makes this particularly convenient.
To give the flavor of the calculation, we check the quadratic relation
(\ref{QHA2}) in the quantum case.
One first uses (\ref{newdotslide2}) to check that
\begin{align*}
\begin{tikzpicture}[baseline = 0]
	\draw[->,thick] (0.28,-.3) to (-0.28,.4);
	\draw[->,thick] (-0.28,-.3) to (0.28,.4);
   \node at (-.3,.55) {$\scriptstyle{j}$};
   \node at (.3,.55) {$\scriptstyle{i}$};
   \node at (0.5,0.05) {$\color{gray}\scriptstyle{\lambda}$};
\end{tikzpicture}
&\mapsto
\left\{
\begin{array}{ll}
-i\begin{tikzpicture}[baseline = 0]
	\draw[->] (-0.38,-.4) to (0.38,.5);
	\draw[-,white,line width=3pt] (0.38,-.4) to (-0.38,.5);
	\draw[->] (0.38,-.4) to (-0.38,.5);
      \node at (0,0.05) {$\diamond$};
	\draw[->,densely dotted] (0.26,.34) to (-0.18,.34);
   \node at (-.4,-.55) {$\scriptstyle{i}$};
   \node at (.4,-.55) {$\scriptstyle{i}$};
   \node at (-.4,.65) {$\scriptstyle{i}$};
   \node at (.4,.65) {$\scriptstyle{i}$};
      \node at (-0.25,0.33) {$\dt$};
      \node at (0.25,0.33) {$\dt$};
      \node at (1.2,0.3) {$\scriptstyle{(q^{-1}x_2-qx_1)^{-1}}$};
\end{tikzpicture}
+q^{-1}i\begin{tikzpicture}[baseline = 0mm]
	\draw[->,densely dotted] (0.55,.06) to (0.14,.06);
	\draw[->] (0.58,-.4) to (0.58,.5);
	\draw[->] (0.08,-.4) to (0.08,.5);
      \node at (0.08,0.65) {$\scriptstyle{i}$};
      \node at (0.58,0.65) {$\scriptstyle{i}$};
      \node at (0.08,0.05) {$\dt$};
      \node at (0.58,0.05) {$\dt$};
      \node at (1.52,0.1) {$\scriptstyle{(q^{-1}x_2-qx_1)^{-1}}$};
\end{tikzpicture}
&\text{if $j=i$,}\\
-qi^{-1}\begin{tikzpicture}[baseline = 0]
	\draw[->] (-0.38,-.4) to (0.38,.5);
	\draw[-,white,line width=3pt] (0.38,-.4) to (-0.38,.5);
	\draw[->] (0.38,-.4) to (-0.38,.5);
	\draw[->,densely dotted] (0.26,.34) to (-0.18,.34);
      \node at (0,0.05) {$\diamond$};
   \node at (-.4,-.55) {$\scriptstyle{i}$};
   \node at (-.4,.65) {$\scriptstyle{q^{-2} i}$};
   \node at (.4,.65) {$\scriptstyle{i}$};
   \node at (.4,-.55) {$\scriptstyle{q^{-2} i}$};
      \node at (-0.25,0.33) {$\dt$};
      \node at (0.25,0.33) {$\dt$};
      \node at (.75,0.3) {$\scriptstyle{x_2-x_1}$};
\end{tikzpicture}
&\text{if $j=i^-$,}\\
-\begin{tikzpicture}[baseline = 0]
	\draw[->] (-0.38,-.4) to (0.38,.5);
	\draw[-,line width=3pt, white] (0.38,-.4) to (-0.38,.5);
	\draw[->] (0.38,-.4) to (-0.38,.5);
      \node at (0,0.05) {$\diamond$};
	\draw[->,densely dotted] (0.26,.34) to (-0.18,.34);
   \node at (-.4,-.55) {$\scriptstyle{i}$};
   \node at (.4,-.55) {$\scriptstyle{j}$};
   \node at (.4,.65) {$\scriptstyle{i}$};
   \node at (-.4,.65) {$\scriptstyle{j}$};
      \node at (-0.25,0.33) {$\dt$};
      \node at (0.25,0.33) {$\dt$};
      \node at (1.65,0.3) {$\scriptstyle{(x_2-x_1)(qx_2-q^{-1}x_1)^{-1}}$};
\end{tikzpicture}
&\text{if $j \neq i, i^-$.}
\end{array}\right.
\end{align*}
Then, we place the right-hand side of the expression just displayed on top
of the formula for the crossing from the
statement of the theorem, to obtain a natural transformation $\theta$.
This is easily seen to be zero in the case $j=i$ as required. We are left with four cases: $j^-=i\neq j^+$, $j^- \neq i = j^+$,
$j^-=i=j^+$ and $j^- \neq i \neq j^+$. According to (\ref{QHA2}), 
we need to show that $\theta$ equals
$\begin{tikzpicture}[baseline = -1mm]
	\draw[->,densely dotted] (0.45,0.08) to (0.14,0.08);
	\draw[->] (0.48,-.1) to (0.48,.3);
	\draw[->] (0.08,-.1) to (0.08,.3);
      \node at (0.08,-0.25) {$\scriptstyle{j}$};
      \node at (0.48,-0.25) {$\scriptstyle{i}$};
      \node at (0.08,0.07) {$\dt$};
      \node at (0.48,0.07) {$\dt$};
      \node at (.63,0.05) {$\scriptstyle{f}$};
\end{tikzpicture}$
where $f=-q^{-1}i^{-1}(q^{-1}x_2-q
x_1)$,
$qi^{-1}(qx_2-q^{-1}x_1)$, $-i^{-2}(qx_2-q^{-1}x_1)(q^{-1}x_2-qx_1)$
or $1$ in these four cases. Note moreover that
$$
g:=(qx_2 - q^{-1}x_1)(q^{-1}x_2-q x_1) = (x_1-x_2)^2 - z^2 x_1x_2.
$$
Hence, we have that $f=gab$ where
\begin{align*}
a &:= \left\{
\begin{array}{ll}
-qi^{-1}&\text{if $j=i^-$,}\\
-(qx_2-q^{-1}x_1)^{-1}&\text{if $j \neq i^-$,}
\end{array}\right.,
&b &:= \left\{
\begin{array}{ll}
q^{-1}i^{-1}&\text{if $j=i^+$,}\\
-(q^{-1}x_2 - qx_1)^{-1}&\text{if $j\neq i^+$.}
\end{array}\right.
\end{align*}
To complete the analysis, we just have to use
Lemmas~\ref{essential}--\ref{essential2} to see that
$$
\theta = 
\begin{tikzpicture}[baseline = -1mm]
\node at (-.23,.75){$\scriptstyle j$};
\node at (.23,.75){$\scriptstyle i$};
\node at (-.23,-.75){$\scriptstyle j$};
\node at (.23,-.75){$\scriptstyle i$};
\node at (-.35,0){$\scriptstyle i$};
\node at (.35,0){$\scriptstyle j$};
	\draw[->] (-0.23,0) to[out=90,in=-90] (0.23,.6);
	\draw[-,white,line width=3pt] (0.23,0) to[out=90,in=-90] (-0.23,.6);
	\draw[->] (0.23,0) to[out=90,in=-90] (-0.23,.6);
	\draw[-] (0.23,-.6) to[out=90,in=-90] (-0.23,0);
	\draw[-,line width=3pt,white] (-0.23,-.6) to[out=90,in=-90] (0.23,0);
	\draw[-] (-0.23,-.6) to[out=90,in=-90] (0.23,0);
\node at (0,.29){$\diamond$};
\node at (0,-.31){$\diamond$};
	\draw[->,densely dotted] (0.15,.46) to (-0.12,.46);
\node at (.19,.44){$\dt$};
\node at (.8,.44){$\scriptstyle (x_2-x_1)a$};
\node at (-.19,.44){$\dt$};
	\draw[->,densely dotted] (0.15,-.45) to (-0.12,-.45);
\node at (.19,-.46){$\dt$};
\node at (.8,-.46){$\scriptstyle (x_2-x_1)b$};
\node at (-.19,-.46){$\dt$};
\end{tikzpicture}
=
\begin{tikzpicture}[baseline = -1mm]
\node at (-.23,.75){$\scriptstyle j$};
\node at (.23,.75){$\scriptstyle i$};
\node at (-.23,-.75){$\scriptstyle j$};
\node at (.23,-.75){$\scriptstyle i$};
	\draw[->] (-0.23,0) to[out=90,in=-90] (0.23,.6);
	\draw[-,line width=3pt,white] (0.23,0) to[out=90,in=-90] (-0.23,.6);
	\draw[->] (0.23,0) to[out=90,in=-90] (-0.23,.6);
	\draw[-] (0.23,-.6) to[out=90,in=-90] (-0.23,0);
	\draw[-,line width=3pt,white] (-0.23,-.6) to[out=90,in=-90] (0.23,0);
	\draw[-] (-0.23,-.6) to[out=90,in=-90] (0.23,0);
	\draw[-] (-.05,0.4) to (-0.12,.31);
	\draw[-] (.05,0.4) to (0.12,.31);
	\draw[-] (-.05,-0.4) to (-0.12,-.31);
	\draw[-] (.05,-0.4) to (0.12,-.31);
	\draw[->,densely dotted] (0.15,.46) to (-0.12,.46);
\node at (.19,.44){$\dt$};
\node at (.8,.44){$\scriptstyle (x_2-x_1)a$};
\node at (-.19,.44){$\dt$};
	\draw[->,densely dotted] (0.15,-.45) to (-0.12,-.45);
\node at (.19,-.46){$\dt$};
\node at (.8,-.46){$\scriptstyle (x_2-x_1)b$};
\node at (-.19,-.46){$\dt$};
\end{tikzpicture}
-\begin{tikzpicture}[baseline = -1mm]
\node at (-.23,.75){$\scriptstyle j$};
\node at (.23,.75){$\scriptstyle i$};
\node at (-.23,-.75){$\scriptstyle j$};
\node at (.23,-.75){$\scriptstyle i$};
\node at (-.35,0){$\scriptstyle j$};
\node at (.35,0){$\scriptstyle i$};
	\draw[->] (-0.23,0) to[out=90,in=-90] (0.23,.6);
	\draw[-,white,line width=3pt] (0.23,0) to[out=90,in=-90] (-0.23,.6);
	\draw[->] (0.23,0) to[out=90,in=-90] (-0.23,.6);
	\draw[-] (0.23,-.6) to[out=90,in=-90] (-0.23,0);
	\draw[-,line width=3pt,white] (-0.23,-.6) to[out=90,in=-90] (0.23,0);
	\draw[-] (-0.23,-.6) to[out=90,in=-90] (0.23,0);
\node at (0,.29){$\diamond$};
\node at (0,-.31){$\diamond$};
	\draw[->,densely dotted] (0.15,.46) to (-0.12,.46);
\node at (.19,.44){$\dt$};
\node at (.8,.44){$\scriptstyle (x_2-x_1)a$};
\node at (-.19,.44){$\dt$};
	\draw[->,densely dotted] (0.15,-.45) to (-0.12,-.45);
\node at (.19,-.46){$\dt$};
\node at (.8,-.46){$\scriptstyle (x_2-x_1)b$};
\node at (-.19,-.46){$\dt$};
\end{tikzpicture}
=
\begin{tikzpicture}[baseline = -1mm]
\node at (-.23,-.75){$\scriptstyle j$};
\node at (.23,-.75){$\scriptstyle i$};
	\draw[->] (-0.23,-.6) to[out=90,in=-90] (-0.23,.6);
	\draw[->] (0.23,-.6) to[out=90,in=-90] (0.23,.6);
	\draw[->,densely dotted] (0.23,0) to (-0.17,0);
\node at (.23,-.01){$\dt$};
\node at (-.23,-.01){$\dt$};
\node at (1.6,-.01){$\scriptstyle (x_2-x_1)^2ab-z^2x_1x_2ab$};
\end{tikzpicture}
=
\begin{tikzpicture}[baseline = -1mm]
\node at (-.23,-.75){$\scriptstyle j$};
\node at (.23,-.75){$\scriptstyle i$};
	\draw[->] (-0.23,-.6) to[out=90,in=-90] (-0.23,.6);
	\draw[->] (0.23,-.6) to[out=90,in=-90] (0.23,.6);
	\draw[->,densely dotted] (0.23,0) to (-0.17,0);
\node at (.23,-.01){$\dt$};
\node at (-.23,-.01){$\dt$};
\node at (.6,-.01){$\scriptstyle gab$};
\end{tikzpicture},
$$
which is what we wanted as $f=gab$.

The adjunction relation (\ref{rightadj3}) is immediate. 

Finally, we need to check the inversion relation. This depends on
Lemmas~\ref{banach1}--\ref{banach3}.
As usual we just go through the details in the quantum case.
First, we observe using (\ref{newdotslide2}) that
\begin{align}
\begin{tikzpicture}[baseline = 0]
	\draw[<-,thick] (0.28,-.3) to (-0.28,.4);
	\draw[->,thick] (-0.28,-.3) to (0.28,.4);
   \node at (-0.32,-.4) {$\scriptstyle{i}$};
   \node at (-0.32,.5) {$\scriptstyle{j}$};
   \node at (.4,.05) {$\scriptstyle{\color{gray}\lambda}$};
\end{tikzpicture}
&\mapsto
\left\{
\begin{array}{ll}
i\begin{tikzpicture}[baseline = 0]
	\draw[->] (-0.38,-.4) to (0.38,.5);
	\draw[-,white,line width=3pt] (0.38,-.4) to (-0.38,.5);
	\draw[<-] (0.38,-.4) to (-0.38,.5);
      \node at (0,0.05) {$\diamond$};
	\draw[->,densely dotted] (0.23,.3) to (-0.16,.3);
   \node at (-.4,-.55) {$\scriptstyle{i}$};
   \node at (.4,-.55) {$\scriptstyle{i}$};
   \node at (-.4,.65) {$\scriptstyle{i}$};
   \node at (.4,.65) {$\scriptstyle{i}$};
      \node at (-0.23,0.29) {$\dt$};
      \node at (0.23,0.29) {$\dt$};
      \node at (1.2,0.25) {$\scriptstyle{(qx_2-q^{-1} x_1)^{-1}}$};
\end{tikzpicture}
=
-i\begin{tikzpicture}[baseline = 0]
	\draw[<-] (0.38,-.4) to (-0.38,.5);
	\draw[-,white,line width=3pt] (-0.38,-.4) to (0.38,.5);
	\draw[->] (-0.38,-.4) to (0.38,.5);
      \node at (0,0.05) {$\diamond$};
	\draw[->,densely dotted] (0.26,-.24) to (-0.19,-.24);
   \node at (-.4,-.55) {$\scriptstyle{i}$};
   \node at (.4,-.55) {$\scriptstyle{i}$};
   \node at (-.4,.65) {$\scriptstyle{i}$};
   \node at (.4,.65) {$\scriptstyle{i}$};
      \node at (-0.26,-0.25) {$\dt$};
      \node at (0.26,-0.25) {$\dt$};
      \node at (1.25,-0.2) {$\scriptstyle{(q^{-1}x_2-q x_1)^{-1}}$};
\end{tikzpicture}
&\text{if $j=i$,}\\
q^{-1}i^{-1}\begin{tikzpicture}[baseline = 0]
	\draw[->] (-0.38,-.4) to (0.38,.5);
	\draw[,white,line width=3pt] (0.38,-.4) to (-0.38,.5);
	\draw[<-] (0.38,-.4) to (-0.38,.5);
	\draw[->,densely dotted] (0.26,-.24) to (-0.19,-.24);
      \node at (0,0.05) {$\diamond$};
   \node at (-.4,-.55) {$\scriptstyle{i}$};
   \node at (-.4,.68) {$\scriptstyle{q^2 i}$};
   \node at (.4,.65) {$\scriptstyle{i}$};
   \node at (.4,-.58) {$\scriptstyle{q^2 i}$};
      \node at (-0.26,-0.25) {$\dt$};
      \node at (0.26,-0.25) {$\dt$};
      \node at (.77,-0.2) {$\scriptstyle{x_2-x_1}$};
\end{tikzpicture}
&\text{if $j=i^+$,}\\
-\begin{tikzpicture}[baseline = 0]
	\draw[->] (-0.38,-.4) to (0.38,.5);
	\draw[-,white,line width=3pt] (0.38,-.4) to (-0.38,.5);
	\draw[<-] (0.38,-.4) to (-0.38,.5);
      \node at (0,0.05) {$\diamond$};
	\draw[->,densely dotted] (0.26,-.24) to (-0.19,-.24);
   \node at (-.4,-.55) {$\scriptstyle{i}$};
   \node at (.4,-.55) {$\scriptstyle{j}$};
   \node at (.4,.65) {$\scriptstyle{i}$};
   \node at (-.4,.65) {$\scriptstyle{j}$};
      \node at (-0.26,-0.25) {$\dt$};
      \node at (0.26,-0.25) {$\dt$};
      \node at (1.72,-0.2) {$\scriptstyle{(x_2-x_1)(qx_2-q^{-1}x_1)^{-1}}$};
\end{tikzpicture}
&\text{if $j \neq i, i^+$.}
\end{array}
\right.
\end{align}
Using this, we can compute the images of the morphisms in
(\ref{day1})--(\ref{day3}).
The first is equal to the morphism from Lemma~\ref{banach1}
composed on the right by another invertible morphism, hence, it is
invertible.
Similarly, the second is equal to the morphism from
Lemma~\ref{banach2}
 composed on the right by an invertible diagonal matrix.
Finally, the last one is equal to the morphism from
Lemma~\ref{banach3} composed on the left by an invertible diagonal
matrix. This completes the proof.
\end{proof}

\begin{Remark}\label{nocaps}
In the setup of Theorem~\ref{thm1}, the images of the generating 2-morphisms
$\:\begin{tikzpicture}[baseline = 0]
	\draw[-,thick] (0.3,0.2) to[out=-90, in=0] (0.1,-0.1);
	\draw[->,thick] (0.1,-0.1) to[out = 180, in = -90] (-0.1,0.2);
    \node at (0.3,.35) {$\scriptstyle{i}$};
\end{tikzpicture}
\!{\color{gray}\scriptstyle\lambda}\:$ and
$\:\begin{tikzpicture}[baseline = -2]
	\draw[-,thick] (0.3,-0.1) to[out=90, in=0] (0.1,0.2);
	\draw[->,thick] (0.1,0.2) to[out = 180, in = 90] (-0.1,-0.1);
    \node at (0.3,-.25) {$\scriptstyle{i}$};
\end{tikzpicture}
\!{\color{gray}\scriptstyle\lambda}\:$
are uniquely determined by the images of the other
generators thanks to Lemma~\ref{lateaddition}. 
It is not easy to find explicit formulae for these in practice.
Nevertheless, we do 
understand 
how to apply $\mathbf{R}$ to dotted bubbles, although it is
easier for this to go in the other direction; see
(\ref{sewing1})--(\ref{sewing2}) below.
\end{Remark}

\section{Generalized cyclotomic quotients}

The Heisenberg category $\Heis_k$ (resp., the Kac-Moody 2-category
$\UU(\g)$) has some universal cyclic module categories (resp.,
2-representations)
known as generalized cyclotomic quotients (GCQs for short).
In this section, we construct an explicit
isomorphism between Heisenberg and Kac-Moody GCQs, and 
use this to prove a converse to Theorem~\ref{thm1}.

\subsection{Kac-Moody 2-representations}\label{svo}
Let $\UU(\g)$ be the Kac-Moody 2-category as in $\S$\ref{songs}.
Its 2-representation theory has been
developed rather fully in the literature. 
We begin the section by reviewing some of the basic facts established in
\cite{CR, Rou}; see also \cite{BD} which extended some of the results
to the Schurian setting.
Actually, as discussed in the introduction, the history
here is a little convoluted, 
since many of these results were first
established in the setting of (degenerate) affine Hecke algebras.
However, with hindsight, the proofs are most naturally explained in terms of the
representation theory of the nil-Hecke algebra.

Suppose that we are given a locally finite Abelian or 
Schurian 2-representation $(\R_\lambda)_{\lambda \in X}$
of $\UU(\g)$. 
Let
\begin{equation}\label{different}
    \textstyle
    \R :=
    \begin{cases}
        \bigoplus_{\lambda \in X} \R_\lambda & \text{in the locally finite Abelian case}, \\
        \prod_{\lambda \in X} R_\lambda & \text{in the Schurian case},
    \end{cases}
\end{equation}
which is again a locally finite Abelian or Schurian category.
The functors
$E_i|_{\R_\lambda}$
and $F_i|_{\R_\lambda}$
for all $\lambda$
define endofunctors
$E_i$ and $F_i$ of $\R$.
The rightwards and leftwards cups and caps in $\UU(\g)$ define canonical adjunctions 
$(E_i, F_i)$ and $(F_i, E_i)$ for all $i \in I$. In particular, $E_i$ and $F_i$ are
sweet endofunctors of $\R$.
There are also divided power
functors
$E_i^{(r)}, F_i^{(r)}$
such that 
$E_i^r \cong (E_i^{(r)})^{\oplus r!}$ and
$F_i^r \cong (E_i^{(r)})^{\oplus r!}$. These are constructed using
the action of the nil-Hecke algebra on the functors $E_i^r$
and $F_i^r$.
The divided power functors
induce endomorphisms $e_i^{(r)} := [E_i^{(r)}]$ and $f_i^{(r)} := [F_i^{(r)}]$ of
the Grothendieck group 
\begin{equation}\label{wtde}
K_0(\R) = \bigoplus_{\lambda \in  X}K_0(\R_\lambda)
\end{equation}
as defined in $\S$\ref{scars}, making $K_0(\R)$
into an integrable module over the Kostant $\Z$-form for the universal
enveloping algebra of $U(\g)$
with (\ref{wtde}) 
as its weight space decomposition.
This assertion is a consequence of the categorical Serre relations
proved in \cite[Proposition 4.2]{Rou} (see also \cite[Corollary 7]{KL2});
the integrability is \cite[Lemma 3.6]{BD}.

We will assume from now on that the 2-representation $(\R_\lambda)_{\lambda \in X}$ 
is {\em nilpotent}, meaning that the following hold:
\begin{itemize}
\item
For each $\lambda \in X$ and $V \in \R_\lambda$, we have that
$E_i V=F_iV=\mathbf 0$ for all but finitely many $i \in I$.
\item 
The 
endomorphisms
$\mathord{
\begin{tikzpicture}[baseline = -1.5mm]
 	\draw[-,darkg,thick] (1.1,.2) to (1.1,-.2);
 	\draw[<-,thick] (0.68,.2) to (0.68,-.2); 
    \node at (.68,-.35) {$\scriptstyle i$};
     \node at (.68,-.02) {$\bull$};
     \node at (1.3,-.02) {$\darkg\scriptstyle V$};
\node at (.9,0) {$\color{gray}\scriptstyle\lambda$};
\end{tikzpicture}
}:E_i V \rightarrow E_i V$
are nilpotent
for all $i \in I$, $\lambda \in X$ and finitely generated $V \in
\R_\lambda$; equivalently,
all of the 
endomorphisms
$\mathord{
\begin{tikzpicture}[baseline = 0mm]
 	\draw[-,darkg,thick] (1.1,.2) to (1.1,-.2);
 	\draw[->,thick] (0.68,.2) to (0.68,-.2); 
    \node at (.68,.35) {$\scriptstyle i$};
     \node at (.68,.02) {$\bull$};
     \node at (1.3,-.02) {$\darkg\scriptstyle V$};
\node at (.9,0) {$\color{gray}\scriptstyle\lambda$};
\end{tikzpicture}
}$
are nilpotent.
\end{itemize} 
The first property implies that
\begin{align}\label{EF}
E &:= \bigoplus_{i \in I} E_i,
&
F &:= \bigoplus_{i \in I} F_i
\end{align}
are well-defined endofunctors of $\R$.
The canonical
adjunctions $(E_i, F_i)$ and $(F_i, E_i)$ for all $i$ induce adjunctions
$(E, F)$ and $(F, E)$,
hence, these are sweet endofunctors too.
By the second property, it makes sense to define
$\eps_i(V)$ and $\phi_i(V)$
to be the nilpotency degrees of the endomorphisms
$\mathord{
\begin{tikzpicture}[baseline = -2mm]
 	\draw[-,darkg,thick] (1.1,.2) to (1.1,-.2);
 	\draw[<-,thick] (0.68,.2) to (0.68,-.2); 
    \node at (.68,-.35) {$\scriptstyle i$};
     \node at (.68,-.02) {$\bull$};
     \node at (1.3,-.02) {$\darkg\scriptstyle V$};
\node at (.9,0) {$\color{gray}\scriptstyle\lambda$};
\end{tikzpicture}
}$ and 
$\mathord{
\begin{tikzpicture}[baseline = -2mm]
 	\draw[-,darkg,thick] (1.1,.2) to (1.1,-.2);
 	\draw[->,thick] (0.68,.2) to (0.68,-.2); 
    \node at (.68,-.35) {$\scriptstyle i$};
     \node at (.68,.02) {$\bull$};
     \node at (1.3,-.02) {$\darkg\scriptstyle V$};
\node at (.9,0) {$\color{gray}\scriptstyle\lambda$};
\end{tikzpicture}
}$, respectively,
for any finitely generated $V \in \R_\lambda$
and $i \in I$.
In other words, the minimal polynomials of these endomorphisms are
$u^{\eps_i(V)}$ and $u^{\phi_i(V)}$, respectively.

Like in $\S$\ref{swd}, for any $\lambda \in X$ and $V \in \R_\lambda$,
we let $Z_V := Z(\End_\R(V))$. Then, assuming that $V$ is finitely
generated so that all but finitely many bubbles act as zero due to the
assumption of nilpotency, we define 
\begin{align}\label{lll}
\h_{V,i}(u)&:=
\mathord{
\begin{tikzpicture}[baseline = -1mm]
     \node at (0.08,0) {$\scriptstyle\anticlocki(u)$};
 	\draw[-,darkg,thick] (0.68,.2) to (0.68,-.22);
     \node at (0.68,-.37) {$\darkg\scriptstyle{V}$};
     \node at (-0.45,0) {$\color{gray}\scriptstyle{\lambda}$};
\end{tikzpicture}
}
=
\left(\mathord{
\begin{tikzpicture}[baseline = -1mm]
     \node at (0.08,0) {$\scriptstyle\clocki(u)$};
 	\draw[-,darkg,thick] (0.68,.2) to (0.68,-.22);
     \node at (0.68,-.37) {$\darkg\scriptstyle{V}$};
     \node at (-0.45,0) {$\color{gray}\scriptstyle{\lambda}$};
\end{tikzpicture}
}\right)^{-1}\in u^{\langle h_i, \lambda\rangle} + u^{\langle
        h_i,\lambda\rangle-1} Z_V[u^{-1}]
\end{align}
for each $i \in I$.
The following are the Kac-Moody counterparts of Lemmas~\ref{preimpy}--\ref{impy}; see
also \cite[Lemma 3.8]{Wunfurling}.

\begin{Lemma}\label{preimpy2}
Suppose that $i \in I$ and $V$ is a finitely generated object of $\R_\lambda$ for $\lambda \in X$.
\begin{itemize}
\item[(1)]
If $f(u) \in Z_V[u]$ is a monic polynomial 
such that
$\mathord{
\begin{tikzpicture}[baseline = -2mm]
 	\draw[thick,->] (0.08,-.2) to (0.08,.2);
     \node at (0.08,-0.01) {$\bull$};
     \node at (0.08,-0.35) {$\scriptstyle i$};
     \node at (-0.3,-0.01) {$\scriptstyle f(y)$};
 	\draw[-,darkg,thick] (0.52,.2) to (0.52,-.2);
     \node at (0.3,0) {$\color{gray}\scriptstyle{\lambda}$};
     \node at (0.69,0) {$\darkg\scriptstyle{V}$};
\end{tikzpicture}
}=0$, 
then $g(u) := \h_{V,i}(u) f(u)$ is a monic polynomial
in $Z_V[u]$
of degree $\deg f(u)+\langle h_i,\lambda\rangle$ 
such that
$\mathord{
\begin{tikzpicture}[baseline = 0mm]
 	\draw[thick,<-] (0.08,-.2) to (0.08,.2);
     \node at (0.08,0.03) {$\bull$};
     \node at (0.08,0.35) {$\scriptstyle i$};
     \node at (-0.3,0.03) {$\scriptstyle g(y)$};
 	\draw[-,darkg,thick] (0.52,.2) to (0.52,-.2);
     \node at (0.3,0.03) {$\color{gray}\scriptstyle{\lambda}$};
     \node at (0.69,0) {$\darkg\scriptstyle{V}$};
\end{tikzpicture}
}=0$.
\item[(2)]
If $g(u) \in Z_V[u]$ is a monic polynomial such that
$\mathord{
\begin{tikzpicture}[baseline = -1mm]
 	\draw[thick,<-] (0.08,-.2) to (0.08,.2);
     \node at (0.08,0.03) {$\bull$};
     \node at (0.08,0.35) {$\scriptstyle i$};
     \node at (-0.3,0.03) {$\scriptstyle g(y)$};
 	\draw[-,darkg,thick] (0.52,.2) to (0.52,-.2);
     \node at (0.3,0.03) {$\color{gray}\scriptstyle{\lambda}$};
     \node at (0.69,0) {$\darkg\scriptstyle{V}$};
\end{tikzpicture}
}=0$, then $f(u) := \h_{V,i}(u)^{-1} g(u)$ is a monic polynomial
in $Z_V[u]$
of degree $\deg g(u)-\langle h_i,\lambda\rangle$ 
such that
$\mathord{
\begin{tikzpicture}[baseline = -1mm]
 	\draw[thick,->] (0.08,-.2) to (0.08,.2);
     \node at (0.08,0) {$\bull$};
     \node at (0.08,-0.35) {$\scriptstyle i$};
     \node at (-0.3,0) {$\scriptstyle f(y)$};
 	\draw[-,darkg,thick] (0.52,.2) to (0.52,-.2);
     \node at (0.3,0) {$\color{gray}\scriptstyle{\lambda}$};
     \node at (0.69,0) {$\darkg\scriptstyle{V}$};
\end{tikzpicture}
}=0$.
 \end{itemize}
\end{Lemma}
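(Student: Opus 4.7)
The proof adapts that of Lemma~\ref{preimpy} almost verbatim: one replaces the Heisenberg generating-function identities of Lemmas~\ref{sure} and~\ref{surely} with their Kac-Moody counterparts in Lemma~\ref{sure2lemma}, and uses \eqref{infgras} (which identifies $\h_{V,i}(u)^{-1}$ with the evaluation of $\clocki(u)$ on $V$) in place of \eqref{igproper}. Because of this inversion, parts (1) and (2) are interchanged by the involution $E_i\leftrightarrow F_i$, $\anticlocki\leftrightarrow\clocki$, so I describe only part~(1).

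For the first claim, the expansion $\h_{V,i}(u)\in u^{\langle h_i,\lambda\rangle}+u^{\langle h_i,\lambda\rangle-1}Z_V[\![u^{-1}]\!]$ recorded in \eqref{lll} yields the leading term of $g(u):=\h_{V,i}(u)\,f(u)$ at once, so the task is to show that $[g(u)]_{u^{-r-1}}=0$ for every $r\ge 0$. Setting $p(u):=u^r f(u)$, identity \eqref{sure2a} gives
\[
[g(u)]_{u^{-r-1}} \;=\; \bigl[\anticlocki(u)\,p(u)\bigr]_{u^{-1}}\Bigm|_V \;=\; \sigma_i(\lambda)\cdot(\text{counterclockwise $i$-bubble with dot $p(y)$ on $V$}).
\]
Opening this bubble via the $(E_i,F_i)$-adjunction rewrites it as $\sigma_i(\lambda)\cdot\mathrm{cap}_V\circ p(y)_{E_iV}\circ\mathrm{cup}_V$, which vanishes because $p(y)=y^r f(y)$ factors through the endomorphism $f(y)\bigm|_{E_iV}=0$ provided by hypothesis.

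For the second claim, I apply \eqref{sure1a} to rewrite the downward $F_i$-strand carrying the dot $g(y)$ as the $u^{-1}$-coefficient of a dotted strand with $(u-y)^{-1}$ multiplied by $g(u)$; substituting $g(u)=\anticlocki(u)\bigm|_V\cdot f(u)$ and invoking \eqref{sure3a} identifies the result, up to the sign $\sigma_i(\lambda)$, with a counterclockwise curl whose internal dot carries $f(y)$ on an upward ($E_i$) segment. This curl vanishes on $V$ because $f(y)$ annihilates $E_iV$. The only real bookkeeping is tracking the signs $\sigma_i(\lambda)\in\{\pm 1\}$, which cancel cleanly; part~(2) follows by the symmetric argument using the $\clocki$-analogs of \eqref{sure2a} and \eqref{sure3a} together with the hypothesis $g(y)\bigm|_{F_iV}=0$.
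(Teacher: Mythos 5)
Your proof is correct and takes exactly the route the paper intends: its own proof of this lemma is the single instruction to mimic the proof of Lemma~\ref{preimpy} using Lemma~\ref{sure2lemma}, and your argument is a faithful (and more detailed) execution of that mimicry, using \eqref{sure2a} for the polynomiality of $g(u)$ and \eqref{sure1a}, \eqref{sure3a} for the vanishing of the $g(y)$-dot, with the curl and bubble killed because the dot lands on the $E_i$ (resp.\ $F_i$) strand adjacent to $V$.
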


\begin{proof}
Mimic the proof of Lemma~\ref{preimpy} using Lemma~\ref{sure2lemma}.
\end{proof}

\begin{Lemma}\label{impy2}
Let $L \in \R_\lambda$ be an irreducible object. For $i \in I$, 
we have that
\begin{align*}
\h_{L,i}(u) &= u^{\phi_i(L)-\eps_i(L)}.
\end{align*}
In particular, $\phi_i(L)-\eps_i(L)= \langle h_i,\lambda \rangle$.
\end{Lemma}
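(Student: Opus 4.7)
The plan is to mimic the proof of Lemma \ref{impy} from the Heisenberg setting, now with Lemma \ref{preimpy2} playing the role of Lemma \ref{preimpy}. Since $L$ is irreducible and $\k$ is algebraically closed, $Z_L = \End_\R(L) = \k$, so any ``coefficient polynomial'' produced by Lemma \ref{preimpy2} applied to $L$ automatically lies in $\k[u]$, and $\h_{L,i}(u)$ is a monic element of $\k(\!(u^{-1})\!)$ of leading degree $\langle h_i,\lambda\rangle$. The crucial point is that by the nilpotency hypothesis on $(\R_\lambda)_{\lambda\in X}$, the minimal polynomials of the dot endomorphisms on $E_iL$ and $F_iL$ are exactly $u^{\eps_i(L)}$ and $u^{\phi_i(L)}$, by the very definition of $\eps_i(L)$ and $\phi_i(L)$.

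First I would feed $f(u) := u^{\eps_i(L)}$ into Lemma \ref{preimpy2}(1); this is valid since $u^{\eps_i(L)}$ annihilates the dot on $E_iL$. The lemma returns a monic polynomial $g(u) := \h_{L,i}(u)\,u^{\eps_i(L)} \in \k[u]$ of degree $\eps_i(L) + \langle h_i,\lambda\rangle$ which kills the dot on $F_iL$. By minimality, $u^{\phi_i(L)}$ divides $g(u)$, so $\phi_i(L) \le \eps_i(L) + \langle h_i,\lambda\rangle$. Symmetrically, feeding $g(u) := u^{\phi_i(L)}$ into Lemma \ref{preimpy2}(2) produces a monic $f(u) := \h_{L,i}(u)^{-1} u^{\phi_i(L)} \in \k[u]$ of degree $\phi_i(L) - \langle h_i,\lambda\rangle$ killing the dot on $E_iL$, so $u^{\eps_i(L)}$ divides $f(u)$ and $\eps_i(L) \le \phi_i(L) - \langle h_i,\lambda\rangle$.

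The two inequalities force $\phi_i(L) - \eps_i(L) = \langle h_i,\lambda\rangle$, at which point each of the divisibilities above becomes an equality of monic polynomials of matching degree: $g(u) = u^{\phi_i(L)}$ and $f(u) = u^{\eps_i(L)}$. Either equation then yields $\h_{L,i}(u) = u^{\phi_i(L)-\eps_i(L)}$, which gives both conclusions of the lemma simultaneously. There is no real obstacle here: once Lemma \ref{preimpy2} is in hand, the argument is a direct transcription of the Heisenberg case, and the nilpotency assumption is precisely what is needed to ensure that the natural test polynomials $u^{\eps_i(L)}$ and $u^{\phi_i(L)}$ are the minimal ones.
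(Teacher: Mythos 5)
Your proof is correct and follows the same route as the paper: apply Lemma~\ref{preimpy2}(1) with $f(u)=u^{\eps_i(L)}$ and Lemma~\ref{preimpy2}(2) with $g(u)=u^{\phi_i(L)}$, use minimality of these polynomials to get the two opposite inequalities, and conclude that both are equalities. The only addition in your write-up is making explicit that $Z_L=\k$ and that the resulting monic identities pin down $\h_{L,i}(u)$ exactly, which the paper leaves implicit.
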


\begin{proof}
We first apply
Lemma~\ref{preimpy2}(1)
with $f(u) = u^{\eps_i(L)}$ to deduce that $\h_{V,i}(u) u^{\eps_i(L)}$
is a monic polynomial of degree $\eps_i(L)+\langle h_i,\lambda\rangle$
divisible by $u^{\phi_i(L)}$. Hence, $\phi_i(L) \leq \eps_i(L)+\langle
h_i,\lambda\rangle$.
Then we apply Lemma~\ref{preimpy2}(2) with $g(u) = u^{\phi_i(L)}$
to deduce that $\h_{V,i}(u)^{-1} u^{\phi_i(L)}$ is monic of degree
$\phi_i(L)-\langle h_i,\lambda\rangle$ divisible by $u^{\eps_i(L)}$.
Hence, $\eps_i(L) \leq \phi_i(L)-\langle h_i,\lambda\rangle$.
We deduce that both inequalities are equalities and the lemma follows.
\end{proof}

\begin{Corollary}
If $V \in \R_\lambda$ is any finitely generated object, 
all coefficients of $\h_{V,i}(u)$ apart from
the leading one are nilpotent.
\end{Corollary}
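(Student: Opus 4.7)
The plan is to combine Lemma~\ref{impy2} with a Krull--Schmidt reduction to the indecomposable case. Since $V$ is finitely generated, the endomorphism algebra $\End_\R(V)$ is finite-dimensional---automatic in the locally finite Abelian case, and noted at the end of $\S$\ref{scars} in the Schurian case. Consequently $V = \bigoplus_{j} V_j$ is a finite direct sum of indecomposables $V_j \in \R_\lambda$ whose endomorphism algebras are local (by Fitting's lemma), so that $Z_V = \prod_j Z_{V_j}$ is a finite product of finite-dimensional commutative local $\k$-algebras, each with residue field $\k$. An element of $Z_V$ is nilpotent if and only if each of its components is sent to zero by the canonical surjection $Z_{V_j} \twoheadrightarrow Z_{V_j}/\operatorname{rad}(Z_{V_j}) = \k$.

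Next, I would observe that each nonzero indecomposable summand $V_j$ admits an irreducible quotient $L_j \in \R_\lambda$ by Zorn's lemma applied to its poset of proper subobjects. The resulting action of $Z_{V_j}$ on $L_j$ gives a unital, hence surjective, ring homomorphism $Z_{V_j} \to \End_\R(L_j) = \k$, which must therefore coincide with the unique character of the local algebra $Z_{V_j}$.

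Finally, since the dotted bubbles arise as 2-endomorphisms of $1_\lambda$ (equivalently, natural endomorphisms of the identity functor of $\R_\lambda$), the evaluations on $V_j$ and on $L_j$ are intertwined by the quotient map $V_j \twoheadrightarrow L_j$. Consequently the generating function $\h_{V_j,i}(u)$ maps to $\h_{L_j,i}(u)$ under the character $Z_{V_j}\to\k$ built above. By Lemma~\ref{impy2}, $\h_{L_j,i}(u) = u^{\langle h_i,\lambda\rangle}$, so every coefficient of $\h_{V_j,i}(u)$ other than the leading one lies in the kernel of the canonical character $Z_{V_j}\to\k$, and is thus nilpotent in $Z_{V_j}$. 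Reassembling the components across $j$ yields the claim for $Z_V$. The only subtle point is the Krull--Schmidt step together with the realisation of every character of $Z_V$ on an irreducible subquotient; both reduce to the finite-dimensionality of $\End_\R(V)$, so no serious obstacle is expected.
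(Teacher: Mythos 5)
Your proof is correct and hinges on the same key input as the paper's one\mbox{-}line argument, namely Lemma~\ref{impy2}, which forces $\h_{L,i}(u)=u^{\langle h_i,\lambda\rangle}$ on irreducibles; where you differ is in how you convert ``the non-leading bubbles vanish on irreducibles'' into nilpotency on $V$. The paper asserts directly that these bubbles therefore lie in the Jacobson radical of $\End_\R(V)$, whereas you split $V$ into finitely many indecomposables $V_j$ with local endomorphism rings and test each bubble against a single irreducible quotient $L_j$: the bubble restricted to $V_j$ is central, kills $L_j$, hence is a non-unit of the finite-dimensional local ring $\End_\R(V_j)$ and so nilpotent. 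This is more elementary and self-contained, at the cost of two small imprecisions that you should repair but that do not break the argument: (i) $Z_V$ is only a subalgebra of $\prod_j Z_{V_j}$, not equal to it (consider $V=L\oplus L$ with $L$ irreducible, where $\End_\R(V)$ is a matrix algebra with one-dimensional centre); the containment, which holds because a central element commutes with the projections onto the summands, is all you actually use. (ii) A general element of $Z_{V_j}=Z(\End_\R(V_j))$ need not preserve $\ker(V_j\twoheadrightarrow L_j)$, so there is no canonical ring homomorphism $Z_{V_j}\to\End_\R(L_j)$; the character you want is only defined on the subalgebra generated by the bubbles, which do descend because they are evaluations of endomorphisms of $1_\lambda$ and hence intertwine every morphism --- exactly the point your final paragraph makes, so the fix is just to restrict the claim in your second step to that subalgebra.
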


\begin{proof}
Lemma~\ref{impy2} shows that the natural transformations defined by 
the bubbles
$\mathord{
\begin{tikzpicture}[baseline = 1.25mm]
  \draw[->,thick] (0.2,0.2) to[out=90,in=0] (0,.4);
  \draw[-,thick] (0,0.4) to[out=180,in=90] (-.2,0.2);
\draw[-,thick] (-.2,0.2) to[out=-90,in=180] (0,0);
  \draw[-,thick] (0,0) to[out=0,in=-90] (0.2,0.2);
   \node at (0.2,0.2) {$\bull$};
   \node at (.4,0.2) {$\scriptstyle{r}$};
   \node at (-0.05,-.15) {$\scriptstyle{i}$};
   \node at (-.38,0.2) {$\color{gray}\scriptstyle{\lambda}$};
\end{tikzpicture}}
$
for $r \geq -\langle h_i,\lambda\rangle$ 
are zero on all irreducible objects $L \in \R_\lambda$.
Hence, they define elements of the Jacobson radical of $\End_\R(V)$.
\end{proof}

The final fundamental result to be mentioned here
reveals some remarkable combinatorics which motivated several
of our earlier notational choices.
Still assuming nilpotency, there is an 
{\em associated crystal} $(\B,\tilde e_i, \tilde f_i, \eps_i,
\phi_i, \operatorname{wt})$ in the general sense of Kashiwara;
more precisely, it is what is called a {\em classical
crystal} in \cite{KKMMNN}. 
This follows by \cite[Proposition 5.20]{CR}
in the locally finite Abelian setting
or \cite[Theorem 4.31]{BD} in the
Schurian case; many of the ideas here go back to the work of Grojnowski \cite{G}.
In more detail, the underlying set $\B$ is the set of isomorphism classes of irreducible objects in
$\R$. The crystal operators
$\tilde e_i, \tilde f_i:\B \rightarrow \B \sqcup
\{\varnothing\}$
are defined on an irreducible object $L \in \R_\lambda$ as follows:
\begin{itemize}
\item
if $E_i L \neq 0$ then $\tilde e_i(L)$ is $\operatorname{hd}(E_i L)
\cong \operatorname{soc}(E_i L)$ (which is irreducible),
else
$E_i L = \varnothing$;
\item
if $F_i L \neq 0$ then $\tilde f_i(L)$ is $\operatorname{hd}(F_i L)
\cong \operatorname{soc}(F_i L)$ (which is irreducible), else
$F_i L = \varnothing$.
\end{itemize}
The weight function $\wt:\B \rightarrow X$ is defined by $\wt(L) :=
\lambda$ for $L \in \R_\lambda$.
The functions $\eps_i, \phi_i:\B \rightarrow \N$ 
take $L \in \B$ to 
the nilpotency degrees of the endomorphisms
$\mathord{
\begin{tikzpicture}[baseline = -2mm]
 	\draw[-,darkg,thick] (1.1,.2) to (1.1,-.2);
 	\draw[<-,thick] (0.68,.2) to (0.68,-.2); 
    \node at (.68,-.35) {$\scriptstyle i$};
     \node at (.68,-.02) {$\bull$};
     \node at (1.3,-.02) {$\darkg\scriptstyle L$};
\node at (.9,0) {$\color{gray}\scriptstyle\lambda$};
\end{tikzpicture}
}$ and 
$\mathord{
\begin{tikzpicture}[baseline = -2mm]
 	\draw[-,darkg,thick] (1.1,.2) to (1.1,-.2);
 	\draw[->,thick] (0.68,.2) to (0.68,-.2); 
    \node at (.68,-.35) {$\scriptstyle i$};
     \node at (.68,.02) {$\bull$};
     \node at (1.3,-.02) {$\darkg\scriptstyle L$};
\node at (.9,0) {$\color{gray}\scriptstyle\lambda$};
\end{tikzpicture}
}$
as above. Part of what it means to say that this is a crystal datum gives that
$\eps_i(L) = \max\{n \in \N\:|\:E_i^{n} L \neq 0\}$ and
$\phi_i(L) = \max\{n \in \N\:|\:F_i^{n} L \neq 0\}$.
Moreover, it is known that the
endomorphism algebras of $E_i L$ and
$F_i L$ are isomorphic to $\k[u] / (u^{\eps_i(L)})$ and $\k[u] / (u^{\phi_i(L)})$,
respectively.

\begin{Remark}
The 2-representations $(\R_\lambda)_{\lambda \in X}$ constructed in 
Theorem~\ref{thm1} are nilpotent. Moreover, the
functors $E_i,
F_i$ and functions $\eps_i, \phi_i$ and $\wt$ as introduced in $\S\S$\ref{sendo}--\ref{swd} are the same
as in the present subsection. Consequently, all of the results
summarized here can be applied to the study of locally finite Abelian
or Schurian 
$\Heis_k$-module categories.
In particular, the description of the endomorphism algebras of $E_i L$ and $F_i L$
just mentioned implies that the homomorphisms (\ref{CRT1})
are actually isomorphisms for irreducible $V$.
\end{Remark}

\subsection{Kac-Moody GCQs}\label{kmgcq}
The next three subsections are concerned with GCQs.
These first appeared on the Kac-Moody side in
\cite[Proposition 5.6]{Wcan}; see also \cite[$\S$4.2]{BD}.
We will only need them under the assumption of nilpotency,
although it can also be useful to consider these categories more
generally; e.g., see \cite{Wunfurling}.
Let $\UU(\g)$ be the Kac-Moody 2-category as in the previous
subsection.
The data needed to define a (nilpotent) GCQ of
$\UU(\g)$ is as follows:
\begin{itemize}
\item a finite-dimensional, commutative, local $\k$-algebra $Z$
with maximal ideal $J$;
\item dominant weights $\mu,\nu \in X^+$;
\item monic polynomials
$\mu_i(u) \in u^{\langle h_i,
    \mu\rangle}+J[u]$, $\nu_i(u) \in u^{\langle h_i,
    \nu\rangle}+J[u]$ for all $i
\in I$.
\end{itemize}
In the important special case
that $Z = \k$, 
the polynomials $\mu_i, \nu_i$ provide no additional data 
beyond that of the dominant weights $\mu,\nu$ since 
we necessarily have that $\mu_i(u) =
    u^{\langle h_i, \mu\rangle}$ and $\nu_i(u) = u^{\langle h_i,
      \nu\rangle}$.
Let $\kappa := \nu - \mu \in X$ and
\begin{align}
\h_i(u)&:=\nu_i(u) / \mu_i(u) \in u^{\langle
             h_i,\kappa\rangle}+u^{\langle h_i,\kappa\rangle-1} J[u^{-1}].
\end{align}
We also need notation for the coefficients of $\h_i(u)$ and its inverse defined
from the expansions
\begin{equation}\label{expansions}
\h_i(u) = \sigma_i(\kappa) \sum_{r \in \Z} \h_i^{(r)} u^{-r-1},
\qquad
\h_i(u)^{-1} = \sigma_i(\kappa)\sum_{r \in \Z} \widetilde\h_i^{(r)} u^{-r-1}.
\end{equation}
Associated to the weight $\kappa$, there is a universal
2-representation
$(\R(\kappa)_\lambda)_{\lambda \in X}$
defined by setting
$\R(\kappa)_\lambda:=
\mathcal{H}om_{\UU(\g)}(\kappa,\lambda)$; the 1- and 2-morphisms in
$\UU(\g)$ act by horizontally composing on the left 
in the obvious way.
Extending scalars, we obtain from this a $Z$-linear 2-representation
$(\R(\kappa)_\lambda \otimes_\k Z)_{\lambda \in X}$.
Let $(\mathcal I_Z(\mu|\nu)_\lambda)_{\lambda \in X}$ be the 
sub-2-representation generated by the 2-morphisms
\begin{equation}
\bigg\{
\mathord{
\begin{tikzpicture}[baseline = -1.2mm]
 	\draw[thick,->] (0.08,-.2) to (0.08,.2);
     \node at (0.08,-0.01) {$\bull$};
     \node at (0.08,-0.35) {$\scriptstyle i$};
     \node at (-0.4,-0.01) {$\scriptstyle \mu_i(y)$};
     \node at (0.3,0) {$\color{gray}\scriptstyle{\kappa}$};
\end{tikzpicture}
},\:
\begin{tikzpicture}[baseline=-.9mm]
\draw[-,thick] (0,-0.18) to[out=180,in=-102] (-.178,0.02);
\draw[-,thick] (-0.18,0) to[out=90,in=180] (0,0.18);
\draw[-,thick] (0.18,0) to[out=-90,in=0] (0,-0.18);
\draw[<-,thick] (0,0.18) to[out=0,in=90] (0.18,0);
   \node at (0,-.32) {$\scriptstyle{i}$};
   \node at (0.18,0) {$\bull$};
   \node at (0.4,0) {$\scriptstyle{y^r}$};
   \node at (-0.4,0) {$\color{gray}\scriptstyle{\kappa}$};
\end{tikzpicture}
-
\h_i^{(r)} 1_{1_\kappa}\:\Big|\:i \in I,
-\langle h_i, \kappa \rangle \leq r < \langle h_i, \mu\rangle
\bigg\}.\label{oneofthe}
\end{equation}
Equivalently, 
by \cite[Lemma 4.14]{BD},
$(\mathcal I_Z(\mu|\nu)_\nu)_{\lambda \in X}$
is generated by the 2-morphisms
\begin{equation}
\bigg\{
\mathord{
\begin{tikzpicture}[baseline = -1.2mm]
 	\draw[thick,<-] (0.08,-.2) to (0.08,.2);
     \node at (0.08,0.02) {$\bull$};
     \node at (0.08,-0.35) {$\scriptstyle i$};
     \node at (-0.4,0.02) {$\scriptstyle \nu_i(y)$};
     \node at (0.3,0) {$\color{gray}\scriptstyle{\kappa}$};
\end{tikzpicture}
},\:
\begin{tikzpicture}[baseline=-.9mm]
\draw[-,thick] (0,-0.18) to[out=180,in=-102] (-.178,0.02);
\draw[->,thick] (-0.18,0) to[out=90,in=180] (0,0.18);
\draw[-,thick] (0.18,0) to[out=-90,in=0] (0,-0.18);
\draw[-,thick] (0,0.18) to[out=0,in=90] (0.18,0);
   \node at (0,-.32) {$\scriptstyle{i}$};
   \node at (-0.18,0) {$\bull$};
   \node at (-0.4,0) {$\scriptstyle{y^r}$};
   \node at (0.4,0) {$\color{gray}\scriptstyle{\kappa}$};
\end{tikzpicture}
-
\widetilde\h_i^{(r)} 1_{1_\kappa}\:\Big|\:i \in I,
\langle h_i, \kappa \rangle \leq r < -\langle h_i, \nu\rangle
\bigg\}.
\end{equation}
The {\em generalized cyclotomic quotient} 
$(\H_Z(\mu|\nu)_\lambda)_{\lambda \in X}$
is the quotient 2-representation. Thus, for $\lambda \in X$, we have that
\begin{equation}
\H_Z(\mu|\nu)_\lambda := \left(\R(\kappa)_\lambda \otimes_\k Z\right) \big/
\mathcal I_Z(\mu|\nu)_\lambda.
\end{equation}
This is the $Z$-linear category with objects that are
1-morphisms $G 1_\kappa:\kappa \rightarrow \lambda$ in $\UU(\g)$,
and morphism space
$\Hom_{\H_Z(\mu|\nu)_\lambda}(G 1_\kappa, G'1_\kappa)$ that
is the quotient of
$\Hom_{\UU(\g)}(G 1_\kappa,G'1_\kappa) \otimes_\k Z$ 
by the $Z$-submodule spanned by all string diagrams from $G 1_\kappa$ to $G'1_\kappa$
which have one of the above generating
2-morphisms appearing on its right-hand boundary.
Note in particular 
by \cite[Lemma 4.14]{BD} again that
\begin{align}
{\scriptstyle\color{gray}\kappa}\:\anticlocki(u) &= \h_i(u) 1_{1_\kappa},&
{\scriptstyle\color{gray}\kappa}\:\clocki(u) &= \h_i(u)^{-1} 1_{1_\kappa}
\end{align}
in $\End_{\H_Z(\mu|\nu)_\kappa}(1_\kappa)$.
It is often convenient to put all of the categories 
$\H_Z(\mu|\nu)_\lambda$ together into a single
$Z$-linear category
\begin{equation}\label{single}
\H_Z(\mu|\nu) 
:= \coprod_{\lambda \in X} \H_Z(\mu|\nu)_\lambda.
\end{equation}
We denote objects in this category simply by words 
in the monoid $\langle E_i,
F_i\rangle_{i \in I}$
generated by the symbols $E_i, F_i\:(i \in I)$, 
such a word $G$ standing for the
1-morphism $G 1_\kappa$.
If $G = G_d\cdots G_1$ with each $G_r \in \{E_i, F_i\:|\:i \in I\}$,
we let
\begin{equation}\label{bluesky}
\wt(G) := \wt(G_1)+\cdots+\wt(G_d)\quad\text{where }
\wt(E_i) = \alpha_i\text{ and }\wt(F_i) = -\alpha_i.
\end{equation}
Then the object $G$ 
belongs to $\H_Z(\mu|\nu)_{\lambda}$ for $\lambda = \kappa+\wt(G)$.

Certain morphism spaces in
$\H_Z(\mu|\nu)$ can be described quite explicitly.
To prepare for this, recall that there is a basis theorem for 2-morphism spaces in
$\UU(\g)$. This was formulated originally as the
{\em nondegeneracy condition} by Khovanov and Lauda in
\cite[$\S$3.2.3]{KL3}. It was proved by them in
finite type A, and it was proved in general in \cite{Wunfurling}; see also \cite{Dupont} for a completely different approach. 

\begin{Lemma}\label{bp}
The quotient of the $Z$-algebra $\End_{\UU(\g)}(1_\kappa) \otimes_\k Z$ 
by the ideal $V$ generated by 
$\Big\{
\begin{tikzpicture}[baseline=-.9mm]
\draw[-,thick] (0,-0.18) to[out=180,in=-102] (-.178,0.02);
\draw[-,thick] (-0.18,0) to[out=90,in=180] (0,0.18);
\draw[-,thick] (0.18,0) to[out=-90,in=0] (0,-0.18);
\draw[<-,thick] (0,0.18) to[out=0,in=90] (0.18,0);
   \node at (0,-.32) {$\scriptstyle{i}$};
   \node at (0.18,0) {$\bull$};
   \node at (0.4,0) {$\scriptstyle{y^r}$};
   \node at (-0.35,0) {$\color{gray}\scriptstyle{\kappa}$};
\end{tikzpicture}
\!\!-
\h_i^{(r)} 1_{1_\kappa},
\begin{tikzpicture}[baseline=-.9mm]
\draw[-,thick] (0,-0.18) to[out=180,in=-102] (-.178,0.02);
\draw[-,thick] (-0.18,0) to[out=90,in=180] (0,0.18);
\draw[-,thick] (0.18,0) to[out=-90,in=0] (0,-0.18);
\draw[<-,thick] (0,0.18) to[out=0,in=90] (0.18,0);
   \node at (0,-.32) {$\scriptstyle{i}$};
   \node at (0.18,0) {$\bull$};
   \node at (0.75,0) {$\scriptstyle{\mu_i(y)y^s}$};
   \node at (-0.35,0) {$\color{gray}\scriptstyle{\kappa}$};
\end{tikzpicture}
\big|\:i \in I,
-\langle h_i, \kappa \rangle \leq r < \langle h_i, \mu\rangle, s \geq 0
\Big\}$
is isomorphic to $Z$.
\end{Lemma}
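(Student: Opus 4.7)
The plan is to invoke the nondegeneracy condition for $\UU(\g)$, established by Khovanov--Lauda \cite{KL3} in finite type A and by Webster \cite{Wunfurling} in general, which implies that $\End_{\UU(\g)}(1_\kappa)$ is a polynomial $\k$-algebra freely generated by the counterclockwise dotted bubbles $b_{i,r} := {\color{gray}\scriptstyle\kappa}\,\anticlocki(y^r)$ for $i \in I$ and $r \geq -\langle h_i,\kappa\rangle$; the remaining bubbles $b_{i,r}$ with $r < -\langle h_i,\kappa\rangle$ are prescribed to be either $\sigma_i(\kappa)$ or $0$ by the definition of negatively dotted bubbles. Tensoring over $\k$ with $Z$ identifies $\End_{\UU(\g)}(1_\kappa)\otimes_\k Z$ with the polynomial $Z$-algebra $Z[b_{i,r}]_{i \in I,\ r \geq -\langle h_i,\kappa\rangle}$.

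Next I would define a $Z$-algebra homomorphism $\phi: \End_{\UU(\g)}(1_\kappa)\otimes_\k Z \to Z$ on the polynomial generators by $b_{i,r} \mapsto \h_i^{(r)}$; under this map the generating function $\anticlocki(u)$ goes to $\h_i(u) = \nu_i(u)/\mu_i(u)$. To verify $V \subseteq \ker\phi$, the type-(a) generators $b_{i,r} - \h_i^{(r)}$ lie in the kernel by construction. For the type-(b) generators, working with generating functions, the element $\anticlocki(\mu_i(y)y^s)$ is the coefficient of $u^{-s-1}$ in $\mu_i(u)\,\sigma_i(\kappa)^{-1}\anticlocki(u)$; its image under $\phi$ is therefore the coefficient of $u^{-s-1}$ in $\sigma_i(\kappa)^{-1}\mu_i(u)\h_i(u) = \sigma_i(\kappa)^{-1}\nu_i(u)$, which vanishes for $s \geq 0$ since $\nu_i(u)$ is a polynomial.

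Thus $\phi$ descends to a $Z$-algebra map $\bar\phi:(\End_{\UU(\g)}(1_\kappa)\otimes_\k Z)/V \to Z$. Because $\bar\phi$ is a retraction of the canonical structural map $\psi: Z \to (\End_{\UU(\g)}(1_\kappa)\otimes_\k Z)/V$, it remains to check $\psi \circ \bar\phi = \mathrm{id}$, equivalently that every generator satisfies $[b_{i,r}] \equiv \h_i^{(r)}\cdot 1 \pmod{V}$. For $-\langle h_i,\kappa\rangle \leq r < \langle h_i,\mu\rangle$ this is immediate from relation (a). For $r \geq \langle h_i,\mu\rangle$ I would proceed by induction on $r$: writing $\mu_i(y) = y^{\langle h_i,\mu\rangle}+\sum_{k=1}^{\langle h_i,\mu\rangle}c_k y^{\langle h_i,\mu\rangle-k}$ with $c_k \in J$, the type-(b) relation $\anticlocki(\mu_i(y)y^{r-\langle h_i,\mu\rangle}) = 0$ yields $b_{i,r} \equiv -\sum_k c_k b_{i, r-k} \pmod{V}$, and the inductive hypothesis (together with the boundary convention for the case $r-k < -\langle h_i,\kappa\rangle$) reduces the right-hand side to the scalar $-\sum_k c_k \h_i^{(r-k)}$. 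That this scalar equals $\h_i^{(r)}$ is the identity $[\mu_i(u)\h_i(u)]_{u^{-r-1+\langle h_i,\mu\rangle}} = [\nu_i(u)]_{u^{-r-1+\langle h_i,\mu\rangle}} = 0$, valid since $\nu_i(u)$ is a polynomial.

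The main obstacle is bookkeeping at the boundary indices $r - k < -\langle h_i,\kappa\rangle$, where the conventions for negatively dotted bubbles inject scalar contributions of $\sigma_i(\kappa)$ into the recursion; this is absorbed cleanly by phrasing everything in terms of generating functions, since the formal identity $\mu_i(u)\h_i(u) = \nu_i(u)$ is designed precisely to encode these boundary contributions on the nose.
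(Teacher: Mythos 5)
Your proof is correct and follows essentially the same route as the paper's: both rest on the nondegeneracy condition (freeness of the bubble polynomial algebra), the recursion coming from monicity of $\mu_i(u)$ that eliminates the generators with $r \geq \langle h_i,\mu\rangle$, and the identity $\mu_i(u)\h_i(u) = \nu_i(u) \in Z[u]$ to see that the remaining relations are consistent. Your explicit retraction $\phi$ together with the inductive surjectivity check is just a more carefully bookkept version of the paper's "factor out generators in two stages" argument, and it has the minor virtue of working over $Z$ throughout rather than quotienting by the $\mu_i$-relations (whose coefficients lie in $J \subseteq Z$) before extending scalars.
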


\begin{proof}
By the nondegeneracy condition, 
$\End_{\UU(\g)}(1_\kappa)$
is a polynomial algebra generated freely by the dotted bubbles
$\begin{tikzpicture}[baseline=-1mm]
\draw[-,thick] (0,-0.18) to[out=180,in=-102] (-.178,0.02);
\draw[-,thick] (-0.18,0) to[out=90,in=180] (0,0.18);
\draw[-,thick] (0.18,0) to[out=-90,in=0] (0,-0.18);
\draw[<-,thick] (0,0.18) to[out=0,in=90] (0.18,0);
   \node at (0,-.32) {$\scriptstyle{i}$};
   \node at (0.18,0) {$\bull$};
   \node at (0.4,0) {$\scriptstyle{y^r}$};
   \node at (-0.4,0) {$\color{gray}\scriptstyle{\kappa}$};
\end{tikzpicture}$
for $i \in I$ and $r \geq - \langle h_i, \kappa\rangle$.
Since $\mu_i(u)$ is monic of degree $\langle h_i, \mu\rangle$, 
factoring out the ideal generated by
$\begin{tikzpicture}[baseline=-1mm]
\draw[-,thick] (0,-0.18) to[out=180,in=-102] (-.178,0.02);
\draw[-,thick] (-0.18,0) to[out=90,in=180] (0,0.18);
\draw[-,thick] (0.18,0) to[out=-90,in=0] (0,-0.18);
\draw[<-,thick] (0,0.18) to[out=0,in=90] (0.18,0);
   \node at (0,-.32) {$\scriptstyle{i}$};
   \node at (0.18,0) {$\bull$};
   \node at (0.75,0) {$\scriptstyle{\mu_i(y)y^s}$};
   \node at (-0.35,0) {$\color{gray}\scriptstyle{\kappa}$};
\end{tikzpicture}$
for $s \geq 0$ reduces to the free polynomial algebra
on generators
$\begin{tikzpicture}[baseline=-1mm]
\draw[-,thick] (0,-0.18) to[out=180,in=-102] (-.178,0.02);
\draw[-,thick] (-0.18,0) to[out=90,in=180] (0,0.18);
\draw[-,thick] (0.18,0) to[out=-90,in=0] (0,-0.18);
\draw[<-,thick] (0,0.18) to[out=0,in=90] (0.18,0);
   \node at (0,-.32) {$\scriptstyle{i}$};
   \node at (0.18,0) {$\bull$};
   \node at (0.4,0) {$\scriptstyle{y^r}$};
   \node at (-0.4,0) {$\color{gray}\scriptstyle{\kappa}$};
\end{tikzpicture}$
for $i \in I$ and $- \langle h_i, \kappa\rangle \leq r < \langle h_i, \mu\rangle$.
Then we tensor over $\k$ with $Z$ and factor out the ideal generated
by the remaining elements
$\begin{tikzpicture}[baseline=-1mm]
\draw[-,thick] (0,-0.18) to[out=180,in=-102] (-.178,0.02);
\draw[-,thick] (-0.18,0) to[out=90,in=180] (0,0.18);
\draw[-,thick] (0.18,0) to[out=-90,in=0] (0,-0.18);
\draw[<-,thick] (0,0.18) to[out=0,in=90] (0.18,0);
   \node at (0,-.32) {$\scriptstyle{i}$};
   \node at (0.18,0) {$\bull$};
   \node at (0.4,0) {$\scriptstyle{y^r}$};
   \node at (-0.35,0) {$\color{gray}\scriptstyle{\kappa}$};
\end{tikzpicture}
\!\!-
\h_i^{(r)} 1_{1_\kappa}$, leaving the algebra $Z$ as the final quotient.
\end{proof}

Let $QH_d$ be the {\em quiver Hecke algebra}.
This is the locally unital $\k$-algebra with local unit provided by the system
$\{1_\bi\:|\:\bi = (i_1,\dots,i_d) \in I^d\}$ of 
mutually orthogonal idempotents,
and  generators
$$
\left\{y_r 1_\bi, \tau_s1_{\bi}\:\big|\: \bi \in I^d, 1 \leq r \leq d, 1
  \leq s < d\right\}.
$$
These generators are subject to the ``local'' relations
represented by
(\ref{QHA1})--(\ref{QHA3}), interpreting
$y_r 1_\bi$ (resp., $\tau_s 1_\bi$) as the string diagram with 
$d$ upwards-oriented 
strings colored $i_1,\dots,i_d$ from
right to left with a dot on the $r$th one (resp., a crossing of the
$s$th and $(s+1)$th ones).
The {\em cyclotomic quiver Hecke algebra}
 $H_d^\mu(Z)$ is the
quotient of the $Z$-algebra $QH_d \otimes_\k Z$ by the two-sided ideal $U$
generated by 
$\left\{\mu_{i_1}(y_1 1_\bi)\:\big|\:\bi \in I^d\right\}$; we interpret $H^\mu_0(Z)$ simply as the
algebra $Z$.
Consider the diagram
\begin{equation}
\begin{diagram}
\node{\!\!\!\!(QH_d \otimes_\k Z)\otimes_Z (\End_{\UU(\g)}(1_\kappa) \otimes_\k Z)}
\arrow{s,l,A}{\pi_1\bar\otimes\pi_2}\arrow{e,t}{\imath_d}\node{
{
\textstyle\bigoplus_{\bi, \bj \in I^d}} \Hom_{\UU(\g)}(E_{\bi}1_\kappa, E_{\bj} 1_\kappa) \otimes_\k Z
}\arrow{s,r,A}{\pi}\\
\node{H_d^\mu(Z)}\arrow{e,b}{\jmath_d}\node{
{\textstyle\bigoplus_{\bi, \bj \in I^d}}
\Hom_{\H_Z(\mu|\nu)}(E_{\bi}, E_{\bj}).}
\end{diagram}\label{buffers}
\end{equation}
The top map $\imath_d$ here is the obvious $Z$-algebra homomorphism
sending $1_\bi \otimes \beta$ to the endomorphism of $E_\bi
1_\kappa := 
E_{i_d}\cdots
E_{i_1}1_\kappa$ induced by $\beta:1_\kappa\Rightarrow 1_\kappa$,
and $y_r 1_\bi
\otimes 1$ and $\tau_s 1_\bi
\otimes 1$ to the 2-morphisms represented by the string diagrams of $y_r 1_\bi$ and
$\tau_s 1_\bi$, respectively. 
The nondegeneracy condition implies that $\imath_d$ is actually an
{isomorphism}.
The right-hand map $\pi$ is the natural quotient map.
The left-hand map $\pi_1\bar\otimes\pi_2$ is 
the product of
the natural quotient map $\pi_1:QH_d \otimes_\k Z \twoheadrightarrow
H_d^\mu(Z)$ with kernel $U$
and the $Z$-algebra homomorphism $\pi_2:\End_{\UU(\g)}(1_\kappa)\otimes_\k Z
\twoheadrightarrow Z$
with kernel $V$ arising from Lemma~\ref{bp}.
The proof of the following lemma is similar to the proof of
\cite[Lemma 8.3]{BCK}.

\begin{Lemma}\label{cqh}
There is a unique isomorphism $\jmath_d$ making the diagram (\ref{buffers})
commute.
\end{Lemma}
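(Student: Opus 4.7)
The approach is to obtain $\jmath_d$ as the map induced from $\pi \circ \imath_d$ after passing to quotients. Lemma~\ref{bp} implies that $(QH_d \otimes_\k Z)\otimes_Z(\End_{\UU(\g)}(1_\kappa)\otimes_\k Z)$ modulo $U\otimes_Z(\End_{\UU(\g)}(1_\kappa)\otimes_\k Z)\,+\,(QH_d\otimes_\k Z)\otimes_Z V$ is canonically isomorphic to $H^\mu_d(Z)$, so that $\pi_1\bar\otimes\pi_2$ realizes $H^\mu_d(Z)$ as a universal quotient. Since $\imath_d$ is a $Z$-algebra isomorphism by the nondegeneracy condition for $\UU(\g)$, existence, uniqueness and bijectivity of $\jmath_d$ all reduce to the single identity
$$
\imath_d\bigl(\ker(\pi_1\bar\otimes\pi_2)\bigr) = \ker \pi.
$$

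One inclusion is immediate. Every generator of $U\otimes_Z(\End_{\UU(\g)}(1_\kappa)\otimes_\k Z)$, namely $\mu_{i_1}(y_1)1_\bi$ times an arbitrary bubble polynomial, is mapped by $\imath_d$ to a diagram with a dot labelled by $\mu_{i_1}(y)$ on the rightmost strand, precomposed by (an image of) a bubble; this is a typical element of $\mathcal{I}_Z(\mu|\nu)$ by the first family of generators in \eqref{oneofthe}. Similarly, generators of $V$ go to bubble relations from the second family of \eqref{oneofthe}, and are therefore killed by $\pi$. Together with the surjectivity of $\pi$ and $\pi_1\bar\otimes\pi_2$, this already yields existence, uniqueness, and surjectivity of $\jmath_d$.

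The main obstacle is the reverse inclusion, i.e.\ injectivity of $\jmath_d$. Unwinding the definition of the sub-2-representation $\mathcal{I}_Z(\mu|\nu)$, any element of $\mathcal{I}_Z(\mu|\nu)\cap\bigl(\Hom_{\UU(\g)}(E_\bi 1_\kappa,E_\bj 1_\kappa)\otimes_\k Z\bigr)$ is a $Z$-linear combination of diagrams in which one of the generating 2-morphisms listed in \eqref{oneofthe} has been grafted onto the rightmost boundary at some interior height. The plan is to \emph{push} every such insertion down to the bottom of the diagram using the defining relations of $\UU(\g)$. For a bubble generator this is the easier case: the bubble slide identities \eqref{bubslides} move the bubble past any vertical strand with error bubbles that lie again in $V$, and past cups and caps using the pivotal structure, so that after finitely many moves the bubble sits at the bottom and the diagram is seen to lie in $(QH_d\otimes_\k Z)\otimes_Z V$. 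For the dot generator $\mu_i(y)1_{E_i 1_\kappa}$ one combines the dot-crossing relations \eqref{QHA1}--\eqref{QHA3} to slide the dot down upward strands, the adjunctions \eqref{rightadj3}--\eqref{leftadj3} to straighten cups and caps, and the inversion relations \eqref{day1}--\eqref{day3} to trade sideways crossings for upward ones.

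The hardest part will be the systematic bookkeeping of the error terms produced during this sliding process. These error terms fall into three classes: diagrams with strictly fewer crossings than the original, which are handled by induction on the number of crossings; diagrams in which the dotted factor $\mu_{i_1}(y)$ has reached the bottom-right, which by definition lie in $U$; and diagrams in which the dotted strand has been capped off before reaching the bottom, producing a dotted bubble whose label is divisible by $\mu_i(y)$, which by Lemma~\ref{bp} lies in $V$ (so that the remaining part of the diagram lies in $QH_d\otimes_\k Z$). Combining these contributions verifies the reverse inclusion, and the overall structure of the argument parallels the proof of \cite[Lemma 8.3]{BCK} in the affine Hecke setting.
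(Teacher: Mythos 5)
Your proposal is correct and follows essentially the same route as the paper: reduce to $\imath_d(\ker(\pi_1\bar\otimes\pi_2))=\ker\pi$, dispose of the easy inclusion on generators, and establish the reverse inclusion by straightening diagrams containing a generator of $\mathcal I_Z(\mu|\nu)$ on the right boundary, inducting on the number of crossings and sorting the terminal terms into $U\otimes B$ (dot reaching a propagating strand) and $A\otimes V$ (dot landing on a bubble). The only cosmetic difference is that for the bubble generators no sliding is needed at all, since they are endomorphisms of $1_\kappa$ sitting in the rightmost region and hence already central, so that case is immediate.
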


\begin{proof}
Let $A := QH_d \otimes_\k Z$ and $B
:= \End_{\UU(\g)}(1_\kappa)\otimes_\k Z$.
As $\imath_d$ is an isomorphism, it suffices to show that 
$\imath_d(\ker \pi_1 \bar\otimes \pi_2) = \ker \pi$.
Note that $\ker \pi_1 \bar\otimes \pi_2 = A \otimes V + U \otimes B$. 
It is obvious that $\pi \circ \imath_d$ sends generators of $A \otimes
V$
and $U \otimes B$ to zero, hence, $\imath_d(\ker \pi_1 \bar\otimes \pi_2)
\subseteq \ker \pi$.
It remains to show that 
$\imath_d^{-1}(\ker \pi) \subseteq \ker \pi_1 \bar\otimes \pi_2$.
By definition
$\ker \pi$ consists of $Z$-linear combinations of
2-morphisms
$\theta:E_\bi 1_\kappa \Rightarrow E_\bj 1_\kappa$
of the form
$$
\theta = 
\mathord{
\begin{tikzpicture}[baseline = -.5mm]
\draw[thick,yshift=-4pt,xshift=-4pt] (.5,0) rectangle ++(8pt,8pt);
\node at (.5,0) {$\scriptstyle \rho$};
\draw[thick,yshift=-4pt,xshift=-4pt] (-.33,0) rectangle ++(16pt,8pt);
\node at (-.2,0.02) {$\scriptstyle \lambda$};
\draw[thick,yshift=-4pt,xshift=-4pt] (-.34,-.5) rectangle ++(32pt,8pt);
\node at (0.1,-.5) {$\scriptstyle \tau$};
\draw[thick,yshift=-4pt,xshift=-4pt] (-.34,.5) rectangle ++(32pt,8pt);
\node at (0.1,.5) {$\scriptstyle \sigma$};
\draw[->,thick] (-.34,0.65) to (-.34,.9);
\draw[->,thick] (.49,0.65) to (.49,.9);
\draw[<-,thick] (-.34,-0.65) to (-.34,-.9);
\draw[<-,thick] (.49,-0.65) to (.49,-.9);
\draw[-,thick] (.49,-0.35) to (.49,-.15);
\draw[-,thick] (.49,0.35) to (.49,.15);
\draw[-,thick] (-.34,-0.35) to (-.34,-.15);
\draw[-,thick] (-.34,0.35) to (-.34,.15);
\draw[-,thick] (-.04,-0.35) to (-.04,-.15);
\draw[-,thick] (-.04,0.35) to (-.04,.15);
\node at (0.52,-1.05) {$\scriptstyle i_1$};
\node at (0.1,-1.05) {$\scriptstyle \cdots$};
\node at (-0.33,-1.05) {$\scriptstyle i_d$};
\node at (0.52,1.05) {$\scriptstyle j_1$};
\node at (0.1,1.05) {$\scriptstyle \cdots$};
\node at (-0.33,1.05) {$\scriptstyle j_d$};
\node at (1,0) {$\scriptstyle \color{gray}\kappa$};
\end{tikzpicture}
}
$$
where $\rho$ is one of the generating 2-morphisms (\ref{oneofthe})
for $\mathcal I_Z(\mu|\nu)$ and $\sigma,\tau,\lambda$ are any
other 2-morphisms in $\UU(\g)$ 
so that the compositions make sense.
We must show for such $\theta$ 
that $\imath_d^{-1}(\theta) \in A \otimes V + U\otimes B$.
If $\rho=
\begin{tikzpicture}[baseline=-.9mm]
\draw[-,thick] (0,-0.18) to[out=180,in=-102] (-.178,0.02);
\draw[-,thick] (-0.18,0) to[out=90,in=180] (0,0.18);
\draw[-,thick] (0.18,0) to[out=-90,in=0] (0,-0.18);
\draw[<-,thick] (0,0.18) to[out=0,in=90] (0.18,0);
   \node at (0,-.32) {$\scriptstyle{i}$};
   \node at (0.18,0) {$\bull$};
   \node at (0.4,0) {$\scriptstyle{y^r}$};
   \node at (-0.4,0) {$\color{gray}\scriptstyle{\kappa}$};
\end{tikzpicture}
-
\h_i^{(r)} 1_{1_\kappa}$ for $-\langle h_i,\kappa\rangle \leq r <
\langle h_i, \mu\rangle$, the inverse image
$\imath_d^{-1}(\theta)$ obviously lies in $A \otimes V$.
Assume instead that $\rho = \mathord{
\begin{tikzpicture}[baseline = -1.2mm]
 	\draw[thick,->] (0.08,-.2) to (0.08,.2);
     \node at (0.08,-0.01) {$\bull$};
     \node at (0.08,-0.35) {$\scriptstyle i$};
     \node at (-0.4,-0.01) {$\scriptstyle \mu_i(y)$};
     \node at (0.3,0) {$\color{gray}\scriptstyle{\kappa}$};
\end{tikzpicture}
}$.
To compute $\imath_d^{-1}(\theta)$, we first ``straighten'' the
diagram $\theta$. Thus, proceeding by induction on the number of
crossings, we use the relations in $\UU(\g)$ to 
slide dotted bubbles to the right-hand edge and to eliminate all other cups
or caps from the diagram, always keeping the generator $\rho$ fixed
on
the right boundary.
This process reduces $\theta$ to a 
$Z$-linear combination of morphisms of the following two types:
\begin{itemize}
\item[(I)]
$
\mathord{
\begin{tikzpicture}[baseline = -.5mm]
\draw[thick,yshift=-4pt,xshift=-4pt] (.5,0) rectangle ++(8pt,8pt);
\node at (.5,0) {$\scriptstyle \rho$};
\draw[thick,yshift=-4pt,xshift=-4pt] (-.34,-.5) rectangle ++(32pt,8pt);
\node at (0.1,-.5) {$\scriptstyle \tau'$};
\draw[thick,yshift=-4pt,xshift=-4pt] (-.34,.5) rectangle ++(32pt,8pt);
\node at (0.1,.5) {$\scriptstyle \sigma'$};
\draw[->,thick] (-.34,0.65) to (-.34,.9);
\draw[->,thick] (.49,0.65) to (.49,.9);
\draw[<-,thick] (-.34,-0.65) to (-.34,-.9);
\draw[<-,thick] (.49,-0.65) to (.49,-.9);
\draw[->,thick] (.49,-0.35) to (.49,-.15);
\draw[<-,thick] (.49,0.35) to (.49,.15);
\draw[->,thick] (-.34,-0.35) to (-.34,.35);
\draw[<-,thick] (.14,0.35) to (.14,-.35);
\node at (-0.1,0) {$\scriptstyle \cdots$};
\node at (0.52,-1.05) {$\scriptstyle i_1$};
\node at (0.1,-1.05) {$\scriptstyle \cdots$};
\node at (-0.33,-1.05) {$\scriptstyle i_d$};
\node at (0.52,1.05) {$\scriptstyle j_1$};
\node at (0.1,1.05) {$\scriptstyle \cdots$};
\node at (-0.33,1.05) {$\scriptstyle j_d$};
\node at (1,-.5) {$\scriptstyle \color{gray}\kappa$};
\draw[thick,yshift=-4pt,xshift=-4pt] (1,-.05) rectangle ++(8pt,10pt);
\node at (1,0) {$\delta$};
\end{tikzpicture}
}$
for $\sigma',\tau'\in \imath_d(A \otimes 1)$
and $\delta \in \imath_d(1 \otimes B)$;
\item[(II)]  $\mathord{
\begin{tikzpicture}[baseline = -.5mm]
\draw[thick,yshift=-4pt,xshift=-4pt] (-.34,0) rectangle ++(32pt,8pt);
\node at (0.1,0) {$\scriptstyle \lambda'$};
\draw[->,thick] (-.34,0.15) to (-.34,.4);
\draw[->,thick] (.49,0.15) to (.49,.4);
\draw[<-,thick] (-.34,-0.15) to (-.34,-.4);
\draw[<-,thick] (.49,-0.15) to (.49,-.4);
\node at (0.52,-.55) {$\scriptstyle i_1$};
\node at (0.1,-.55) {$\scriptstyle \cdots$};
\node at (-0.33,-.55) {$\scriptstyle i_d$};
\node at (0.52,.55) {$\scriptstyle j_1$};
\node at (0.1,.55) {$\scriptstyle \cdots$};
\node at (-0.33,.55) {$\scriptstyle j_d$};
\node at (1,-.4) {$\scriptstyle \color{gray}\kappa$};
\draw[thick,yshift=-4pt,xshift=-4pt] (1,-.05) rectangle ++(8pt,10pt);
\node at (1,0) {$\delta$};
\end{tikzpicture}
}\:
\mathord{\begin{tikzpicture}[baseline = -0.2mm]
  \draw[->,thick] (0,0.3) to[out=180,in=90] (-.3,0);
  \draw[-,thick] (0.3,0) to[out=90,in=0] (0,.3);
 \draw[-,thick] (-.3,0) to[out=-90,in=180] (0,-0.3);
  \draw[-,thick] (0,-0.3) to[out=0,in=-90] (0.3,0);
\filldraw[white,thick,yshift=-4pt,xshift=-4pt] (0.28,-0.05) rectangle ++(7.8pt,7.6pt);
\draw[thick,yshift=-4pt,xshift=-4pt] (0.28,-0.06) rectangle ++(8pt,8pt);
\node at (0.28,-0.05) {$\scriptstyle\rho$};
   \node at (0.35,.4) {$\scriptstyle{y^s}$};
   \node at (0.05,-.42) {$\scriptstyle{i}$};
      \node at (0.16,.25) {$\bull$};
\end{tikzpicture}
}$
for $\lambda'\in \imath_d(A \otimes 1), \delta \in
\imath_d(1 \otimes B)$ and $s \geq 0$.
\end{itemize}
These morphisms 
arise when 
$\rho$ ends up on a propagating strand 
(type I)
or on a dotted bubble (type II)
after straightening.
It remains to observe that the image under $\imath_d^{-1}$
of a type I morphism 
lies in $U \otimes B$, and
the image of 
a
type II morphism lies in $A \otimes V$.
\end{proof}

\begin{Corollary}\label{cqhd}
For $d \geq 0$, we have that
$
\dim \left(\bigoplus_{\bi,\bj \in I^d}
\Hom_{\H_Z(\mu|\nu)}(E_\bi, E_{\bj})\right) = \ell^d d! \dim Z
$
where $\ell := \sum_{i \in I} \langle h_i, \mu\rangle$.
\end{Corollary}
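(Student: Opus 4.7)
The plan is to derive the dimension formula from Lemma~\ref{cqh}, which supplies a $Z$-algebra isomorphism $\jmath_d: H_d^\mu(Z) \xrightarrow{\sim} \bigoplus_{\bi,\bj \in I^d} \Hom_{\H_Z(\mu|\nu)}(E_\bi, E_{\bj})$. Applying $\dim_\k(-)$ to both sides immediately reduces the problem to computing $\dim_\k H_d^\mu(Z)$, where $H_d^\mu(Z) = (QH_d \otimes_\k Z)/U$ with $U$ generated by $\mu_{i_1}(y_1 1_\bi)$ for $\bi \in I^d$.

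Next, I would invoke the classical basis theorem for cyclotomic quiver Hecke algebras. Since we are in Cartan type A (i.e., $\g = \sl_I'$), the Brundan--Kleshchev basis theorem \cite{BK} (or more generally the Kang--Kashiwara argument, which is valid for any symmetrizable Kac--Moody type) produces an explicit basis of $H_d^\mu(R)$ of size $\ell^d d!$ over any commutative ring $R$, for any choice of monic defining polynomials $\mu_i(u) \in R[u]$ of degree $\langle h_i,\mu\rangle$. Specializing to $R = Z$ gives that $H_d^\mu(Z)$ is a free $Z$-module of rank $\ell^d d!$, and hence $\dim_\k H_d^\mu(Z) = \ell^d d! \cdot \dim_\k Z$.

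If one wishes to avoid citing the full basis theorem, a two-step argument suffices: first, the standard PBW-type spanning set $\{y^\alpha \tau_w 1_\bi\}$ for $QH_d$ shows that $H_d^\mu(Z)$ is spanned over $Z$ by a set of size at most $\ell^d d!$ (using the cyclotomic relation to truncate the powers of $y_1$, then the nil-Hecke straightening to bound the powers of later $y_r$'s). Second, reducing mod $J = J(Z)$ gives a surjection $H_d^\mu(Z)/J\cdot H_d^\mu(Z) \twoheadrightarrow H_d^\mu(\k)$, and the classical dimension formula $\dim H_d^\mu(\k) = \ell^d d!$ combined with Nakayama's lemma forces equality of ranks. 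The count $\dim_\k H_d^\mu(Z) = \ell^d d! \cdot \dim_\k Z$ follows.

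The only potential obstacle is ensuring that the cited basis theorem applies uniformly for an arbitrary monic polynomial $\mu_i(u) \in u^{\langle h_i,\mu\rangle} + J[u]$ rather than just for $\mu_i(u) = u^{\langle h_i,\mu\rangle}$; but this is precisely the generality in which the Brundan--Kleshchev construction operates, since the cyclotomic straightening relation only uses that $\mu_{i_1}$ is monic of the prescribed degree.
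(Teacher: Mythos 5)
Your proposal is correct and takes essentially the same route as the paper: both reduce via Lemma~\ref{cqh} to computing $\dim_\k H_d^\mu(Z)$ and then quote the known dimension formula $\ell^d d!\dim Z$ for the cyclotomic quiver Hecke algebra (the paper attributes this to a Shapovalov form calculation based on the categorification theorem of \cite{KK} rather than to a basis theorem, but these are interchangeable). The only caveat is in your alternative sketch: Nakayama's lemma yields just the upper bound $\dim_\k H_d^\mu(Z)\le\ell^d d!\dim_\k Z$, and the reverse inequality still requires freeness of $H_d^\mu(Z)$ over $Z$ (i.e.\ the basis theorem you were trying to avoid), though as the paper remarks immediately after the corollary, only the upper bound is actually needed in the sequel.
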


\begin{proof}
It is well known that $\dim H^\mu_d(Z) = \ell^d d! \dim Z$.
For example, this follows by a Shapovalov form calculation 
given the categorification theorem of \cite{KK}.
\end{proof}

\begin{Corollary}\label{maybe}
$\H_Z(\mu|\nu)$ is a finite-dimensional category, i.e., all of its
morphism spaces are finite-dimensional vector spaces over $\k$.
\end{Corollary}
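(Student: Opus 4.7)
The plan is to reduce a general morphism space in $\H_Z(\mu|\nu)$ to one between pure $E$-words and then invoke Corollary~\ref{cqhd}. By definition (\ref{single}), an arbitrary morphism space has the form $\Hom_{\H_Z(\mu|\nu)_\lambda}(G 1_\kappa, G' 1_\kappa)$ for words $G, G'$ in the letters $\{E_i, F_i\:|\:i \in I\}$ with $\wt(G) = \wt(G') = \lambda - \kappa$. I want to eliminate all of the $F_i$'s.

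First I would observe that the rightwards and leftwards cups and caps of $\UU(\g)$ give canonical adjunctions $(E_i, F_i)$ and $(F_i, E_i)$ in $\UU(\g)$ for each $i \in I$, and that these adjunctions descend to the quotient $\H_Z(\mu|\nu)$ since the sub-2-representation $(\mathcal I_Z(\mu|\nu)_\lambda)_{\lambda \in X}$ is by construction closed under horizontal composition on the left. Consequently, for any words $H, K$ one has natural isomorphisms
\begin{align*}
\Hom_{\H_Z(\mu|\nu)}(H F_i, K) &\cong \Hom_{\H_Z(\mu|\nu)}(H, K E_i),\\
\Hom_{\H_Z(\mu|\nu)}(H, K F_i) &\cong \Hom_{\H_Z(\mu|\nu)}(H E_i, K),
\end{align*}
obtained by horizontal composition with the appropriate cup or cap.

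Iterating these adjunctions, I can migrate every occurrence of $F_i$ in $G$ across to $G'$ (converting it to an $E_i$), and every occurrence of $F_i$ in $G'$ across to $G$ (again converting it to $E_i$). Letting $a, b$ denote the numbers of $E$-letters and $F$-letters in $G$ and $a', b'$ the corresponding counts for $G'$, the resulting words $\bi$ and $\bj$ are pure $E$-words of lengths $a+b'$ and $a'+b$ respectively; these two lengths coincide because the hypothesis $\wt(G) = \wt(G')$ forces $a - b = a' - b'$. Writing $d$ for this common length, I obtain an isomorphism
$$\Hom_{\H_Z(\mu|\nu)}(G 1_\kappa, G' 1_\kappa) \;\cong\; \Hom_{\H_Z(\mu|\nu)}(E_\bi 1_\kappa, E_\bj 1_\kappa)$$
for some $\bi, \bj \in I^d$.

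By Corollary~\ref{cqhd}, the full space $\bigoplus_{\bi, \bj \in I^d} \Hom_{\H_Z(\mu|\nu)}(E_\bi, E_\bj)$ has finite dimension $\ell^d d! \dim Z$, so in particular each individual summand is finite-dimensional. This gives the finite-dimensionality of the original morphism space. No serious obstacle arises: the only point requiring care is that the adjunction units and counits survive in the quotient, which is immediate from the fact that $\mathcal I_Z(\mu|\nu)$ is a 2-sided ideal of 2-morphisms in a 2-representation of $\UU(\g)$.
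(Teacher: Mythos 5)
There is a genuine gap. The adjunctions in $\UU(\g)$ only let you transfer the \emph{outermost} letter of a word across a Hom-space: from $\Hom(F_i H 1_\kappa, K1_\kappa)\cong \Hom(H1_\kappa, E_iK1_\kappa)$ and its companions you can shuffle the letters at the outer ends of $G$ and $G'$ back and forth, but the concatenated word $G^*G'$ (reverse $G$, swap $E\leftrightarrow F$, append $G'$) is an invariant of these moves. Consequently a mixed word cannot in general be brought to the pure-$E$ form $\Hom(E_\bi,E_\bj)$ by adjunction alone: already $\Hom_{\H_Z(\mu|\nu)}(E_iF_i1_\kappa, 1_\kappa)$ only cycles among $\Hom(F_i1_\kappa,F_i1_\kappa)$ and $\Hom(1_\kappa, E_iF_i1_\kappa)$ under such moves and never reaches a space covered by Corollary~\ref{cqhd}. (As literally written your two displayed isomorphisms are worse still: with $F_i$ read as the innermost letter they change the source weight from $\kappa$ to $\kappa-\alpha_i$, i.e.\ they leave the category $\H_Z(\mu|\nu)$ altogether, whose objects are by definition 1-morphisms out of $\kappa$.) Your letter-count bookkeeping ($a+b'=a'+b$) implicitly assumes the letters can be permuted arbitrarily within and between the two words, which is exactly what adjunction does not provide.

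To repair this you must also use the inversion isomorphisms (\ref{day1})--(\ref{day3}) to commute an $E$-letter past an $F$-letter \emph{inside} a word. Since (\ref{day2})--(\ref{day3}) produce extra summands $1_\lambda^{\oplus |\langle h_i,\lambda\rangle|}$, i.e.\ strictly shorter words, the argument then becomes an induction on word length. This is precisely what the paper does: reduce by adjunction to $\Hom_{\H_Z(\mu|\nu)}(\varnothing, G)$, observe that Corollary~\ref{cqhd} (again via adjunction) handles the sorted words $F_{j_1}\cdots F_{j_d}E_{i_d}\cdots E_{i_1}$, and sort a general word by induction on its length using (\ref{day1})--(\ref{day3}).
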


\begin{proof}
Using the biadjunction of $E_i$ and $F_i$, it suffices to show that
$\Hom_{\H_Z(\mu|\nu)}(\varnothing, G)$ is finite-dimensional for any
word $G$. This space is clearly zero unless $G$ has an equal number of
$E$-type letters as $F$-type letters.
Also Corollary~\ref{cqhd} establishes the result if $G=F_{j_1}\cdots
F_{j_d} E_{i_d}\cdots E_{i_1}$, i.e., all the $F$-type letters are to
the left of the $E$-type letters.
The general case then follows by induction on the length, 
using the isomorphisms (\ref{day1})--(\ref{day3}) to establish the induction step.
\end{proof}

\begin{Remark}
In the remainder of the article, we really only need to appeal to
fact that $\ell^d d! \dim Z$ is an {\em upper bound} for the dimension 
in Corollary~\ref{cqhd}.
This follows from the existence of a surjective homomorphism
$\jmath_d$ as in (\ref{buffers}), which follows as above from the
surjectivity of the homomorphism $\imath_d$. The latter assertion is easily proved 
without needing to appeal to the nondegeneracy
condition.
\end{Remark}

It is often useful to work in the larger category
$\mathcal{H}om_\k(\H_Z(\mu|\nu)^{\op}, \Vec)$
of $\k$-linear functors and natural transformations. 
This can be thought of in elementary algebraic terms by
replacing $\H_Z(\mu|\nu)$ with
the locally unital algebra
\begin{align}\label{stupify}
H_Z(\mu|\nu) &:= \bigoplus_{G, G' \in \H_Z(\mu|\nu)}
\Hom_{\H_Z(\mu|\nu)}(G,G').
\end{align}
Multiplication in $H_Z(\mu|\nu)$ is induced by composition in the
category
$\H_Z(\mu|\nu)$, and its local unit $\left\{1_G\:\big|\:G \in 
\langle E_i, F_i\rangle_{i \in I}\right\}$ arises from the identity morphisms of the objects of $\H_Z(\mu|\nu)$.
Then $\mathcal{H}om_\k(H_\Z(\mu|\nu)^{\op}, \Vec)$ is isomorphic to
the category $\lfdrmod H_Z(\mu|\nu)$ of locally finite-dimensional
right modules over this algebra. 
In view of Corollary~\ref{maybe}, $H_Z(\mu|\nu)$ is locally
finite-dimensional, hence, $\lfdrmod H_Z(\mu|\nu)$
is a Schurian category.
Similarly to (\ref{stupify}), we define $H_Z(\mu|\nu)_\lambda$ 
from the category $\H_Z(\mu|\nu)_\lambda$
for each $\lambda \in X$; then (\ref{single})
translates into the algebra
decomposition
\begin{equation}\label{blockde}
H_Z(\mu|\nu) = \bigoplus_{\lambda \in X} H_Z(\mu|\nu)_\lambda.
\end{equation}
The categorical action of $\UU(\g)$ on
$\left(\H_Z(\mu|\nu)\right)_{\lambda \in X}$
extends to make $\left(\lfdrmod
  H_Z(\mu|\nu)_\lambda\right)_{\lambda \in X}$ 
into a Schurian 2-representation.
One way to see this is explained in \cite[Construction 4.26]{BD},
where the extensions of the categorification functors $E_i$ and $F_i$ to
arbitrary $H_Z(\mu|\nu)$-modules are realized
by tensoring with certain bimodules, and the generating 2-morphisms of
$\UU(\g)$ act via explicit bimodule homomorphisms.

Let $P:= 1_\varnothing H_Z(\mu|\nu)$ be the finitely
generated projective $H_Z(\mu|\nu)$-module
associated to the empty word. By Lemma~\ref{cqh}
with $d=0$, we have that $\End_{H_Z(\mu|\nu)}(P) \cong
Z$. As $Z$ is local, it follows that $P$ is a
projective indecomposable module.
Then for any word $G \in \langle E_i, F_i\rangle_{i \in I}$
the module $G P$ obtained by applying the functor $G$
is identified with the right ideal $1_G H_Z(\mu|\nu)$.
These modules for all $G$ give a projective generating 
family for the Schurian category $\lfdrmod H_Z(\mu|\nu)$ such that
\begin{equation}
H_Z(\mu|\nu) = 
\bigoplus_{G, G' \in \langle E_i, F_i\rangle_{i \in I}}
1_{G'} H_Z(\mu|\nu) 1_{G}
\cong\bigoplus_{G, G' \in \langle E_i, F_i\rangle_{i \in I}}
\Hom_{H_Z(\mu|\nu)}(G P, G' P).
\end{equation}

\begin{Remark}\label{usual}
In the special case that $\nu = 0$, the GCQ
$H_Z(\mu|\nu)$ is Morita equivalent to the usual cyclotomic
quotient, that is, the locally unital algebra $\bigoplus_{d \geq 0} H^\mu_d(Z)$; 
see \cite[Theorem 4.25]{R2}.
Recall by \cite{KK} that finitely generated projective modules over this algebra
gives a categorification of the Weyl $\Z$-form of the integrable lowest
weight module $V(-\mu)$ of $\UU(\g)$. 
In general, finitely generated projective $H_Z(\mu|\nu)$-modules can
be used to categorify the tensor product $V(\mu|\nu) :=
V(-\mu)\otimes V(\nu)$
of the integrable lowest weight module $V(-\mu)$ and the
integrable highest weight module $V(\nu)$; see \cite{Wcan}.
This result is not needed below.
\end{Remark}

\subsection{Heisenberg GCQs}\label{massive}
On the Heisenberg side, GCQs
have been defined in
the degenerate case in the introduction of \cite{Bheis}, and in the
quantum case in \cite[$\S$9]{BSWqheis}.
As usual, we will discuss both cases simultaneously according to the
value of $z \in \k$.
The required data is as follows:
\begin{itemize}
\item
a finite-dimensional, commutative, local $\k$-algebra $Z$ with maximal
ideal $J$;
\item
monic polynomials $m(u), n(u) \in Z[u]$,
assuming in addition in the quantum case that $m(0), n(0) \in Z^\times$.
\end{itemize}
 Let $k :=\deg n(u)-\deg m(u)$ and
\begin{align}\label{hat}
\h(u) &:= n(u) / m(u) \in u^k + u^{k-1}Z[\![u^{-1}]\!].
\end{align}
To this data, we are going to associate a left tensor ideal
$\mathcal I_Z(m|n)$ of the strict 
$Z$-linear monoidal category $\Heis_k \otimes_\k Z$.
The precise definition of $\mathcal I_Z(m|n)$ is slightly different in the degenerate
and quantum cases; it will be explained in the next two paragraphs.
Then the {\em generalized cyclotomic quotient} is the quotient category
\begin{equation}\label{hab}
\H_Z(m|n) := (\Heis_k \otimes_\k Z) / \mathcal I_Z(m|n),
\end{equation}
which is itself naturally a $Z$-linear $\Heis_k$-module category.
This quotient category has objects that are words in the monoid $\langle E, F\rangle$, 
and for two such words $G, G'$ the morphism space
$\Hom_{\H_Z(m|n)}(G, G')$ is the quotient of
$\Hom_{\Heis_k}(G,G')\otimes_\k Z$ by the $Z$-submodule defined by the
ideal $\mathcal I_Z(m|n)$.
In both cases, we will have that
\begin{align}\label{bothcases}
\anticlock{\scriptstyle(u)} &= \h(u)1_\unit,
&\clock{\scriptstyle(u)} &= \h(u)^{-1}1_\unit
\end{align}
in $\End_{\H_Z(m|n)}(\unit)$.

Here is the definition of the left tensor ideal $\mathcal I_Z(m|n)$ in
the degenerate case. Define $\h^{(r)}, \widetilde\h^{(r)} \in Z$ from the coefficients of the
formal Laurent series $\h(u), \h(u)^{-1}$ so that
\begin{align}
\h(u) &= \sum_{r \in \Z} \h^{(r)} u^{-r-1},
&
\h(u)^{-1} &= -\sum_{r \in \Z} \widetilde\h^{(r)} u^{-r-1},
\end{align}
this notation being consistent with (\ref{igq})--(\ref{igp}).
Then $\mathcal I_Z(m|n)$ is generated by
\begin{equation}
\Big\{
\mathord{
\begin{tikzpicture}[baseline = -1.2mm]
 	\draw[->] (0.08,-.25) to (0.08,.2);
     \node at (0.08,-0.01) {$\dt$};
     \node at (-0.35,-0.01) {$\scriptstyle m(x)$};
\end{tikzpicture}
},\:
\begin{tikzpicture}[baseline=-.9mm]
\draw[-] (0,-0.18) to[out=180,in=-102] (-.178,0.02);
\draw[-] (-0.18,0) to[out=90,in=180] (0,0.18);
\draw[-] (0.18,0) to[out=-90,in=0] (0,-0.18);
\draw[<-] (0,0.18) to[out=0,in=90] (0.18,0);
   \node at (0.18,0) {$\dt$};
   \node at (0.4,0) {$\scriptstyle{x^r}$};
\end{tikzpicture}
-
\h^{(r)} 1_{\unit}\:\big|\:
-k \leq r < \deg m(u)
\Big\}.\label{anotheroneofthe}
\end{equation}
Equivalently, 
by \cite[Lemma 1.8]{Bheis},
it is generated by
\begin{equation}
\Big\{
\mathord{
\begin{tikzpicture}[baseline = -.9mm]
 	\draw[<-] (0.08,-.2) to (0.08,.25);
     \node at (0.08,0.02) {$\dt$};
     \node at (-0.35,0.02) {$\scriptstyle n(x)$};
\end{tikzpicture}
},\:
\begin{tikzpicture}[baseline=-1mm]
\draw[-] (0,-0.18) to[out=180,in=-102] (-.178,0.02);
\draw[->] (-0.18,0) to[out=90,in=180] (0,0.18);
\draw[-] (0.18,0) to[out=-90,in=0] (0,-0.18);
\draw[-] (0,0.18) to[out=0,in=90] (0.18,0);
   \node at (-0.18,0) {$\dt$};
   \node at (-0.4,0) {$\scriptstyle{x^r}$};
\end{tikzpicture}
-
\widetilde\h^{(r)} 1_{\unit}\:\bigg|\:
k \leq r < \deg n(u)
\Big\}.
\end{equation}
The same lemma implies that (\ref{bothcases}) holds.

\begin{Lemma}\label{bp2}
In the degenerate case, the quotient of the $Z$-algebra $\End_{\Heis_k}(\unit) \otimes_\k Z$ 
by the ideal $V$ generated by 
$\Big\{
\begin{tikzpicture}[baseline=-.9mm]
\draw[-] (0,-0.18) to[out=180,in=-102] (-.178,0.02);
\draw[-] (-0.18,0) to[out=90,in=180] (0,0.18);
\draw[-] (0.18,0) to[out=-90,in=0] (0,-0.18);
\draw[<-] (0,0.18) to[out=0,in=90] (0.18,0);
   \node at (0.18,0) {$\dt$};
   \node at (0.4,0) {$\scriptstyle{x^r}$};
\end{tikzpicture}
\!-
\h^{(r)} 1_\unit,
\:
\begin{tikzpicture}[baseline=-.9mm]
\draw[-] (0,-0.18) to[out=180,in=-102] (-.178,0.02);
\draw[-] (-0.18,0) to[out=90,in=180] (0,0.18);
\draw[-] (0.18,0) to[out=-90,in=0] (0,-0.18);
\draw[<-] (0,0.18) to[out=0,in=90] (0.18,0);
   \node at (0.18,0) {$\dt$};
   \node at (0.7,0) {$\scriptstyle{m(x)x^s}$};
\end{tikzpicture}
\:\big|\:
-k \leq r < \deg m(u), s \geq 0
\Big\}
$
is isomorphic to $Z$.
\end{Lemma}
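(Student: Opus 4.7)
The plan is to mirror the proof of Lemma~\ref{bp} closely. First I would appeal to the basis theorem for $\Heis_k$ established in \cite[Theorem 1.3]{Bheis} (the degenerate Heisenberg analog of the Khovanov-Lauda nondegeneracy condition), which identifies $\End_{\Heis_k}(\unit)$ with the polynomial $\k$-algebra freely generated by the coefficients $\beta_r := \bigl[\anticlock(u)\bigr]_{u^{-r-1}}$ for $r \geq -k$ (equivalently, the anticlockwise bubbles labelled by $x^r$). For $r < -k$ these symbols are not generators but are already rigidly determined by the determinantal definitions given in $\S$\ref{dh}: $\beta_{-k-1} = 1_\unit$ and $\beta_r = 0$ for $r < -k-1$.

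Write $d := \deg m(u)$ and $m(u) = u^d + c_{d-1}u^{d-1} + \cdots + c_0$. The second family of generators of $V$ --- the bubbles dotted by $m(x)x^s$ for $s \geq 0$ --- expand into the linear relations $\beta_{d+s} + c_{d-1} \beta_{d+s-1} + \cdots + c_0 \beta_s = 0$, and since $m(u)$ is monic these relations recursively express each $\beta_{d+s}$ as a polynomial in the lower $\beta$'s (with the convention about $\beta_r$ for $r < -k$ above). Factoring by this family alone therefore reduces $\End_{\Heis_k}(\unit)$ to the free polynomial $\k$-algebra on the finite set $\{\beta_r : -k \leq r < d\}$. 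Tensoring with $Z$ over $\k$ and then factoring by the remaining generators of $V$, namely $\beta_r - \h^{(r)} 1_\unit$ for $-k \leq r < d$, collapses every polynomial generator to a scalar in $Z$, leaving $Z$ as the final quotient.

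The main point to address --- the one place where the argument is not purely formal --- is checking that the two families of generators of $V$ are mutually consistent, i.e., that under the substitution $\beta_r \mapsto \h^{(r)}$ the linear relations $\beta_{d+s} + c_{d-1} \beta_{d+s-1} + \cdots + c_0 \beta_s = 0$ become valid identities in $Z$. This follows immediately from $\h(u) = n(u)/m(u)$: multiplying by $m(u)$ and extracting the $u^{-s-1}$-coefficient for $s \geq 0$ yields $\sum_{j=0}^{d} c_j \h^{(j+s)} = 0$ (with $c_d := 1$), since $n(u)$ is a polynomial in $u$. When small $s$ forces the index $j+s$ to drop below $-k$, one uses that $\beta_{-k-1} = 1_\unit$ matches $\h^{(-k-1)} = 1$ and $\beta_r = 0$ matches $\h^{(r)} = 0$ for $r < -k-1$, these equalities being forced by $\h(u) \in u^k + u^{k-1} Z[\![u^{-1}]\!]$. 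With this consistency verified, the quotient is manifestly isomorphic to $Z$.
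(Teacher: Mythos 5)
Your proof is correct and follows essentially the same route as the paper's: invoke the basis theorem identifying $\End_{\Heis_k}(\unit)$ as a free polynomial algebra on the bubbles $\beta_r$ for $r\geq -k$ (the paper cites \cite[Theorem 6.4]{BSWk0} for this rather than \cite[Theorem 1.3]{Bheis}), use the monic relations $m(x)x^s$ to eliminate $\beta_r$ for $r\geq \deg m(u)$, then evaluate the surviving generators to the scalars $\h^{(r)}\in Z$. The explicit consistency check via $m(u)\h(u)=n(u)$ is a correct and harmless addition that the paper leaves implicit in the triangular-elimination step.
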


\begin{proof}
The basis theorem proved in \cite[Theorem 6.4]{BSWk0} implies that
$\End_{\Heis_k}(\unit)$
is a polynomial algebra generated freely by
$\begin{tikzpicture}[baseline=-1mm]
\draw[-] (0,-0.18) to[out=180,in=-102] (-.178,0.02);
\draw[-] (-0.18,0) to[out=90,in=180] (0,0.18);
\draw[-] (0.18,0) to[out=-90,in=0] (0,-0.18);
\draw[<-] (0,0.18) to[out=0,in=90] (0.18,0);
   \node at (0.18,0) {$\dt$};
   \node at (0.4,0) {$\scriptstyle{x^r}$};
\end{tikzpicture}$
for $r \geq -k$.
Given this, the lemma follows similarly to Lemma~\ref{bp}.
\end{proof}

In order to define the left tensor ideal $\mathcal I_Z(m|n)$ in
the quantum case, there is a minor additional complication involving some
choices of square roots: we assume henceforth that we are given distinguished
square roots $\sqrt{c}$ of each $c \in \k^\times$ such that $\sqrt{1/c} = 1/\sqrt{c}$.
The need for this is an artifact of the
choice of normalization
of the quantum Heisenberg category; see Remark~\ref{normalization}.
Given these square roots, we get also distinguished square roots
$\sqrt{c}$ of
all $c \in Z^\times$ lifting the chosen square root of the image of $c$
in $\k = Z / J$.
Then we define $\h^{(r)}, \widetilde\h^{(r)} \in Z$ so that
\begin{align}
\h(u) &=z\sqrt{\frac{n(0)}{m(0)}} \sum_{r \in \Z} \h^{(r)} u^{-r},
&
\h(u)^{-1} &= -z\sqrt{\frac{m(0)}{n(0)}} \sum_{r \in \Z} \widetilde\h^{(r)} u^{-r},
\end{align}
this notation being consistent with (\ref{summer1})--(\ref{summer2}) for $t =
\sqrt{m(0)/n(0)}$.
Then $\mathcal I_Z(m|n)$ is generated by
\begin{equation}
\Big\{
\mathord{
\begin{tikzpicture}[baseline = -1.2mm]
 	\draw[->] (0.08,-.25) to (0.08,.2);
     \node at (0.08,-0.01) {$\dt$};
     \node at (-0.35,-0.01) {$\scriptstyle m(x)$};
\end{tikzpicture}
},\:
\begin{tikzpicture}[baseline=-.9mm]
   \node at (0,0) {$\anticlockplus$};
   \node at (0.3,0) {$\scriptstyle{r}$};
\end{tikzpicture}
\!-
\h^{(r)} 1_{\unit}\:\big|\:
-k \leq r < \deg m(u)
\Big\}.\label{yetanotheroneofthe}
\end{equation}
Equivalently, by
\cite[Lemma 9.2]{BSWqheis},
it is generated by
\begin{equation}
\Big\{
\mathord{
\begin{tikzpicture}[baseline = -.9mm]
 	\draw[<-] (0.08,-.2) to (0.08,.25);
     \node at (0.08,0.02) {$\dt$};
     \node at (-0.35,0.02) {$\scriptstyle n(x)$};
\end{tikzpicture}
},\:
\begin{tikzpicture}[baseline=-.9mm]
   \node at (0,0) {$\clockplus$};
   \node at (-0.3,0) {$\scriptstyle{r}$};
\end{tikzpicture}
\!\!-
\widetilde\h^{(r)} 1_{\unit}\:\big|\:
k \leq r < -\deg n(u)
\Big\},
\end{equation}
and also (\ref{bothcases}) holds.
Recalling from (\ref{groundring}) 
that $\Heis_k$ is defined over the algebra $\K = \k[t,t^{-1}]$ in
the quantum case, and that
$t\unit =  z\begin{tikzpicture}[baseline=-.9mm]
   \node at (0,0) {$\anticlockplus$};
   \node at (0.4,0) {$\scriptstyle{-k}$};
\end{tikzpicture}$ by the defining relations, 
the presence of the generator
$\begin{tikzpicture}[baseline=-.9mm]
   \node at (0,0) {$\anticlockplus$};
   \node at (0.4,0) {$\scriptstyle{-k}$};
\end{tikzpicture}
\!-
\h^{(-k)} 1_{\unit}$ in the definition of $\mathcal I_Z(m|n)$
has the effect of forcing the parameter $t$ to act on any morphism in
$\H_Z(m|n)$ by multiplication by the scalar
$\sqrt{m(0)/n(0)} \in Z^\times$. 
This is necessary for some choice of the square root 
due to the last part of Lemma~\ref{preimpy}.

\begin{Lemma}\label{bp3}
In the quantum case, the quotient of the $Z$-algebra $\End_{\Heis_k}(\unit) \otimes_\k Z$ 
by the ideal $V$ generated by 
$\Big\{
\!\! \begin{tikzpicture}[baseline=-.9mm]
   \node at (0,0) {$\anticlockplus$};
   \node at (0.3,0) {$\scriptstyle{r}$};
\end{tikzpicture}
\!\!-
\h^{(r)} 1_\unit,
\:
\begin{tikzpicture}[baseline=-.9mm]
\draw[-] (0,-0.18) to[out=180,in=-102] (-.178,0.02);
\draw[-] (-0.18,0) to[out=90,in=180] (0,0.18);
\draw[-] (0.18,0) to[out=-90,in=0] (0,-0.18);
\draw[<-] (0,0.18) to[out=0,in=90] (0.18,0);
   \node at (0.18,0) {$\dt$};
   \node at (0.7,0) {$\scriptstyle{m(x)x^s}$};
\end{tikzpicture}
\big|\:
-k \leq r < \deg m(u), s \in \Z
\Big\}
$
is isomorphic to $Z$.
\end{Lemma}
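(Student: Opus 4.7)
The proof is a quantum adaptation of Lemma~\ref{bp2}. The starting point is the basis theorem for the quantum Heisenberg category established in \cite{BSWqheis} (the quantum counterpart of \cite[Theorem 6.4]{BSWk0}): as a $\K$-algebra, $\End_{\Heis_k}(\unit)$ is a polynomial ring freely generated by the $(+)$-bubbles $\begin{tikzpicture}[baseline=-.9mm]\node at (0,0) {$\anticlockplus$};\node at (0.3,0) {$\scriptstyle{r}$};\end{tikzpicture}$ for $r > -k$; the boundary case $r=-k$ yields $tz^{-1}1_\unit$ by the definitions in $\S$\ref{qh}, so $t$ itself is determined once $\anticlockplus_{-k}$ is. After tensoring with $Z$ over $\k$, we obtain a Laurent polynomial algebra over $Z$ in $t$ and these free $(+)$-bubble generators. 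The plan is to show, as in the degenerate case, that factoring by the second family of generators of $V$ reduces this to a polynomial algebra over $Z$ in only finitely many $(+)$-bubble generators, and that factoring by the first family then specializes each of those to an element of $Z$.

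To process the second family, we package the $(+)$-bubbles into the generating function $\anticlock(u)$ from (\ref{summer1}). A Laurent-polynomial extension of (\ref{surely2})---legitimate in the quantum case because the dot is invertible, so that $m(x)x^s$ makes sense for every $s\in\Z$---identifies each bubble $\begin{tikzpicture}[baseline=-.9mm]\draw[-] (0,-0.18) to[out=180,in=-102] (-.178,0.02);\draw[-] (-0.18,0) to[out=90,in=180] (0,0.18);\draw[-] (0.18,0) to[out=-90,in=0] (0,-0.18);\draw[<-] (0,0.18) to[out=0,in=90] (0.18,0);\node at (0.18,0) {$\dt$};\node at (0.7,0) {$\scriptstyle{m(x)x^s}$};\end{tikzpicture}$ with a specific coefficient of the formal Laurent series $\anticlock(u)\,m(u)$. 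Running over all $s\in\Z$, these relations collectively force $\anticlock(u)\,m(u)$ to be congruent, modulo the subideal they generate, to a Laurent polynomial supported in the positions occupied by $n(u)$. Since $m(u)$ is monic, this recursively expresses every $(+)$-bubble $\anticlockplus_r$ with $r\geq\deg m(u)$ as a $Z$-polynomial in $\anticlockplus_{-k+1},\dots,\anticlockplus_{\deg m(u)-1}$ (together with $t^{\pm1}$), with the constant-term contributions determined by the coefficients of $n(u)$. Thus modulo this subideal the algebra is a Laurent polynomial algebra over $Z$ in $t$ and the finitely many generators $\anticlockplus_r$ for $-k<r<\deg m(u)$.

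Factoring out the first family of generators of $V$ now specializes each of these remaining generators to an element of $Z$: the relation at $r=-k$ reads $tz^{-1}1_\unit\equiv\h^{(-k)}1_\unit$, which forces $t\mapsto\sqrt{m(0)/n(0)}\in Z^\times$ and thereby eliminates the $t$-variable, while the relations for $-k<r<\deg m(u)$ specialize $\anticlockplus_r$ to $\h^{(r)}\in Z$. The resulting quotient is exactly $Z$. The main technical hurdle is the careful Laurent-polynomial extension of (\ref{surely2})---one has to be attentive to the role of the ``$p(0)$'' term, which is only present when $s=0$---together with the bookkeeping of the normalization constants $t$ and $z$ appearing in (\ref{summer1})--(\ref{summer2}); once those are sorted out, the rest of the argument proceeds exactly as in the degenerate case of Lemma~\ref{bp2}.
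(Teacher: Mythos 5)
There is a genuine gap, and it sits exactly at the point where the quantum case differs from the degenerate one. Your starting point --- that $\End_{\Heis_k}(\unit)$ is a polynomial ring freely generated by the $(+)$-bubbles with labels $r>-k$ --- is not the correct basis theorem. Because the dot is invertible, \cite[Theorem 10.1]{BSWqheis} gives a strictly larger algebra: besides (what amounts to) the $(+)$-bubbles indexed by $r>-k$, there is a second infinite family of free generators, namely the counterclockwise bubbles labelled by $x^s$ for $s<0$. These negatively dotted bubbles are not recorded anywhere in the generating function $\anticlock(u)$ of (\ref{summer1}), which only packages the $(+)$-bubbles. Consequently your proposed ``Laurent-polynomial extension of (\ref{surely2})'' fails: for $s<0$ the bubble labelled $m(x)x^s$ involves these extra generators and is \emph{not} a coefficient of the series $\anticlock(u)\,m(u)$, so the relations with $s<0$ cannot be read off as congruences on that series. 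The formula (\ref{surely2}) is only valid for genuine polynomials $p(u)$.

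The actual role of the generators of $V$ with $s<0$ is to eliminate this second family, and this is where the standing hypothesis $m(0)\in Z^\times$ enters: since $m(x)x^s = m(0)x^s + (\text{higher powers of } x)$, the relation for $s<0$ expresses the bubble labelled $x^s$ in terms of bubbles with larger dot-exponents, and one recurses upward until only $t^{\pm 1}$ and the finitely many $(+)$-bubbles with $-k<r<\deg m(u)$ survive. Your proposal never uses the invertibility of $m(0)$, which is a symptom of this omission. The rest of your outline --- using the relations $m(x)x^s$ for $s\geq 0$ together with (\ref{surely2}) to dispose of the $(+)$-bubbles with $r\geq \deg m(u)$, then using the first family to substitute $t\mapsto\sqrt{m(0)/n(0)}$ and to evaluate the surviving $(+)$-bubbles at $\h^{(r)}$ --- agrees with the paper's proof and is fine. (One small further slip: the relations $m(x)x^s$ by themselves know nothing about $n(u)$; the coefficients of $n(u)$ only enter through the first family of generators of $V$, so the phrase about ``constant-term contributions determined by the coefficients of $n(u)$'' at the second-family stage is misplaced.)
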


\begin{proof}
By the basis theorem from \cite[Theorem 10.1]{BSWqheis},
$\End_{\Heis_k}(\unit)$
is a free polynomial algebra over $\K$
on generators
$
\begin{tikzpicture}[baseline=-1mm]
  \node at (0,0) {$\anticlockplus$};
   \node at (0.3,0) {$\scriptstyle{r}$};
\end{tikzpicture}$
for $-k < r < \deg m(u)$,
$\begin{tikzpicture}[baseline=-1mm]
\draw[-] (0,-0.18) to[out=180,in=-102] (-.178,0.02);
\draw[-] (-0.18,0) to[out=90,in=180] (0,0.18);
\draw[-] (0.18,0) to[out=-90,in=0] (0,-0.18);
\draw[<-] (0,0.18) to[out=0,in=90] (0.18,0);
   \node at (0.18,0) {$\dt$};
   \node at (0.4,0) {$\scriptstyle{x^s}$};
\end{tikzpicture}$
for $s <0$, and 
$\begin{tikzpicture}[baseline=-1mm]
\draw[-] (0,-0.18) to[out=180,in=-102] (-.178,0.02);
\draw[-] (-0.18,0) to[out=90,in=180] (0,0.18);
\draw[-] (0.18,0) to[out=-90,in=0] (0,-0.18);
\draw[<-] (0,0.18) to[out=0,in=90] (0.18,0);
   \node at (0.18,0) {$\dt$};
   \node at (0.4,0) {$\scriptstyle{x^s}$};
\end{tikzpicture}$
for
$s \geq \deg m(u)$.
Since $m(u)$ is monic, 
factoring out the ideal generated by
$\begin{tikzpicture}[baseline=-.9mm]
\draw[-] (0,-0.18) to[out=180,in=-102] (-.178,0.02);
\draw[-] (-0.18,0) to[out=90,in=180] (0,0.18);
\draw[-] (0.18,0) to[out=-90,in=0] (0,-0.18);
\draw[<-] (0,0.18) to[out=0,in=90] (0.18,0);
   \node at (0.18,0) {$\dt$};
   \node at (0.7,0) {$\scriptstyle{m(x)x^s}$};
\end{tikzpicture}
$ for $s \geq 0$ leaves us with the free polynomial algebra
over $\K$
on generators
$
\begin{tikzpicture}[baseline=-1mm]
  \node at (0,0) {$\anticlockplus$};
   \node at (0.3,0) {$\scriptstyle{r}$};
\end{tikzpicture}$
for $-k < r < \deg m(u)$ and
$\begin{tikzpicture}[baseline=-1mm]
\draw[-] (0,-0.18) to[out=180,in=-102] (-.178,0.02);
\draw[-] (-0.18,0) to[out=90,in=180] (0,0.18);
\draw[-] (0.18,0) to[out=-90,in=0] (0,-0.18);
\draw[<-] (0,0.18) to[out=0,in=90] (0.18,0);
   \node at (0.18,0) {$\dt$};
   \node at (0.4,0) {$\scriptstyle{x^s}$};
\end{tikzpicture}$
for $s <0$. Then, since $m(0)$ is a unit,
factoring out the ideal generated by
$\begin{tikzpicture}[baseline=-.9mm]
\draw[-] (0,-0.18) to[out=180,in=-102] (-.178,0.02);
\draw[-] (-0.18,0) to[out=90,in=180] (0,0.18);
\draw[-] (0.18,0) to[out=-90,in=0] (0,-0.18);
\draw[<-] (0,0.18) to[out=0,in=90] (0.18,0);
   \node at (0.18,0) {$\dt$};
   \node at (0.7,0) {$\scriptstyle{m(x)x^s}$};
\end{tikzpicture}
$ for $s < 0$ leaves us with the free polynomial algebra
over $\K$
on generators
$
\begin{tikzpicture}[baseline=-1mm]
  \node at (0,0) {$\anticlockplus$};
   \node at (0.3,0) {$\scriptstyle{r}$};
\end{tikzpicture}$
for $-k < r < \deg m(u)$.
Finally we tensor over $\k$ with $Z$ and factor out the ideal generated
by the remaining elements
$\begin{tikzpicture}[baseline=-.9mm]
   \node at (0,0) {$\anticlockplus$};
   \node at (0.3,0) {$\scriptstyle{r}$};
\end{tikzpicture}
\!\!-
h^{(r)} 1_\unit$ for $-k \leq r < \deg m(u)$. The first of these with $r=-k$
substitutes $t \in \K$ by
$\sqrt{m(0)/n(0)} \in Z$, leaving a free polynomial algebra
over $Z$ on generators
$
\begin{tikzpicture}[baseline=-1mm]
  \node at (0,0) {$\anticlockplus$};
   \node at (0.3,0) {$\scriptstyle{r}$};
\end{tikzpicture}$
for $-k < r < \deg m(u)$. Then the remaining relations for $-k < r <
\deg m(u)$
evaluate these generators to elements of $Z$.
\end{proof}

Now we proceed like in the previous subsection.
Let $AH_d$ be the {\em affine Hecke algebra}, degenerate or quantum
according to the value of $z$.
This is the $\k$-algebra
with generators $\{x_1,\dots,x_d, s_1,\dots,s_{d-1}\}$ (dots and
crossings) in the
degenerate case or $\{x_1^{\pm 1}, \dots, x_d^{\pm 1}, \tau_1^{\pm 1}, \dots,
\tau_{d-1}^{\pm 1}\}$ (invertible dots and positive/negative crossings)
in the quantum case subject to the ``local'' relations represented by
(\ref{dAHA}) or (\ref{AHA1})--(\ref{AHA2}), respectively.
The {\em cyclotomic Hecke algebra} $H_d^m(Z)$ is the
quotient of the $Z$-algebra $AH_d \otimes_\k Z$ by the two-sided ideal $U$
generated by $m(x_1)$; 
we interpret $H^m_0(Z)$ simply as the algebra $Z$.
Consider the diagram
\begin{equation}
\begin{diagram}
\node{\!\!\!\!(AH_d \otimes_\k Z)\otimes_Z (\End_{\Heis_k}(\unit) \otimes_\k Z)}
\arrow{s,l,A}{\pi_1\bar\otimes\pi_2}\arrow{e,t}{\imath_d}\node{
\End_{\Heis_k}(E^d) \otimes_\k Z}\arrow{s,r,A}{\pi}\\
\node{H_d^m(Z)}\arrow{e,b}{\jmath_d}\node{\End_{\H_Z(a|b)}(E^d).}
\end{diagram}\label{buffers2}
\end{equation}
The top map $\imath_d$ is the evident $Z$-algebra homomorphism.
The basis theorem proved in \cite[Theorem 6.4]{BSWk0} or \cite[Theorem 10.1]{BSWqheis} implies
that this is actually an isomorphism.
The right-hand map $\pi$ is the natural quotient map.
The left-hand map $\pi_1\bar\otimes\pi_2$ is 
the product of
the natural quotient map $\pi_1:AH_d \otimes_\k Z \twoheadrightarrow
H_d^m(Z)$ with kernel $U$
and the $Z$-algebra homomorphism $\pi_2:\End_{\Heis_k}(\unit)\otimes_\k Z
\twoheadrightarrow Z$
with kernel $V$ arising from Lemmas~\ref{bp2}--\ref{bp3}.

\begin{Lemma}\label{cqh2}
There is a unique isomorphism $\jmath_d$ making the diagram (\ref{buffers2})
commute.
\end{Lemma}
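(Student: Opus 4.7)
The plan is to mimic the proof of Lemma~\ref{cqh} essentially verbatim, adapted to the Heisenberg setting. Since $\imath_d$ in (\ref{buffers2}) is already known to be an isomorphism via the basis theorem (\cite[Theorem 6.4]{BSWk0} in the degenerate case, \cite[Theorem 10.1]{BSWqheis} in the quantum case), the existence and uniqueness of a homomorphism $\jmath_d$ making the square commute reduces to the assertion $\imath_d(\ker(\pi_1\bar\otimes\pi_2)) = \ker\pi$. Setting $A := AH_d \otimes_\k Z$ and $B := \End_{\Heis_k}(\unit)\otimes_\k Z$, we have $\ker(\pi_1\bar\otimes\pi_2) = A\otimes V + U\otimes B$, where $U$ is the two-sided ideal of $A$ defining $H^m_d(Z)$ and $V$ is the ideal of $B$ described in Lemma~\ref{bp2} or \ref{bp3}.

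The inclusion $\imath_d(A\otimes V + U\otimes B) \subseteq \ker\pi$ is immediate: the generators of $V$ and the generator $m(x)$ of $U$ map under $\imath_d$ to string diagrams which, when read as endomorphisms of $E^d$, factor through one of the generating morphisms (\ref{anotheroneofthe}) or (\ref{yetanotheroneofthe}) of $\mathcal I_Z(m|n)$ placed on the rightmost strand or inserted as an extra bubble; these are zero in $\End_{\H_Z(m|n)}(E^d)$. The reverse inclusion is the substantive content. By definition, $\ker\pi$ is spanned by $Z$-linear combinations of morphisms
\[
\theta = \sigma \circ (\lambda \otimes \rho) \circ \tau : E^d \Rightarrow E^d
\]
in which $\rho$ is one of the generating 2-morphisms (\ref{anotheroneofthe}) or (\ref{yetanotheroneofthe}) of $\mathcal I_Z(m|n)$ placed anywhere in the diagram, and $\sigma,\tau,\lambda$ are arbitrary morphisms in $\Heis_k$ making the composition sensible. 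The goal is to show $\imath_d^{-1}(\theta) \in A\otimes V + U\otimes B$.

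The key technique is a straightening procedure. If $\rho$ is a ``bubble generator'' of the form $\anticlock\!(x^r)-\h^{(r)} 1_\unit$ (degenerate case) or the quantum analog, then $\imath_d^{-1}(\theta)$ already visibly lies in $A\otimes V$ after using the defining relations to push the offending bubble to the right boundary --- bubble slides (\ref{moregin}) or (\ref{gin}) express the cost of commuting a bubble past a strand as a sum of products of lower-weight bubbles, so one just induces on diagrammatic complexity. If instead $\rho = \mathord{\begin{tikzpicture}[baseline=-1mm]\draw[->] (0.08,-.15) to (0.08,.2);\node at (0.08,0.02) {$\dt$};\node at (-0.35,0.02) {$\scriptstyle m(x)$};\end{tikzpicture}}$ (or its quantum $\anticlockplus$-shifted analog), I apply the full suite of Heisenberg relations --- adjunctions (\ref{rightadj})/(\ref{leftadj}), braid/sideways crossing formulas (\ref{sideways})/(\ref{limb}), curl relations (\ref{rightcurl})/(\ref{leftcurl}) and (\ref{saturday}), and the dot-sliding relations from Lemmas~\ref{sure} and \ref{surely} --- to eliminate all cups, caps, and crossings outside of the region containing $\rho$, pushing $\rho$ onto either (I) a propagating upward strand, or (II) a dotted bubble detached from the main strands. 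In case (I), the resulting morphism lies in $\imath_d(U\otimes B)$, since $\rho$ gets absorbed into the factor carrying the $m(x_r)$-type generator. In case (II), the outcome is a bubble $\anticlock\!(x^s m(x))$ (or its quantum version) multiplied by an element of $\imath_d(A\otimes 1)$, and Lemmas~\ref{bp2}/\ref{bp3} tell us that such bubbles lie in $V$, so the morphism lies in $\imath_d(A\otimes V)$.

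The main obstacle is verifying that the straightening terminates with only types (I) and (II) --- in particular, every correction term produced by a bubble slide, sideways crossing expansion, or curl simplification either still contains $\rho$ on a bubble/propagating strand, or is itself a $Z$-linear combination of such diagrams up to lower complexity. In the quantum case this is slightly more delicate because the bubble slides (\ref{gin}) and the sideways crossing relation (\ref{limb}) produce infinite sums of correction terms and involve the parameter $t$ (which, via Lemma~\ref{bp3}, is controlled by the generator $\anticlock\!(x^{-k})-\h^{(-k)}1_\unit \in V$). Organizing the induction by the number of crossings and cups/caps, exactly as in the proof of Lemma~\ref{cqh} and \cite[Lemma 8.3]{BCK}, will close the argument.
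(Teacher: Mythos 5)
Your proposal is correct and follows exactly the route the paper takes: the paper's proof of this lemma is a one-line reference back to the proof of Lemma~\ref{cqh}, substituting Lemmas~\ref{bp2}--\ref{bp3} for Lemma~\ref{bp}, and your expansion of that reference (kernel decomposition $A\otimes V + U\otimes B$, straightening to type (I)/(II) diagrams, invertibility of $\imath_d$ via the basis theorems) is precisely the intended argument.
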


\begin{proof}
This is similar to the proof of Lemma~\ref{cqh}, using Lemmas~\ref{bp2}--\ref{bp3}
in place of Lemma~\ref{bp}.
\end{proof}

\begin{Corollary}\label{cqhd2}
$\dim \End_{\H_Z(a|b)}(E^d) = \ell^d d! \dim Z$ where $\ell := \deg m(u)$.
\end{Corollary}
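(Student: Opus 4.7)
The plan is straightforward: combine the isomorphism $\jmath_d$ from Lemma~\ref{cqh2} with the known dimension of the cyclotomic Hecke algebra $H^m_d(Z)$. Specifically, Lemma~\ref{cqh2} gives an isomorphism of $Z$-algebras
\[
\jmath_d : H^m_d(Z) \stackrel{\sim}{\longrightarrow} \End_{\H_Z(m|n)}(E^d),
\]
so it suffices to show that $\dim_\k H^m_d(Z) = \ell^d d! \dim_\k Z$.

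For the dimension computation, I would invoke the classical basis theorem for cyclotomic (degenerate or quantum) affine Hecke algebras of type A: the algebra $H^m_d(Z)$ admits a $Z$-basis of the form $\{x_1^{a_1} \cdots x_d^{a_d} \cdot T_w \mid 0 \leq a_i < \ell,\ w \in \SG_d\}$, where $T_w$ denotes a reduced-word product of the generators $s_i$ (degenerate case) or $\tau_i$ (quantum case). Granting this, $H^m_d(Z)$ is free of rank $\ell^d d!$ as a $Z$-module, so $\dim_\k H^m_d(Z) = \ell^d d! \dim_\k Z$ and the corollary follows.

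The main (only) obstacle is citing the correct basis result for $H^m_d(Z)$. In the degenerate case this is due to Kleshchev (and goes back to Brundan-Kleshchev); in the quantum case it is the Ariki-Koike theorem. As in the proof of Corollary~\ref{cqhd}, rather than giving a direct combinatorial argument, one can appeal to the categorification theorem of Kang-Kashiwara \cite{KK}: the Grothendieck group of $\bigoplus_{d\geq 0} H^m_d(\k)\lmod$ categorifies the integrable lowest weight module $V(-\mu)$ for $\mu = \sum_i \langle h_i, \mu\rangle \Lambda_i$ determined by the root multiplicities of $\bar m(u)$, and a Shapovalov form calculation then yields the desired dimension in the weight space. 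Tensoring with $Z$ multiplies by $\dim_\k Z$, giving the stated formula.

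Note also that only the upper bound $\dim \End_{\H_Z(m|n)}(E^d) \leq \ell^d d! \dim Z$ is really needed in what follows: this bound is automatic from the surjectivity of $\imath_d$ (which does not require the full basis theorem for $\Heis_k$) together with the surjectivity of $\pi_1 \bar\otimes \pi_2$, and from the known upper bound $\dim H^m_d(Z) \leq \ell^d d! \dim Z$ which can be proven directly by a straightening argument inside $AH_d \otimes_\k Z$.
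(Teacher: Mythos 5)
Your proof is correct and follows the paper's route exactly: the paper likewise deduces the corollary from the isomorphism $\jmath_d$ of Lemma~\ref{cqh2} together with the known fact that the level $\ell$ cyclotomic Hecke algebra $H^m_d(Z)$ has dimension $\ell^d d!\dim Z$. Your closing remark that only the upper bound is actually needed (and is available without the full basis theorem) also matches the paper's own Remark following Corollary~\ref{maybe}.
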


\begin{proof}
This is the dimension of the level $\ell$ cyclotomic Hecke algebra
$H_d^m(Z)$.
\end{proof}

\begin{Corollary}\label{definitely}
$\H_Z(m|n)$ is a finite-dimensional category.
\end{Corollary}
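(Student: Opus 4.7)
The plan is to mimic exactly the strategy used in the proof of Corollary~\ref{maybe} on the Kac-Moody side, but using the Heisenberg inversion relations in place of (\ref{day1})--(\ref{day3}). Since $\Heis_k$ is strictly pivotal, the biadjunction $(E,F)$ together with $(F,E)$ gives natural isomorphisms
\[
\Hom_{\H_Z(m|n)}(G, G') \;\cong\; \Hom_{\H_Z(m|n)}(\unit, G^* \otimes G'),
\]
where $G^*$ denotes the reversed-and-flipped word. Thus it suffices to prove that $\Hom_{\H_Z(m|n)}(\unit, G)$ is finite-dimensional for every word $G \in \langle E, F\rangle$.

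First I would fix a word $G = G_d \cdots G_1$ with each $G_r \in \{E, F\}$. Let $a$ be the number of $E$'s and $b$ the number of $F$'s; the plan is to induct on the pair $(a+b, \ell(G))$, where $\ell(G)$ counts the number of pairs $(r, s)$ with $r < s$, $G_r = E$ and $G_s = F$ (so $\ell(G) = 0$ precisely when $G = F^b E^a$). In the base case $G = F^b E^a$, adjunction gives
\[
\Hom_{\H_Z(m|n)}(\unit, F^b E^a) \;\cong\; \Hom_{\H_Z(m|n)}(E^b, E^a),
\]
which is zero unless $a = b$, in which case it is a direct summand of $\End_{\H_Z(m|n)}(E^a)$ and so is finite-dimensional by Corollary~\ref{cqhd2}.

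For the inductive step, assume $\ell(G) > 0$, so $G$ contains an adjacent pair $\cdots E F \cdots$. The key tool is the inversion relation (\ref{invrela})--(\ref{invrelb}) in the degenerate case or (\ref{invrel1a})--(\ref{invrel1b}) in the quantum case, which exhibits $E \otimes F$ in the additive Karoubi envelope as isomorphic to $F \otimes E \oplus \unit^{\oplus k}$ (if $k \geq 0$) or $F \otimes E$ as isomorphic to $E \otimes F \oplus \unit^{\oplus (-k)}$ (if $k \leq 0$). Tensoring on left and right by the remaining letters of $G$, this expresses $G$ as a direct summand of $G' \oplus (G'')^{\oplus |k|}$, where $G'$ is obtained from $G$ by swapping the chosen adjacent $EF$ to $FE$ (so $\ell(G') = \ell(G) - 1$), and $G''$ is obtained by deleting both letters (so $|G''| = a+b-2$). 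By induction, both $\Hom_{\H_Z(m|n)}(\unit, G')$ and $\Hom_{\H_Z(m|n)}(\unit, G'')$ are finite-dimensional, hence so is $\Hom_{\H_Z(m|n)}(\unit, G)$.

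The only step requiring real care is justifying that the inversion relation, which a priori lives in the additive Karoubi envelope, can be applied inside morphism spaces of $\H_Z(m|n)$ itself; but this is routine since any decomposition of an identity $1_G = \sum \phi_r \circ \psi_r$ via morphisms to and from $G' \oplus (G'')^{\oplus |k|}$ in $\mathrm{Add}(\H_Z(m|n))$ immediately gives an injection of $\Hom_{\H_Z(m|n)}(\unit, G)$ into $\Hom_{\H_Z(m|n)}(\unit, G') \oplus \Hom_{\H_Z(m|n)}(\unit, G'')^{\oplus |k|}$ by postcomposition. There is no serious obstacle here; the whole argument is a direct translation of the Kac-Moody proof.
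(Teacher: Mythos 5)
Your proposal is correct and follows essentially the same route as the paper: reduce via biadjunction to $\Hom_{\H_Z(m|n)}(\unit,G)$, handle the base case $F^bE^a$ with Corollary~\ref{cqhd2}, and induct using the inversion relations (\ref{invrela})--(\ref{invrelb}) (resp.\ (\ref{invrel1a})--(\ref{invrel1b})), exactly as the paper's proof of Corollary~\ref{maybe} does with (\ref{day1})--(\ref{day3}). The extra care you take with the well-founded induction order and with passing from the additive envelope back to Hom spaces is sound and only makes the argument more explicit.
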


\begin{proof}
This follows by an argument similar to Corollary~\ref{maybe}, using 
(\ref{invrela})--(\ref{invrelb}) in the degenerate case, 
or the analogous inversion relations
in the quantum case.
\end{proof}

As we did in (\ref{stupify}), we switch from now on to using 
algebraic language by viewing the finite-dimensional
category 
$\H_Z(m|n)$ instead as the locally finite-dimensional locally unital
algebra
\begin{equation}\label{stupify2}
H_Z(m|n) := \bigoplus_{G, G' \in \H_Z(m|n)}
\Hom_{\H_Z(m|n)}(G, G'),
\end{equation}
with 
local unit
$\{1_G\:|\:G \in \langle E, F \rangle\}$ and multiplication induced by
composition. 
Then we can consider 
the Schurian category $\lfdrmod H_Z(m|n)$. 
The categorical action of $\Heis_k$ on $\H_Z(m|n)$ extends canonically
to make $\lfdrmod H_Z(m|n)$ into a Schurian $\Heis_k$-module
category.

Let $P := 1_\varnothing H_Z(m|n)$.
As $\End_{H_Z(m|n)}(P) \cong Z$, this is a projective indecomposable module.
Then for any $G \in \langle E, F \rangle$ the projective module $G P$
is identified with the right ideal $1_G H_Z(m|n)$.
These modules for all $G$ give a projective generating family for
$\lfdrmod H_Z(m|n)$
such that
\begin{equation}\label{nupify}
H_Z(\mu|\nu) = 
\bigoplus_{G, G' \in \langle E, F\rangle}
1_{G'} H_Z(\mu|\nu) 1_{G}
\cong\bigoplus_{G, G' \in \langle E, F\rangle}
\Hom_{H_Z(\mu|\nu)}(G P, G' P).
\end{equation}

\begin{Remark}\label{builtlikeatank}
Like in Remark~\ref{usual}, in the case that $n(u) = 1$, 
the GCQ $H_Z(m|n)$ is Morita equivalent to the usual cyclotomic quotient, that is, the 
locally unital algebra $\bigoplus_{d  \geq 0} H^m_d(Z)$.
This is proved in the degenerate case in \cite[Theorem 1.7]{Bheis};
the proof in the quantum case is similar.
\end{Remark}

\subsection{Isomorphisms between GCQs}
\label{sec:gener-cycl-quot}
Fix a finite-dimensional, 
commutative, local $\k$-algebra $Z$ with maximal ideal $J$ and
monic polynomials $m(u), n(u) \in Z[u]$, and let $k := \deg n(u)-\deg m(u)$.
Then define the Heisenberg GCQ $H_Z(m|n)$ as in (\ref{stupify2}).
Let $\bar m(u), \bar n(u) \in \k[u]$ be the reductions of $m(u), n(u)$
modulo $J$.
Let $I$ be the union of the trajectories of the roots of $\bar m(u)$ and $\bar n(u)$ under the
automorphisms $i \mapsto i^\pm$ defined in the introduction.
This gives us the data needed to define the Kac-Moody algebra
$\mathfrak{g}$ with root lattice $X$.
Let $\mu,\nu \in X^+$ be the dominant weights 
defined by declaring that $\langle h_i, \mu\rangle$ and $\langle
h_i,\nu\rangle$ are the multiplicities of $i\in I$ as a root of $\bar m(u)$
and $\bar n(u)$, respectively, and let $\kappa := \nu-\mu$.
Then we apply Corollary~\ref{hens} to the polynomials $m(u), n(u)$ 
to deduce that there are unique monic
polynomials
$\mu_i(u) \in u^{\langle h_i, \mu\rangle} + J[u], \nu_i(u)
\in u^{\langle h_i, \nu\rangle} +
J[u]$ such that
\begin{align}\label{boring}
m(u) &= 
\prod_{i \in I} \mu_i(u-i),
&n(u) &= 
\prod_{i \in I} \nu_i(u-i)\\\intertext{in the degenerate case (here
  $\mu_i(u), \nu_i(u)$ are the polynomials $m_i(u),
  n_i(u)$ produced by Corollary~\ref{hens}), or}
m(u)&=
\prod_{i \in I} i^{\langle h_i, \mu\rangle}\mu_i\big({\textstyle\frac{u}{i}}-1\big),
&
n(u)&=
\prod_{i \in I} i^{\langle h_i, \nu\rangle}\nu_i\big({\textstyle\frac{u}{i}}-1\big)\label{music}
\end{align}
in the quantum case (this time $\mu_i(u), \nu_i(u)$ are
$i^{-\langle h_i,\mu\rangle} m_i(iu), i^{-\langle h_i,\nu\rangle} n_i(iu)$).
These polynomials give us the data needed for the Kac-Moody GCQ 
$H_Z(\mu|\nu)$ according to (\ref{stupify}).
In this subsection, we are going to show that $H_Z(\mu|\nu) \cong H_Z(m|n)$.
In order to do this, we apply the general machinery from
Section 4 to analyze the Schurian
$\Heis_k$-module category $\lfdrmod H_Z(m|n)$.

\begin{Lemma}\label{speccy}
The spectrum of the $\Heis_k$-module category
$\lfdrmod H_Z(m|n)$ is the set $I$ generated by the roots of the
polynomials $m(u)$ and $n(u)$ as above.
\end{Lemma}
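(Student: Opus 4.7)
Write $I'$ for the spectrum of $\R := \lfdrmod H_Z(m|n)$ in the sense of $\S$\ref{swd}. The plan is to establish $I \subseteq I'$ and $I' \subseteq I$ separately, both times exploiting structural information about the projective $P := 1_\varnothing H_Z(m|n)$.

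For $I \subseteq I'$: I would first produce every root of $\bar m(u)$ in $I'$ by examining the action of the dot on $EP$. By Lemma~\ref{cqh2} with $d=1$, the homomorphism $u \mapsto x$ identifies $\End_{H_Z(m|n)}(EP) \cong H^m_1(Z) = Z[u]/(m(u))$. Applying Corollary~\ref{hens} to $m(u)$ produces the factorization $m(u) = \prod_{i \in I}\mu_i(u-i)$, giving a Chinese remainder decomposition of $Z[u]/(m(u))$ into non-zero local factors indexed by the roots of $\bar m$; the corresponding orthogonal idempotents $e_i$ act non-trivially on $EP$, so $E_i P = e_i \cdot EP \neq 0$, whence $i \in I'$ for every root $i$ of $\bar m$. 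The parallel argument on $FP$ (using the $F$-version of Lemma~\ref{cqh2}, proved identically with dots and crossings on downward strands) supplies every root of $\bar n(u)$ in $I'$. Since Lemma~\ref{closed} ensures $I'$ is closed under $i \mapsto i^\pm$, and $I$ is by construction the smallest such closed set containing the roots of $\bar m$ and $\bar n$, we obtain $I \subseteq I'$.

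For $I' \subseteq I$: I would prove by induction on the length of $G \in \langle E, F\rangle$ that every $\k$-eigenvalue of the leftmost dot acting on $E(GP)$ lies in $I$. Since any simple $L \in \R$ is a composition factor of some $GP$ and $E$ is exact, $EL$ is a subquotient of $EGP$, so the roots of $m_L(u)$ form a subset of these eigenvalues; this yields $I' \subseteq I$. The base case $G = \varnothing$ follows directly from $m(x) = 0$ on $EP$. For the inductive step, the dot-slide relations~\eqref{dAHA} or~\eqref{AHA2} translate into the familiar Jucys--Murphy shift: pushing the leftmost dot past an adjacent $E$-crossing shifts its eigenvalue by $\pm 1$ (degenerate) or rescales it by $q^{\pm 2}$ (quantum), both staying in $I$. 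When $G$ contains $F$-letters, apply the biadjunction together with the inversion relations~\eqref{invrela}--\eqref{invrelb} or~\eqref{invrel1a}--\eqref{invrel1b} to rewrite $EF$ in terms of $FE$ plus identity summands, thereby reducing to the all-$E$ case.

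The main obstacle will be tracking cleanly the bubble corrections introduced by the inversion relations in the mixed case. These bubbles act on $GP$ through the generating function $\h_{GP}(u)$; starting from $\h_P(u) = \bar n(u)/\bar m(u)$, iterated application of Lemma~\ref{choose} and its $F$-analog shows that the roots and poles of $\h_{GP}(u)$ modulo $J$ always lie in $I$, which in turn ensures the auxiliary cup/cap contributions in the inductive step never produce eigenvalues outside $I$ and the induction closes.
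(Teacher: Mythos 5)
Your first containment ($I$ contained in the spectrum) is exactly the paper's argument: the $d=1$ case of Lemma~\ref{cqh2}, the CRT decomposition supplied by Corollary~\ref{hens} applied to (\ref{boring})--(\ref{music}), the parallel statement for $FP$, and closure under $i \mapsto i^{\pm}$ via Lemma~\ref{closed}. For the reverse containment your skeleton is also the paper's --- induction on the length of $G$, with the $AH_2$ eigenvalue-shift observation and the central-character identity as the two inputs --- but the organization differs in a way worth noting. The paper never invokes the inversion relation: it reduces immediately to an irreducible $L$ all of whose $m_L$- and $n_L$-roots lie in $I$, and for an irreducible subquotient $K$ of $EL$ it gets the roots of $m_K$ from the $AH_2$ fact and the roots of $n_K$ from the identity $n_K(u) = \frac{n_L(u)m_K(u)}{m_L(u)}\times(\text{a rational function with zeros and poles in }I)$ coming from Lemmas~\ref{impy} and~\ref{choose}; the case of a subquotient of $FL$ is then word-for-word symmetric. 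Your route instead pushes the leftmost $E$ past the $F$-letters using (\ref{invrela})--(\ref{invrelb}), which is exactly where the cup/cap/bubble corrections bite; the repair you sketch via $\h_{GP}(u)$ and Lemma~\ref{choose} is the right idea, but once you deploy it you are already running the central-character argument on irreducible subquotients, at which point the inversion relation is doing no work and can be dropped. Relatedly, your inductive hypothesis records only the eigenvalues of the dot on $E(GP)$, i.e.\ the roots of $m_{L'}$ for composition factors $L'$; to get through a step $G = FG'$ you also need control of the roots of $n_{L'}$, since those are what the $AH_2$ observation shifts when $F$ is applied --- so strengthen the hypothesis to the paper's ``$GP$ belongs to $I$,'' which tracks both families of roots at once (equivalently, tracks the zeros and poles of $\h_{L'}(u)$, which is in effect what your final paragraph is doing). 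One small correction: $\h_P(u) = n(u)/m(u)$ has coefficients in $Z$, not in $\k$; it is only its reduction modulo $J$, relevant for irreducible quotients, that equals $\bar n(u)/\bar m(u)$.
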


\begin{proof}
From (\ref{boring})--(\ref{music}), we get the CRT decomposition
\begin{equation}\label{cars}
    Z[u] / (m(u)) \cong
    \begin{cases}
        \bigoplus_{i \in I} Z[u]/(\mu_i(u-i)) & \text{in the degenerate case}, \\
        \bigoplus_{i \in I} Z[u]/ \left( \mu_i \left( \frac{u}{i} - 1 \right) \right) & \text{in the quantum case}.
    \end{cases}
\end{equation}
Moreover, the image of $(u-i)$ in the $i$th summand 
of this decomposition is 
nilpotent.
From the $d=1$ case of Lemma~\ref{cqh2}, we see that there is an isomorphism 
\begin{align*}
Z[u] / (m(u)) &\stackrel{\sim}{\rightarrow} \End_{H_Z(m|n)} (EP),
&u &
\mapsto \mathord{
\begin{tikzpicture}[baseline = -.8mm]
 	\draw[->] (0.08,-.2) to (0.08,.2);
     \node at (0.08,0) {$\dt$};
 	\draw[-,darkg,thick] (0.45,.2) to (0.45,-.2);
     \node at (0.7,.05) {$\darkg\scriptstyle{P}$};
\end{tikzpicture}
}.
\end{align*}
So from (\ref{cars}), we get induced a decomposition of the module
$EP$ such that $(u-i)$ acts nilpotently on the $i$th summand. It
follows that this summand is simply the generalized
$i$-eigenspace $E_i P$ as defined in $\S$\ref{sendo}.
This shows that $EP = \bigoplus_{i \in I} E_i P$
with $\End_{H_Z(m|n)}(E_i P) \cong Z[u] / (\mu_i(u))$.
Consequently, $E_i P$ is non-zero if and only if $i \in I$ and $\langle h_i, \mu\rangle > 0$.
A similar discussion applies to $FP$: we have that
$FP = \bigoplus_{i \in I} F_i P$
with $\End_{H_Z(m|n)}(F_i P) \cong Z[u] / (\nu_i(u))$.
Consequently, $F_i P$ is 
non-zero if and only if $i \in I$ and $\langle h_i, \nu\rangle > 0$.
In view of Lemma~\ref{closed}, we deduce that the spectrum of
$\lfdrmod H_Z(m|n)$ contains the set $I$.

Conversely, we must show that $I$ is contained in the spectrum
of $\lfdrmod H_Z(m|n)$.
To prove this, we say that $V \in \lfdrmod H_Z(m|n)$ belongs to
  $I$ if $E V = \bigoplus_{i \in I} E_i V$ and $FV = \bigoplus_{i \in I} F_i V$.
We must show that every $V \in \lfdrmod H_Z(m|n)$ belongs to $I$.
As the modules $GP$ for $G \in \langle E, F \rangle$ give a projective
generating family, and these functors are exact, 
it suffices to show that all $GP$ belong to $I$.
To prove this, we proceed by induction on the length of the word $G$.
The base case $G = \varnothing$ follows from
the previous paragraph.
To prove the induction step, it suffices to establish the following:
{\em if $L$ is an irreducible $H_Z(m|n)$-module belonging to $I$ 
then the modules $EL$ and $FL$ also belong to $I$.}
To see this, let $K$ be an irreducible subquotient of either $EL$ or $FL$.
Since $L$ belongs to $I$, all roots of the minimal polynomials
$m_L(u)$ and $n_L(u)$ belong to $I$.
We must show that all roots of 
$m_K(u)$ and $n_K(u)$ also belong to $I$.
In the case that $K$ is a subquotient of $EL$, we argue as follows.
All roots of $m_K(u)$ belong to $I$ due to a well-known observation about the affine 
Hecke algebra $AH_2$; see \cite[Lemma 6.1]{BSWk0} or \cite[Lemma 9.3]{BSWqheis}.
To deduce that all roots of $n_K(u)$ belong to $I$, use the fact that
$n_K(u) = \frac{n_L(u) m_K(u)}{m_L(u)} \times $(a rational
function in $\k(u)$ with zeros and poles in $I$)
thanks to Lemmas~\ref{impy} and
\ref{choose}.
The argument in the case that $K$ is a subquotient of $FL$ is similar.
\end{proof}

With Lemma~\ref{speccy} in hand, we see that the endofunctors $E$
and $F$ of $\lfdrmod H_Z(m|n)$ decompose into eigenfunctors as 
$E = \bigoplus_{i \in I} E_i$ and $F = \bigoplus_{i \in I} F_i$ as in (\ref{pape}). 
Applying (\ref{one}), we also have the weight decomposition
\begin{equation}\label{timetorun}
\lfdrmod H_Z(m|n) = \prod_{\lambda \in X} \lfdrmod H_Z(m|n)_\lambda.
\end{equation}
Applying Theorem~\ref{thm1},
$\left(\lfdrmod H_Z(m|n\right)_\lambda)_{\lambda \in X}$
is a nilpotent 2-representation of $\UU(\g)$.
Let $P$ be the projective indecomposable module $1_\varnothing
H_{m|n}$
and recall that $\kappa = \nu-\mu$.

\begin{Lemma}\label{mower}
The module $P$ belongs to $\lfdrmod H_Z(m|n)_\kappa$.
Moreover, under the categorical action of $\UU(\g)$ just defined, we
have isomorphisms
\begin{align}\label{4thofJulypm}
Z[u] / (\mu_i(u)) &\stackrel{\sim}{\rightarrow} \End_{H_Z(m|n)} (E_iP),
&u &
\mapsto \mathord{
\begin{tikzpicture}[baseline = -1mm]
 	\draw[thick,->] (0.08,-.2) to (0.08,.2);
     \node at (0.08,-0.02) {$\bull$};
     \node at (0.08,-.35) {$\scriptstyle i$};
 	\draw[-,darkg,thick] (0.5,.2) to (0.5,-.2);
     \node at (0.3,0) {$\color{gray}\scriptstyle{\kappa}$};
     \node at (0.75,.05) {$\darkg\scriptstyle{P}$};
\end{tikzpicture}
},\\
Z[u] / (\nu_i(u)) &\stackrel{\sim}{\rightarrow} \End_{H_Z(m|n)} (F_iP),
&u &\mapsto \mathord{
\begin{tikzpicture}[baseline = -.5mm]
 	\draw[<-,thick] (0.08,-.2) to (0.08,.2);
     \node at (0.08,0.03) {$\bull$};
     \node at (0.08,.35) {$\scriptstyle i$};
     \node at (0.3,0.04) {$\color{gray}\scriptstyle{\kappa}$};
 	\draw[-,darkg,thick] (0.5,.2) to (0.5,-.2);
     \node at (0.75,.05) {$\darkg\scriptstyle{P}$};
\end{tikzpicture}
}.\label{5thofJulypm}
\end{align}
Finally, the generating function
from (\ref{lll}) satisfies $\h_{P,i}(u) = \nu_i(u)/\mu_i(u)$.
\end{Lemma}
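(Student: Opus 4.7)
The plan is to establish the three assertions in order, exploiting that the defining relation (\ref{bothcases}) of the Heisenberg GCQ forces the bubble $\anticlock(u)$ to act on $P = 1_\varnothing H_Z(m|n)$ as multiplication by $\h(u) = n(u)/m(u)$, so $\h_P(u) = n(u)/m(u)$. For part 1, I would take any irreducible subquotient $L$ of $P$; since $Z$ is local with residue field $\k$ and $\End_\R(L) = \k$, the action of $Z$ on $L$ factors through $Z/J = \k$, and therefore $\h_L(u) = \bar n(u)/\bar m(u)$. Using (\ref{boring}) or (\ref{music}) together with $\mu_i(u) \equiv u^{\langle h_i,\mu\rangle}$ and $\nu_i(u) \equiv u^{\langle h_i,\nu\rangle}$ modulo $J[u]$, this simplifies to $\h_L(u) = \prod_{i\in I}(u-i)^{\langle h_i,\kappa\rangle}$. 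Lemma~\ref{impy} identifies this rational function with $n_L(u)/m_L(u)$, so $\phi_i(L)-\eps_i(L)=\langle h_i,\kappa\rangle$ for every $i\in I$, forcing $\wt(L)=\kappa$.

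For part 2, I would apply the $d=1$ case of Lemma~\ref{cqh2}, yielding an isomorphism $Z[u]/(m(u)) \stackrel{\sim}{\to} \End_{H_Z(m|n)}(EP)$ that sends $u$ to the Heisenberg dot $x$. The CRT decomposition provided by (\ref{boring}) or (\ref{music}) produces the eigenspace splitting already used in the proof of Lemma~\ref{speccy}, identifying $\End_{H_Z(m|n)}(E_i P)$ with $Z[u]/(\mu_i(u-i))$ in the degenerate case and with $Z[u]/(\mu_i(u/i-1))$ in the quantum case. The change of variable $v := x - i$ (resp.\ $v := x/i - 1$) rewrites this as $Z[v]/(\mu_i(v))$, and by the prescription of $\mathbf{R}$ in Theorem~\ref{thm1} this substitution is exactly $v \leftrightarrow y$, giving (\ref{4thofJulypm}). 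A parallel argument with $n(u)$ and $\nu_i$ in place of $m(u)$ and $\mu_i$ delivers (\ref{5thofJulypm}).

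For part 3, I would apply Lemma~\ref{preimpy2}(1) to $V = P$ with $f(u) = \mu_i(u)$: part 2 guarantees that $\mu_i(y)$ annihilates $E_i P$, so the lemma produces a monic polynomial $g(u) := \h_{P,i}(u)\mu_i(u) \in Z[u]$ of degree $\langle h_i,\mu\rangle + \langle h_i,\kappa\rangle = \langle h_i,\nu\rangle$ such that $g(y)$ acts as zero on $F_i P$. Invoking part 2 again, $\End_{H_Z(m|n)}(F_i P) \cong Z[u]/(\nu_i(u))$, so $\nu_i(u)$ divides $g(u)$; comparing degrees and leading coefficients forces $g(u) = \nu_i(u)$, whence $\h_{P,i}(u) = \nu_i(u)/\mu_i(u)$ as claimed. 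The main conceptual subtlety throughout is tracking the translation between the Heisenberg dot $x$ and the Kac-Moody dot $y$ supplied by Theorem~\ref{thm1}; once that dictionary is in place, the remaining work is bookkeeping with generating functions.
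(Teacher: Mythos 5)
Your proposal is correct and follows essentially the same route as the paper's proof: the bubble relation (\ref{bothcases}) gives $\h_P(u)=n(u)/m(u)$ and hence the weight of the composition factors via Lemma~\ref{impy}/(\ref{chickens}); the $d=1$ case of Lemma~\ref{cqh2} plus the CRT splitting from Lemma~\ref{speccy} and the dot dictionary of Theorem~\ref{thm1} give (\ref{4thofJulypm})--(\ref{5thofJulypm}); and Lemma~\ref{preimpy2}(1) with $f(u)=\mu_i(u)$ pins down $\h_{P,i}(u)$ exactly as in the paper. The only cosmetic difference is that in the last step you phrase the conclusion as "$\nu_i(u)$ divides $g(u)$, compare degrees," whereas the paper argues that $g(u)-\nu_i(u)$ has degree $\langle h_i,\nu\rangle-1$ and vanishes in $Z[u]/(\nu_i(u))$ where $1,u,\dots,u^{\langle h_i,\nu\rangle-1}$ are $Z$-independent; both are valid.
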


\begin{proof}
For the first statement, it suffices to show that the unique
irreducible quotient $L$ of $P$ belongs to $\lfdrmod H_Z(m|n)_\kappa$.
By (\ref{bothcases}) and the definition (\ref{hat}), we have that
$\h_P(u) = n(u)/m(u)$,
where $\h_P(u)$ is the generating function
defined by (\ref{jjj}).
Hence, $\h_L(u) = \bar n(u) / \bar m(u)$.
Using the observation immediately following (\ref{chickens})
together with (\ref{boring})--(\ref{music}),
it follows that 
$\langle h_i, \wt(L) \rangle = \langle h_i, \nu\rangle
- \langle h_i, \mu\rangle = \langle h_i, \kappa\rangle.$
Thus, $\wt(L) = \kappa$ as required.

To establish (\ref{4thofJulypm}), the argument from the first paragraph of
the proof of Lemma~\ref{speccy} shows that there is an isomorphism
\begin{align*}
Z[u] / (\mu_i(u)) &\stackrel{\sim}{\rightarrow} \End_{H_Z(m|n)} (E_iP),
&u &
\mapsto 
\left\{
\begin{array}{rl}
\mathord{
\begin{tikzpicture}[baseline = -1mm]
 	\draw[->] (0.08,-.2) to (0.08,.2);
     \node at (0.08,-0.03) {$\dt$};
     \node at (0.08,-0.35) {$\scriptstyle i$};
     \node at (-0.33,-0.02) {$\scriptstyle x-i$};
 	\draw[-,darkg,thick] (0.4,.2) to (0.4,-.2);
     \node at (0.65,0) {$\darkg\scriptstyle{P}$};
\end{tikzpicture}
}&\text{in the degenerate case,}\\
\mathord{
\begin{tikzpicture}[baseline = -1mm]
 	\draw[->] (0.08,-.2) to (0.08,.2);
     \node at (0.08,-.03) {$\dt$};
     \node at (0.08,-0.35) {$\scriptstyle i$};
     \node at (-0.38,-0.02) {$\scriptstyle \frac{x}{i}-1$};
 	\draw[-,darkg,thick] (0.4,.2) to (0.4,-.2);
     \node at (0.65,0) {$\darkg\scriptstyle{P}$};
\end{tikzpicture}
}&\text{in the quantum case.}
\end{array}\right.
\end{align*}
So we get (\ref{4thofJulypm}) using the definition of the action of
$\mathord{
\begin{tikzpicture}[baseline = -2]
	\draw[->,thick] (0.08,-.15) to (0.08,.3);
      \node at (0.08,0.05) {$\bull$};
   \node at (0.08,-.3) {$\scriptstyle{i}$};
\end{tikzpicture}
}
{\color{gray}\scriptstyle\kappa}$
from the statement of Theorem~\ref{thm1}.
The proof of (\ref{5thofJulypm}) is similar.

Finally, we must compute $\h_{P,i}(u) \in u^{\langle
  h_i,\kappa\rangle}+
u^{\langle h_i,\kappa\rangle-1}Z[u^{-1}]$.
Applying Lemma~\ref{preimpy2}(1) with $f(u) = \mu_i(u)$, we see that
$g(u) := \h_{P,i}(u) \mu_i(u)$ is a monic polynomial in $Z[u]$ of
degree $\langle h_i, \nu\rangle$
such that 
$\mathord{
\begin{tikzpicture}[baseline = -1mm]
 	\draw[thick,<-] (0.08,-.2) to (0.08,.2);
     \node at (0.08,0.03) {$\bull$};
     \node at (0.08,0.35) {$\scriptstyle i$};
     \node at (-0.3,0.03) {$\scriptstyle g(y)$};
 	\draw[-,darkg,thick] (0.52,.2) to (0.52,-.2);
     \node at (0.3,0.03) {$\color{gray}\scriptstyle{\kappa}$};
     \node at (0.69,0) {$\darkg\scriptstyle{P}$};
\end{tikzpicture}
}=0$.
Then by (\ref{5thofJulypm}), it follows that the image of $g(u) - \nu_i(u)$ is zero
in $Z[u] / (\nu_i(u))$.
Since $g(u)-\nu_i(u)$ is a polynomial of degree $\langle h_i,\nu\rangle-1$ and 
$1, u, u^2,\dots,u^{\langle h_i,\nu\rangle-1}$ are linearly
independent over $Z$ in this algebra, it follows that $g(u) =
\nu_i(u)$.
Now we have shown that $\h_{P,i}(u) \mu_i(u)=\nu_i(u)$, and
the result follows.
\end{proof}

Finally, we need to pass to an idempotent expansion of the locally unital algebra $H_Z(m|n)$, i.e., we must refine its local unit.
Take $G = G_d \cdots G_1 \in \langle E,F\rangle$.
As $E = \bigoplus_{i \in I} E_i$ and $F = \bigoplus_{i \in I} F_i$,
there is a decomposition 
\begin{equation}\label{disaster}
G = \bigoplus_{\bi \in I^d} G_\bi
\end{equation}
of the endofunctor $G$,
where $G_\bi := (G_d)_{i_d}\cdots (G_1)_{i_1}$ for $\bi = (i_1,\dots,i_d) \in I^d$.
Recalling that $G P = 1_G H_Z(m|n)$, we deduce that the idempotent
$1_G \in H_Z(m|n)$ decomposes as
$1_G = \sum_{\bi \in I^d} 1_{G_\bi}$ for mutually orthogonal idempotents $1_{G_\bi}$
such that $1_{G_\bi} H_Z(m|n)=G_\bi P$.
In this way, we have defined a refinement of the original local unit for the
algebra $H_Z(m|n)$,
with the new local unit being
indexed by the same set $\langle E_i, F_i\rangle_{i \in I}$
as the local unit of $H_Z(\mu|\nu)$.
Note moreover for $G \in \langle E_i, F_i\rangle_{i \in I}$ 
that $G P$ belongs to $\lfdrmod H_Z(m|n)_\lambda$ 
for $\lambda = \kappa + \wt(G)$. 
So the weight space decomposition of $\lfdrmod H_Z(m|n)$ from (\ref{timetorun})
is consistent with the
algebra decomposition
\begin{equation}\label{cup}
H_Z(m|n) = \bigoplus_{\lambda \in X} H_Z(m|n)_\lambda
\quad\text{where}\quad
H_Z(m|n)_\lambda := \hspace{-5mm}\bigoplus_{\substack{G, G' \in \langle E_i,
  F_i\rangle_{i \in I}\\\wt(G) = \wt(G') = \lambda-\kappa}}
\hspace{-5mm}1_{G'} H_Z(m|n)1_G.
\end{equation}
This should be compared with the decomposition (\ref{blockde}) of the
algebra $H_Z(\mu|\nu)$.

\begin{Theorem}\label{thm2}
With notation as above,
there is a unique isomorphism of locally unital $Z$-algebras 
$\theta:H_Z(\mu|\nu)\stackrel{\sim}{\rightarrow} H_Z(m|n)$
defined on generators of the algebra
$H_Z(\mu|\nu)$ involving upwards dots or crossings or rightwards cups
or caps by the formulae in the statement of Theorem~\ref{thm1}, and with
        $\theta(1_G) = 1_G$ for each $G \in \langle E_i,
F_i\rangle_{i \in I}$.
\end{Theorem}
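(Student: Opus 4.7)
The plan is to construct $\theta$ by applying the nilpotent $\UU(\g)$-2-representation on $(\lfdrmod H_Z(m|n)_\lambda)_{\lambda \in X}$ provided by Theorem~\ref{thm1} to the distinguished projective $P=1_\varnothing H_Z(m|n)$. By Lemma~\ref{mower}, $P$ lies in $\lfdrmod H_Z(m|n)_\kappa$ with $\kappa = \nu - \mu$, and its generating functions satisfy $\h_{P,i}(u) = \nu_i(u)/\mu_i(u) = \h_i(u)$, the very series used to cut out $H_Z(\mu|\nu)$. Thus for each pair of words $G, G' \in \langle E_i, F_i\rangle_{i \in I}$ with $\kappa + \wt(G) = \kappa + \wt(G')$, the 2-representation $\mathbf{R}$ (extended $Z$-linearly) induces a map $\Hom_{\UU(\g)}(G 1_\kappa, G' 1_\kappa) \otimes_\k Z \to \Hom_{H_Z(m|n)}(GP, G'P) = 1_{G'} H_Z(m|n) 1_G$, the identification on the right using the refined idempotents $1_{G_\bi}$ from (\ref{disaster}).

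Next I would verify that this assembly of maps annihilates the generators of $\mathcal{I}_Z(\mu|\nu)$ from (\ref{oneofthe}), so that it descends to a well-defined $Z$-algebra homomorphism $\theta: H_Z(\mu|\nu) \to H_Z(m|n)$. The nilpotency relation $\mu_i(y)\cdot 1_{E_i 1_\kappa} = 0$ acts as zero on $E_i P$ precisely because of the isomorphism $Z[u]/(\mu_i(u)) \cong \End_{H_Z(m|n)}(E_i P)$ of (\ref{4thofJulypm}). The bubble relations $\anticlocki(u) = \h_i(u)\,1_{1_\kappa}$ likewise hold on $P$ by the last assertion of Lemma~\ref{mower}. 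Comparing with the generating formulae of Theorem~\ref{thm1} shows that the resulting $\theta$ agrees on dots, upwards crossings, and rightwards cups/caps with the expressions in the statement of the present theorem; since these generators together with $\theta(1_G)=1_G$ determine $\theta$, uniqueness is automatic.

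For surjectivity, it suffices to show that a generating set for $H_Z(m|n)$ (as a locally unital $Z$-algebra with local unit $\{1_{G_\bi}\}$) lies in the image. The formulae of Theorem~\ref{thm1} send the Kac-Moody dot to $(x-i)$ or $(\tfrac{x}{i}-1)$ acting on $E_iP$, so (using that $i \in Z^\times$ in the quantum case) the colored Heisenberg dot on each $E_i$-string lies in the image. Each Kac-Moody crossing is sent to a leading Heisenberg crossing term multiplied by an invertible scalar series in $(u-i)^{-1}$ plus lower-order corrections; this triangular relationship can be inverted to express the colored Heisenberg crossings $\,\begin{tikzpicture}[baseline = -1mm]\draw[->] (0.28,-.28) to (-0.28,.28);\draw[->] (-0.28,-.28) to (0.28,.28);\node at (0,-.01) {$\diamond$};\end{tikzpicture}\,$ in the image (the same observation applies in the quantum case with positive crossings, then with negative crossings via (\ref{newdotslide2})). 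The rightwards cups and caps land directly in the image by construction. Summing the colored pieces over $i \in I$ recovers all generators of $\Heis_k$ needed to produce $H_Z(m|n)$, so $\theta$ is surjective.

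Injectivity then follows by a dimension comparison in each ``block''. For each $d \geq 0$, Corollary~\ref{cqhd} gives $\dim \bigoplus_{\bi,\bj\in I^d} \Hom_{\H_Z(\mu|\nu)}(E_\bi, E_\bj) = \ell^d d! \dim Z$, while Corollary~\ref{cqhd2} combined with the eigenfunctor decomposition $E^d = \bigoplus_\bi E_\bi$ gives the identical value for $\H_Z(m|n)$. Thus $\theta$ is an isomorphism between finite-dimensional spaces of equal dimension on the ``pure $E^d$'' piece, and similarly on the pure $F^d$ piece. The main obstacle is extending this matching to mixed words $G, G'$: one uses the biadjunctions $(E_i, F_i)$ and $(F_i, E_i)$ on both sides — which are matched under $\theta$ because both the rightwards cups/caps and the leftwards cups/caps transport correctly (the latter because they are uniquely determined by the former Kac-Moody generators via Lemma~\ref{lateaddition}) — to identify $\Hom_{H_Z(-)}(G, G')$ with $\Hom_{H_Z(-)}(\varnothing, G''\,G^*)$ for a suitable rearranged word, and then to reduce via (\ref{day1})--(\ref{day3}) to a direct sum of pure-$E$ and pure-$F$ endomorphism spaces. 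Once the dimensions match block by block, the surjective $\theta$ is forced to be an isomorphism everywhere.
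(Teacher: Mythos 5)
Your proposal is correct and follows essentially the same route as the paper's proof: it constructs $\theta$ from the universal property of the Kac-Moody GCQ applied to the 2-representation produced by Theorem~\ref{thm1} (with Lemma~\ref{mower} supplying the verification that the ideal generators of $\mathcal I_Z(\mu|\nu)$ act by zero on $P$), proves surjectivity on pure-$E$ words by inverting the generator formulae, gets injectivity there from the matching dimension counts of Corollaries~\ref{cqhd} and~\ref{cqhd2}, and reduces mixed words to that case via the adjunctions and the inversion isomorphisms (\ref{day1})--(\ref{day3}). The only cosmetic difference is the bookkeeping in the last reduction (the paper runs an explicit induction on the total length of $G$ and $G'$, pushing all $F$-letters leftward and then transporting them across by adjunction), but the underlying idea is identical.
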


\begin{proof}
As the projective module $P$ belongs to $\lfdrmod
H_Z(m|n)_\kappa$, the categorical action of $\UU(\g)$ on the family
 $\left(\lfdrmod H_Z(m|n)_\lambda\right)_{\lambda \in X}$ 
induces a unique $Z$-linear morphism of
2-representations
$$\left(\R(\kappa)_\lambda\otimes_\k Z\right)_{\lambda \in X}
\rightarrow
\left(\lfdrmod H_Z(m|n)_\lambda\right)_{\lambda \in X}$$
sending $1_\kappa \mapsto P$.
Lemma~\ref{mower} shows that the generators of 
$\mathcal I_Z(\mu|\nu)$ from (\ref{oneofthe}) map to zero, hence, this factors through
the quotient to give a $Z$-linear morphism of 2-representations
$\left(\H_Z(\mu|\nu)_\lambda\right)_{\lambda \in X} \rightarrow 
\left(\lfdrmod H_Z(m|n)_\lambda\right)_{\lambda \in X}.$
Thus, we have constructed a $Z$-linear functor
\begin{equation*}
\Theta:\H_Z(\mu|\nu) \rightarrow \lfdrmod H_Z(m|n)
\end{equation*}
sending $G$ to $G P$ for each $G \in \langle E_i, F_i\rangle_{i \in I}$.
Using (\ref{stupify}) and (\ref{nupify}), it follows that $\Theta$
induces a $Z$-algebra homomorphism
$\theta:H_Z(\mu|\nu) \rightarrow H_Z(m|n)$ sending
$1_G \mapsto 1_G$ for each $G \in \langle E_i, F_i \rangle_{i \in I}$.
By its definition, this may be computed explicitly on 
generators of $H_Z(\mu|\nu)$ involving upwards dots or crossings or
rightwards cups or caps using the formulae from
Theorem~\ref{thm1}; as noted in Remark~\ref{nocaps}, 
we have not given explicit formulae for leftwards cups or caps, but these are not needed.

To show $\theta$ is an isomorphism, we show equivalently that the
functor $\Theta$ is fully faithful, i.e.,
it defines isomorphisms
$\Theta_{G,G'}:\Hom_{\H_Z(\mu|\nu)}(G, G') \stackrel{\sim}{\rightarrow}
\Hom_{H_Z(m|n)}(GP, G'P)$ for all $G, G' \in \langle E_i,
F_i\rangle_{i \in I}$. We first treat the case that $G$ and $G'$ both
belong to $\langle E_i\rangle_{i \in I}$. We may assume that both $G$
and $G'$ have the same length $d \geq 0$, since otherwise both morphism spaces
are zero. Then the $Z$-linear map $\Theta_{G,G'}$ is surjective.
To see this, since every
morphism in $\Hom_{H_Z(m|n)}(GP, G'P)$ is a $Z$-linear combination of
morphisms obtained by composing dots 
and crossings of the form (\ref{interesting}), we just need to
show that all of the latter morphisms are in the image. This follows
on inverting the formulae in Theorem~\ref{thm1} (we will write
the inverse formulae explicitly
explicitly in Theorem~\ref{thm3} below). Then to see that $\Theta_{G,G'}$ is
injective we use the equality of dimensions which follows on comparing
Corollaries~\ref{cqhd} and \ref{cqhd2}.

To establish the fully faithfulness
for more general words $G, G' \in \langle E_i, F_i\rangle_{i \in I}$,
the idea is to reduce to the special case just treated. We proceed by
induction on the sum of the lengths of the words $G$ and $G'$.
Given morphisms
$H,H' \in \Add(\H_Z(\mu|\nu))$, i.e., finite direct sums of words in
$\langle E_i, F_i\rangle_{i \in I}$,
and an isomorphism $\alpha\in\Hom_{\H_Z(\mu|\nu)}(H', H)$
defined by some 2-morphism in $\UU(\g)$,
we can apply
$\Theta$ to obtain an isomorphism $\beta \in
\Hom_{H_Z(m|n)}(H' P, H P)$ such that the following diagram commutes:
$$
\begin{diagram}
\node{\Hom_{\H_Z(\mu|\nu)}(H, G')}
\arrow{s,l}{\alpha^*}
\arrow{e,t}{\Theta_{H,G'}}\node{\Hom_{H_Z(m|n)}(H P, G'P)}
\arrow{s,r}{\beta^*}
\\
\node{\Hom_{\H_Z(\mu|\nu)}(H',G')}
\arrow{e,b}{\Theta_{H',G'}}\node{\Hom_{H_Z(m|n)}(H' P,
G'P)}
\end{diagram}.
$$
The vertical arrows in this diagram are isomorphisms, so we deduce
that 
$\Theta_{H,G'}$ is an isomorphism if and only if $\Theta_{H',G'}$ is an isomorphism.
Using this observation for isomorphisms $\alpha$ obtained from 
the isomorphisms (\ref{day1})--(\ref{day3}), one reduces 
to proving the fully faithfulness in the situation that 
all letters of the form
$F_i\:(i \in I)$ in $G$ appear to the left of all letters of the form
$E_i \:(i \in I)$; this argument also requires the induction hypothesis since
shorter words may arise when (\ref{day2})--(\ref{day3}) are used.
Next, 
we note for any $H \in \langle E_i, F_i\rangle_{i \in I}$
that the following diagram commutes:
$$
\begin{diagram}
\node{\Hom_{\H_Z(\mu|\nu)}(F_i H, G')}
\arrow{s,l}{E_i}
\arrow{e,t}{\Theta_{F_i H, G'}}\node{\Hom_{H_Z(m|n)}(F_i H P, G'P)}
\arrow{s,r}{E_i}
\\
\node{\Hom_{\H_Z(\mu|\nu)}(E_i F_i H, E_iG')}
\arrow{s,l}{\alpha^*}
\arrow{e,b}{\Theta_{E_i F_i H, E_i G'}}\node{\Hom_{H_Z(m|n)}(E_i F_i H P,
E_i G'P)}
\arrow{s,r}{\beta^*}\\
\node{\Hom_{\H_Z(\mu|\nu)}(H, E_iG')}
\arrow{e,b}{\Theta_{H, E_i G'}}\node{\Hom_{H_Z(m|n)}(H P,
E_i G'P)}
\end{diagram},
$$
where $\alpha:H \rightarrow E_i F_i H$ is the morphism in
$\H_Z(\mu|\nu)$
defined by the unit of the adjunction $(F_i, E_i)$
and $\beta:HP \rightarrow E_i F_i HP$ is its image under $\Theta$. The
compositions
down the left edge and down the right edge of this diagram are 
adjunction isomorphisms, so we deduce that $\Theta_{F_i H, G'}$ is an isomorphism if
and only if $\Theta_{H, E_i G'}$ is an isomorphism.
Using this observation, we reduce the proof of fully faithfulness to
the situation that $G \in \langle E_i\rangle_{i \in I}$.
Then we repeat the process to reduce further to the case that
all letters of the form
$F_i\:(i \in I)$ in $G'$ appear to the left of all letters of the form
$E_i\:(i \in I)$.
Finally, using the other adjunction $(E_i, F_i)$ we move all the letters $F_i$ from $G'$
to $G$, putting us into the situation treated in the previous paragraph.
\end{proof}

\begin{Corollary}
Let $\theta^*:\lfdrmod H_Z(m|n)\rightarrow \lfdrmod H_Z(\mu|\nu)$
be the restriction functor arising from the isomorphism $\theta$.
This defines a strongly equivariant isomorphism
between $\left(\lfdrmod H_Z(m|n)_\lambda\right)_{\lambda \in X}$, that is,
the 2-representation
 obtained by applying Theorem~\ref{thm1}
to the Heisenberg GCQ $\lfdrmod H_Z(m|n)$, and
$\left(\lfdrmod
  H_Z(\mu|\nu)_\lambda\right)_{\lambda \in X}$,
that is, the 2-representation 
 arising from the Kac-Moody GCQ.
\end{Corollary}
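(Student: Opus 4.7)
The plan is to observe that this corollary is essentially a formal consequence of the construction of $\theta$ in Theorem~\ref{thm2}, since that isomorphism was engineered precisely via the Theorem~\ref{thm1} formulas. First, I would verify that $\theta$ respects the block decompositions \eqref{blockde} and \eqref{cup}. This follows because $\theta(1_G) = 1_G$ for every word $G \in \langle E_i,F_i\rangle_{i\in I}$ and the weight $\wt(G)$ from \eqref{bluesky} determines which summand $1_G$ sits in on either side. Hence $\theta$ restricts to isomorphisms $\theta_\lambda : H_Z(\mu|\nu)_\lambda \xrightarrow{\sim} H_Z(m|n)_\lambda$, and the restriction functors $\theta^*_\lambda:\lfdrmod H_Z(m|n)_\lambda\xrightarrow{\sim}\lfdrmod H_Z(\mu|\nu)_\lambda$ form $\k$-linear equivalences of Schurian categories.

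Next I would construct the coherence isomorphisms $\eta_G : \theta^*_{\lambda'}\circ G \stackrel{\sim}{\Rightarrow} G\circ\theta^*_\lambda$. Following the bimodule realization of the Kac-Moody action (\cite[Construction 4.26]{BD}), the functor $E_i 1_\lambda$ (resp. $F_i 1_\lambda$) on either module category is implemented by tensoring over the base algebra with the bimodule $\bigoplus_G 1_{E_i G}H_Z1_G$ (resp. $\bigoplus_G 1_{F_i G}H_Z 1_G$), where $H_Z$ denotes either of the two locally unital algebras. Because $\theta$ sends these idempotents to themselves and is a $Z$-algebra isomorphism, these bimodules correspond under the restriction of scalars along $\theta$, producing canonical isomorphisms $\eta_{E_i 1_\lambda}$ and $\eta_{F_i 1_\lambda}$. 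Horizontally composing these then yields $\eta_G$ for every 1-morphism $G$ of $\UU(\g)$, and the required coherence with horizontal composition is built in.

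The remaining point is 2-equivariance: for each generating 2-morphism $\gamma$ of $\UU(\g)$, the natural transformation induced on the right-hand side must correspond under $\eta$ to the one induced on the left. For the upward dots, upward crossings, and rightward cups and caps this is immediate, because $\theta$ was \emph{defined} on these generators precisely by the formulas in Theorem~\ref{thm1}, which is exactly the recipe by which the $\UU(\g)$-action on $\lfdrmod H_Z(m|n)$ is produced. For the leftward cups and caps $\:\begin{tikzpicture}[baseline = 0] \draw[-,thick] (0.3,0.2) to[out=-90, in=0] (0.1,-0.1); \draw[->,thick] (0.1,-0.1) to[out = 180, in = -90] (-0.1,0.2); \end{tikzpicture}\:$ and $\:\begin{tikzpicture}[baseline = -2] \draw[-,thick] (0.3,-0.1) to[out=90, in=0] (0.1,0.2); \draw[->,thick] (0.1,0.2) to[out = 180, in = 90] (-0.1,-0.1); \end{tikzpicture}\:$, which are not given explicit formulas (see Remark~\ref{nocaps}), I invoke Lemma~\ref{lateaddition}: both 2-representations satisfy all the relations \eqref{QHA1}--\eqref{flight2}, the upward generators correspond under $\eta$ already, and Lemma~\ref{lateaddition} says this data uniquely determines the leftward cups and caps. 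Hence they must correspond as well.

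The main obstacle is not any hard computation but rather the care needed in the bimodule bookkeeping that underlies the coherence isomorphisms $\eta_G$ and in confirming that the uniqueness of Lemma~\ref{lateaddition} genuinely applies on both sides; however, once it is recognized that $\theta$ was built from the Theorem~\ref{thm1} dictionary and that both actions satisfy the defining relations of $\UU(\g)$, the equivariance is forced and the proof is essentially formal.
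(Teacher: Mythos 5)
Your proposal is correct and follows essentially the same route as the paper's argument: restrict $\theta$ along the block decompositions using $\theta(1_G)=1_G$, realize the coherence isomorphisms $\eta_G$ on the projective generators $1_GH_Z(m|n)$ via the identifications of the corresponding bimodules, and note that 2-equivariance on the generators is built into the definition of $\theta$ from the Theorem~\ref{thm1} formulas. Your explicit appeal to Lemma~\ref{lateaddition} to handle the leftward cups and caps is a valid way to supply a detail the paper leaves implicit.
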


\begin{Remark}
Bearing in mind Remarks~\ref{usual} and \ref{builtlikeatank},
Theorem~\ref{thm2} can be viewed as a substantial generalization of
the isomorphism theorem from \cite{BK}.
The original isomorphism
$H_d^\mu(Z) \stackrel{\sim}{\rightarrow} H_d^m(Z)$ from \cite{BK} may be recovered from 
Theorem~\ref{thm2} 
using also Lemmas~\ref{cqh} and \ref{cqh2}; actually, one just needs 
the special case $n(u)=1,\nu=0$ of the theorem.
\end{Remark}

\subsection{Kac-Moody to Heisenberg}
Now we can prove the converse to Theorem~\ref{thm1}.
As usual we discuss the degenerate case $z=0$ and the quantum case $z
\neq 0$ simultaneously.
Let $I$ be a subset of $\k$ closed under the automorphisms $i \mapsto i^{\pm}$ defined in the introduction, assuming $0 \notin I$ in the quantum case.
Let $\UU(\g)$ be the  Kac-Moody 2-category associated to this data.
The dotted arrows in the statement of the following theorem should be
interpreted in the same way as was explained after (\ref{brexit}). Now
these dotted arrows can be labelled by any 
power series in $\k[\![y_1,\dots,y_n]\!]$, and make sense due to the assumed nilpotency.

\begin{Theorem}\label{thm3}
Assume that $(\R_\lambda)_{\lambda \in X}$ is a nilpotent 2-representation of $\UU(\g)$ that is either locally finite Abelian or Schurian.
Let $\R$ 
be defined from this as in (\ref{different}).
Assume in addition that $\R$ is of central charge $k \in \Z$, i.e., $\R_\lambda \neq
\mathbf{0}\Rightarrow
\sum_{i \in I} \langle h_i, \lambda \rangle = k$.
Then there is a unique way to make $\R$ into a
$\Heis_k$-module category so 
that $E$ and $F$ act as the endofunctors
(\ref{EF}), and the generating morphisms in
$\Heis_k$ map to natural transformations according to
\begin{align*}
\begin{tikzpicture}[baseline = -0.8mm]
	\draw[->] (0.08,-.25) to (0.08,.3);
      \node at (0.08,0.02) {$\dt$};
\end{tikzpicture}
&\mapsto
\sum_{\substack{\lambda \in X \\i \in I}}\begin{tikzpicture}[baseline = -0.8mm]
	\draw[->,thick] (0.08,-.25) to (0.08,.3);
      \node at (0.08,-0.4) {$\scriptstyle{i}$};
      \node at (0.08,0.02) {$\bull$};
      \node at (0.42,0.02) {$\scriptstyle{y+i}$};
      \node at (0.35,-0.3) {$\color{gray}\scriptstyle\lambda$};
\end{tikzpicture},
&
\begin{tikzpicture}[baseline = 1mm]
	\draw[<-] (0.4,0.4) to[out=-90, in=0] (0.1,0);
	\draw[-] (0.1,0) to[out = 180, in = -90] (-0.2,0.4);
\end{tikzpicture}
&\mapsto
\sum_{\substack{\lambda \in X \\i \in I}}\begin{tikzpicture}[baseline = 1mm]
	\draw[<-,thick] (0.4,0.4) to[out=-90, in=0] (0.1,0);
	\draw[-,thick] (0.1,0) to[out = 180, in = -90] (-0.2,0.4);
      \node at (-0.2,0.55) {$\scriptstyle{i}$};
      \node at (0.6,0.2) {$\color{gray}\scriptstyle\lambda$};
\end{tikzpicture}\:,
&
\begin{tikzpicture}[baseline = 1mm]
	\draw[<-] (0.4,0) to[out=90, in=0] (0.1,0.4);
	\draw[-] (0.1,0.4) to[out = 180, in = 90] (-0.2,0);
\end{tikzpicture}&\mapsto
\sum_{\substack{\lambda \in X \\i \in I}}\begin{tikzpicture}[baseline = 1mm]
	\draw[<-,thick] (0.4,0) to[out=90, in=0] (0.1,0.4);
	\draw[-,thick] (0.1,0.4) to[out = 180, in = 90] (-0.2,0);
      \node at (-0.2,-0.15) {$\scriptstyle{i}$};
      \node at (0.6,0.2) {$\color{gray}\scriptstyle\lambda$};
\end{tikzpicture}
\:\:,\end{align*}\begin{align*}
\begin{tikzpicture}[baseline = 0]
	\draw[->] (0.28,-.3) to (-0.28,.4);
	\draw[->] (-0.28,-.3) to (0.28,.4);
\end{tikzpicture}
&\mapsto
\sum_{\substack{\lambda \in X \\ i \in I}}
\left(\begin{tikzpicture}[baseline = 0]
	\draw[->,thick] (0.38,-.4) to (-0.38,.5);
	\draw[->,thick] (-0.38,-.4) to (0.38,.5);
	\draw[->,thick,densely dotted] (0.26,-.24) to (-0.21,-.24);
   \node at (-.4,-.55) {$\scriptstyle{i}$};
   \node at (.4,-.55) {$\scriptstyle{i}$};
      \node at (-0.26,-0.25) {$\bull$};
      \node at (0.26,-0.25) {$\bull$};
      \node at (.95,-0.2) {$\scriptstyle{y_2-y_1+1}$};
      \node at (0.7,.25) {$\color{gray}\scriptstyle\lambda$};
\end{tikzpicture}
-\begin{tikzpicture}[baseline = 0mm]
	\draw[->,thick] (0.38,-.4) to (0.38,.5);
	\draw[->,thick] (0,-.4) to (0,.5);
      \node at (0,-0.55) {$\scriptstyle{i}$};
      \node at (0.38,-0.55) {$\scriptstyle{i}$};
      \node at (0.55,0) {$\color{gray}\scriptstyle\lambda$};
\end{tikzpicture}\right)\\
&+\sum_{\substack{\lambda \in X \\ i \in I}}
\left(\begin{tikzpicture}[baseline = 0]
	\draw[->,thick] (0.38,-.4) to (-0.38,.5);
	\draw[->,thick] (-0.38,-.4) to (0.38,.5);
	\draw[->,thick,densely dotted] (0.26,-.24) to (-0.21,-.24);
   \node at (-.4,-.55) {$\scriptstyle{i+1}$};
   \node at (.4,-.55) {$\scriptstyle{i}$};
      \node at (-0.26,-0.25) {$\bull$};
      \node at (0.26,-0.25) {$\bull$};
      \node at (1.1,-0.2) {$\scriptstyle{(y_2-y_1+1)^{-1}}$};
      \node at (0.55,.2) {$\color{gray}\scriptstyle\lambda$};
\end{tikzpicture}
+\begin{tikzpicture}[baseline = 0mm]
	\draw[->,thick,densely dotted] (0.55,.06) to (0.12,.06);
	\draw[->,thick] (0.58,-.4) to (0.58,.5);
	\draw[->,thick] (0.08,-.4) to (0.08,.5);
      \node at (0.08,-0.55) {$\scriptstyle{i+1}$};
      \node at (0.58,-0.55) {$\scriptstyle{i}$};
      \node at (0.08,0.05) {$\bull$};
      \node at (0.58,0.05) {$\bull$};
      \node at (1.4,0.1) {$\scriptstyle{(y_2-y_1+1)^{-1}}$};
      \node at (0.85,-.3) {$\color{gray}\scriptstyle\lambda$};
\end{tikzpicture}
\right)\\
&
+
\sum_{\substack{\lambda \in X \\ i,j \in I\\j \neq i, i^+}}
\left(
-\begin{tikzpicture}[baseline = 0]
	\draw[->,thick] (0.38,-.4) to (-0.38,.5);
	\draw[->,thick] (-0.38,-.4) to (0.38,.5);
	\draw[->,thick,densely dotted] (0.26,-.24) to (-0.21,-.24);
   \node at (-.4,-.55) {$\scriptstyle{j}$};
   \node at (.4,-.55) {$\scriptstyle{i}$};
      \node at (-0.26,-0.25) {$\bull$};
      \node at (0.26,-0.25) {$\bull$};
      \node at (2.15 ,-0.2) {$\scriptstyle{(y_2-y_1+j-i-1)(y_2-y_1+j-i)^{-1}}$};
      \node at (0.55,.2) {$\color{gray}\scriptstyle\lambda$};
\end{tikzpicture}
+\begin{tikzpicture}[baseline = 0mm]
	\draw[->,thick,densely dotted] (0.55,.06) to (0.12,.06);
	\draw[->,thick] (0.58,-.4) to (0.58,.5);
	\draw[->,thick] (0.08,-.4) to (0.08,.5);
      \node at (0.08,-0.55) {$\scriptstyle{j}$};
      \node at (0.58,-0.55) {$\scriptstyle{i}$};
      \node at (0.08,0.05) {$\bull$};
      \node at (0.58,0.05) {$\bull$};
      \node at (1.55,0.1) {$\scriptstyle{(y_2-y_1+j-i)^{-1}}$};
      \node at (0.85,-.3) {$\color{gray}\scriptstyle\lambda$};
\end{tikzpicture}
\right)
\end{align*}
in the degenerate case,
or
\begin{align*}
\begin{tikzpicture}[baseline = -0.8mm]
	\draw[->] (0.08,-.25) to (0.08,.3);
      \node at (0.08,0.02) {$\dt$};
\end{tikzpicture}
&\mapsto
\sum_{\substack{\lambda \in X \\i \in I}}\begin{tikzpicture}[baseline = -0.8mm]
	\draw[->,thick] (0.08,-.25) to (0.08,.3);
      \node at (0.08,-0.4) {$\scriptstyle{i}$};
      \node at (0.08,0.02) {$\bull$};
      \node at (0.58,0.02) {$\scriptstyle{i(y+1)}$};
      \node at (0.35,-0.3) {$\color{gray}\scriptstyle\lambda$};
\end{tikzpicture},
&
\begin{tikzpicture}[baseline = 1mm]
	\draw[<-] (0.4,0.4) to[out=-90, in=0] (0.1,0);
	\draw[-] (0.1,0) to[out = 180, in = -90] (-0.2,0.4);
\end{tikzpicture}
&\mapsto
\sum_{\substack{\lambda \in X \\i \in I}}\begin{tikzpicture}[baseline = 1mm]
	\draw[<-,thick] (0.4,0.4) to[out=-90, in=0] (0.1,0);
	\draw[-,thick] (0.1,0) to[out = 180, in = -90] (-0.2,0.4);
      \node at (-0.2,0.55) {$\scriptstyle{i}$};
      \node at (0.6,0.2) {$\color{gray}\scriptstyle\lambda$};
\end{tikzpicture}\:,
&
\begin{tikzpicture}[baseline = 1mm]
	\draw[<-] (0.4,0) to[out=90, in=0] (0.1,0.4);
	\draw[-] (0.1,0.4) to[out = 180, in = 90] (-0.2,0);
\end{tikzpicture}&\mapsto
\sum_{\substack{\lambda \in X \\i \in I}}\begin{tikzpicture}[baseline = 1mm]
	\draw[<-,thick] (0.4,0) to[out=90, in=0] (0.1,0.4);
	\draw[-,thick] (0.1,0.4) to[out = 180, in = 90] (-0.2,0);
      \node at (-0.2,-0.15) {$\scriptstyle{i}$};
      \node at (0.6,0.2) {$\color{gray}\scriptstyle\lambda$};
\end{tikzpicture}
\:\:,\end{align*}
\begin{align*}
\begin{tikzpicture}[baseline = 0]
	\draw[->] (0.28,-.3) to (-0.28,.4);
	\draw[-,line width=4pt,white] (-0.28,-.3) to (0.28,.4);
	\draw[->] (-0.28,-.3) to (0.28,.4);
\end{tikzpicture}
&\mapsto
\sum_{\substack{\lambda \in X \\ i \in I}}
\left(\begin{tikzpicture}[baseline = 0]
	\draw[->,thick] (0.38,-.4) to (-0.38,.5);
	\draw[->,thick] (-0.38,-.4) to (0.38,.5);
	\draw[->,thick,densely dotted] (0.26,-.24) to (-0.21,-.24);
   \node at (-.4,-.55) {$\scriptstyle{i}$};
   \node at (.4,-.55) {$\scriptstyle{i}$};
      \node at (-0.26,-0.25) {$\bull$};
      \node at (0.26,-0.25) {$\bull$};
      \node at (1.5,-0.2) {$\scriptstyle{q(y_2+1)-q^{-1}(y_1+1)}$};
      \node at (0.7,.25) {$\color{gray}\scriptstyle\lambda$};
\end{tikzpicture}
-q^{-1}\begin{tikzpicture}[baseline = 0mm]
	\draw[->,thick] (0.38,-.4) to (0.38,.5);
	\draw[->,thick] (0,-.4) to (0,.5);
      \node at (0,-0.55) {$\scriptstyle{i}$};
      \node at (0.38,-0.55) {$\scriptstyle{i}$};
      \node at (0.55,0) {$\color{gray}\scriptstyle\lambda$};
\end{tikzpicture}\right)\\
&
+\sum_{\substack{\lambda \in X \\ i \in I}}
\left(\begin{tikzpicture}[baseline = 0]
	\draw[->,thick] (0.38,-.4) to (-0.38,.5);
	\draw[->,thick] (-0.38,-.4) to (0.38,.5);
	\draw[->,thick,densely dotted] (0.26,-.24) to (-0.21,-.24);
   \node at (-.4,-.58) {$\scriptstyle{q^2 i}$};
   \node at (.4,-.55) {$\scriptstyle{i}$};
      \node at (-0.26,-0.25) {$\bull$};
      \node at (0.26,-0.25) {$\bull$};
      \node at (1.8,-0.2) {$\scriptstyle{\left(q(y_2+1)-q^{-1}(y_1+1)\right)^{-1}}$};
      \node at (0.55,.2) {$\color{gray}\scriptstyle\lambda$};
\end{tikzpicture}
+qz\begin{tikzpicture}[baseline = 0mm]
	\draw[->,thick,densely dotted] (0.55,.06) to (0.12,.06);
	\draw[->,thick] (0.58,-.4) to (0.58,.5);
	\draw[->,thick] (0.08,-.4) to (0.08,.5);
      \node at (0.08,-0.58) {$\scriptstyle{q^2 i}$};
      \node at (0.58,-0.55) {$\scriptstyle{i}$};
      \node at (0.08,0.05) {$\bull$};
      \node at (0.58,0.05) {$\bull$};
      \node at (2.5,0.1) {$\scriptstyle{(y_2+1)\left(q(y_2+1)-q^{-1}(y_1+1)\right)^{-1}}$};
      \node at (0.85,-.3) {$\color{gray}\scriptstyle\lambda$};
\end{tikzpicture}
\right)\\
&+
\sum_{\substack{\lambda \in X \\ i,j \in I\\j \neq i, i^+}}
\left(
-\begin{tikzpicture}[baseline = 0]
	\draw[->,thick] (0.38,-.4) to (-0.38,.5);
	\draw[->,thick] (-0.38,-.4) to (0.38,.5);
	\draw[->,thick,densely dotted] (0.26,-.24) to (-0.21,-.24);
   \node at (-.4,-.55) {$\scriptstyle{j}$};
   \node at (.4,-.55) {$\scriptstyle{i}$};
      \node at (-0.26,-0.25) {$\bull$};
      \node at (0.26,-0.25) {$\bull$};
      \node at (2.9,-0.2) {$\scriptstyle{\left(q^{-1}j (y_2+1)-qi(y_1+1)\right) \left(j(y_2+1)-i(y_1+1)\right)^{-1}}$};
      \node at (0.55,.2) {$\color{gray}\scriptstyle\lambda$};
\end{tikzpicture}
+z\begin{tikzpicture}[baseline = 0mm]
	\draw[->,thick,densely dotted] (0.55,.06) to (0.12,.06);
	\draw[->,thick] (0.58,-.4) to (0.58,.5);
	\draw[->,thick] (0.08,-.4) to (0.08,.5);
      \node at (0.08,-0.55) {$\scriptstyle{j}$};
      \node at (0.58,-0.55) {$\scriptstyle{i}$};
      \node at (0.08,0.05) {$\bull$};
      \node at (0.58,0.05) {$\bull$};
      \node at (2.2,0.1) {$\scriptstyle{j(y_2+1)\left(j(y_2+1)-i(y_1+1)\right)^{-1}}$};
      \node at (0.85,-.3) {$\color{gray}\scriptstyle\lambda$};
\end{tikzpicture}
\right)
\end{align*}
in the quantum case
with the action of $t \in \K$ chosen so that
$t_L = \sqrt{\prod_{i \in I} (-i)^{-\langle h_i,\lambda\rangle}}$
for all 
irreducible $L \in \R_\lambda$ and $\lambda \in X$.
We also have that
\begin{align}\label{sewing1}
\anticlock(u)&\mapsto \sum_{\lambda \in X}\left(\prod_{i \in I} {\scriptstyle\color{gray}\lambda\:}\anticlocki(u-i)\right),&
\clock(u)&\mapsto \sum_{\lambda \in X}\left(\prod_{i \in I} {\scriptstyle\color{gray}\lambda\:}\clocki(u-i)\right)\\\intertext{in the
  degenerate case, and}
\anticlock(u)&\mapsto \sum_{\lambda\in X}\left(\prod_{i \in I} i^{\langle h_i, \lambda\rangle}
{\scriptstyle\color{gray}\lambda\:}\anticlocki\left(\frac{u}{i}-1\right)\right),&
\clock(u)&\mapsto \sum_{\lambda \in X}\left(\prod_{i \in I} i^{-\langle h_i, \lambda\rangle}
{\scriptstyle\color{gray}\lambda\:}\clocki\left(\frac{u}{i}-1\right)\right)\label{sewing2}
\end{align}
in the quantum case.
\end{Theorem}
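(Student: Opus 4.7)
The plan is to verify that the displayed formulas on generators assemble into a well-defined strict monoidal functor $\mathcal{H}eis_k \to \mathcal{E}nd(\mathcal{R})$; uniqueness is automatic since the generators of $\mathcal{H}eis_k$ are prescribed. First I would check that each formula gives a natural transformation between the correct endofunctors. Because $\mathcal{R}$ is nilpotent, on any finitely generated $V \in \mathcal{R}_\lambda$ only finitely many $E_i, F_i$ act non-trivially and $y - i$ acts nilpotently on each $E_i V$ (and $\frac{y}{i}-1$ in the quantum case); this makes the dotted-arrow shorthand from Section~4.1 converge exactly as in the Kac-Moody setup. The prescription of $t$ in the quantum case is consistent since the central-charge hypothesis $\sum_{i \in I}\langle h_i,\lambda\rangle = k$ makes $\prod_i(-i)^{-\langle h_i,\lambda\rangle}$ depend only on $\lambda \bmod Y$ up to a square, and Lemma~\ref{impy2} shows the value on irreducibles is forced.

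The main step is to verify the defining relations of $\mathcal{H}eis_k$ (affine Hecke relations, adjunction, inversion, infinite Grassmannian, bubble slides, curl). Here I would invoke the universal nature of generalized cyclotomic quotients. Given a finitely generated $V \in \mathcal{R}_\lambda$, one can find a finite-dimensional commutative local $\k$-algebra $Z$ mapping to $Z(\End_\mathcal{R}(V))$ together with dominant weights $\mu, \nu \in X^+$ and polynomials $\mu_i(u), \nu_i(u)$ so that the generators~(\ref{oneofthe}) of the ideal $\mathcal{I}_Z(\mu|\nu)$ act trivially on $V$. This produces a functor from the Kac-Moody GCQ $H_Z(\mu|\nu)_\lambda$ into the additive subcategory generated by $V$. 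Converting parameters via~(\ref{boring})--(\ref{music}) and invoking Theorem~\ref{thm2}, we obtain an isomorphism $\theta: H_Z(\mu|\nu) \stackrel{\sim}{\rightarrow} H_Z(m|n)$ whose formulas on Kac-Moody generators are precisely those of Theorem~\ref{thm1}. Inverting those formulas in $H_Z(m|n)$ (equivalently, solving for Heisenberg generators in terms of Kac-Moody ones) yields exactly the formulas in Theorem~\ref{thm3}. Since the Heisenberg relations trivially hold in $H_Z(m|n)$, they transport through $\theta^{-1}$ to hold on the image of $V$, and hence as natural transformations on $V$. Since this works for every finitely generated $V$ in every block, the relations hold in $\mathcal{E}nd(\mathcal{R})$.

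The bubble formulas~(\ref{sewing1})--(\ref{sewing2}) are proved the same way: in $H_Z(m|n)$ we have $\anticlock(u) = \h(u) 1_\unit$ with $\h(u) = n(u)/m(u)$, and the Hensel factorisations~(\ref{boring})--(\ref{music}) combined with $\h_{P,i}(u) = \nu_i(u)/\mu_i(u)$ from Lemma~\ref{mower} exhibit $\h(u)$ as the corresponding product of shifted Kac-Moody generating functions; naturality then extends the identity from projective generators of the GCQ to all of $\mathcal{R}$. The hard part will be the rigorous reduction ``relation on a finitely generated $V$ $\Leftrightarrow$ relation in an appropriate GCQ.'' One must produce, for each $V$, the data $(Z,\mu,\nu,\mu_i,\nu_i)$ canonically enough that the various natural transformations really do pull back compatibly; this rests on Lemmas~\ref{bp} and~\ref{cqh}, which describe morphism spaces in $H_Z(\mu|\nu)$ explicitly, together with Corollary~\ref{hens} to split polynomials. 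Once this is in place, every Heisenberg relation becomes a consequence of Theorem~\ref{thm2}, and the formulas on the Heisenberg side are precisely the inverse of the Kac-Moody-to-Heisenberg passage in Theorem~\ref{thm1}, closing the loop between the two theorems.
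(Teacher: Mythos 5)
Your proposal is correct and follows essentially the same route as the paper: reduce to a finitely generated indecomposable $V$, realize it as the image of the generating object of a Kac--Moody GCQ with $Z = Z(\End_{\R}(V))$, $\mu_i(u) = u^{\langle h_i,\mu\rangle}$ and $\nu_i(u) = \h_{V,i}(u)\mu_i(u)$, transport the Heisenberg relations through the isomorphism of Theorem~\ref{thm2}, and read off the bubble formulas from $\h(u) = n(u)/m(u)$ together with (\ref{boring})--(\ref{music}). The ``hard part'' you flag is resolved exactly by this explicit choice of $(Z,\mu,\nu,\mu_i,\nu_i)$, using Lemma~\ref{preimpy2} to see that $\nu_i(u)$ is a genuine polynomial of the right degree.
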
  

\begin{proof}
Note to start with that the formulae for dots, crossings and rightwards
cups and caps in the statement of the theorem
are equivalent to the ones in Theorem~\ref{thm1}. We have simply
rearranged them to make the Heisenberg morphisms the subjects.

We first explain the proof in the (easier) degenerate case.
The formulae in the theorem give us well-defined natural
transformations
$\begin{tikzpicture}[baseline = -1mm]
	\draw[->] (0.08,-.2) to (0.08,.2);
      \node at (0.08,0) {$\dt$};
\end{tikzpicture}:E \Rightarrow E$,
$\begin{tikzpicture}[baseline = -1mm]
	\draw[->] (0.2,-.2) to (-0.2,.2);
	\draw[->] (-0.2,-.2) to (0.2,.2);
\end{tikzpicture}:E^2\Rightarrow E^2$,
$\,\begin{tikzpicture}[baseline = .75mm]
	\draw[<-] (0.3,0.3) to[out=-90, in=0] (0.1,0);
	\draw[-] (0.1,0) to[out = 180, in = -90] (-0.1,0.3);
\end{tikzpicture}:\operatorname{Id}_\R \Rightarrow
FE$ and $\:\begin{tikzpicture}[baseline = .75mm]
	\draw[<-] (0.3,0) to[out=90, in=0] (0.1,0.3);
	\draw[-] (0.1,0.3) to[out = 180, in = 90] (-0.1,0);
\end{tikzpicture}:EF\Rightarrow \operatorname{Id}_\R$.
We just need to verify that these natural transformations satisfy the
relations (\ref{dAHA})--(\ref{rightadj}) and the 
inversion relations (\ref{invrela})--(\ref{invrelb}).
As every object of $\R$ is a direct limit of finitely generated
objects, and every finitely generated object is a finite direct sum of
indecomposable objects, 
it suffices to check the relations on an indecomposable, finitely generated $V
\in \R_\kappa$ and $\kappa \in X$.
Let $Z := Z_V = Z(\End_\R(V))$, which is a finite-dimensional,
commutative, local $\k$-algebra.
Let $\mu \in X^+$ be defined so that $\langle h_i, \mu \rangle$ is the
nilpotency degree of the endomorphism
$\mathord{
\begin{tikzpicture}[baseline = -1.5mm]
 	\draw[-,darkg,thick] (1.1,.2) to (1.1,-.2);
 	\draw[<-,thick] (0.68,.2) to (0.68,-.2); 
    \node at (.68,-.35) {$\scriptstyle i$};
     \node at (.68,-.02) {$\bull$};
     \node at (1.3,-.02) {$\darkg\scriptstyle V$};
\node at (.9,0) {$\color{gray}\scriptstyle\kappa$};
\end{tikzpicture}
}$.
Let $\mu_i(u) := u^{\langle h_i, \mu\rangle}$.
Let $\nu := \kappa+\mu$ and $\nu_i(u) := \h_{V, i}(u) \mu_i(u) \in
Z[u]$,
which is a polynomial of degree $\langle h_i, \nu\rangle$.
In other words, $\h_i(u) := \nu_i(u) / \mu_i(u)$ is $\h_{V,i}(u)$.
Defining $\h_i^{(r)}$ as in (\ref{expansions}), 
the relations (\ref{oneofthe}) are satisfied in the action of
$\UU(\g)$ on $V$. These are the defining relations of the Kac-Moody
GCQ $\H_Z(\mu|\nu)$, so we get induced a unique $Z$-linear 
morphism of 2-representations $\left(\H_Z(\mu|\nu)\right)_{\lambda \in
  X}
\rightarrow (\R_\lambda)_{\lambda \in X}$ sending $1_\kappa \mapsto
V$.
This gives us a $Z$-linear functor
$\H_Z(\mu|\nu) \rightarrow \R$, $\varnothing \mapsto V$. Hence, using the isomorphism of
Theorem~\ref{thm2}, we get a $Z$-linear functor
$\H_Z(m|n) \rightarrow \R$, $\varnothing \mapsto V$ for $m(u), n(u) \in
Z[u]$ defined as in (\ref{boring}). The assumption that $\R$ is
of central charge $k$ means 
that $\H_Z(m|n)$ is a $\Heis_k$-module category.
The evaluations on $V$ of the natural transformations arising in the
relations to be checked are the images under this functor of
corresponding morphisms in $\H_Z(m|n)$. Since the relations hold for
the latter this does the job.
It just remains to prove (\ref{sewing1}). Again it suffices to check
that this holds when evaluated on the chosen object $V$, that is, we
must show that $\h_V(u) = \prod_{i \in I} \h_{V,i}(u-i)$.
We know already that $\h_{V,i}(u) = \nu_i(u) / \mu_i(u)$, so by the
definition (\ref{boring}) we have that $\prod_{i \in I} \h_{V,i}(u-i)
= n(u) / m(u)$. This equals $\h_V(u)$ due to (\ref{hat}) and
(\ref{bothcases}).

Now consider the quantum case.
The formulas in the statement of the theorem give us 
natural transformations
$\begin{tikzpicture}[baseline = -1mm]
	\draw[->] (0.08,-.2) to (0.08,.2);
      \node at (0.08,0) {$\dt$};
\end{tikzpicture}:E \Rightarrow E$,
$\begin{tikzpicture}[baseline = -1mm]
	\draw[->] (0.2,-.2) to (-0.2,.2);
	\draw[-,line width=4pt,white] (-0.2,-.2) to (0.2,.2);
	\draw[->] (-0.2,-.2) to (0.2,.2);
\end{tikzpicture}:E^2\Rightarrow E^2$,
$\,\begin{tikzpicture}[baseline = .75mm]
	\draw[<-] (0.3,0.3) to[out=-90, in=0] (0.1,0);
	\draw[-] (0.1,0) to[out = 180, in = -90] (-0.1,0.3);
\end{tikzpicture}:\operatorname{Id}_\R \Rightarrow
FE$ and $\:\begin{tikzpicture}[baseline = .75mm]
	\draw[<-] (0.3,0) to[out=90, in=0] (0.1,0.3);
	\draw[-] (0.1,0.3) to[out = 180, in = 90] (-0.1,0);
\end{tikzpicture}:EF\Rightarrow \operatorname{Id}_\R$,
the first two of which are clearly invertible.
As $\Heis_k$ is a
$\K$-linear category rather than a $\k$-linear category, we also need 
to define an invertible natural transformation $t:\R \rightarrow \R$ such that
$t_{EV} = E t_V$ and $t_{FV} = F t_V$ for each $V \in \R$.
Before we do this in general, consider the situation for
an irreducible object $L \in \R_\lambda$. The minimal
polynomials of
$\mathord{
\begin{tikzpicture}[baseline = -2mm]
 	\draw[-,darkg,thick] (1.1,.2) to (1.1,-.2);
 	\draw[<-,thick] (0.68,.2) to (0.68,-.2); 
    \node at (.68,-.35) {$\scriptstyle i$};
     \node at (.68,-.02) {$\bull$};
     \node at (1.3,-.02) {$\darkg\scriptstyle L$};
\node at (.9,0) {$\color{gray}\scriptstyle\lambda$};
\end{tikzpicture}
}$
and
$\mathord{
\begin{tikzpicture}[baseline = 0mm]
 	\draw[-,darkg,thick] (1.1,.2) to (1.1,-.2);
 	\draw[->,thick] (0.68,.2) to (0.68,-.2); 
    \node at (.68,.35) {$\scriptstyle i$};
     \node at (.68,.02) {$\bull$};
     \node at (1.3,-.02) {$\darkg\scriptstyle L$};
\node at (.9,0) {$\color{gray}\scriptstyle\lambda$};
\end{tikzpicture}
}$
are $u^{\eps_i(L)}$ and $u^{\phi_i(L)}$, so in a
$\Heis_k$-action consistent with these formulas,
we have that 
$m_L(u) = \prod_{i \in I} (u-i)^{\eps_i(L)}$ and
$n_L(u) = \prod_{i \in I} (u-i)^{\phi_i(L)}$.
In view of Lemma~\ref{impy}, using also that $\langle h_i, \lambda
\rangle = \phi_i(L)-\eps_i(L)$ by Lemma~\ref{impy2},
it follows that
$t_L^2 = \prod_{i \in I} (-i)^{-\langle h_i, \lambda \rangle}$.
In the statement of the theorem, we have stipulated that
$t_L = \sqrt{\prod_{i \in I} (-i)^{-\langle h_i, \lambda \rangle}}$, thereby
making the same fixed choice of square root as in the definition of
GCQs in $\S$\ref{massive}.
In general, it suffices to define the natural transformation $t$ on objects
$V$ that are finitely
generated and indecomposable; then we can define $t_V$ on an arbitrary
$V \in \R$ by taking direct sums and limits.
Fixing such an object $V \in \R_\kappa$, define $Z, \mu, \nu, \mu_i(u)$
and $\nu_i(u)$ as in the previous paragraph. Then, as before, we get a
$Z$-linear functor $\H_Z(\mu|\nu) \rightarrow \R$, $\varnothing \mapsto
V$.
Composing with the isomorphism from Theorem~\ref{thm2},
this gives us a $Z$-linear functor
$\H_Z(m|n) \rightarrow \R$, $\varnothing \mapsto V$ where
$m(u), n(u) \in Z[u]$ are defined as in (\ref{music}).
On $\H_Z(m|n)$, we know 
that $t$ acts as $$
\sqrt{m(0) / n(0)} =
\sqrt{\textstyle\prod_{i \in I} i^{-\langle h_i, \kappa\rangle}
  \mu_i(-1) / \nu_i(-1)}
\in Z.
$$
Modulo the unique maximal ideal $J$ of $Z$, this expression equals
$\sqrt{\prod_{i \in I} (-i)^{-\langle h_i, \kappa \rangle}}$, which is the
desired action of $t$ on irreducible quotients of $V$.
So we can use this formula to define
the morphism $t_V:V \rightarrow V$, and have the data
needed to define the natural transformation $t$.
We still need to check that 
$t_{EV} = E t_V$ and $t_{FV} = F t_V$
and to verify the other defining relations of $\Heis_k$, namely,
(\ref{AHA2})--(\ref{rightadj2}),
the inversion relation (\ref{invrel1a})--(\ref{invrel1b}),
and the additional relation explained immediately after
(\ref{invrel1b}). But these all follow as in the previous paragraph because they
are true for the action of $\Heis_k$ on $\H_Z(m|n)$.
Finally, to prove (\ref{sewing2}), we argue in the same way as
explained at the end of the previous paragraph, using (\ref{music})
instead of (\ref{boring}).
\end{proof}

\begin{Remark}
Like in Remark~\ref{nocaps}, the actions of the leftwards cups and caps in
$\Heis_k$ are uniquely determined by the actions of the other
generators
due now to \cite[Lemma 5.2]{BSWk0} or
\cite[Lemma 4.3]{BSWqheis}, but it is not easy to find explicit formulae.
\end{Remark}

\end{document}